\definecolor{Blue}{rgb}{0,0,1}
\def\blue{\color{Blue}}
\definecolor{Red}{rgb}{1,0,0}
\def\red{\color{Red}}
\definecolor{Green}{rgb}{0,1,0}
\definecolor{Yellow}{rgb}{1,1,0}
\DeclareFontFamily{OT1}{eusb}{} \DeclareFontShape{OT1}{eusb}{m}{n} {<5> <6> <7> <8> <9> <10> <11> <12> <14.4> eusb10}{}
\DeclareMathAlphabet{\eusb}{OT1}{eusb}{m}{n}
\DeclareFontFamily{OT1}{eusm}{} \DeclareFontShape{OT1}{eusm}{m}{n} {<5> <6> <7> <8> <9> <10> <11> <12> <14.4> eusm10}{}
\DeclareMathAlphabet{\eusm}{OT1}{eusm}{m}{n}
\DeclareFontFamily{OT1}{eufm}{} \DeclareFontShape{OT1}{eufm}{m}{n} {<5> <6> <7> <8> <9> <10> <11> <12> <14.4> eufm10}{}
\DeclareMathAlphabet{\mathfrak}{OT1}{eufm}{m}{n}
\DeclareFontFamily{OT1}{fraktura}{}
\DeclareFontShape{OT1}{fraktura}{m}{n} {<5> <6> <7> <8> <9> <10> <11> <12> <13> <14.4> [1.1] eufm10}{}
\DeclareMathAlphabet{\fraktura}{OT1}{fraktura}{m}{n}
\DeclareFontFamily{OT1}{cmfi}{} \DeclareFontShape{OT1}{cmfi}{m}{n} {<5> <6> <7> <8> <9> <10> <11> <12> <13> <14.4> [0.9] cmfi10}{}
\DeclareMathAlphabet{\cmfi}{OT1}{cmfi}{b}{n}
\DeclareFontFamily{OT1}{cmss}{} \DeclareFontShape{OT1}{cmss}{m}{n} {<5> <6> <7> <8> <9> <10> <11> <12> <13> <14.4> cmss10}{}
\DeclareMathAlphabet{\cmss}{OT1}{cmss}{m}{n}
\newtheoremstyle{thm}{1.5ex}{1.5ex}{\itshape\rmfamily}{} {\bfseries\rmfamily}{}{2ex}{}
\newtheoremstyle{def}{1.5ex}{1.5ex}{\slshape\rmfamily}{} {\bfseries\rmfamily}{}{2ex}{}
\newtheoremstyle{rem}{1.3ex}{1.3ex}{\rmfamily}{} {\itshape}
{} {1.5ex}{}
\theoremstyle{thm}
\newtheorem{theorem}{Theorem}[section]
\newtheorem{lemma}[theorem]{Lemma}
\newtheorem{claim}[theorem]{Claim}
\newtheorem{proposition}[theorem]{Proposition}
\newtheorem*{Main Theorem}{Main Theorem.}
\newtheorem{corollary}[theorem]{Corollary}
\newtheorem*{special theorem}{Lindeberg-Feller Theorem for Martingales}
\theoremstyle{def}
\newtheorem{definition}[theorem]{Definition}
\newtheorem{assumption}{Assumption}
\newtheorem{example}{Example}
\theoremstyle{rem}
\numberwithin{equation}{section}
\renewcommand{\section}{\secdef\sct\sect}
\newcommand{\sct}[2][default]{%
\refstepcounter{section}
\addcontentsline{toc}{section}{{\tocsection {}{\thesection}{\!\!\!\!#1\dotfill}}{}}
\vspace{0.7cm}
\centerline{\scshape\thesection.\ #1} \nopagebreak \vspace{0.2cm}}
\newcommand{\sect}[1]{%
\vspace{0.4cm} \centerline{\large\scshape\rmfamily #1}
\vspace{0.2cm}}
\newcommand{\ignore}[1]{{}}
\renewcommand{\subsection}{\secdef\subsct\sbsect}
\newcommand{\subsct}[2][default]{\refstepcounter{subsection}
\addcontentsline{toc}{subsection}
{{\tocsection{\!\!}{\hspace{1.2em}\thesubsection}{\!\!\!\!#1\dotfill}}{}}
\nopagebreak\vspace{0.45\baselineskip} {\flushleft\bf
\thesubsection~\bf #1.~}
\\*[3mm]\noindent
\nopagebreak}
\newcommand{\sbsect}[1]{\vspace{0.1cm}\noindent
\textbf{#1.~}\vspace{0.1cm}}
\renewcommand{\subsubsection}{%
\secdef \subsubsect\sbsbsect}
\newcommand{\subsubsect}[2][default]{%
\refstepcounter{subsubsection}
\addcontentsline{toc}{subsubsection}{{\tocsection{\!\!}
{\hspace{3.05em}\thesubsubsection}{\!\!\!\!#1\dotfill}}{}}
\nopagebreak
\vspace{0.15\baselineskip} \nopagebreak {\flushleft\rmfamily
\itshape\thesubsubsection
\ \rmfamily #1\/.}\ }
\newcommand{\sbsbsect}[1]{\vspace{0.1cm}\noindent
\rmfamily \itshape
\arabic{section}.\arabic{subsection}.\arabic{subsubsection} \
\sffamily #1\/.\ }
\newcommand{\dist}{\operatorname{dist}}
\newcommand{\supp}{\operatorname{supp}}
\newcommand{\diam}{\operatorname{diam}}
\newcommand{\FF}{\mathcal F}
\newcommand{\MM}{\mathcal M}
\newcommand{\C}{\mathbf C}
\newcommand{\D}{\mathbb D}
\newcommand{\N}{\mathbb N}
\newcommand{\BbbP}{\mathbb P}
\newcommand{\R}{\mathbb R}
\newcommand{\Z}{\mathbb Z}
\definecolor{Blue}{rgb}{0,0,1}
\def\blue{\color{Blue}}
\def\myffrac#1#2 in #3{\raise 2.6pt\hbox{$#3 #1$}\mkern-1.5mu\raise 0.8pt\hbox{$#3/$}\mkern-1.1mu\lower 1.5pt\hbox{$#3 #2$}}
\newcommand{\joint}{\Large \texttt{P}}
\newcommand{\quenchedP}{P}
\newcommand{\annealedP}{\BbbP}
\newcommand{\quenchedE}{E}
\newcommand{\numsinks}{A}
\newcommand{\cubenz}[2]{Q_{#1}(#2)}
\newcommand{\ConnectExp}{\alpha}
\newcommand{\thesink}{{\mathcal C}}
\newcommand{\ProbBndinSink}{\xi}
\newcommand{\GoingTo}[2]{\Psi_{#1}(#2)}
\newcommand{\GoRen}[2]{\Psi_{#1}^{(#2)}}
\newcommand{\Cont}{\Phi}
\newcommand{\n}{{n^*}}
\newcommand{\nn}{{m^*}}
\newcommand{\Width}[2]{W_{#1}^{(#2)}}
\newcommand{\DistSink}[3]{\mbox{dist}_{#3}(#1,#2)}
\newcommand{\BernClust}{\mathcal D}
\newcommand{\BernSink}{\hat{\mathcal C}}
\newcommand{\ones}{{\underline{\bf 1}}}
\newcommand{\PercHarmConst}{C^\prime}
\newcommand{\connom}{\includegraphics[width=0.1in]{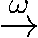}}
\newcommand{\connomp}{\includegraphics[width=0.1in]{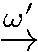}}
\newcommand{\conneib}{\includegraphics[width=0.08in]{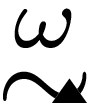}}
\newcommand{\desball}[2]{B^{\tiny \mbox{dis}}_{#1}(#2)}
\newcommand{\ball}[2]{B_{#1}(#2)}
\newcommand{\cball}[1]{B_{#1}}  
\newcommand{\disball}{B^{\tiny \mbox{dis}}}
\DeclareMathOperator*{\osc}{osc}
\newcommand{\pbm}{P_{\rm BM}}
\newcommand{\BndSecDer}[2]{M^{#1}_{#2}}
\newcommand{\CovMat}{{\Sigma}}
\newcommand{\QCovMat}[3]{M_{#2}^{(#3)}(#1)}
\newcommand{\domain}{\mathcal O}
\newcommand{\disdomain}[1]{\mathcal O^{{\tiny \mbox{dis}}}_{#1}}
\newcommand{\ExitProb}[1]{\chi_{#1}}
\newcommand{\Cube}[1]{Q_{#1}}
\newcommand{\Apbnd}{\xi}
\newcommand{\CubeColl}[1]{\mathcal Q_{#1}}
\newcommand{\Path}{\gamma}
\newcommand{\stair}[5]{
\draw[#5, #4] (o)--++(0,#1)--++(1,0) 
  \foreach \i in {1,...,#2}{%
  		--++({int(rnd*3)},0)--++(0,#3) coordinate (o)
		};
}
\newcommand{\OscEvent}[4]{H_{#1,#2}^{#3,#4}} 
\newcommand{\OscConst}{\upsilon}
\newcommand{\errQb}{{\tilde\epsilon}}
\newcommand{\BallGrowth}{M}
\newcommand{\BallScale}{K}
\newcommand{\disribTile}{{\mathcal D}}
\newcommand{\admis}{{\mathcal A}}
\newcommand{\BasicCoupling}[1]{{\tilde\mu^{(#1)}}}
\newcommand{\CompCoupling}[1]{{\mu^{(#1)}}}
\newcommand{\StopLevel}{\mathbb T}
\newcommand{\HitExp}{\rho}
\newcommand{\RegFlag}{{\mathcal K}}
\newcommand{\NonAdmisExp}{{\large \kappa}}
\newcommand{\infl}[2]{{\mathcal I}[{#1}\to{#2}]}
\newcommand{\infed}[1]{I({#1})}
\newcommand{\MeetProb}{\varrho}
\newcommand{\SizeOsc}{{\underline{\overline{\Psi}}}}
\newcommand{\LessThanTwo}{\tilde \alpha}
\newcommand{\hitmstarx}[2] {{\mathcal P}_{#1}^{#2}}
\newcommand{\PwrDwn}{{\Xi}}
\newcommand{\GoodEvents}[3]{{\mathcal G}_{#3}(#1,#2)}
\newcommand{\Mesh}{\nu}
\newcommand{\DistribGood}[5] {\tet(#1,#2,#3,#4,#5)}
\newcommand{\tet}{
\begin{picture}(10,10)
\put(10,10){\line(0,-1){10}}
\put(10,0){\line(-1,0){10}}
\put(0,0){\line(0,1){10}}
\put(10,10){\line(-1,0){5}}
\put(5,10){\line(0,-1){5}}
\end{picture}}
\newcommand{\rdconst}{\tilde H}
\newcommand{\omegaconst}{H}
\newcommand{\ConstClaimPart}{C_1}
\newcommand{\EpsComp}{D}
\title[Harnack inequality in balanced environments]
{\fontsize{14}{20}\selectfont An elliptic Harnack inequality for random walk in balanced environments}
\author[N.~Berger, M.~Cohen, J.-D.~Deuschel and X.~Guo]{Noam Berger\,$^{1,}$\,$^2$ \and Moran Cohen\,$^2$ \and Jean-Dominique Deuschel\,$^3$ \and Xiaoqin Guo\,$^4$}
\begin{document}

\thanks{\hglue-4.5mm\fontsize{9.6}{9.6}\selectfont\copyright\,2018 by N.~Berger, M.~Cohen, J.-D.~Deuschel and X.~Guo.
\vspace{2mm}}
\maketitle

\vspace{-5mm}
\centerline{\textit{$^1$The Technical University of Munich}}
\centerline{\textit{$^2$The Hebrew University of Jerusalem}}
\centerline{\textit{$^3$The Technical University of Berlin}}
\centerline{\textit{$^4$ University of Wisconsin--Madison}}

\begin{abstract}
We prove a Harnack inequality for the solutions of a difference equation with non-elliptic balanced i.i.d. coefficients. Along the way 
we prove a (weak) quantitative homogenisation result, which we believe is of interest too.
\end{abstract}

\section{Introduction}
\label{sec:intro}\noindent

This paper deals with a Random Walk in Random Environment (RWRE) on $\Z^d$  which is defined as follows: Let
$\MM^d$ denote the space of all probability measures on the nearest neighbors of the origin $\{\pm e_i\}_{i=1}^d$, with the $\sigma$-algebra which is inherited from the finite-dimensional space in which it is embedded, and let $\Omega=\left(\MM^d\right)^{\Z^d}$ with the product $\sigma$-algebra. An {\em environment} is a  point $\omega\in\Omega$, we denote by
$P$ the distribution of the environment on $\Omega$. For the purposes of this paper, we assume that $P$ is an i.i.d. measure, i.e.
for a given environment $\omega\in\Omega$, the {\em Random Walk} on $\omega$ is a time-homogenous
Markov chain jumping to the nearest neighbors with transition kernel
\[
\quenchedP_\omega\left(\left.X_{n+1}=z+e\right|X_n=z\right)=\omega(z,e)\ge 0, \ \ \ \ \ \ 
\sum_e\omega(z,e)=1.
\]
The {\em quenched law} $\quenchedP_\omega^z$ is defined to be the law on $\left(\Z^d\right)^\N$ induced by
the kernel
$\quenchedP_\omega$ and $\quenchedP_\omega^z(X_0=z)=1$.  We let $\joint^z=P\otimes \quenchedP_\omega^z$ be the joint law of the environment and the walk, and the {\em annealed} law is defined to be its marginal
\[
\annealedP^z=\int_{\Omega}P_\omega^zdP(\omega).
\]


A comprehensive account of the results and the remaining challenges in the understanding of RWRE can be found in Zeitouni's Saint Flour lecture notes \cite{ofernotes} and in the more recent survey paper by Drewitz and Ram\'irez \cite{dreram}.

\ignore{\red say what's elliptic. Mention that the perturbative is open in 2d}

In this paper we will focus on a special class of environments: the  balanced environment.
In particular, we solve the challenge of adapting the methods that were developed for the elliptic case in \cite{Lawler82} and \cite{GZ} to non-elliptic cases.

\begin{definition}\label{def:balanced}
An environment $\omega$ is said to be {\em balanced} if for every $z\in\Z^d$ and neighbor $e$ of the origin, $\omega(z,e)=\omega(z,-e)$.
\end{definition}

Of course we want to make sure that the walk really spans $\Z^d$:  
\begin{definition}\label{def:genddim}
An environment $\omega$ is said to be {\em genuinely $d$-dimensional} if for every neighbor $e$ of the origin, there exists $z\in\Z^d$ such that  $\omega(z,e)>0$.
\end{definition}

Throughout this paper we make the following assumption.
\begin{assumption}\label{ass:main}
$P$-almost surely, $\omega$ is balanced and genuinely $d$-dimensional.
\end{assumption}

Note that whenever the distribution is ergodic, the above assumption is equivalent with
$$P\big[\omega(z,e)=\omega(z,-e)\big]=1,\qquad\text{and}\qquad  P\big[\omega(z,e)>0\big]>0$$
for every $z\in\Z^d$ and a neighbor $e$ of the origin.

Note that unlike \cite{GZ} we do not allow holding times in our model. We do this for the sake of simplicity. Holding times in our case could be handled exactly as they are handled in \cite{GZ}.

\begin{example}\label{exam:Erwin's walk}
Take $P=\nu^{\Z^d}$ as above with
$$\nu\left[\omega(z,e_i)=\omega(z,-e_i)=\frac12, 
\omega(z,e_j)=\omega(z,-e_j)=0, \forall j\ne i\right]=\frac{1}{d}, \,\, i=1,...,d.$$
In this model, the environment chooses at random one of the $\pm e_i$ direction, see Figure \ref{fig:erwex}).

\begin{figure}[h]
\begin{center}
\includegraphics[width=2in]{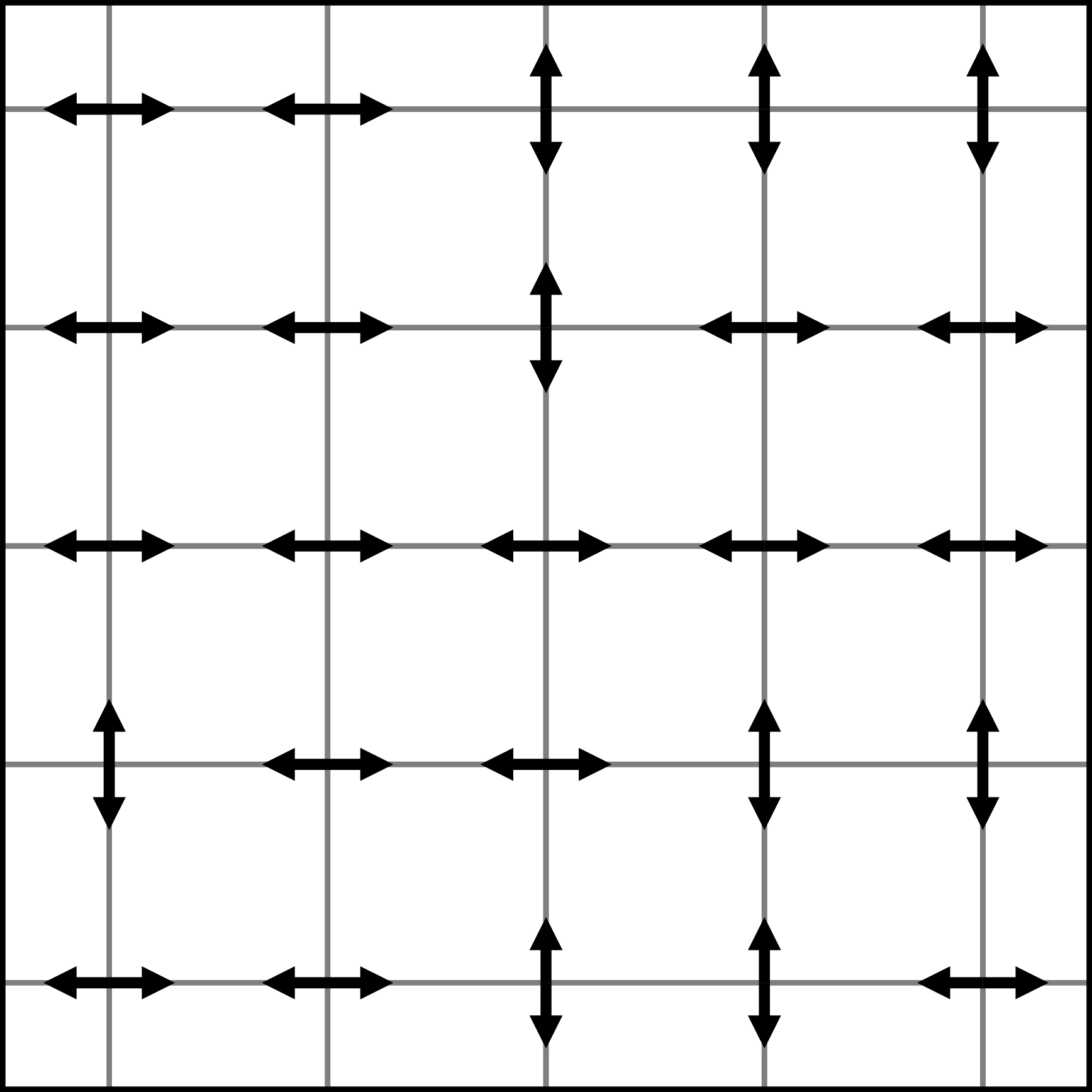}
\caption{\sl An illustration of Example \ref{exam:Erwin's walk} restricted to a small box.
}
\label{fig:erwex}
\end{center}
\end{figure}  
\end{example}

\subsection{Main results}

We have two main results in this paper. The first result is a sort of quantitative homogenization. In \cite{BD14} the following quenched invariance principle was proved.
Let
$$
\bar\omega_n = \tau_{-X_n}\omega,\quad n \in
\mathbb N
$$
where $\tau$ is the shift on $\Omega$, be the environment viewed from the point of view of the particle.

\begin{theorem}[\cite{BD14}]\label{thm:BD14} Assume that the environment is i.i.d., balanced and genuinely $d$-dimensional, then there exists a unique invariant distribution  $Q$ on $\Omega$
for the process $\{\bar\omega_n\}, n\in\mathbb N\}$ which is absolutely continuous with respect to $P$,
moreover the following quenched invariance principle holds:
for $P$ almost $\omega$,  the rescalled random walk $\{X^N(t)=N^{-1/2}X_{[Nt]}, t\ge 0\}$
converges weakly  under $P^0_\omega$ to  a Brownian motion  with deterministic non-degenerate diagonal covariance matrix $\Sigma$, 
 $\Sigma_{i,i}=2\Bbb E_{Q}[\omega(0,e_i)], i=1,...,d$. 
\end{theorem}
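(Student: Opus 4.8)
The plan is to use the martingale structure forced by balancedness to reduce the invariance principle to an ergodic theorem for the environment seen from the particle, and to supply that ergodic theorem by constructing an invariant measure $Q\ll P$.

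\emph{Martingale reduction.} For every $\omega$ and $z$ the process $X_n$ is a $\quenchedP_\omega^z$-martingale: by Definition \ref{def:balanced} the one-step drift $\sum_e\omega(X_n,e)\,e=\sum_{i=1}^d\big(\omega(X_n,e_i)-\omega(X_n,-e_i)\big)e_i$ vanishes. Hence $X_{[Nt]}=\sum_{k=0}^{[Nt]-1}(X_{k+1}-X_k)$ is a sum of bounded martingale differences, and, writing $\FF_k=\sigma(X_0,\dots,X_k)$, the $\FF_k$-conditional covariance of the $k$-th increment is $\Sigma(\bar\omega_k)$, the diagonal matrix with entries $\Sigma(\omega)_{ii}=\omega(0,e_i)+\omega(0,-e_i)=2\omega(0,e_i)$ (nearest-neighbour steps produce no off-diagonal contribution). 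By the Lindeberg--Feller theorem for martingales, the functional CLT for $X^N$ with a deterministic limiting covariance $\Sigma$ follows once one proves, for $P$-a.e.\ $\omega$ and every $t\ge 0$,
\[
\frac1N\sum_{k=0}^{[Nt]-1}\Sigma(\bar\omega_k)\xrightarrow[N\to\infty]{}t\,\Sigma,
\]
because the Lindeberg condition is automatic (increments are bounded by $1$) and tightness of $\{X^N\}$ follows from Doob's maximal inequality together with the display.

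\emph{The invariant measure.} The display is exactly an ergodic theorem for the Markov chain $\{\bar\omega_n\}$, which I would derive from an invariant probability measure $Q$ on $\Omega$ absolutely continuous with respect to $P$. Producing $Q$ in the non-elliptic setting is the step I expect to be the main obstacle, and one of the places where genuine $d$-dimensionality (Definition \ref{def:genddim}), rather than mere balancedness, is used. I would follow the periodisation route: reading $\omega$ modulo $N$ gives an $N$-periodic balanced walk that descends to the torus $(\Z/N\Z)^d$ and carries a stationary density $\rho_N$ against the uniform measure (on its recurrent class, an i.i.d.\ consideration); averaging the environment laws $\rho_N\,\textd P$ and passing to a subsequential weak limit produces a candidate for $Q$. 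The crux is that this limit is again a probability measure and is absolutely continuous with respect to $P$, equivalently that $\{\rho_N\}$ is uniformly integrable. In the elliptic case this is a discrete maximum-principle / mean-value estimate of Kuo--Trudinger type, as exploited in \cite{Lawler82,GZ}; the new difficulty is that $\omega(z,e)$ may vanish, so I would isolate a percolating set of ``good'' sites where the coordinatewise ellipticity is bounded below, establish a mean-value inequality for the $\omega$-invariant density that only uses good sites, and control the time the walk spends off the good set, thereby obtaining an $L^p(P)$ bound on $\rho_N$ uniform in $N$ for some $p=p(d)>1$.

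\emph{Uniqueness, identification, conclusion.} Uniqueness of $Q$ is standard: the environment chain is ergodic under any invariant law absolutely continuous with respect to the (shift-)ergodic measure $P$, so two such laws coincide. Birkhoff's ergodic theorem applied to $\{\bar\omega_n\}$ then gives the limit above with $\Sigma=\E_Q[\Sigma(\omega)]$, first $Q$-a.s.\ and then for $P$-a.e.\ $\omega$ by coupling the environment chain started from $P$ to its stationary version (using the uniqueness of $Q$). Thus $\Sigma$ is diagonal with $\Sigma_{ii}=2\,\E_Q[\omega(0,e_i)]$, and its non-degeneracy---positivity of each $\E_Q[\omega(0,e_i)]$---is the second place where genuine $d$-dimensionality is essential, as it prevents the limiting diffusion from degenerating along a coordinate axis. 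Finally, finite-dimensional convergence together with tightness upgrades to weak convergence of $X^N$ under $\quenchedP_\omega^0$ to a Brownian motion with covariance $\Sigma$, which is the assertion of the theorem.
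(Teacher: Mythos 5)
Note first that this paper does not prove Theorem \ref{thm:BD14}: it is imported from \cite{BD14}, and Subsection \ref{sec:inpbd} merely collects the ingredients of that proof (the rescaled walk, the maximum principle, the sink structure), so your proposal must be measured against the argument of \cite{BD14}. Your overall skeleton --- martingale CLT reduction with quenched conditional covariance $2\omega(X_k,e_i)$, construction of an invariant $Q\ll P$ by periodisation plus an ABP/Kuo--Trudinger estimate giving uniform integrability of the periodised densities, then an ergodic theorem --- is indeed the architecture of \cite{Lawler82}, \cite{GZ} and \cite{BD14}. But the two places where non-ellipticity actually bites are exactly the steps your sketch gets wrong or leaves vague. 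Your route to uniform integrability, ``isolate a percolating set of good sites where the coordinatewise ellipticity is bounded below,'' fails already in Example \ref{exam:Erwin's walk}: there every site is supported on a single coordinate direction, so no site has all $2d$ transition probabilities bounded below and the proposed good set is empty. The device of \cite{BD14} is different: one passes to the rescaled walk $Y_n=X_{T_n}$ of Definition \ref{def:scrw}, where $T_1$ is the first time all $d$ coordinates have moved --- finite with stretched-exponential tails precisely because of genuine $d$-dimensionality (Lemma \ref{lem:well_defined}) --- and one proves a maximum principle for the operator $L^{(N)}_\omega$ built from the stopped rescaled step (Theorem \ref{thm:max_princ}), whose hypotheses are quenched tail bounds on $T_1$ rather than any pointwise ellipticity; this is what replaces ellipticity in the ABP estimate.

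Second, your uniqueness/identification step is not ``standard'' in this setting. Without ellipticity an invariant $Q\ll P$ need not be equivalent to $P$ --- its support $\supp_\omega Q$ is a union of sinks --- so the usual argument (strict positivity of the density, hence ergodicity of the environment chain, hence uniqueness) breaks down, and likewise your plan to pass from the $Q$-a.s.\ ergodic theorem to $P$-a.e.\ $\omega$ by an unspecified coupling is exactly the nontrivial point. In \cite{BD14}, and in Subsection \ref{sec:inpbd} here, this is where the percolation analysis enters: sinks have positive density and are finite in number (Lemma \ref{lem:posdens}), the sink is unique (Proposition \ref{prop:unique}, a Burton--Keane-type argument only announced in \cite{BD14} and proved in this paper), and from every starting point the walk a.s.\ enters the sink (Lemma \ref{lem:BD1459}); the last fact is what transfers the ergodic averages from $Q$-typical to $P$-typical environments. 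Relatedly, the non-degeneracy $\E_Q[\omega(0,e_i)]>0$ is not a direct consequence of Definition \ref{def:genddim} but comes out of this same structure (finiteness of $T_1$ under the stationary law). So while your high-level plan is the right one, the two genuinely non-elliptic steps need the rescaled-walk maximum principle and the sink analysis, not the good-site percolation and the ``standard'' uniqueness argument you propose.
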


Then the content of Theorem \ref{thm:BD14} is that on  the large scale, the RWRE behaves like a Brownian Motion with 
covariance matrix $\Sigma$. The next theorem gives a quantitative bound on how much time it takes until this behavior is seen.

Namely,
we will provide a quantitative estimate (that holds with high probability) for the difference between the discrete-harmonic function and the corresponding {\it homogenized} function, where the two functions have the same (up to discretization) boundary
conditions. 

To be specific, for any discrete finite set $E\subset\Z^d$, we say that a function $u: \bar E\to\R$ is {\it $\omega$-harmonic} in $E$ if for any $x\in E$,
\[
L_\omega u(x):=\sum_{y:y\sim x}\omega(x,y)[u(y)-u(x)]=0.
\]
Let $\cball{R}=\{z\in\R^d:\|z\|_2<R\}$ be the ball of radius $R$ in $\R^d$. For $x\in\R^d$, let $\ball R x=x+\cball R$ and let $\desball{R}{x}=\ball{R}{x}\cap\Z^d$.

Let $\domain=\cball 1$ be the unit ball.
Let $\CovMat$ be the covariance matrix of the limiting Brownian Motion as in Theorem \ref{thm:BD14}. 
Let $F:\overline{\cball 1}\to\R$ be a function which is continuous in $\overline{\cball 1}$, smooth in $\cball 1$ and satisfies 
\[
\sum_{i,j=1}^d\CovMat_{ij}\partial_{ij}F(x)=0 \quad
 \mbox{ for all } x\in \cball 1.
\]
For given $R>0$, we denote by  
\[
\disdomain{R}:=\{x\in\disball_R: y\in \disball_R \mbox{ for all }y\sim x\}
\] 
the biggest subset of $\disball_R$ such that $\overline{\disdomain R}=\disball_R$.
For any $k>0$, we let 
\begin{equation}\label{eq:defIk}
I_k(\disdomain R):=\{x\in\disdomain R:\dist(x, \partial\disdomain R)>k\}
\end{equation}
denote the subset of $\disdomain R$ that has distance more than $k$ away from $\partial\disdomain R$. 

We define the function $F_R: \overline{\disdomain{R}}\to\R$ by 
\[
F_R(x)=F(x/R) \quad \mbox{ for every $x\in\overline{\disdomain{R}}$},
\] so that $F_R$ is the function obtained by ``stretching"  the domain of $F$.
Let $G=G_{R,\omega}: \overline{\disdomain{R}}\to\R$ be the solution of the Dirichlet problem
\[
\left\{
\begin{array}{lr}
L_\omega G=0 &\mbox{ on }\disdomain{R}\\
G=F_R &\mbox{ on }\partial\disdomain{R}.
\end{array}
\right.
\]
In other words, $G$ is the $\omega$-harmonic function on $\overline{\disdomain{R}}$ whose boundary data on $\partial\disdomain{R}$ agrees with that of $F_R$. 

For $i\in\N$, let $\BndSecDer{i}{F}$ denote the supremum (of the absolute values) of all $i$-th order partial derivatives of $F$ on $\cball 1$.

we can thus state the following quantitative estimate.

\begin{theorem}\label{thm:quan_hom}
For any $\epsilon>0$, there exists $n_1=n_1(\epsilon,P)>\epsilon^{-2}$ and $\delta=\delta(P)>0$ such that for any $R>n_1$, 
with probability greater than or equal to $1-C\exp(-R^\delta)$,
\begin{equation*}
\max_{x\in \disdomain{R}}|F_R(x)-G_{R,\omega}(x)|\leq \epsilon(\BndSecDer{2}{F}+\BndSecDer{3}{F}).
\end{equation*}
\end{theorem}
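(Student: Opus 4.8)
The plan is to compare the $\omega$-harmonic function $G$ with the continuum-harmonic $F_R$ via a maximum-principle argument, using a homogenization corrector to absorb the discrepancy between the discrete operator $L_\omega$ and the homogenized operator $\sum_{ij}\CovMat_{ij}\partial_{ij}$. First I would record the key analytic input: since $F$ is smooth on $\cball 1$, a Taylor expansion gives, for $x\in I_k(\disdomain R)$ with $k$ a fixed large integer,
\[
L_\omega F_R(x)=\frac1{2R^2}\sum_{i=1}^d \omega(x,e_i)\,\partial_{ii}F(x/R)+O\Big(\frac{\BndSecDer{3}{F}}{R^3}\Big),
\]
because the odd-order terms cancel by the balance condition $\omega(x,e_i)=\omega(x,-e_i)$. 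The main point is that $\omega(x,e_i)$ is \emph{not} equal to its $Q$-expectation $\tfrac12\CovMat_{ii}$, so $L_\omega F_R$ is not small pointwise; it is only small ``on average.'' To exploit this I would introduce, for each $i$, a corrector function $\chi_i=\chi_i^{(R)}$ solving a discrete Poisson-type equation $L_\omega \chi_i = \omega(\cdot,e_i)-\tfrac12\CovMat_{ii}$ on a slightly enlarged domain, and then show (this is the homogenization estimate, and the crux of the whole argument) that with probability $\ge 1-C\exp(-R^\delta)$ one has the sublinearity bound $\max|\chi_i|=o(R^2)$, in fact $\max|\chi_i|\le \epsilon' R^2$ on $\disdomain R$ for $R$ large. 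Combining, the function $\widetilde F:=F_R+\tfrac1{2R^2}\sum_i \chi_i\,\partial_{ii}F(\cdot/R)$ satisfies $|L_\omega\widetilde F|\le C(\BndSecDer{2}{F}+\BndSecDer{3}{F})(R^{-3}+R^{-2}\|\nabla\chi_i\|_\infty\cdots)$; after cleaning up one gets $\|L_\omega\widetilde F\|_{\infty,I_k}$ small enough that the discrete maximum principle on $\disdomain R$, together with $\|\widetilde F-F_R\|_\infty\le \epsilon' R^2\cdot R^{-2}\BndSecDer2F$ on the boundary layer, yields $\max_{\disdomain R}|G-F_R|\le \epsilon(\BndSecDer2F+\BndSecDer3F)$.

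In more detail, the steps in order are: (i) reduce to a large fixed integer cutoff $k$ and control the contribution of the boundary strip $\disdomain R\setminus I_k(\disdomain R)$ using a discrete barrier / Carne–Varopoulos-type estimate or the known Hölder regularity of $\omega$-harmonic functions in balanced environments from \cite{BD14} (so the error there is $O(k/R)$ times $\BndSecDer1F$, negligible for $R$ large); (ii) construct the correctors $\chi_i$ — here the natural construction is $\chi_i(x)=\quenchedE_\omega^x\big[\sum_{n=0}^{\tau-1}(\omega(X_n,e_i)-\tfrac12\CovMat_{ii})\big]$ with $\tau$ the exit time from a ball of radius $\sim R\log R$, which solves the right equation and is the discrete analogue of the stationary corrector whose existence underlies Theorem \ref{thm:BD14}; (iii) prove the quantitative sublinearity $\max_{\disball_R}|\chi_i|\le \epsilon' R^2$ with the stretched-exponential probability — this is where the i.i.d. structure, an ergodic-theorem-with-rate argument, and a concentration inequality for the occupation measure of the balanced walk enter; (iv) plug $\widetilde F$ into the discrete maximum principle.

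The main obstacle is step (iii): obtaining a \emph{quantitative}, high-probability sublinear bound on the corrector in a genuinely non-elliptic setting, where the walk can get stuck on lower-dimensional slabs and the usual elliptic regularity tools (e.g.\ the Krylov–Safonov machinery used in \cite{Lawler82,GZ}) fail. I expect this to require the mean-value / exit-probability estimates for balanced non-elliptic walks established in \cite{BD14} as a black box, combined with a multiscale or renormalization argument: decompose $[0,\tau]$ into $O(\log R)$ dyadic time-scales, apply a martingale concentration (Azuma–Hoeffding, or the Lindeberg–Feller-for-martingales statement flagged in the preamble) to the increments $\omega(X_n,e_i)-\tfrac12\CovMat_{ii}$ along the path at each scale, and sum. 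The delicate accounting is that the number of visits to any region is itself random and only controlled in an averaged sense, so one has to condition carefully and use that $\chi_i$ is itself $\omega$-harmonic off the diagonal to propagate pointwise control from a subexponentially large but still manageable set of ``good'' vertices to all of $\disdomain R$. Once the corrector bound is in hand, steps (i), (ii) and (iv) are routine maximum-principle bookkeeping.

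\begin{remark}
The exponent $\delta=\delta(P)>0$ and the threshold $n_1>\epsilon^{-2}$ are not optimized; they emerge from the worst scale in the multiscale estimate of step (iii), and the constraint $n_1>\epsilon^{-2}$ simply ensures that the deterministic Taylor error $O(R^{-1})$ is itself below $\epsilon$.
\end{remark}
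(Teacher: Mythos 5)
There is a genuine gap, and it sits exactly where you flag it: your step (iii). You reduce the theorem to a quantitative, high-probability sublinearity bound $\max_{\disball_R}|\chi_i|\le \epsilon' R^2$ for correctors in the non-elliptic balanced environment, but this statement is of the same strength as (arguably harder than) the theorem itself, and the sketch you give for it does not go through. Under the quenched law the increments $\omega(X_n,e_i)-\tfrac12\CovMat_{ii}$ are \emph{not} martingale differences: the environment is frozen, so conditionally on the past each summand is deterministic, and Azuma/Lindeberg concentration along the path gives nothing. What you actually need is control of the occupation measure of the walk tested against the field $\omega(\cdot,e_i)-\tfrac12\CovMat_{ii}$, i.e.\ a \emph{rate} for the convergence of the environment viewed from the particle to its invariant law $Q$; no such rate is available here, and the density $dQ/dP$ is not quantitatively controlled in the non-elliptic setting, so the "ergodic theorem with rate plus concentration" step is precisely the missing quantitative homogenization input, not a tool you can invoke. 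Two further points would also need repair: step (i) appeals to quenched H\"older regularity of $\omega$-harmonic functions from \cite{BD14}, which is not proved there (that machinery is Kuo--Trudinger and needs ellipticity), and step (iv) cannot use a standard discrete ABP inequality for $L_\omega$ without ellipticity; one is forced to the rescaled-walk version (Theorem \ref{thm:max_princ}) with its hypotheses on $T_1$.

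For comparison, the paper's proof avoids correctors altogether, and the mechanism is what makes the non-elliptic case tractable. Fix a scale $n_0=n_0(\epsilon,\alpha,P)$ and use the \emph{qualitative} quenched CLT (Theorem \ref{thm:BD14}) only to say that at each site, with probability at least $1-\alpha$, the $n_0$-step quenched covariance is within $\epsilon$ of $\CovMat$ (the event $A_{n_0}(x)$); by the i.i.d.\ structure and Chernoff bounds, the density of bad sites in $\disball_R$ is at most $2\alpha$ outside an event of stretched-exponentially small probability. Then one considers $H=F_R-G_{R,\omega}$, extends it to an auxiliary $H'$ on a slightly larger ball, and perturbs by the quadratic $\gamma\|x\|_2^2$ with $\gamma=3\epsilon \BndSecDer{2}{F}R^{-2}$: at every site where $A_{n_0}(x)$ holds the perturbed function is strictly submean at scale $n_0$, hence that site is excluded from the upper contact set $D_h$ (Lemma \ref{lem:point_not_exposed}). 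In the ABP-type bound \eqref{eq:max_princ} for the rescaled walk only the low-density bad set and a boundary strip contribute, and a H\"older inequality together with moment bounds on $\quenchedE_\omega^x[T_1]$ (Lemma \ref{lem:cont_T_1}) closes the estimate. In other words, the $\ell^d$-structure of the maximum principle does the spatial averaging that your corrector bound was supposed to provide, converting a soft limit theorem into a quantitative one; without that device (or an actual proof of quantitative corrector sublinearity), your argument does not yield the stated stretched-exponential bound.
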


As a consequence of the above Theorem we get the following result on the exit distribution from a large ball for the random walk:
Let $A\subseteq\partial B_1$ be open in the
relative topology of $\partial B_1$. We write $\beta(A)$ for the probability that a Brownian Motion starting at the origin with covariance matrix $\Sigma$
 leaves the ball $B_1$ through $A$.
Fix $\epsilon>0$ and $R$, and we say the the environment $\omega$ is $(A,R,\epsilon)$-good if the probability that the RWRE in environment $\omega$ leaves $B_R$ through $RA$ is within $\epsilon$ from $\beta(A)$.

\begin{corollary}\label{cor:quan_hom} Assume that the environment is i.i.d., balanced and genuinely $d$-dimensional. There exists $\alpha>0$ such that for every $A$ and $\epsilon$ there are finite and positive constants $C_1$ and $C_2$ such that the probability that $\omega$ is  {$(A,R,\epsilon)$}-good is greater than or equal to $1-C_1\exp(-C_2R^\alpha)$.
\end{corollary}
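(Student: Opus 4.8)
The plan is to express both exit probabilities as values at the origin of harmonic functions, and then to invoke Theorem~\ref{thm:quan_hom} after sandwiching the (discontinuous) indicator boundary data between smooth functions. Fix an open $A\subseteq\partial B_1$ and $\epsilon>0$. Let $h_{R,\omega}\colon\overline{\disdomain R}\to[0,1]$ be the $\omega$-harmonic function on $\disdomain R$ whose values on $\partial\disdomain R$ are the indicator of ``the walk leaves $B_R$ through $RA$''; by a one-step decomposition (optional stopping at the exit time $\tau$ of $\disdomain R$), $h_{R,\omega}(0)$ is exactly the probability in the statement. On the continuum side, $\CovMat$ is non-degenerate by Theorem~\ref{thm:BD14}, so $\sum_{i,j}\CovMat_{ij}\partial_{ij}$ is uniformly elliptic, the Dirichlet problem on $B_1$ is classically solvable, and $\beta(A)=u_A(0)=\mu_\CovMat(A)$, where $u_A$ is the bounded solution with boundary data $\mathbf 1_A$ and $\mu_\CovMat$ is the harmonic (exit) measure of the limiting Brownian motion, which is mutually absolutely continuous with surface measure on $\partial B_1$.

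Using regularity of $\mu_\CovMat$, I would choose $F^-\le\mathbf 1_A\le F^+$ on $\partial B_1$, each continuous on $\overline{B_1}$ and smooth in $B_1$ with $\sum_{i,j}\CovMat_{ij}\partial_{ij}F^\pm=0$ there, and such that $F^+(0)-F^-(0)<\epsilon/3$; for $F^+$ this forces $\mu_\CovMat(\partial A)=0$, which is automatic for the sets of interest (e.g. $A$ whose relative boundary is piecewise $C^1$), and I will assume it. Setting
\[
\epsilon':=\frac{\epsilon}{3\bigl(\BndSecDer{2}{F^-}+\BndSecDer{3}{F^-}+\BndSecDer{2}{F^+}+\BndSecDer{3}{F^+}\bigr)},
\]
Theorem~\ref{thm:quan_hom} applied to $F^-$ and to $F^+$ with tolerance $\epsilon'$ produces $\delta=\delta(P)$ and $n_1=n_1(A,\epsilon,P)$ such that, for every $R>n_1$, with probability at least $1-C\exp(-R^\delta)$ one has simultaneously $\max_{\disdomain R}|F^\pm_R-G^\pm_{R,\omega}|\le\epsilon/3$, where $G^\pm_{R,\omega}$ is the $\omega$-harmonic function agreeing with $F^\pm_R=F^\pm(\cdot/R)$ on $\partial\disdomain R$.

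On that event I would compare $h_{R,\omega}$ with $G^\pm_{R,\omega}$ at the origin (which lies in $\disdomain R$ for $R$ large). Every recorded exit point of the walk lies within distance $O(1)$ of $\partial B_R$ and in the cone over $RA$, so the uniform continuity of $F^\pm$ on $\overline{B_1}$ gives, with $o_R(1)$ bounded by the modulus of continuity of $F^\pm$ at $O(1/R)$,
\[
h_{R,\omega}(0)\le E_\omega^0\bigl[F^+(X_\tau/R)\bigr]+o_R(1)=G^+_{R,\omega}(0)+o_R(1)\le F^+(0)+\tfrac{\epsilon}{3}+o_R(1)\le\beta(A)+\tfrac{2\epsilon}{3}+o_R(1),
\]
where the last step uses $F^+(0)<F^-(0)+\epsilon/3\le\beta(A)+\epsilon/3$ (the latter by the maximum principle for the elliptic equation, as $F^-\le\mathbf 1_A$). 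The matching lower bound $h_{R,\omega}(0)\ge\beta(A)-\tfrac{2\epsilon}{3}-o_R(1)$ follows symmetrically from $F^-$. Enlarging $n_1$ so that $o_R(1)<\epsilon/3$ for $R>n_1$, this event is contained in $\{\omega\text{ is }(A,R,\epsilon)\text{-good}\}$; taking $\alpha:=\delta(P)$ and then $C_2$ small and $C_1$ large enough that $1-C_1\exp(-C_2R^\alpha)\le0$ for $R\le n_1$, the bound $P(\omega\text{ is }(A,R,\epsilon)\text{-good})\ge1-C_1\exp(-C_2R^\alpha)$ holds for all $R>0$. The crux is the transition from continuous boundary data, for which Theorem~\ref{thm:quan_hom} applies verbatim, to the indicator data $\mathbf 1_{RA}$: this needs both the regularity of $\mu_\CovMat$ near $\partial A$ and a quantitative reconciliation of the macroscopic target $RA\subseteq\partial B_R$ with the microscopic lattice boundary $\partial\disdomain R$ — the two places where qualitative statements must be made quantitative.
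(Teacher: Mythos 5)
Your proposal is correct and follows essentially the same route the paper takes, namely deducing the corollary as a special case of Corollary~\ref{cor:exit}: sandwich the indicator of $A$ between boundary data solving $\sum_{i,j}\CovMat_{ij}\partial_{ij}F=0$, apply Theorem~\ref{thm:quan_hom} with a tolerance scaled against the derivative bounds, reconcile the lattice exit point with the continuum boundary (your $o_R(1)$ modulus-of-continuity step plays the role of the paper's Step~3 estimate comparing $1_{\tilde A_R}$ with $F^{(\ell)}_{R+1}$ on $\partial\disball_R$), and use $\mu_\CovMat(\partial A)=0$ to make $F^+(0)-F^-(0)$ small. The only points to make explicit are that the boundary data of $F^\pm$ must be taken smooth on $\partial\cball 1$ (mollified indicators, as in the paper) so that $\BndSecDer{2}{F^\pm}$ and $\BndSecDer{3}{F^\pm}$ are finite, and that your measure-zero-boundary assumption on $A$ is exactly the hypothesis the paper adds in Corollary~\ref{cor:exit}.
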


Our second main result is a Harnack inequality for functions that are $\omega$-harmonic. 
Then we prove the following which is our main result
\begin{theorem}\label{thm:harnack}
Under Assumption \ref{ass:main}, there exists a constant $C$ and a random $R_0$ satisfying $P(R_0<\infty)=1$ such that for every $R>R_0$ and every non-negative $\omega$-harmonic function $f:B_{2R}\to\R$, we have
\[
\sup\{f(x):x\in B_R\}\leq C \inf\{f(x):x\in B_R\}.
\]
Furthermore, there exist $\beta>0$ and $\gamma>0$ such that for every $M$,
\[
P(R_0>M) < \exp(-\gamma M^\beta).
\]
In addition, the constant $C$ can be taken arbitrarily close to the constant in the classical Harnack inequality in $\R^d$ for the corresponding
Brownian motion with covariance $\Sigma$.
\end{theorem}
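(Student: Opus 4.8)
The plan is to derive the Harnack inequality by transferring the quantitative homogenisation estimate of Theorem~\ref{thm:quan_hom} (and its Corollary~\ref{cor:quan_hom}) from a single scale to all scales below the current one, and then running the classical iteration argument that produces Harnack from a comparison of exit distributions. The guiding principle is that, on a ball of radius $R$, an $\omega$-harmonic function is well approximated by a genuine $\Sigma$-harmonic function, for which the classical Harnack inequality holds with a constant arbitrarily close to the Euclidean one; the error incurred in this approximation is $O(\epsilon)$ uniformly, so the discrete Harnack constant can be taken arbitrarily close to the classical one once $R$ is large enough. The randomness enters only through the size of the scale $R_0$ past which every relevant ball is ``good'' in the sense of Corollary~\ref{cor:quan_hom}; the stretched-exponential tail on $R_0$ comes directly from the stretched-exponential tail there via a union bound over the $O(R^d)$ balls of a fixed smaller radius contained in $B_{2R}$ (the polynomial factor $R^d$ is absorbed into $\exp(-C_2 R^\alpha)$ at the cost of shrinking $\alpha$).

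First I would fix a small mesh parameter $\rho\in(0,1)$ and a target accuracy $\epsilon$, and declare a ball $\ball{r}{x}\subseteq B_{2R}$ to be \emph{regular} if, for the rescaled domain, the $\omega$-harmonic measure on $\partial\desball{r}{x}$ is within $\epsilon$ (in total variation on a suitable collection of boundary patches) of the $\Sigma$-Brownian exit measure from $\ball r x$; Corollary~\ref{cor:quan_hom} says a given such ball is regular with probability $\ge 1-C_1\exp(-C_2 r^\alpha)$. Set $R_0$ to be the smallest $M$ such that every dyadic-ish ball of radius between $M^{c}$ and $R$ sitting inside $B_{2R}$ is regular, for all $R\ge M$; a Borel--Cantelli / summation argument using the stretched-exponential bound shows $P(R_0<\infty)=1$ and gives the tail $P(R_0>M)<\exp(-\gamma M^\beta)$ for suitable $\beta,\gamma$. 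Then on the event $\{R>R_0\}$ I would prove, by the standard oscillation-decay argument (à la De~Giorgi--Nash--Moser adapted to this discrete setting, as in \cite{Lawler82,GZ}), that a non-negative $\omega$-harmonic function $f$ on $B_{2R}$ satisfies $\osc_{\ball{r/2}{x}} f \le \theta \,\osc_{\ball r x} f + (\text{error})$ with $\theta<1$ coming from the near-classical exit-distribution bound on regular balls; iterating down to the scale $R^c$ yields Hölder regularity, and then comparing values of $f$ at two points of $B_R$ through a chain of regular balls of comparable radius (the Harnack chain) gives $\sup_{B_R} f\le C\inf_{B_R} f$. To see that $C$ can be taken close to the classical constant, one keeps track of the fact that on each regular ball the multiplicative distortion of harmonic measure is $1+O(\epsilon)$, so the whole chain contributes $(1+O(\epsilon))^{(\text{number of links})}$; choosing $\epsilon$ small relative to the fixed chain length (which depends only on $d$ and the ratio of radii, not on $R$) keeps the excess over the classical constant arbitrarily small.

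The main obstacle I expect is handling the \emph{boundary layer} and the non-ellipticity simultaneously: because the environment is only balanced and genuinely $d$-dimensional (not uniformly elliptic), the discrete domain $\disdomain{R}$ can have an irregular boundary and the walk can have directions in which it effectively does not move at a given site, so one cannot directly invoke the elliptic Harnack machinery of \cite{Lawler82}. The homogenisation input of Theorem~\ref{thm:quan_hom} is precisely what replaces the missing ellipticity: it guarantees that \emph{at the macroscopic scale} the exit distribution is the nice nondegenerate Gaussian one, so the argument must be organised so that every quantitative step is applied only on balls of radius $\ge R^c$ (where homogenisation has ``kicked in'' with overwhelming probability) and never on truly microscopic balls where the non-elliptic pathologies live. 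Concretely, the delicate point is the base case of the oscillation iteration: one must show that already at scale $R^c$ the function $f$ has controlled oscillation purely from regularity of the exit measure at that scale, without any a priori continuity estimate, which is where the precise form of the ``$(A,R,\epsilon)$-good'' statement in Corollary~\ref{cor:quan_hom} (uniform over the open sets $A$) is essential. A secondary technical nuisance is the interplay of the two length scales in the error term of Theorem~\ref{thm:quan_hom}, namely that $n_1$ depends on $\epsilon$; one must verify that the stretched-exponential probability bound survives taking $\epsilon$ as small as needed while letting $R\to\infty$, which forces the bookkeeping of $R_0$ to involve both a lower cutoff $R^c$ and the $\epsilon$-dependent threshold $n_1(\epsilon,P)$.
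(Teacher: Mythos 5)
There is a genuine gap, and it sits exactly where you flag the ``delicate point'': you claim the base case of the oscillation iteration can be handled ``purely from regularity of the exit measure at that scale'' via Corollary~\ref{cor:quan_hom}. In the non-elliptic balanced setting this is false. The homogenisation input only compares quenched harmonic measure with the Brownian exit measure on \emph{macroscopic boundary patches}; it says nothing about the conditional law \emph{within} a patch, and an $\omega$-harmonic function can oscillate wildly between neighbouring lattice sites at every scale below the homogenisation scale (directions with $\omega(z,e)=0$ disconnect nearby points). Since the $\sup$ and $\inf$ in the Harnack inequality are attained at individual lattice points, controlling oscillation only down to scale $R^{c}$, or coupling two walks only to the level of exiting through the same patch, cannot close the argument: you never get exact coalescence of the coupled walks, hence no oscillation decay and no pointwise comparison of $f$ at two given sites. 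The paper supplies the missing ingredient from a completely different source: the percolation analysis of the sink (Propositions~\ref{prop:connect_outside_sink} and~\ref{prop:antal_pist_sink}), which yields Corollary~\ref{cor:perc_bnd_harm_func} -- an a priori fixed-scale Harnack inequality with a huge constant and a total-variation coupling bound. This is what allows the multiscale coupling behind the oscillation inequality (Theorem~\ref{thm:osc}) to terminate with actual coalescence at a bounded scale, and it is also the a priori bound against which the final contradiction is run. The paper explicitly notes that Corollary~\ref{cor:perc_bnd_harm_func} has counterexamples for finite-range dependent non-elliptic environments, so this step genuinely uses the i.i.d.\ structure through percolation and cannot be extracted from Theorem~\ref{thm:quan_hom} alone; your plan, which ``never touches truly microscopic balls,'' has no substitute for it.

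A second, independent problem is your route to the near-optimal constant. Chaining regular balls and claiming the chain contributes only $(1+O(\epsilon))^{\#\text{links}}$ conflates the distortion relative to the Brownian comparison with the Harnack constant of each link: every link costs at least the classical constant for its own geometry, which is bounded away from $1$, so the product over a Harnack chain is much larger than the classical constant for the pair $B_R\subset B_{2R}$ and does not become small by shrinking $\epsilon$. The paper instead argues by contradiction (Lemma~\ref{lem:mainfharn}): if the ratio exceeds the classical constant by a definite amount, Claim~\ref{claim:part_with_difference} -- which needs the \emph{relative-error} form of the exit-measure comparison in \eqref{eq:def_good_event}, not just additive total-variation closeness -- produces a boundary patch on which the max/min ratio is almost as large; the oscillation inequality then amplifies the ratio when zooming into that patch, and after roughly $R^{1/3}$ steps the ratio grows super-exponentially, contradicting the crude bound of Corollary~\ref{cor:perc_bnd_harm_func}. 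So both the statement with a sharp constant and the basic inequality itself hinge on ingredients (the sink/percolation estimates and the ratio-amplification scheme) that are absent from your proposal.
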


From Theorem \ref{thm:harnack} we get the following weak Liouville type estimate.
\begin{corollary}\label{cor:liouville}
Let $\delta < 1$. For $P$-almost $\omega$ and every $\omega$-harmonic function $f : \Z^d \to \R$, if
\[
\sup_{0 < |x|} \frac{|f(x)|}{|x|^\delta} <\infty
\]
then $f$ is a constant function.
\end{corollary}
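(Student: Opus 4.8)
The plan is to derive Corollary~\ref{cor:liouville} from the Harnack inequality of Theorem~\ref{thm:harnack} by an oscillation--decay iteration; the only real point is that the dilation ratio $2$ appearing there is, in general (once $d\ge2$), too small for the iteration to cover the whole range $\delta<1$, so one must run it across a large dilation ratio, where the near-optimality of the constant (the ``In addition'' clause of Theorem~\ref{thm:harnack}, itself a consequence of the quantitative homogenisation of Theorem~\ref{thm:quan_hom}) becomes decisive. First I would record the large-ratio form of the Harnack inequality, which the proof of Theorem~\ref{thm:harnack}---equivalently, the homogenisation estimate of Theorem~\ref{thm:quan_hom}---supplies for any fixed $\lambda>1$: there is a random radius $R_0=R_0(\lambda,\omega)$, finite for $P$-a.e.\ $\omega$, such that for every $\rho>R_0$ and every non-negative $\omega$-harmonic $h$ on $B_{\lambda\rho}$ one has $\sup_{B_\rho}h\le C_\lambda\inf_{B_\rho}h$, with $C_\lambda$ as close as desired to the classical Harnack constant of the operator $\sum_{i,j}\Sigma_{ij}\partial_{ij}$ across the pair $B_\rho\subset B_{\lambda\rho}$. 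That classical constant is explicit---computed from the Poisson kernel after diagonalising $\Sigma$---and behaves like $1+O(d/\lambda)$ as $\lambda\to\infty$; hence $\theta_\lambda:=\frac{C_\lambda-1}{C_\lambda+1}=O(d/\lambda)$, and since $\delta<1$ we may fix once and for all a value $\lambda=\lambda(\delta,d,\Sigma)$ large enough that $q:=\theta_\lambda\,\lambda^{\delta}<1$.

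The iteration is then routine. Fix $\omega$ with $R_0(\lambda,\omega)<\infty$ and an $\omega$-harmonic $f$ with $K:=\sup_{x\neq0}|f(x)|/|x|^{\delta}<\infty$; after subtracting a constant we may assume $f(0)=0$, so $\sup_{B_\rho}|f|\le 2K\rho^{\delta}$ for $\rho$ large. For $\rho>R_0$, both $f-\inf_{B_{\lambda\rho}}f$ and $\sup_{B_{\lambda\rho}}f-f$ are non-negative $\omega$-harmonic on $B_{\lambda\rho}$; applying the large-ratio Harnack inequality to each and adding the two inequalities, the cross terms cancel and one obtains
\[
\osc_{B_\rho}f\ \le\ \theta_\lambda\,\osc_{B_{\lambda\rho}}f .
\]
Iterating this bound and inserting the growth hypothesis gives, for every $\rho>R_0$ and every $k\ge1$,
\[
\osc_{B_\rho}f\ \le\ \theta_\lambda^{\,k}\,\osc_{B_{\lambda^{k}\rho}}f\ \le\ 2K\rho^{\delta}\,(\theta_\lambda\lambda^{\delta})^{k}\ =\ 2K\rho^{\delta}\,q^{k},
\]
which tends to $0$ as $k\to\infty$. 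Therefore $\osc_{B_\rho}f=0$ for every $\rho>R_0(\lambda,\omega)$, and since $\bigcup_{\rho>R_0}B_\rho=\Z^d$ the function $f$ is constant.

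For the almost-sure statement, the above produces, for the fixed $\delta$, a single $P$-null event---namely $\{R_0(\lambda,\cdot)=\infty\}$---off which every $\omega$-harmonic $f$ with $\sup_{x\neq0}|f(x)|/|x|^{\delta}<\infty$ is constant; intersecting these events over a sequence $\delta_m\uparrow1$ then yields the conclusion for all $\delta<1$ simultaneously, since each admissible $f$ meets the hypothesis for some $\delta_m$.

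The main obstacle is not this argument, which is soft, but in securing the large-dilation Harnack inequality \emph{with near-classical constant}---that is, in Theorem~\ref{thm:harnack} and the homogenisation input underneath it. For a generic uniformly elliptic difference operator the Harnack constant across $B_\rho\subset B_{\lambda\rho}$ does not tend to $1$ as $\lambda$ grows---it degrades, roughly like $\lambda^{\log_2 C}$---so the scheme above would stall at the associated large-scale H\"older exponent, which can be far below $1$ in high dimension. It is exactly because the environment homogenises that the quenched Harnack constant approaches the constant-coefficient value $1+O(d/\lambda)$, which is what makes $\theta_\lambda\lambda^{\delta}<1$ attainable for every $\delta<1$. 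The remaining bookkeeping is minor: the iteration must begin above the random threshold $R_0(\lambda,\cdot)$, which is harmless because $R_0<\infty$ $P$-a.s.\ and the outer radius $\lambda^{k}\rho$ is driven to infinity, and $\lambda$ is chosen depending only on $\delta$ (and on $d$ and $\Sigma$), never on $\omega$ or $f$.
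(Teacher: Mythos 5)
The paper never writes out a proof of Corollary \ref{cor:liouville}; it is stated as an immediate consequence of Theorem \ref{thm:harnack}, so there is no official argument to compare yours against step by step. Taken on its own, your argument is correct and is surely the intended one: the passage from Harnack to oscillation decay (apply the inequality to $f-\inf_{B_{\lambda\rho}}f$ and to $\sup_{B_{\lambda\rho}}f-f$ and add, giving $\osc_{B_\rho}f\le\frac{C_\lambda-1}{C_\lambda+1}\,\osc_{B_{\lambda\rho}}f$) is standard, the iteration against the growth hypothesis is carried out correctly, and the bookkeeping with the random threshold $R_0$ and with intersecting the null sets over $\delta_m\uparrow 1$ is fine.

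You have also put your finger on the one genuinely delicate point. Theorem \ref{thm:harnack} as literally stated concerns only the pair $B_R\subset B_{2R}$, and the classical constant for that pair is bounded away from $1$ (and grows with the dimension), so the naive iteration at ratio $2$ --- or, equivalently, a direct use of the oscillation inequality of Theorem \ref{thm:osc} --- only yields the Liouville property for $\delta$ below the exponent $\log_2\frac{C+1}{C-1}$, which is well below $1$ once $d\ge 2$. Your proof therefore needs the ratio-$\lambda$ form of the Harnack inequality with constant close to the classical constant of the pair $B_\rho\subset B_{\lambda\rho}$; this is not what the theorem says, but it is what its proof gives: Corollary \ref{cor:exit}, Theorem \ref{thm:osc} and the argument of Lemma \ref{lem:mainfharn} are insensitive to the value of the outer/inner ratio, and rerunning that argument with $2$ replaced by $\lambda$ (taking the reference constant to be the classical Harnack constant for the correspondingly stretched pair of balls) produces exactly the statement you invoke, with an a.s.\ finite threshold $R_0(\lambda,\omega)$ having stretched-exponential tails. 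Since you state this dependence explicitly and attribute it to the homogenisation machinery rather than to the bare statement of Theorem \ref{thm:harnack}, I would count your write-up as a complete and correct proof of the corollary, one that makes explicit a step the paper leaves entirely to the reader.
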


We note that Harnack inequalities for balanced environments in the elliptic setting have been proven before, see \cite{KuoTru, ofernotes, lawlerharnack}, and for other non-elliptic cases,
see \cite{Barlow}. However, we believe that our result is the first Harnack inequality in the context of RWRE which is valid in a case which is non-reversible and non-elliptic. It
is also to the best of our knowledge the first Harnack inequality in the context of RWRE where the optimal Harnack constant is established.

\subsection{Structure of the paper}
Many of our arguments are based on results established in \cite{BD14}.
In Section \ref{sec:inpbd} we collect those results so that we can use them later in the paper.
In Section \ref{sec:quan_hom} we prove Theorem \ref{thm:quan_hom}.
In Section \ref{sec:perc} we discuss the connectivity structure of the balanced directed percolation. While proving some results of independent interest, the main goal of this section is to provide
percolation estimates necessary for the proof of Theorem \ref{thm:harnack}.
Then, finally, in Sections \ref{sec:osc} and \ref{sec:harnack} we collect results from 
previous sections and prove Theorem \ref{thm:harnack}. In particular, the proof combines ideas for Harnack inequalities that we learned from \cite{KuoTru} and \cite{Barlow}, some of which go back to \cite{FS86}.

\subsection{Input from \cite{BD14}}\label{sec:inpbd}
In this section we review some useful definitions and results from \cite{BD14}.

We first define the rescaled walk, which is a useful notion in the study of non-elliptic balanced RWRE, and recall  some basic facts about it.

Let $\{X_n\}_{n=0}^\infty$ be a nearest neighbor walk in $\Z^d$, i.e. a sequence in $\Z^d$ such that $\|X_{n+1}-X_n\|_1=1$ for every $n$. Let $\alpha_n,n\geq 1$ be the coordinate that changes between $X_{n-1}$ and $X_{n}$, i.e. $\alpha(n)=i$ whenever $X_{n}-X_{n-1}=e_i$ or $X_{n}-X_{n-1}=-e_i$.

The following is  \cite[Definition 3]{BD14}.
\begin{definition}\label{def:scrw}
The stopping times $T_k,k\geq 0$ are defined as follows: $T_0=0$. Then
\[
T_{k+1}=\min\left\{
t>T_k:\{\alpha(T_k+1),\ldots,\alpha(t)\}=\{1,\ldots,d\}
\right\}\leq\infty.
\]
We then define the {\em rescaled random walk} to be the sequence (no longer a nearest neighbor walk)
$Y_n=X_{T_n}$. $(Y_n)$ is defined as long as $T_n$ is finite.
\end{definition}

The following estimates both annealed and quenched have been derived in  \cite{BD14}, cf. Lemma 2.1, 2.2, 2.3 and 2.4:
\begin{lemma}\label{lem:well_defined}
$\annealedP$-almost surely, $T_k<\infty$ for every $k$.
There exists a constant $C$ such that for every $n$,
\[
\annealedP(T_1>n)<e^{-Cn^{\frac 13}}.
\]
and
\[
P\left(
\omega:\quenchedE_\omega(T_1)>k
\right)
\leq e^{-Ck^{\frac 13}}.
\]
Moreover for every $0<p<\infty$, 
\[
E\left[
\quenchedE_\omega(T_1^p)
\right]<\infty.
\]
\end{lemma}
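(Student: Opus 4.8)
\begin{proofsect}{Proof proposal}
The plan is to reduce the whole statement to a single stretched-exponential tail bound. Write $\tau^{(i)}=\min\{t\ge1:\alpha(t)=i\}$ for the first time the walk moves in the $i$-th coordinate direction. Since all $d$ directions have been used by time $t$ precisely when $\max_i\tau^{(i)}\le t$, we have $T_1=\max_{1\le i\le d}\tau^{(i)}\le\sum_{i=1}^d\tau^{(i)}$, and by the invariance of the model under permuting the coordinate axes it is enough to control $\tau:=\tau^{(1)}$. Up to time $\tau$ the walk stays in the hyperplane $H$ through $X_0$ orthogonal to $e_1$, and each coordinate $X^{(j)}$, $j\ge2$, is a bounded-increment martingale (by balancedness). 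I would reformulate with killing: at a visit to $z\in H$ the walk ``uses direction $1$'' (is killed) with probability $q_z:=2\omega(z,e_1)$ and otherwise takes a unit step inside $H$ with the renormalised law $\omega(z,\cdot)/(1-q_z)$; thus $\tau$ is the killing time. Since $P$ is i.i.d.\ and genuinely $d$-dimensional I fix $\epsilon_0>0$ with $p_0:=P(\omega(0,e_1)\ge\epsilon_0)>0$ and call $z$ \emph{$1$-open} if $q_z\ge2\epsilon_0$; the $1$-openness of the sites of $H$ is then an i.i.d.\ Bernoulli$(p_0)$ field, revealed together with the values $q_z$ at the first visit of the walk.

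The key step I would prove is a deterministic spreading estimate: there are $c,C>0$ depending only on $d$ such that for every $\omega$, every $z\in H$, every $r\ge1$ and every $N$,
\[
\quenchedP_\omega^z\big(\tau>N\ \text{ and }\ X\ \text{stays within distance }r\ \text{of }z\ \text{up to time }N\big)\le Ce^{-cN/r^2}.
\]
The idea: at each step either $q_{X_t}<\tfrac12$, in which case the one-step conditional quadratic variation of $(X^{(2)},\dots,X^{(d)})$ is $1-q_{X_t}\ge\tfrac12$, or $q_{X_t}\ge\tfrac12$, in which case the walk is killed at that step with probability $\ge\tfrac12$. Hence on $\{\tau>N\}$, except on a set of probability $\le2^{-N/4}$, at least $\tfrac34N$ of the first $N$ steps are of the first type, so the accumulated quadratic variation of $(X^{(2)},\dots,X^{(d)})$ exceeds $\tfrac38N$, and therefore some coordinate $X^{(j)}$, a martingale with increments bounded by $1$, is confined to an interval of length $2r$ while its quadratic variation exceeds $\tfrac{3N}{8(d-1)}$. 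Finally, a martingale with increments bounded by $1$ that stays in an interval of length $2r$ has quadratic variation exceeding $mr^2$ with probability at most $e^{-c'm}$ (decompose along successive quadratic-variation blocks of size $r^2$; in each block the martingale moves by order $r$, hence exits the interval, with probability of order $1$). Taking $m$ of order $N/r^2$ gives the bound.

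Granting this, I would obtain the remaining claims as follows. Apply the spreading estimate with $N=n$ and $r=Kn^{1/3}$ for a large constant $K=K(p_0,d)$: except with $\annealedP^0$-probability $\le Ce^{-c'n^{1/3}}$, by time $n$ the walk is killed or has left the ball of radius $r$ around $X_0$, hence in the latter case has visited $\ge r/\sqrt d\ge 2n^{1/3}$ distinct sites of $H$. Conditionally on the walk's trajectory — a function, up to the killing time, only of $\{\omega(\cdot,e_j):j\ge2\}$ at the visited sites — the indicators that distinct visited sites are $1$-open are i.i.d.\ Bernoulli$(p_0)$, so except with probability $e^{-c''n^{1/3}}$ the walk is by time $n$ killed or has visited $\ge n^{1/3}$ distinct $1$-open sites; in that case it has survived $\ge n^{1/3}$ independent killing trials of success probability $\ge2\epsilon_0$, which costs $(1-2\epsilon_0)^{n^{1/3}}$. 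Hence $\annealedP^0(\tau^{(1)}>n)\le Ce^{-cn^{1/3}}$, and by the union bound over directions $\annealedP^0(T_1>n)\le Ce^{-cn^{1/3}}$; in particular $\annealedP(T_1=\infty)=0$, and applying the strong Markov property at $T_{k-1}$ — the environment viewed from $X_{T_{k-1}}$ is $P$-a.s.\ again balanced and genuinely $d$-dimensional, both being shift-invariant — gives inductively $\annealedP(T_k<\infty)=1$. For the moments, $E[\quenchedE_\omega(T_1^p)]=\annealedE^0[T_1^p]=p\int_0^\infty t^{p-1}\annealedP^0(T_1>t)\,dt<\infty$ since a stretched exponential beats every power. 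For the quenched bound it suffices (again up to the factor $d$) to bound $P(\quenchedE_\omega^0[\tau^{(1)}]>k)$: writing $\quenchedE_\omega^0[\tau^{(1)}]=\sum_{m\ge0}\quenchedP_\omega^0(\tau^{(1)}>m)$, the sum over $m\le k$ is at most $k$, while by Markov's inequality in $\omega$ and the annealed tail $P\big(\sum_{m>k}\quenchedP_\omega^0(\tau^{(1)}>m)>1\big)\le\sum_{m>k}\annealedP^0(\tau^{(1)}>m)\le C'e^{-c'k^{1/3}}$, so $P(\quenchedE_\omega^0[\tau^{(1)}]>k+1)\le C'e^{-c'k^{1/3}}$, which after adjusting constants and summing over the $d$ directions yields $P(\omega:\quenchedE_\omega(T_1)>k)\le e^{-Ck^{1/3}}$.

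The hard part will be the spreading estimate of the second paragraph — specifically the quantitative, exponentially concentrated statement that a bounded-increment martingale confined to an interval of length $2r$ dissipates quadratic variation at rate $O(r^2)$ per unit time, carried out uniformly over the non-elliptic, highly anisotropic environment and correctly dovetailed with the killing mechanism. Everything else (the union bound over directions, the Bernoulli comparison for reaching a $1$-open site, the passage from annealed to quenched through Markov's inequality, the moment bound, and the almost-sure finiteness of $T_k$) is routine.
\end{proofsect}
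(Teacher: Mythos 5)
You should first note what the paper itself does here: it does not prove this lemma at all, but imports it from \cite{BD14} (Lemmas 2.1--2.4), and your overall plan --- the dichotomy ``either the in-plane martingale is confined to a ball of radius $r$ for time $n$ (cost $e^{-cn/r^2}$) or it visits many fresh sites, each giving a chance to use the missing direction'', optimized at $r\asymp n^{1/3}$, followed by Markov's inequality in $\omega$ to pass from the annealed tail to the quenched expectation --- is essentially the argument behind that citation. However, one step of your write-up is genuinely wrong as stated: the claim that, conditionally on the walk's trajectory up to the killing time, the indicators that the distinct visited sites are $1$-open are i.i.d.\ Bernoulli$(p_0)$. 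By balancedness and normalization, $2\omega(z,e_1)=1-2\sum_{j\ge 2}\omega(z,e_j)$, so the killing probability $q_z$ --- and hence $1$-openness --- is a deterministic function of exactly the in-plane weights that drive the trajectory; conditioning on a trajectory that has survived therefore biases the visited sites toward being $1$-closed. Example 1 of the paper (each site puts all its mass on a single coordinate direction) makes this stark: there a $1$-open site has $q_z=1$, so every site that was visited and survived is necessarily $1$-closed, and the conditional probability of $1$-openness given a surviving trajectory is $0$, not $p_0$. The next sentence (``it has survived $\ge n^{1/3}$ independent killing trials of success probability $\ge 2\epsilon_0$'') repeats the same conditioning fallacy, since ``visited many $1$-open sites'' is itself entangled with survival.

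The repair is standard and preserves your architecture: argue sequentially instead of conditioning on the whole path. Each time the walk arrives at a site it has never visited before, conditionally on the $\sigma$-field generated by the walk so far and by $\omega$ at the previously visited sites, the annealed probability that the very next step uses direction $1$ equals $E_P[2\omega(0,e_1)]\ge 2\epsilon_0 p_0=:\delta>0$, because the environment at the fresh site is independent of everything revealed. Iterating (optional stopping of the obvious supermartingale) gives $\mathbb{P}^0\big(\tau^{(1)}>n,\ \text{at least } m \text{ distinct sites visited by time } n\big)\le (1-\delta)^{m-1}$, which is exactly the input your choice $r=Kn^{1/3}$ needs; combined with your confinement estimate (which is correct, quenched and uniform in $\omega$, since it only uses balancedness and your $q\ge\tfrac12$ dichotomy) this yields the $n^{1/3}$ tail. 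The remainder of your proposal --- the union bound over directions, the bound $P(\quenchedE_\omega(T_1)>k)\le e^{-Ck^{1/3}}$ via Markov's inequality applied to $\sum_{m>k}\mathbb{P}^0(T_1>m)$, the moment bound, and the almost-sure finiteness of all $T_k$ --- is sound, though the last point is cleanest phrased quenchedly: for $P$-a.e.\ $\omega$ and every starting site $z$ one has $P^z_\omega(T_1<\infty)=1$, and then one iterates the quenched Markov property at the stopping times $T_k$, rather than invoking a strong Markov property under the annealed law.
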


The maximum principle, originally due to Alexandrov-Bakelman-Pucci in the continuum and adapted to
 balanced random walks by Kuo and Trudinger, \cite{KuoTru}  is one key analytical tool in the proof 
 of the existence of the stationary measure for the walk $Q$. In our non-elliptic setting it
 can be restated in terms of the rescaled process as follows, cf Theorem 3.1 of \cite{BD14}:
 
 For $N\in\N$ and $k=k(N)\in (0,N)\cap\Z$, let $T_1^{(N)}=T_1^{(N,k)}=\min\big(T_1, k\big)$.
Let $h:\Z^d\to\R$ be a real valued function, and for every $z\in\Z^d$, let
$L^{(N)}_\omega h(z):=h(z)-\quenchedE_\omega^z[h(X_{T_1^{(N)}})]$.

Let $Q\subseteq \Z^d$ be finite and connected, and let
$\partial^{(k)}Q=\{z\in \Z^d-Q: \exists_{x\in Q}\|z-x\|_\infty < k\}$.

We say that a 
 point $z\in Q$ is {\em exposed} if there exists $\beta=\beta(z,h)\in\R^d$ such that 
$h(z)-\langle\beta,z\rangle\geq h(x)-\langle\beta,x\rangle$
for every $x\in Q\cup\partial^{(k)}Q$. We let $D_h$ be the set of exposed points.
Further, we define the {\em angle of vision} $I_h(z)$ as follows:
\begin{equation}\label{eq:defIz}
I_h(z)=\left\{
\beta\in\R^d: \forall_{x\in Q\cup\partial^{(k)}Q}  h(x)\leq h(z)+\langle\beta,x-z\rangle
\right\}.
\end{equation}
This is the set of hyperplanes that touch the graph of $h$ at $(z,h(z))$ and are above the graph of  $h$ all over
$Q\cup\partial^{(k)}Q$. A point $z$ is exposed if and only if $I_h(z)$ is not empty.

%

\begin{theorem}[Maximum Principle, \cite{BD14} Theorem 3.1]\label{thm:max_princ}
There exists $N_0$ such that for every $N>N_0$ and every $0<k<N$, every balanced environment 
$\omega$ and every $Q$ of diameter $N$,
if for every $z\in Q$
\begin{equation}\label{eq:condformax}
\quenchedP_\omega^z\big(T_1>(\log N)^\kappa\big)<e^{-(\log N)^2}
\quenchedE_\omega^z\big(T_1\cdot{\bf 1}_{T_1>k}\big)<e^{-(\log N)^2}
\quenchedP_\omega^z\big(T_1>k\big)<e^{-(\log N)^3}
\end{equation}
then
\begin{equation}\label{eq:max_princ}
\max_{z\in Q}h(z)-\max_{z\in \partial^{(k)}Q}h(z)
\leq 6 N \left(
\sum_{z\in Q} 
{\bf 1}_{z\in D_h}
|L^{(N)}_\omega h(z)|^d
\right)^{\frac 1d}
\end{equation}
\end{theorem}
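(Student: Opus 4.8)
The plan is to run the lattice Alexandrov--Bakelman--Pucci argument of Kuo--Trudinger, but with the single-site operator replaced throughout by the rescaled operator $L^{(N)}_\omega$, comparing the Lebesgue measure of the normal image $\bigcup_{z\in D_h}I_h(z)$ with the sum of the measures of the individual angles of vision. We may assume $\max_Q h>\max_{\partial^{(k)}Q}h$, since otherwise \eqref{eq:max_princ} is trivial. First I would do the covering step: fix $\beta\in\R^d$ with $|\beta|<(\max_Q h-\max_{\partial^{(k)}Q}h)/\diam(Q\cup\partial^{(k)}Q)$ and let $z$ maximise $x\mapsto h(x)-\langle\beta,x\rangle$ over $Q\cup\partial^{(k)}Q$. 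The smallness of $|\beta|$ forces $z\in Q$ (the interior gap beats $|\beta|$ times the diameter), and then $\beta\in I_h(z)$ by the very definition \eqref{eq:defIz}, so $z\in D_h$. Since $\diam(Q\cup\partial^{(k)}Q)\le N+2k<3N$, this shows $B(0,\rho)\subseteq\bigcup_{z\in D_h}I_h(z)$ with $\rho=(\max_Q h-\max_{\partial^{(k)}Q}h)/(3N)$, whence $c_d\rho^d\le\sum_{z\in D_h}\mathrm{vol}\,I_h(z)$, where $c_d$ is the volume of the unit ball.

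The heart of the matter is the local volume estimate $\mathrm{vol}\,I_h(z)\le C_d\,|L^{(N)}_\omega h(z)|^d$ at every exposed $z$, with $C_d$ depending only on $d$. In the elliptic case this is classical: $I_h(z)$ is contained in the box $\prod_i[h(z+e_i)-h(z),\,h(z)-h(z-e_i)]$ of volume $\prod_i\bigl(2h(z)-h(z+e_i)-h(z-e_i)\bigr)$, and uniform ellipticity together with $L_\omega h(z)=-\sum_i\omega(z,e_i)\bigl(2h(z)-h(z+e_i)-h(z-e_i)\bigr)$ and AM--GM finishes it --- which fails precisely because some $\omega(z,e_i)$ may vanish. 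The remedy is the rescaled step. Balancedness makes each coordinate of $(X_n)$ a $\quenchedP_\omega^z$-martingale, and the conditions \eqref{eq:condformax} (via $\quenchedP_\omega^z(T_1>k)<e^{-(\log N)^3}$ and the bound on $\quenchedE_\omega^z[T_1{\bf 1}_{T_1>k}]$) are calibrated so that $T_1^{(N)}=\min(T_1,k)$ is a bounded stopping time under which, up to a negligible correction on $\{T_1>k\}$, the walk stays in $Q\cup\partial^{(k)}Q$; optional stopping then yields $\quenchedE_\omega^z[X_{T_1^{(N)}}]=z$, hence $L^{(N)}_\omega h(z)=\quenchedE_\omega^z\bigl[h(z)-h(X_{T_1^{(N)}})\bigr]\ge0$, since $h$ lies below its supporting hyperplane through $(z,h(z))$ all over $Q\cup\partial^{(k)}Q$. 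The point is that the rescaled displacement $V:=X_{T_1^{(N)}}-z$ is non-degenerate in every direction at once: because each step changes exactly one coordinate and is balanced, $\quenchedE_\omega^z[V_iV_j]=0$ for $i\ne j$, while $\quenchedE_\omega^z[V_i^2]$ equals the expected number of coordinate-$i$ steps before $T_1^{(N)}$, which is at least $\quenchedP_\omega^z(T_1\le k)\ge1-e^{-(\log N)^3}$ because coordinate $i$ is touched before $T_1$; the first two inequalities of \eqref{eq:condformax} also keep the upper tail of $|V|$ under control (so $|V|\le(\log N)^\kappa$ off an event of probability $\le e^{-(\log N)^2}$, and $\quenchedE_\omega^z|V|^2$ stays bounded). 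Feeding the supporting inequality $h(z+v)\le h(z)+\langle\beta,v\rangle$ (valid for $v$ in the support of the law $\mu_z$ of $V$) into this --- evaluating against the displacement direction and using that $\quenchedE_\omega^z[VV^T]$ is diagonal with spectrum bounded away from zero --- confines $I_h(z)$ to a convex body whose volume is $\le C_d\,|L^{(N)}_\omega h(z)|^d$.

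Finally I would combine the two: $c_d\rho^d\le\sum_{z\in D_h}\mathrm{vol}\,I_h(z)\le C_d\sum_{z\in Q}{\bf 1}_{z\in D_h}|L^{(N)}_\omega h(z)|^d$; taking $d$-th roots, recalling $\rho=(\max_Q h-\max_{\partial^{(k)}Q}h)/(3N)$, and choosing $N_0$ so large that the lower-order errors coming from the exceptional events are absorbed, gives \eqref{eq:max_princ} with a numerical constant depending only on $d$; checking that this constant is at most $6$ is where a slightly more careful covering and the precise $\log N$-form of the conditions are used.

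I expect the genuine obstacle to be exactly this local volume estimate in the non-elliptic setting, i.e. turning the non-degeneracy of the rescaled step into a clean bound $\mathrm{vol}\,I_h(z)\le C_d\,|L^{(N)}_\omega h(z)|^d$ with $C_d$ independent of $N$: one has to play the lower non-degeneracy of $V$ (the $\quenchedE_\omega^z[V_i^2]$ bounded below) against its upper tail, and it is precisely for this that all three inequalities of \eqref{eq:condformax} are needed --- both to make $T_1^{(N)}$ an honest bounded stopping time that keeps the walk inside $Q\cup\partial^{(k)}Q$ and to guarantee that the truncation at level $k$ costs nothing relevant.
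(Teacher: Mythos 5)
Note first that the paper does not prove this statement at all: it is quoted verbatim from \cite{BD14} (Theorem 3.1 there), so the only available comparison is with the strategy of that proof, which is indeed the one you outline --- the Alexandrov--Bakelman--Pucci/Kuo--Trudinger normal-mapping argument run with the rescaled operator $L^{(N)}_\omega$. Your covering step is fine (and can be stated even more cleanly: since $T_1^{(N)}\le k$ is a bounded stopping time and $X_{T_1^{(N)}}$ stays within $\ell^\infty$-distance $k$ of $z$, hence inside $Q\cup\partial^{(k)}Q$, optional stopping and the supporting-hyperplane inequality apply exactly, with no correction on $\{T_1>k\}$).

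The genuine gap is the local estimate $\mathrm{vol}\, I_h(z)\le C_d\,|L^{(N)}_\omega h(z)|^d$, which you assert but do not prove, and the mechanism you invoke --- that $E_\omega^z[VV^T]$ is diagonal with spectrum bounded below, where $V=X_{T_1^{(N)}}-z$ --- cannot deliver it. At an exposed $z$ all your argument uses is: $g_\beta(v):=h(z)+\langle\beta,v\rangle-h(z+v)\ge 0$ on the range of the stopped walk, $E_\omega^z[g_\beta(V)]=L^{(N)}_\omega h(z)=:t$ for every $\beta\in I_h(z)$, and the moment facts $E[V]=0$, $E[V_iV_j]=0$ for $i\neq j$, $E[V_i^2]\ge 1-o(1)$. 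From this data the only generic way to pin $\beta$ is a Markov/Chebyshev argument, which bounds $\langle\beta,v\rangle$ on events of the displacement law by $t$ divided by their probability; but under \eqref{eq:condformax} the law of $V$ may carry its variance on sites at distance of order $(\log N)^\kappa$ having probability as small as a negative power of $\log N$ (the conditions do not force $P(|V_i|\ge 1)$ to be bounded below by a constant --- one can build balanced environments satisfying \eqref{eq:condformax} in which this probability is only polylogarithmically small, while $E[V_i^2]$ is polylogarithmically large). Consequently a proof along your stated lines either has a hole or produces $\mathrm{vol}\, I_h(z)\le C\,t^d(\log N)^{c\kappa}$, i.e.\ a right-hand side in \eqref{eq:max_princ} with an extra $N$-dependent factor, which is not the stated theorem (and your closing remark that the constant $6$ comes out of ``a slightly more careful covering'' does not address this, since the loss occurs in the local estimate, not in the covering). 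The quantity that actually saves the non-elliptic case is not the covariance of $V$ but finer consequences of the martingale structure at the first-update times: for instance, if $\sigma_i\le T_1$ is the first time coordinate $i$ moves, conditional Jensen at $\sigma_i$ gives $E_\omega^z|V_i|\ge P_\omega^z(T_1\le k)\ge 1-e^{-(\log N)^3}$, a first-moment nondegeneracy that survives exactly because of \eqref{eq:condformax}; and even granting such bounds, converting one-dimensional pinnings of $\langle\beta,\cdot\rangle$ into a $d$-dimensional volume bound with a purely dimensional constant is a nontrivial convex-geometry step, which is precisely the content of the proof in \cite{BD14} and is missing here. In short: right architecture, but the heart of the theorem --- the per-exposed-point volume estimate with a constant independent of $N$ --- is not established by the argument you describe.
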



We now turn to some definitions and results pertaining to Percolation.

\begin{definition}\label{def:connected}
For $\omega\in\Omega$ and $x,y\in\Z^d$, we denote by  
$
x\connom y
$
the occurrence
\[
\quenchedP_\omega^x(\exists_n X_n=y)>0.
\]
We say that a set $A\subseteq\Z^d$ is {\em strongly connected} w.r.t. $\omega$ if for every $x$ and $y$ in $A$, 
$
x\connom y.
$
A set $A\subseteq\Z^d$ is called a {\em sink} w.r.t. $\omega$ if it is strongly connected and 
$x \hspace{-0.15cm} \not\connom y$ for every $x\in A$ and $y\notin A$.

For every two neighbors $x$ and $y$, we draw
a directed edge from $x$ to $y$, denoted $x\conneib y$, whenever 
$\omega(x,y-x)>0$.

\end{definition}
For a measure $Q$ which is invariant w.r.t. the point of view of the particle and is absolutely continuous w.r.t. $P$, we define 
\[\supp Q=\{\omega:\frac{dQ}{dP}(\omega)>0\},\]
where the derivative is the Radon-Nykodim derivative. This is well define up to a set of $P$-measure zero.

We define $\supp_\omega Q = \{z\in\Z^d:\tau_{-z}(\omega)\in \supp Q\}$.
In view of Corollary 4.12 and Lemma 5.6 of \cite{BD14} we have the following lower bound on the density of a sink:
\begin{lemma}\label{lem:posdens}
\begin{enumerate}
\item\label{item:posdens}
There exists $\Phi>0$ such that for $P$-almost every $\omega$, every sink has lower density at least $\Phi$.
\item\label{item:finmany}
For every ergodic $Q$ which is invariant w.r.t. the point of view of the particle and is absolutely continuous w.r.t. $P$, $P$-a.s. there are only finitely many sinks contained in $\supp_\omega Q$.
\item\label{item:allpnt} $P$-a.s., every point in $\supp_\omega Q$ is contained in a sink.
\end{enumerate}
\end{lemma}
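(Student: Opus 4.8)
The plan is to obtain all three parts from the two cited results of \cite{BD14} --- the uniform density lower bound and the identification of $\supp_\omega Q$ --- together with two elementary facts read off from Definition~\ref{def:connected}. The first is that \emph{every sink is infinite}: since $\omega$ is balanced, each coordinate of $X_n$ is a martingale, so a walk confined to a finite closed strongly connected set $A$ would be a bounded $\Z^d$-valued martingale, hence eventually constant --- impossible, as a nearest-neighbour walk moves at every step and $\sum_e\omega(z,e)=1$ leaves no absorbing vertex. The second is that \emph{distinct sinks are disjoint}: if $x\in A\cap B$ with $A,B$ sinks and $y\in A$, then $x\connom y$ along a trajectory which, $A$ being a sink, stays in $A$; but $x\in B$ and $B$ a sink force every trajectory from $x$ to stay in $B$, so $y\in B$; thus $A\subseteq B$, and by symmetry $A=B$.

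Parts~\ref{item:posdens} and~\ref{item:allpnt} are then essentially restatements of the cited results. For part~\ref{item:posdens} I would transcribe \cite[Corollary~4.12]{BD14} into the present notation, using the shift-covariance of $\connom$ (the relation $x\connom y$ in $\omega$ corresponds to the analogous relation between $0$ and $y-x$ in $\tau_x\omega$) to match the notion of ``sink w.r.t.\ $\omega$'' used here with the object estimated there; the point is that the lower-density constant $\Phi>0$ may be chosen to depend only on $P$ and to serve all sinks simultaneously, for $P$-a.e.\ $\omega$. For part~\ref{item:allpnt}, \cite[Lemma~5.6]{BD14} identifies $\supp_\omega Q$, up to a $P$-null set, with the union of all sinks; the inclusion we need --- that each $z\in\supp_\omega Q$ lies in some sink --- is exactly the nontrivial half of that identification.

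Part~\ref{item:finmany} then follows by counting. Because $P$ is i.i.d., hence ergodic under the $\Z^d$-shift, the ergodic theorem applied to the stationary field $z\mapsto{\bf 1}\{\tau_{-z}\omega\in\supp Q\}$ gives, for $P$-a.e.\ $\omega$, that $|\supp_\omega Q\cap\cball R|/|\cball R\cap\Z^d|\to P(\supp Q)$ as $R\to\infty$. If $A_1,\dots,A_N$ are distinct sinks contained in $\supp_\omega Q$, they are pairwise disjoint by the second fact above and each has lower density at least $\Phi$ by part~\ref{item:posdens}, so
\[
N\Phi \le \liminf_{R\to\infty}\sum_{j=1}^N\frac{|A_j\cap\cball R|}{|\cball R\cap\Z^d|} \le \liminf_{R\to\infty}\frac{|\supp_\omega Q\cap\cball R|}{|\cball R\cap\Z^d|} = P(\supp Q) \le 1 ,
\]
whence $N\le\Phi^{-1}$: there are at most $\lfloor\Phi^{-1}\rfloor$ sinks inside $\supp_\omega Q$. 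The only genuinely substantial ingredient is part~\ref{item:posdens}, where \cite{BD14} does the real work through a maximum-principle estimate of the type of Theorem~\ref{thm:max_princ} together with the i.i.d.\ structure; in the present paper the one point demanding care is the $\tau$-translation bookkeeping that matches the sinks here with those of \cite{BD14} and confirms the uniformity of $\Phi$, after which parts~\ref{item:allpnt} and~\ref{item:finmany} are immediate.
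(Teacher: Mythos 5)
Your proposal is correct and is essentially the paper's approach: the paper gives no proof of this lemma at all, asserting it directly ``in view of Corollary 4.12 and Lemma 5.6 of \cite{BD14}'', which are exactly the two inputs you invoke for parts (1) and (3). Your counting argument for part (2) --- pairwise disjointness of distinct sinks, the uniform lower density bound $\Phi$ from part (1), and the spatial ergodic theorem applied to the stationary field $z\mapsto{\bf 1}\{\tau_{-z}\omega\in\supp Q\}$, yielding at most $\Phi^{-1}$ sinks inside $\supp_\omega Q$ --- correctly supplies the glue the paper leaves implicit; the preliminary martingale observation that sinks are infinite is sound but not actually needed, since positive lower density already forces it.
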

In other words, the lemma says that a.s. $\supp_\omega Q$ is a finite union of sinks, each of which has lower density at least $\Phi$.


As announced in Remark 3 of \cite{BD14} let us now state 
\begin{proposition}\label{prop:unique}
There exists a unique sink.
\end{proposition}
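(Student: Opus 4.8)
The plan is to combine the density bound for sinks (Lemma~\ref{lem:posdens}(\ref{item:posdens})) with the uniqueness and ergodicity of the invariant measure $Q$ from Theorem~\ref{thm:BD14}.

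\emph{Preliminaries.} First, two distinct sinks are disjoint: if $x$ lay in both $C_1$ and $C_2$, then from $x$ one could reach every point of $C_1$ and of $C_2$, while $C_2$ being a sink forces everything reachable from $x\in C_2$ to lie in $C_2$; hence $C_1\subseteq C_2$, and symmetrically $C_1=C_2$. Next, at least one sink exists: since $Q\ll P$ and $P$ is i.i.d.\ and hence ergodic, for $P$-a.e.\ $\omega$ the set $\supp_\omega Q=\{z:\tau_{-z}\omega\in\supp Q\}$ has density $P(\supp Q)>0$ by the ergodic theorem, so it is nonempty, and by Lemma~\ref{lem:posdens}(\ref{item:allpnt}) each of its points lies in a sink. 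Finally, by Lemma~\ref{lem:posdens}(\ref{item:posdens}) every sink has lower density at least $\Phi$, so by disjointness there are at most $\Phi^{-1}$ of them; the number of sinks $N(\omega)$ is invariant under all shifts $\tau_z$, so by ergodicity of $P$ it equals a deterministic constant $k_0\in\{1,\dots,\lfloor\Phi^{-1}\rfloor\}$ for $P$-a.e.\ $\omega$.

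\emph{The main point is to show $k_0=1$.} Suppose $k_0\ge 2$. I would first argue that every sink is contained in $\supp_\omega Q$: starting the walk at a point $z$ of a sink $C$, it stays in $C$ forever, and the empirical occupation measure of the environment process $n^{-1}\sum_{k<n}\delta_{\bar\omega_k}$ converges to an invariant probability measure which, by the a priori density/energy estimates of \cite{BD14} underlying Lemma~\ref{lem:posdens}, is absolutely continuous with respect to $P$; by the uniqueness part of Theorem~\ref{thm:BD14} this limit is $Q$, whence $\tau_{-z}\omega\in\supp Q$. Thus $\supp_\omega Q$ is exactly the union of all $k_0$ sinks. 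Now invoke ergodicity of $Q$ (a unique absolutely continuous invariant measure must be ergodic): under $Q$ the walk started at the origin is trapped in the single sink containing $0$, and no trajectory of the environment process can ever move between two different sinks, so classifying an environment by a shift-covariant invariant of the sink currently containing the origin would split $\supp Q$ into pieces each invariant and of positive measure for the environment process — contradicting ergodicity of $Q$. (Equivalently, two positive-density sinks $C_1,C_2$ would make $h(x):=P^x_\omega(X_n\in C_1\text{ for some }n)$ a bounded, non-constant $\omega$-harmonic function on $\Z^d$ equal to $1$ on $C_1$ and $0$ on $C_2$; one can then try to contradict the quantitative homogenisation of Section~\ref{sec:quan_hom}, since the solution of the limiting constant-coefficient equation $\sum_{i,j}\CovMat_{ij}\partial_{ij}u=0$ is continuous in the interior, whereas $h$ oscillates by $1$ between two sets of positive density at every scale.)

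\emph{Expected main obstacle.} The genuine difficulty is this last step. The two sinks are statistically indistinguishable — both are ``typical'' sinks of the same i.i.d.\ field — so one cannot label them in a shift-covariant way, and the invariant-partition argument must be set up carefully; in practice the cleanest implementation seems to be to first prove that the unique invariant measure $Q$ is carried by the shift-orbit of a \emph{single} sink, which is precisely where the absolute-continuity estimates from \cite{BD14} and the ergodicity of $Q$ have to be used together. Everything else (disjointness, existence, the density count, shift-invariance of $k_0$) is routine.
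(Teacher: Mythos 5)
Your reduction steps (disjointness of sinks, existence, the density count via Lemma~\ref{lem:posdens}, and the a.s.\ constancy of the number $k_0$ of sinks by ergodicity of $P$) are fine, but the core of your argument --- deriving a contradiction from $k_0\ge 2$ out of uniqueness/ergodicity of $Q$ --- has a genuine gap, and it is exactly the one you flag yourself. There is no measurable, shift-covariant way to ``classify an environment by the sink currently containing the origin'': under the i.i.d.\ law the two (or more) sinks are exchangeable, the only measurable event available is ``the origin lies in \emph{a} sink'', and the environment-viewed-from-the-particle dynamics never has to distinguish which sink it is trapped in. Consequently the existence of two disjoint positive-density sinks does \emph{not} split $\supp Q$ into two invariant sets of positive measure: the walk started in either sink can induce exactly the same law on environment sequences, so both are compatible with one and the same ergodic $Q$. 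Uniqueness and ergodicity of $Q$ are therefore simply not in conflict with $k_0\ge 2$ by this route, and the first half of your argument (that each sink meets $\supp_\omega Q$, which anyway leans on unstated a priori estimates from \cite{BD14} rather than on anything quoted in this paper) does not help close this. The parenthetical fallback also fails: $h(x)=P^x_\omega(\exists n\,X_n\in C_1)$ is indeed a bounded $\omega$-harmonic function equal to $1$ on $C_1$ and $0$ on $C_2$, but Theorem~\ref{thm:quan_hom} only compares $G_{R,\omega}$ with the continuum solution for \emph{smooth rescaled boundary data} $F_R$ and says nothing about such $h$; and any Liouville/Harnack statement that would rule $h$ out (Corollary~\ref{cor:liouville}, Theorem~\ref{thm:harnack}) sits downstream of Section~\ref{sec:perc}, whose results use Proposition~\ref{prop:unique}, so invoking them would be circular.

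The missing idea, and what the paper actually does, is a Burton--Keane-type \emph{local modification} argument rather than an ergodic-decomposition argument for $Q$. Assuming $k\ge 2$ sinks, one notes that both the number of sinks and the minimal distance $\dist(P)$ between two distinct sinks are a.s.\ constants (ergodicity of $P$); one then picks two sinks realizing this minimal distance at points $z,w$ and resamples the environment at the single site $z$, conditioning $\omega(z,\cdot)$ to charge a coordinate direction pointing from $z$ towards $w$. The resampled law is absolutely continuous with respect to $P$, yet on an event of positive probability the modified environment has a site reachable from the first sink which is too close to the second sink to lie in any sink, so the first sink is destroyed and only $k-1$ sinks remain --- contradicting the a.s.\ constancy of the sink count under any law absolutely continuous with respect to $P$. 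This uses only ergodicity of $P$ and the finiteness of the number of sinks, not the invariant measure $Q$; some single-site surgery of this kind (a substitute for the finite-energy property) is what your proposal is missing.
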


\begin{proof}
We use an adaptation of the easy part of the percolation argument of Burton and Keane \cite{BuKe},
easy since we already know that there are finitely many sinks.
Even though the finite energy condition is not satisfied, a very similar yet slightly weaker condition holds.
Let $S_1(\omega), S_2(\omega), ..., S_k(\omega)$ be distinct sinks, $k\ge 2$.
Note that by ergodic theorem the number of sinks is $P$ a.s. constant.
Define 
\[
\dist(P):=\min\left( |z-w|:z \mbox{ and } w \mbox{ are in two distinct sinks} \right).
\]
Note that due to shift invariance and ergodicity $\dist(P)$ is a $P$-almost sure constant, and therefore we treat it as a natural number rather than as a random variable.
Let $z$ and $w$ be two points such that $|z-w|=\dist(P)$, and such that the event $U=U(z,w)=\{z \mbox{ and } w \mbox{ are in two distinct sinks}\}$ has a positive $P$ probability. Let $i$ be a direction s.t. $\langle e_i,z-w\rangle\neq 0$.
 Let $R$ be the following measure on $\Omega\times\Omega$: we sample $\omega$ and $\omega'$. for all $x\neq z$, we take $\omega(x)=\omega'(x)$ to be sampled i.i.d. according to $\nu$. We then take $\omega(z)\sim (\nu|\omega(e_i)=0)$ and $\omega'(z)\sim (\nu|\omega(e_i)\neq  0)$. Again, everything is independent. Let $P_1$ be the distribution of $\omega$ and $P_2$ be the distribution of $\omega'$.
 Note that $P_1$ and $P_2$ are both absolutely continuous w.r.t. $P$, and that $P_1(U)>0$ and $P_2(U)=0$.
 We now condition on the (positive probability) event $U(\omega) \setminus U(\omega^\prime)$.

Call $S_1$ the sink containing $z$ in $\omega$, call $S_2$ the sink containing $w$ in $\omega$, and $S_3, \ldots, S_k$ all the other sinks in $\omega$.
For $j=2, \ldots, k$, the environments $\omega$ and $\omega^\prime$ agree on $S_j$, so $S_2, \ldots, S_k$ are all sinks in $\omega^\prime$.

Assume w.l.o.g. that $\langle e_i,z-w\rangle > 0$, and let $z^\prime = z + e_i$. Then $z^\prime$ is in no sink in $\omega^\prime$ b/c it is too close to $S_2$.
Note that $x \connomp z^\prime$ for every $x \in S_1$. Thus no point in $S_1$ is in a sink in $\omega^\prime$. Further, no point outside of $S=\cup_{j=1}^k S_j$ is in a sink in $\omega^\prime$.
Indeed, if $y \in \Z^d \setminus S$ is in a sink $W$, then $W$ cannot intersect $S$, and thus $W$ is a sink in $\omega$ other than $S_1, \ldots, S_k$. However, there is no such sink, thus $W$ does not
exist.

Thus there are only $k-1$ sinks in $\omega^\prime$, in contradiction to $P_2$ being absolutely continuous with respect to $P$.
 
\end{proof}

The last result that we need is the following lemma, which follows immediately from \cite[Proposition 5.9]{BD14} and Proposition \ref{prop:unique}
\begin{lemma}\label{lem:BD1459}
Let $S$ be the (a.s. unique) sink. Then for $P$-a.e. $\omega$ and every $z\in\Z^d$,
\[
\quenchedP_\omega^z\left(
\exists_N \mbox{ s.t. } \forall_{n>N} X_n\in S
\right)=1.
\]
\end{lemma}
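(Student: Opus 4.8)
The plan is to obtain the statement by simply combining \cite[Proposition 5.9]{BD14} with Proposition \ref{prop:unique}. Recall that \cite[Proposition 5.9]{BD14} asserts that, for $P$-a.e.\ $\omega$ and every $z\in\Z^d$, the walk started at $z$ is $\quenchedP_\omega^z$-almost surely eventually confined to $\supp_\omega Q$, where $Q$ is the unique invariant measure of Theorem \ref{thm:BD14}; that is,
\[
\quenchedP_\omega^z\big(\exists\,N\text{ s.t. }\forall\,n>N,\ X_n\in\supp_\omega Q\big)=1 .
\]
Hence the entire content of the lemma reduces to the identification $\supp_\omega Q=S$ for $P$-a.e.\ $\omega$.

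To carry this out, I would first note that $\supp_\omega Q$ is $P$-a.s.\ nonempty: since $Q$ is a probability measure absolutely continuous with respect to $P$, the set $\supp Q=\{dQ/dP>0\}$ has positive $P$-measure, and since $P$ is a product measure (hence ergodic with respect to the shift $\tau$), the set $\supp_\omega Q=\{z:\tau_{-z}\omega\in\supp Q\}$ has positive density, in particular is nonempty, for $P$-a.e.\ $\omega$. Next, by item \ref{item:allpnt} of Lemma \ref{lem:posdens} every point of $\supp_\omega Q$ lies in some sink, and by item \ref{item:finmany} of Lemma \ref{lem:posdens} only finitely many sinks are contained in $\supp_\omega Q$; thus $\supp_\omega Q$ is a nonempty finite union of sinks. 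By Proposition \ref{prop:unique} there is exactly one sink, namely $S$, so $\supp_\omega Q=S$. Substituting this into the displayed identity yields the claim.

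Since all the analytic work has been done in \cite{BD14}, there is essentially no obstacle; the only points that deserve a line of care are (i) that \cite[Proposition 5.9]{BD14} is indeed phrased in the ``eventually inside $\supp_\omega Q$'' form — note that, for the rescaled walk $Y_n=X_{T_n}$, this is equivalent to the same statement for $X_n$, because $S$ is an absorbing set for $X$ (by the definition of a sink, once $X_m\in S$ one has $X_n\in S$ for every $n\ge m$) — and (ii) the elementary fact that $\supp_\omega Q$ is nonempty. Should one wish to quote \cite[Proposition 5.9]{BD14} only in a weaker form, e.g.\ that the walk enters $\supp_\omega Q$ with probability one, or merely infinitely often, one upgrades to ``eventually in $S$'' using that $S$ is absorbing for $X$ together with a Lévy $0$--$1$ law argument applied to the tail event of ever entering $S$.
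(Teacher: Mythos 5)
Your argument is correct and is essentially the paper's own: the paper derives the lemma directly from \cite[Proposition 5.9]{BD14} together with Proposition \ref{prop:unique}, which is exactly your route. Your additional details — identifying $\supp_\omega Q$ with the unique sink $S$ via Lemma \ref{lem:posdens} and noting that $\supp_\omega Q$ is a.s.\ nonempty — simply make explicit the identification the paper leaves implicit.
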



\section{Quantitative estimates for the invariance principle}\label{sec:quan_hom}

In this section we prove a quantitative homogenization bound. Namely,
we will provide a quantitative estimate (that holds with high probability) for the difference between the discrete-harmonic function and the corresponding {\it homogenized} function, where the two functions have the same (up to discretization) boundary
conditions. 

To be specific, for any discrete finite set $E\subset\Z^d$, we say that a function $u: \bar E\to\R$ is {\it $\omega$-harmonic} in $E$ if for any $x\in E$,
\[
L_\omega u(x):=\sum_{y:y\sim x}\omega(x,y)[u(y)-u(x)]=0.
\]
Let $\cball{R}=\{z\in\R^d:\|z\|_2<R\}$ be the ball of radius $R$ in $\R^d$. For $x\in\R^d$, let $\ball R x=x+\cball R$ and let $\desball{R}{x}=\ball{R}{x}\cap\Z^d$.

Let $\domain=\cball 1$ be the unit ball.
Let $\CovMat$ be the covariance matrix of the limiting Brownian Motion as in Theorem \ref{thm:BD14}. 
Let $F:\overline{\cball 1}\to\R$ be a function which is continuous in $\overline{\cball 1}$, smooth in $\cball 1$ and satisfies 
\[
\sum_{i,j=1}^d\CovMat_{ij}\partial_{ij}F(x)=0 \quad
 \mbox{ for all } x\in \cball 1.
\]
For given $R>0$, we denote by  
\[
\disdomain{R}:=\{x\in\disball_R: y\in \disball_R \mbox{ for all }y\sim x\}
\] 
the biggest subset of $\disball_R$ such that $\overline{\disdomain R}=\disball_R$.
For any $k>0$, we let 
\begin{equation}\label{eq:defIk}
I_k(\disdomain R):=\{x\in\disdomain R:\dist(x, \partial\disdomain R)>k\}
\end{equation}
denote the subset of $\disdomain R$ that has distance more than $k$ away from $\partial\disdomain R$. 

We define the function $F_R: \overline{\disdomain{R}}\to\R$ by 
\[
F_R(x)=F(x/R) \quad \mbox{ for every $x\in\overline{\disdomain{R}}$},
\] so that $F_R$ is the function obtained by ``stretching"  the domain of $F$.
Let $G=G_{R,\omega}: \overline{\disdomain{R}}\to\R$ be the solution of the Dirichlet problem
\[
\left\{
\begin{array}{lr}
L_\omega G=0 &\mbox{ on }\disdomain{R}\\
G=F_R &\mbox{ on }\partial\disdomain{R}.
\end{array}
\right.
\]
In other words, $G$ is the $\omega$-harmonic function on $\overline{\disdomain{R}}$ whose boundary data on $\partial\disdomain{R}$ agrees with that of $F_R$. 

For $i\in\N$, let $\BndSecDer{i}{F}$ denote the supremum (of the absolute values) of all $i$-th order partial derivatives of $F$ on $\cball 1$.

Our goal in this section is to obtain Theorem \ref{thm:quan_hom}, namely that
%
For any $\epsilon>0$, there exists $n_1=n_1(\epsilon,P)>\epsilon^{-2}$ and $\delta=\delta(P)>0$ such that for any $R>n_1$, 
with probability greater than or equal to $1-C\exp(-R^\delta)$,
\begin{equation}\label{eq:maxH}
\max_{x\in \disdomain{R}}|F_R(x)-G_{R,\omega}(x)|\leq \epsilon(\BndSecDer{2}{F}+\BndSecDer{3}{F}).
\end{equation}


%

First, note that the ``stretched" version $F_R$ of the function $F$ is very ``flat" when $R$ is large. Indeed, for $x,y \in\overline{\disdomain{R}}\subset R\domain$, by Taylor expansion,
 \begin{equation}\label{eq:Taylor_F}
 F_R(y)-F_R(x)=
 \frac{1}{R}\langle \triangledown F(\tfrac{x}{R}),y-x\rangle + \tfrac{1}{2R^2}(y-x)^t D_2 F(\tfrac{x}{R})(y-x) +\rho_y\|y-x\|^3,
 \end{equation}
where the error term $\rho_y$ is bounded by $R^{-3}\BndSecDer{3}{F}$.
 Hence, we conclude that 
 {
\begin{equation}\label{eq:LF}
|L_\omega F_R(x)|
\le 
\frac{\BndSecDer{2}{F}}{R^2}+\frac{2d\BndSecDer{3}{F}}{R^3}
\le \frac{C_F}{R^2} 
\qquad \mbox{ for all }x\in\disdomain{R}, 
\end{equation}
}
where $C_F=
C(\BndSecDer{2}{F} +2d\BndSecDer{3}{F})$. Note that the constant $C$ may differ from line to line and so may $C_F$.

\bigskip
Our next observation is that since (by the quenched CLT) the diffusion matrix of  $X_n/\sqrt n$ converges to $\CovMat$, the function $F_R$ should be ``approximately $\omega$-harmonic" in a sense that will be made precise in \eqref{eq: LF_n0} below. Indeed, for $x\in\Z^d, n\in\N$, let $\QCovMat{x}{\omega}{n}$ be the $d\times d$ covariance matrix with entries
\[
(\QCovMat{x}{\omega}{n})_{ij}:=E_\omega^x[(X_n(i)-x)(X_n(j)-x)]/n, \quad 1\le i,j\le n,
\]
where $X_n(i)$ denotes the $i$-th coordinate of $X_n$.
For any fixed $\epsilon>0$ and $\alpha>0$, by 
Theorem \ref{thm:BD14} 
 there exists $n_0=n_0(\epsilon,\alpha, P)$ such that for any $n\ge n_0$,
\begin{equation}\label{eq:bdest}
P\left[\|\QCovMat{x}{\omega}{n}-\CovMat\|_1<\epsilon\right]>1-\alpha.
\end{equation}
Moreover, for any $x\in I_{n_0}(\disdomain R)$,  
by \eqref{eq:Taylor_F} we have
\begin{align*}
& E_\omega^x[F_R(X_{n_0})-F_R(x)]\\
&=
\frac{1}{R^2}E_\omega^x[(X_{n_0}-x)^t D_2 F(\tfrac{x}{R})(X_{n_0}-x)]+ E_\omega^x[\rho_{X_{n_0}}\|X_{n_0}-x\|^3]\\
&=
\frac{n_0}{R^2}\sum_{i,j=1}^d \partial_{ij} F(\tfrac{x}{R})\QCovMat{x}{\omega}{n_0}_{ij}+E_\omega^x[\rho_{X_{n_0}}\|X_{n_0}-x\|^3].
\end{align*}
Here $D_2F(x)$ denotes the Hessian matrix of $F$ at $x$.
Under the event
\begin{equation}\label{eq:def-An0}
A_{n_0}(x)=\{\omega: \|\QCovMat{x}{\omega}{n_0}-\CovMat\|_1<\epsilon\},
\end{equation}
recalling that
$\sum_{i,j=1}^d \partial_{ij} F(\tfrac{x}{R}) \CovMat_{ij}=0$, 
we see that 
\[
\left|
\sum_{i,j=1}^d \partial_{ij}\ F(\tfrac{x}{R})\QCovMat{x}{\omega}{n_0}_{ij}
\right|
\leq
\epsilon d^2 \BndSecDer{2}{F}.
\]
Hence for $x\in I_{n_0}(\disdomain R)$,  when $A_{n_0}(x)$ occurs we  obtain
\begin{equation}\label{eq: LF_n0}
\left|
\quenchedE_\omega^x[F_R(X_{n_0})] - F_R(x)
\right|
\leq 
\epsilon n_0d^2R^{-2}\BndSecDer{2}{F} + C\BndSecDer{3}{F}{n_0}^{\frac 32}R^{-3} 
\leq 
\epsilon n_0R^{-2}C_F
\end{equation}
for $R>\tfrac{\sqrt{n_0}}{\epsilon}\vee n_0$.

\bigskip

\ignore{
Now we are ready to use a discrete version of the classical maximum principle of Alexandrov, Bakhelman and Pucci 
({\red Xiaoqin - references}) to prove Theorem~\ref{thm:quan_hom}. Of the many versions of this theorem, we use the version \cite[Theorem 3.1]{BD14} which was tailored especially for our non-elliptic environment. 

For any finite set $Q\subset\Z^d$, let $\partial^{(k)}Q=\{x\in\Z^d\setminus Q: \dist(x, Q)\le k\}$. Note that $\partial^{(1)}Q=\partial Q$.
{\red XG: Need to copy the definitions of $T_1, L_\omega^{(N)}, \partial^{(k)}$..}
\begin{lemma}[Theorem 3.1 of \cite{BD14}]\label{lem:max_princ}
There exists $N_0$ such that for every $N>N_0$ and every $0<k<N$, every balanced environment 
$\omega$ and every $Q\subset\Z^d$ of diameter $N$,
if 
\begin{equation}\label{eq:condformax}
\quenchedP_\omega^z\big(T_1>k\big)<e^{-(\log N)^3}
\qquad\mbox{ for all } z\in Q,
\end{equation}
then for any function $u: Q\cup\partial^{(k)}Q\to\R$,
\begin{equation}\label{eq:max_princ}
\max_{z\in Q}u(z)
\leq 
\max_{z\in \partial^{(k)}Q}u(z)+
6 N \left(
\sum_{z\in D_h} {\blue \big (-L^{(N)}_\omega u(z)\big)_+}^d
\right)^{\frac 1d}
\end{equation}
where 
\[
D_u=D_u(k,Q)=\left\{x\in Q:
\exists{\beta\in\R^d} \mbox{ such that }
u(x)-\langle\beta,x\rangle=\max_{z\in Q\cup\partial^{(k)}Q} 
u(z)-\langle\beta,z\rangle
\right\}
\]
is the upper contact set of the function $u$.
\end{lemma}
}

Recall the definition of $n_0$ and $A_{n_0}$ in \eqref{eq:bdest} and \eqref{eq:def-An0}. We claim that taking $k=\sqrt R$,  \eqref{eq:condformax} also happens with high probability. For fixed $\epsilon,\alpha>0$, we define the events (Here for any discrete set $Q$, $|Q|$ denotes the cardinality of $Q$.)
\begin{align*}
A^{(1)}_R&=
\left\{\omega:
\frac{\sum_{x\in\disball_R} {\bf 1}_{\omega\in A_{n_0}(x)}}{|\disball_R|} > 1-2\alpha
\right\},\\
A_R^{(2)}&=\left\{\omega: \quenchedP_\omega^x(T_1>R^{1/2})<e^{-(\log R)^3} \mbox{ for all }x\in\disdomain{R}\right\},
\end{align*}
%
 where the stopping time $T_1$ in the definition of $A_R^{(2)}$
is as in Definition \ref{def:scrw}.

\begin{lemma}\label{lem:events_likely}
Let $\epsilon\in(0,1),\alpha>0, n_0=n_0(\epsilon,\alpha,P)$ be the same as in \eqref{eq:bdest}. 
There exist $C=C(n_0,P)$ and $c=c(n_0,P)$ such that
$P(A^{(1)}_R\cap A^{(2)}_R)>1-Ce^{-cR^{1/7}}$.
\end{lemma}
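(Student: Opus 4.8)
\textbf{Proof plan for Lemma~\ref{lem:events_likely}.}
The strategy is to bound $P\big((A_R^{(1)})^c\big)$ and $P\big((A_R^{(2)})^c\big)$ separately and then combine them by a union bound. For $A_R^{(2)}$ the argument is essentially a union bound over the $O(R^d)$ sites of $\disdomain R$. By Lemma~\ref{lem:well_defined}, for each fixed $x$ we have $P\big(\quenchedE_\omega^x(T_1)>k\big)\le e^{-Ck^{1/3}}$; on the complementary event $\quenchedE_\omega^x(T_1)\le k$, Markov's inequality gives $\quenchedP_\omega^x(T_1>R^{1/2})\le k/R^{1/2}$, which for the choice $k=R^{1/3}$ (or any suitable sub-$R^{1/2}$ power) is far smaller than $e^{-(\log R)^3}$ once $R$ is large. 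Hence $P\big((A_R^{(2)})^c\big)\le |\disdomain R|\,e^{-CR^{1/9}}\le C'e^{-c'R^{1/9}}$; the exponent $1/7$ in the statement is just a convenient crude bound, and one only needs some positive power of $R$ in the exponent. (One should be mildly careful here to use the \emph{quenched} tail bound of Lemma~\ref{lem:well_defined} rather than the annealed one; both are stated there, and the quenched one is what is needed site-by-site.)

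The bound on $P\big((A_R^{(1)})^c\big)$ is the more substantial part. Here $A_R^{(1)}$ says that the empirical fraction of sites $x\in\disball_R$ for which the local averaged covariance $\QCovMat{x}{\omega}{n_0}$ is $\epsilon$-close to $\CovMat$ exceeds $1-2\alpha$, while \eqref{eq:bdest} only guarantees that \emph{each individual} site has this property with probability $>1-\alpha$. The point is that the indicator ${\bf 1}_{\omega\in A_{n_0}(x)}$ depends only on the environment inside $\desball{n_0}{x}$ (since $X_{n_0}$ under $\quenchedP_\omega^x$ never leaves that ball), so these indicators are finite-range dependent with range $2n_0$. Partition $\disball_R$ into $O(n_0^d)$ classes according to residue mod $3n_0$ (say) in each coordinate; within each class the indicators are i.i.d.\ with mean $>1-\alpha$. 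Applying a standard Hoeffding/Chernoff bound to each class — each of size $\asymp R^d/n_0^d$ — we get that the class-wise empirical average is below $1-2\alpha$ with probability at most $e^{-c\alpha^2 R^d/n_0^d}$; summing over the $O(n_0^d)$ classes and noting that if every class-average is $\ge 1-2\alpha$ then so is the overall average, we obtain $P\big((A_R^{(1)})^c\big)\le C(n_0)\,e^{-c(n_0)R^d}$.

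Combining the two estimates via $P(A_R^{(1)}\cap A_R^{(2)})\ge 1-P\big((A_R^{(1)})^c\big)-P\big((A_R^{(2)})^c\big)$ gives the claimed bound $1-Ce^{-cR^{1/7}}$, with $C=C(n_0,P)$ and $c=c(n_0,P)$, since $R^{1/7}$ dominates neither exponent — it is slower than both $R^{1/9}$... wait, that is backwards, so one in fact takes the \emph{weaker} of the two exponents; in any case both $e^{-cR^d}$ and $e^{-c'R^{1/9}}$ are bounded by $e^{-c''R^{1/7}}$ for $R$ large, so the stated form holds. The main obstacle is the $A_R^{(1)}$ estimate: one must correctly identify the finite range of dependence of the events $A_{n_0}(x)$ (which hinges on the observation that the walk run for time $n_0$ stays in $\desball{n_0}{x}$, so $A_{n_0}(x)$ is measurable with respect to $\omega$ restricted to $\desball{n_0}{x}$), and then deploy a coloring/decoupling argument to reduce to an i.i.d.\ concentration inequality. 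Everything else is routine. \qed
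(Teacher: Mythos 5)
Your treatment of $A_R^{(1)}$ is correct and is essentially the paper's own argument: the event $A_{n_0}(x)$ is measurable with respect to $\omega$ restricted to $\desball{n_0}{x}$, so the indicators have dependence range $2n_0$; splitting $\disball_R$ into residue classes modulo roughly $2n_0+1$ gives, within each class, i.i.d.\ indicators of mean $>1-\alpha$, and Chernoff plus a union bound over the finitely many classes yields $P\big((A_R^{(1)})^c\big)\le C(n_0)e^{-c(n_0)R^d}$. No issue there.

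The $A_R^{(2)}$ part, however, contains a genuine error. You claim that on the event $\quenchedE_\omega^x(T_1)\le k$, Markov's inequality gives $\quenchedP_\omega^x(T_1>R^{1/2})\le k/R^{1/2}$, and that with $k=R^{1/3}$ this is ``far smaller than $e^{-(\log R)^3}$''. The inequality goes the wrong way: $e^{-(\log R)^3}=R^{-(\log R)^2}$ is superpolynomially small, so $k/R^{1/2}=R^{-1/6}$ is enormously \emph{larger} than the threshold appearing in the definition of $A_R^{(2)}$ for large $R$. No bound on the first quenched moment of $T_1$ can produce the required superpolynomial smallness of $\quenchedP_\omega^x(T_1>R^{1/2})$, so this step fails, and your parenthetical advice (that the quenched tail bound of Lemma~\ref{lem:well_defined} is the one needed) is exactly backwards. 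The paper's route is different and is the one that works: take the \emph{annealed} bound $\annealedP^x(T_1>R^{1/2})\le e^{-cR^{1/6}}$ from Lemma~\ref{lem:well_defined}, view $\omega\mapsto\quenchedP_\omega^x(T_1>R^{1/2})$ as a nonnegative random variable whose $P$-expectation equals this annealed probability, and apply Markov's inequality at level $e^{-(\log R)^3}$ to get $P\big(\quenchedP_\omega^x(T_1>R^{1/2})\ge e^{-(\log R)^3}\big)\le e^{(\log R)^3-cR^{1/6}}$; a union bound over the $O(R^d)$ sites of $\disdomain R$ then gives $P(A_R^{(2)})\ge 1-Ce^{-R^{1/7}}$. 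With that replacement (and dropping the momentary confusion at the end about which exponent dominates -- one simply keeps the weaker exponent, and both tails are indeed $\le e^{-c''R^{1/7}}$), the combination by union bound is fine.
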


We postpone the proof of Lemma \ref{lem:events_likely} until after the proof of Theorem \ref{thm:quan_hom}.

Throughout this section we always take $k=k(R):=\sqrt R$.

To prove Theorem~\ref{thm:quan_hom}, we consider the error
\[
H(x)=H_{R,\omega}(x):=F_R(x)-G_R(x),   \qquad x\in\overline{\disdomain{R}}.
\] 
Then  $H$  is the solution of the discrete Dirichlet problem
\[
\left\{
\begin{array}{lr}
L_\omega H=L_\omega F_R  &\mbox{ on }\disdomain{R}\\
H=0, &\mbox{ on }\partial\disdomain{R}.
\end{array}
\right.
\]
However, $H$ is not defined on $\partial^{(k)}\disdomain R$. To apply Theorem~\ref{thm:max_princ}, we define an auxiliary function  $H':\overline{\disball_{R+k}}\to\R$ to be the solution of the Dirichlet problem $H'|_{\partial\disball_{R+k}}=0$ and
\[
L_\omega H'=\left\{
\begin{array}{lr}
{
L_\omega F_R}
&\mbox{ on }\disdomain R\\
0&\mbox{ on }\disball_{R+k}\setminus \disdomain R.
\end{array}
\right.
\]
Notice that by the definitions of $H$ and $H'$,
\begin{equation}\label{eq:H'}
H'(x)= E_\omega^x
\left[\sum_{i=0}^{\tau_{R+k}-1}-L_\omega F_R(X_i){\bf 1}_{X_i\in \disdomain R}\right]
\qquad\mbox{ for all }x\in \overline{\disball_{R+k}}, \mbox{ and}
\end{equation}
\[
H(x)=E_\omega^x\left[
\sum_{i=0}^{\tau(\disdomain R)-1} -L_\omega F_R
(X_i)
\right]
\qquad\mbox{ for all }x\in \overline{\disdomain R},
\]
where $\tau(Q):=\inf\{n\ge 0: X_n\notin Q\}$ denotes the exit time from $Q\subset\Z^d$ and $\tau_R:=\tau(\disball_R)$.
Hence  by \eqref{eq:LF} and \eqref{eq:H'},  for $k=\sqrt R$,
{
\begin{equation}\label{eq:bdry_H'}
\max_{\partial\disdomain R}|H'|
\le 
\max_{x\in\partial\disdomain R} C_F E_\omega^x[\tau_{R+k}]/R^2\le C_F k/R=C_F/\sqrt R
\end{equation}
and so 
\[
\max_{\disdomain R}|H-H'|\le \max_{\partial \disdomain R}|H-H'|
=\max_{\partial \disdomain R}|H'|
\le C_F/\sqrt R.
\]
}

Now for any fixed small constant $\epsilon>0$, we set 
\[
\gamma:=3\epsilon \BndSecDer{2}{F} R^{-2}
\]
and define $h:\overline{\disball_{R+k}}\to\R$ as
\[
h(x)=H'(x)+\gamma\|x\|_2^2 \qquad \forall x\in\overline{\disball_{R+k}}.
\]
Our first goal is to use Theorem \ref{thm:max_princ} to estimate $\max_{\disdomain{R}}h$. We do so by showing that most of the points in
$\disball_{R+k}$ are outside of $D_h$ (Lemma \ref{lem:point_not_exposed}), and then controlling the $\omega$-Laplacian in the remaining points (Lemma \ref{lem:cont_T_1} below).

\begin{lemma}
\label{lem:point_not_exposed}
Let $\epsilon,\alpha>0$ be any fixed constants and let $n_0=n_0(\epsilon,\alpha, P)$ be as in \eqref{eq:bdest}. Let $R>\tfrac{\sqrt{n_0}}{\epsilon}\vee n_0$. 
For any $x\in I_{n_0}(\disdomain R)$, if $\omega\in A_{n_0}(x)$, then $x\notin D_h$.
\end{lemma}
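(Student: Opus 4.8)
The goal is to show that a point $x\in I_{n_0}(\disdomain R)$ at which the environment is ``good'' (i.e.\ $\omega\in A_{n_0}(x)$) cannot be an exposed point of $h$. Recall $h=H'+\gamma\|\cdot\|_2^2$ with $\gamma=3\epsilon\BndSecDer2F R^{-2}$, and that exposedness of $x$ means there is a linear functional $\beta$ with $h(z)-\langle\beta,z\rangle\le h(x)-\langle\beta,x\rangle$ for all $z$ in the relevant box $Q\cup\partial^{(k)}Q$. The natural strategy is to contradict this by exhibiting a point $z$ reachable from $x$ at time $n_0$ (in the sense that $\quenchedP_\omega^x(X_{n_0}=z)>0$) with $h(z)-\langle\beta,z\rangle>h(x)-\langle\beta,x\rangle$; equivalently, to show that the $n_0$-step expectation $\quenchedE_\omega^x[h(X_{n_0})]-\langle\beta,\quenchedE_\omega^x[X_{n_0}]\rangle$ strictly exceeds $h(x)-\langle\beta,x\rangle$, since if every $z$ in the support satisfied the reverse inequality the expectation would too.

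First I would note that $\quenchedE_\omega^x[X_{n_0}]=x$: the environment is balanced, so each coordinate of the walk is a martingale, hence the linear term $\langle\beta,\cdot\rangle$ drops out of the computation entirely. Thus it suffices to show $\quenchedE_\omega^x[h(X_{n_0})]>h(x)$. Split $h$ into its two pieces. For the quadratic part, a direct computation using the balanced property gives $\quenchedE_\omega^x[\|X_{n_0}\|_2^2]-\|x\|_2^2 = \sum_{i=1}^d \quenchedE_\omega^x[(X_{n_0}(i)-x(i))^2] = n_0\operatorname{tr}\QCovMat{x}{\omega}{n_0}$, and on the event $A_{n_0}(x)$ this is at least $n_0(\operatorname{tr}\CovMat - \epsilon d)$, which — since $\CovMat$ is non-degenerate with $\operatorname{tr}\CovMat = 2\sum_i \Bbb E_Q[\omega(0,e_i)]>0$ — is bounded below by a positive multiple of $n_0$ (taking $\epsilon$ small, say $\epsilon$ less than $\operatorname{tr}\CovMat/(2d)$; one may have to observe that the claim is only needed for such $\epsilon$, or absorb the constant). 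For the $H'$ part, I would use the representation \eqref{eq:H'}: since $x\in I_{n_0}(\disdomain R)$ the walk stays inside $\disdomain R\subset\disball_{R+k}$ up to time $n_0$, so $H'(X_{n_0})-H'(x) = -\quenchedE_\omega^x[\sum_{i=0}^{n_0-1} L_\omega F_R(X_i)\mathbf 1_{X_i\in\disdomain R}] + (\text{correction})$ — more cleanly, $H'$ satisfies $L_\omega H' = L_\omega F_R$ on $\disdomain R$, so by optional stopping $\quenchedE_\omega^x[H'(X_{n_0})] - H'(x) = \quenchedE_\omega^x[\sum_{i=0}^{n_0-1} L_\omega F_R(X_i)]$, whose absolute value is at most $n_0 C_F/R^2$ by \eqref{eq:LF}. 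Actually the sharper route is to combine this with \eqref{eq: LF_n0}: since $F_R = H' + (\text{a harmonic function})$... but it's cleanest to just bound $|\quenchedE_\omega^x[H'(X_{n_0})]-H'(x)|\le n_0 C_F/R^2$ directly.

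Putting the pieces together: $\quenchedE_\omega^x[h(X_{n_0})]-h(x) \ge \gamma\, n_0(\operatorname{tr}\CovMat-\epsilon d) - n_0 C_F/R^2$. Since $\gamma = 3\epsilon\BndSecDer2F/R^2$ and $C_F = C(\BndSecDer2F + 2d\BndSecDer3F)$, the first term is of order $\epsilon\BndSecDer2F n_0/R^2$ while the second is of order $(\BndSecDer2F+\BndSecDer3F)n_0/R^2$ — so to make the difference \emph{strictly positive} one needs the $3\epsilon\BndSecDer2F(\operatorname{tr}\CovMat-\epsilon d)$ to dominate $C_F$, which is \emph{not} automatic when $\BndSecDer3F$ is large. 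Here I expect the fix is that the statement of Theorem~\ref{thm:quan_hom} has $\epsilon$ multiplying $\BndSecDer2F+\BndSecDer3F$, so one should choose $\gamma$ proportional to $\epsilon(\BndSecDer2F+\BndSecDer3F)R^{-2}$ rather than just $\epsilon\BndSecDer2F R^{-2}$; with that (or reading $C_F/R^2$ as the genuinely smaller $O(R^{-2}\BndSecDer2F + R^{-3}\BndSecDer3F)$ term when $R$ is large and using \eqref{eq: LF_n0} instead of \eqref{eq:LF} to get the $\epsilon$-factor on the $H'$ side) the strict inequality holds. \textbf{The main obstacle} is precisely this bookkeeping: ensuring the quadratic bump $\gamma\|x\|_2^2$ is chosen large enough to beat the discrete Laplacian error $L_\omega F_R$ while still being small enough ($O(\epsilon)$ after the final division by $R^2$ and multiplication by the diameter $6N\sim R$) that it does not spoil the $\epsilon(\BndSecDer2F+\BndSecDer3F)$ bound in \eqref{eq:maxH} — i.e.\ the constants in $\gamma$ must thread between two competing requirements, and one must also make sure to invoke \eqref{eq: LF_n0} (which already builds in the event $A_{n_0}(x)$ and the non-degeneracy of $\CovMat$) rather than the cruder \eqref{eq:LF} at the decisive step.
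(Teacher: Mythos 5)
Your overall skeleton coincides with the paper's: use the balancedness identity $\quenchedE_\omega^x[X_{n_0}]=x$ so that the linear part of any supporting hyperplane drops out, show $\quenchedE_\omega^x[h(X_{n_0})]>h(x)$, and conclude that no $\beta$ as in the definition of an exposed point can exist. The quadratic part is also essentially right, though heavier than needed: for a balanced nearest-neighbour walk one has $E_\omega^x[\|X_{n_0}\|_2^2]-\|x\|_2^2=n_0$ exactly (each step contributes $1$ in conditional expectation), so no appeal to $A_{n_0}(x)$, to $\operatorname{tr}\CovMat$, or to smallness of $\epsilon$ is required there; your trace route silently restricts to small $\epsilon$, whereas the lemma is stated for all $\epsilon>0$.

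The genuine gap is at the decisive step, the estimate for $H'$. The paper does not bound $E_\omega^x[H'(X_{n_0})]-H'(x)$ by summing the per-step bound \eqref{eq:LF}; it uses that $L_\omega(H'-F_R)=0$ on $\disdomain R$ and that, since $x\in I_{n_0}(\disdomain R)$, the walk cannot leave $\disdomain R$ within $n_0$ steps, whence $E_\omega^x[H'(X_{n_0})-H'(x)]=E_\omega^x[F_R(X_{n_0})-F_R(x)]$, and then it invokes \eqref{eq: LF_n0}. The whole point of \eqref{eq: LF_n0} is that on $A_{n_0}(x)$ the $\CovMat$-harmonicity of $F$ cancels the leading Hessian contribution over the $n_0$-step block, producing the crucial factor $\epsilon$; the crude bound $n_0C_F/R^2$ obtained from \eqref{eq:LF} carries no such factor and is therefore never dominated by the bump $\gamma n_0=3\epsilon\BndSecDer{2}{F}n_0R^{-2}$ once $\epsilon$ is small --- this failure already occurs when $\BndSecDer{3}{F}=0$, so it is not merely the third-derivative issue you describe. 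You do gesture at the correct repair (``$F_R=H'+$ a harmonic function'', ``use \eqref{eq: LF_n0} instead of \eqref{eq:LF}''), but you leave it unexecuted and instead present the crude route as the clean one; and your alternative fix of redefining $\gamma$ is not available for this statement, since $h$, and hence $D_h$, is already defined with $\gamma=3\epsilon\BndSecDer{2}{F}R^{-2}$, so changing $\gamma$ proves a different lemma (it would amount to revising the whole section's setup). Your bookkeeping worry is not baseless --- the $\BndSecDer{3}{F}$ term of \eqref{eq: LF_n0} is only $O(\epsilon\BndSecDer{3}{F}n_0R^{-2})$ after using $R>\sqrt{n_0}/\epsilon$, and the paper's ``$-2\epsilon n_0R^{-2}\BndSecDer{2}{F}$'' glosses over absolute constants --- but that is a matter of adjusting constants in $\gamma$ or in \eqref{eq: LF_n0}, quite different from the missing $\epsilon$-gain that your main argument forfeits.
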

From Lemma \ref{lem:point_not_exposed} we see that such $x$ is not counted in the sum in \eqref{eq:max_princ}. 

\begin{proof}[Proof of Lemma~\ref{lem:point_not_exposed}]
When $R>\tfrac{\sqrt{n_0}}{\epsilon}\vee n_0$, $x\in I_{n_0}(\disdomain R)$ and $\omega\in A_{n_0}(x)$, by \eqref{eq: LF_n0} and our definition of $\gamma$,
\begin{align*}
E_\omega^x[h(X_{n_0})-h(x)]
&=
E_\omega^x[H'(X_{n_0})-H'(x)]+\gamma E_\omega^x[\|X_{n_0}\|^2-\|x\|^2]\\
&= E_\omega^x[F_R(X_{n_0})-F_R(x)]+\gamma E_\omega^x[\|X_{n_0}\|^2-\|x\|^2]\\
&\ge 
-2\epsilon n_0R^{-2}\BndSecDer{2}{F}+\gamma n_0>0.
\end{align*}

Thus $\quenchedE_\omega^x[h(X_{n_0})]>h(x)$. In particular, since $\quenchedE_\omega^x[X_{n_0}]=x$, for every $\alpha\in\R^d$ we
get $\quenchedE_\omega^x[h(X_{n_0})+\langle\alpha,X_{n_0}-x\rangle]>h(x)$. So for every $\alpha\in\R^d$ there exists $y$ in the support of $X_{n_0}$ with 
$h(y)+\langle\alpha,y-x\rangle)>h(x)$, which implies $x\notin D_h$.
\end{proof}

Our next step is to control the $\omega$-Laplacian of the function $h$. By definition, on $B_{R+k}$,
\[
|L_\omega h|
\stackrel{\eqref{eq:LF}}{\le} 
|\gamma-{\bf 1}_{\disdomain R}C_F/R^2|
\leq C_F/R^2.
\] Hence
\begin{equation}\label{eq:omega_laplacian_bnd}
|L^{(R)}_\omega h(x)| \leq C_F\quenchedE_\omega^x[T_1]/R^2.
\end{equation}
For $p>0$ and $K>0$, we define the event
\[
A_R^{(3)}(p,K):=\left\{\omega: 
\frac{1}{|\disball_R|}\sum_{x\in\disball_
R}\left(\quenchedE_\omega^x[T_1]\right)^p
\le  K
\right\}.
\]

\begin{lemma}\label{lem:cont_T_1}
Let $p>d$. There exist positive constants $\delta$ and $C$ depending only on $p$ and the environment measure $P$ such that 
\[
P\left(A_R^{(3)}(p,C)\right)
>1- Ce^{-R^\delta}.
\]
\end{lemma}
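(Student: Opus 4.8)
The plan is to bound the average $\frac{1}{|\disball_R|}\sum_{x\in\disball_R}(\quenchedE_\omega^x[T_1])^p$ by a concentration argument that exploits the i.i.d.\ structure of the environment together with the finite-range dependence of the local quantity $g(\omega,x):=\quenchedE_\omega^x[T_1]$. Roughly speaking, I want to show that $g$ has good tails (from Lemma~\ref{lem:well_defined}), that it is ``almost local'' in the sense that $g(\omega,x)$ depends mostly on $\omega$ restricted to a bounded neighbourhood of $x$, and then apply a stretched-exponential concentration inequality for sums of weakly dependent bounded-moment random variables on $\Z^d$.

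First I would record the key input: by Lemma~\ref{lem:well_defined}, $E\big[\quenchedE_\omega(T_1^{q})\big]<\infty$ for every finite $q$, in particular for $q=p$ (so the expectation $\mu_p:=E[g(\omega,0)^p]=E[\quenchedE_\omega^0[T_1]^p]$ is finite by Jensen's inequality applied inside the quenched expectation: $\quenchedE_\omega^0[T_1]^p\le \quenchedE_\omega^0[T_1^p]$) and also the quenched tail bound $P(\quenchedE_\omega^0[T_1]>k)\le e^{-Ck^{1/3}}$, which shows $g(\omega,0)$ has a stretched-exponential tail. Set $C:=2\mu_p+1$, say. Then I want to prove $P\big(\frac{1}{|\disball_R|}\sum_{x\in\disball_R}g(\omega,x)^p>C\big)\le Ce^{-R^\delta}$ for a suitable $\delta>0$. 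The natural route is truncation: write $g(\omega,x)^p=g(\omega,x)^p{\bf 1}_{g(\omega,x)\le M_R}+g(\omega,x)^p{\bf 1}_{g(\omega,x)>M_R}$ with a slowly growing cutoff $M_R$ (e.g.\ $M_R=R^{a}$ for small $a$, or a power of $\log R$). For the large part, a union bound over the $\lesssim R^d$ sites using the quenched stretched-exponential tail gives $P(\exists x\in\disball_R: g(\omega,x)>M_R)\le CR^d e^{-CM_R^{1/3}}$, which is $\le Ce^{-R^\delta}$ if $M_R$ is, say, a large power of $\log R$ (or a small power of $R$); on the complement this contribution to the average vanishes. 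For the bounded part, $g(\omega,x)^p{\bf 1}_{g\le M_R}$ is a bounded random variable with bounded dependence range — more precisely $T_1$ started at $x$ is determined by the walk before it has visited all $d$ coordinate-directions, and although $T_1$ is not a priori supported on a bounded box, one truncates again by conditioning on the event $\{T_1\le (\log R)^\kappa\}$ à la \eqref{eq:condformax} (whose complement has probability $e^{-(\log R)^2}$ per site by Lemma~\ref{lem:well_defined}, absorbed by the union bound), so that on the good event $g(\omega,x)$ depends only on $\omega$ in the box $B_{(\log R)^\kappa}(x)$.

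The main step is then concentration of $\frac{1}{|\disball_R|}\sum_x \tilde g(\omega,x)$ where $\tilde g$ is bounded by $M_R$ and $L_R$-local with $L_R=(\log R)^\kappa$. I would partition $\disball_R$ into $O(L_R^d)$ sublattices, each of spacing $>2L_R$, so that within each sublattice the variables $\tilde g(\omega,x)$ are genuinely i.i.d.; apply Hoeffding/Bernstein to each sublattice (sum of $\sim R^d/L_R^d$ independent variables bounded by $M_R$, deviation $\sim R^d$), and union over the $O(L_R^d)$ sublattices. With $M_R$ and $L_R$ polylogarithmic (or small polynomial) in $R$, each sublattice's deviation probability is $\le \exp\!\big(-c\,(R^d/L_R^d)/M_R^2\big)\le \exp(-R^{d-o(1)})\le e^{-R^\delta}$ for any $\delta<d$, and the polylog number of sublattices does not hurt. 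Since $E[\tilde g(\omega,x)]\le \mu_1^{1/1}\cdot$(something) $\le \mu_p^{1/p}<\infty$ uniformly (and the truncation only decreases it), the mean of the average is below $C/2$, and adding the negligible large-part contribution keeps it below $C$ off an event of probability $\le Ce^{-R^\delta}$. Adjusting constants gives the claim with $\delta=\delta(p,P)$ and $C=C(p,P)$.

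\textbf{The main obstacle} I anticipate is handling the non-locality of $T_1$ rigorously: $T_1$ is genuinely unbounded, so $g(\omega,x)=\quenchedE_\omega^x[T_1]$ is not a function of finitely many coordinates, and one must carefully split it as a local part plus a remainder controlled by Lemma~\ref{lem:well_defined}, making sure the remainder's contribution to the average is uniformly small with the required stretched-exponential probability — this is exactly where one pays a power of $\log R$ rather than getting a clean exponential, and it is presumably why the stated bound is $e^{-R^\delta}$ for some small $\delta$ (likely $\delta=1/7$ or similar, matching Lemma~\ref{lem:events_likely}) rather than $e^{-cR^d}$. The bookkeeping of how the cutoffs $M_R$ and $L_R$ trade off against the exponent $\delta$ is the part that needs care; everything else is a standard finite-range-dependence concentration argument.
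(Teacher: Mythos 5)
Your proposal follows essentially the same route as the paper: the paper truncates at $W=R^{1/2}$, writes $\quenchedE_\omega^x[T_1]=E_1(x)+E_2(x)$ with $E_1(x)=\quenchedE_\omega^x[T_1\,{\bf 1}_{T_1\le W}]$ and $E_2(x)=\quenchedE_\omega^x[T_1\,{\bf 1}_{T_1> W}]$, handles the local part $E_1$ by exactly your sublattice decomposition (independence beyond distance $2W$, as in the proof of Lemma~\ref{lem:events_likely}) plus a concentration bound with mean $K_1=2E[(\quenchedE_\omega^x[T_1])^p]$, and handles the tail part $E_2$ by Cauchy--Schwarz ($E[E_2(x)]\le Ce^{-cR^{1/6}}$ using the finite second moment and the annealed tail from Lemma~\ref{lem:well_defined}), followed by Markov and a union bound over the $\lesssim R^d$ sites. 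Two caveats on your version. First, the ``large power of $\log R$'' option for the cutoff does not deliver the claimed bound: with $L_R=(\log R)^\kappa$ the per-site tail estimate is only $e^{-c(\log R)^{\kappa/3}}$, so after the union bound you get quasi-polynomial decay, not $e^{-R^\delta}$; you must take the localization scale to be a power of $R$, as the paper does with $W=R^{1/2}$ (your parenthetical ``or a small power of $R$'' is the choice that actually works). Second, ``conditioning on $\{T_1\le L_R\}$, whose complement is absorbed by the union bound'' glosses over the fact that what must be controlled is the quenched expectation $\quenchedE_\omega^x[T_1\,{\bf 1}_{T_1>L_R}]$ (raised to the $p$-th power and averaged), not merely the probability of $\{T_1>L_R\}$; this is precisely where the paper's Cauchy--Schwarz step with $E[\quenchedE_\omega^x(T_1^2)]<\infty$ enters, and you acknowledge the issue but should make that step explicit. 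With those repairs your argument is the paper's; your extra value-truncation at $M_R$ is harmless and in fact makes the Hoeffding step on each sublattice cleaner than the paper's rather terse treatment of \eqref{eq:bnde1}.
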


We postpone the proof of Lemma \ref{lem:cont_T_1} until after the proof of Theorem \ref{thm:quan_hom}.

We are now ready to bound the function $H$ on $\disdomain{R}$.  

\begin{proof}[Proof of Theorem~\ref{thm:quan_hom}]
Let $\alpha=\alpha(\epsilon)>0$ be a small constant to be determined later.
Let $n_0=n_0(\epsilon,\alpha,P)$ be as  in \eqref{eq:bdest} and $R>\tfrac{\sqrt{n_0}}{\epsilon}\vee n_0$.
We only need to prove \eqref{eq:maxH} for 
\[
\omega\in A^{(1)}_R\cap A^{(2)}_R\cap A^{(3)}_R.
\]
First we estimate $\max_{x\in\disball_R}h(x)$. 
By Lemma~\ref{lem:point_not_exposed}, 
${\bf 1}_{x\in D_h}L^{(R)}_\omega h(x)\neq 0$ only if $\omega\notin A_{n_0}(x)$ or $x\in J_k:=\disball_R\setminus\disball_{R-k}$.
Note that $|J_k|\le CkR^{d-1}$.

By Theorem \ref{thm:max_princ}, on the event $A^{(2)}_R$,
\begin{eqnarray}\label{eq:from_mp}
\max_{x\in {\disball_R}}h(x)&-&\max_{x\in \partial^{(k)}{\disball_R}}h(x)
\leq 6 R \left(
\sum_{x\in {\disball_R}} {\bf 1}_{x\in D_h}(-L^{(R)}_\omega h(x))_+^d
\right)^{\frac 1d} \\
\nonumber
&\leq&
CR^2 \left(\frac{1}{|\disball_R|}
\sum_{x\in\disball_R} {\bf 1}_{x\in J_k\mbox{ or }\omega\notin A_{n_0}(x)}|L^{(R)}_\omega h(x)|^d
\right)^{\frac 1d} \\
\nonumber
&\leq& 
CR^2 \left(
\left[\frac{1}{|\disball_R|}
\sum_{x\in {\disball_R}} {\bf 1}_{x\in J_k\mbox{ or }\omega\notin A_{n_0}(x)}
\right]^{\frac 1{d+1}}
\left[\frac{1}{|\disball_R|}
\sum_{x\in {\disball_R}} |L^{(R)}_\omega h(x)|^{d+1}
\right]^{\frac d {d+1}}
\right)^{\frac 1d}.
\end{eqnarray}

On the event $A^{(1)}_R$,
\begin{align*}
\sum_{x\in {\disball_R}} {\bf 1}_{x\in J_k\mbox{ or }\omega\notin A_{n_0}(x)}
\le 
|J_k|+\sum_{x\in \disball_R}{\bf 1}_{\omega\notin A_{n_0}(x)}
\le 
(C\frac{k}{R}+2\alpha)|\disball_R|.
\end{align*}
Recall that $k=\sqrt R$. Hence, for $R>C/\alpha^2$,
\begin{equation}\label{eq:inD}
\frac{1}{|\disball_R|}
\sum_{x\in {\disball_R}} {\bf 1}_{x\in J_k\mbox{ or }\omega\notin A_{n_0}(x)}
\leq 3\alpha.
\end{equation}
Applying \eqref{eq:omega_laplacian_bnd} and Lemma \ref{lem:cont_T_1} on the event $A^{(3)}_R$, by \eqref{eq:inD} and \eqref{eq:from_mp},
\begin{equation}\label{eq:notinD}
\max_{x\in {\disball_R}}h(x)-\max_{x\in \partial^{(k)}{\disball_R}}h(x)
\le 
CR^2(3\alpha)^{1/(d+1)d} (C_F R^{-2})
=C_F \alpha^{1/(d+1)d}.
%
%
%
%
\end{equation}
Moreover, 
\begin{align*}
\max_{\partial^{(k)}{\disball_R}}h
&\le 
\gamma(R+k)^2+\max_{\partial^{(k)}{\disball_R}}H'\\
&\le 
\gamma(R+k)^2+\max_{
 \partial\disdomain R}H'
\stackrel{\eqref{eq:bdry_H'}}{\le} C_F(\epsilon+\tfrac{k}{R}),
\end{align*}
where in the second inequality we used the fact that $H'$ is $\omega$-harmonic in $\disball_{R+k}\setminus\disball_R$ and that an $\omega$-harmonic function achieves its maximum in the boundary.
%

Therefore, from \eqref{eq:from_mp}, \eqref{eq:inD} and \eqref{eq:notinD}, 
we have
\begin{equation*}
\max_{\disdomain{R}}H
\le 
{
\max_{\disdomain{R}}H'+\tfrac{C_F}{\sqrt R}
\le 
\max_{\disball_R}h+\epsilon C_F}
\le
C_F(\epsilon+\tfrac{k}{R})+C_F\alpha^{1/qd}.
\end{equation*}
Replacing $F$ by $-F$, similar arguments also give the same upper bound for
$\max_{\disdomain{R}}(-H)$.
Theorem~\ref{thm:quan_hom} now follows by taking $\alpha=\epsilon^{(d+1)d}$ and $R\ge \epsilon^{-2}+n_0$. 
\end{proof}

We now prove Lemmas \ref{lem:events_likely} and \ref{lem:cont_T_1}.

\begin{proof}[Proof of Lemma \ref{lem:events_likely}]
We start with estimating the probability of $A_R^{(2)}$. By 
Lemma \ref{lem:well_defined},
for every $x \in \disdomain{R}$,
\[
\annealedP^x(T_1 > R^{1/2}) \leq e^{-cR^{1/6}},
\]
and thus by Markov's inequality
\[
P\left( \big\{ \omega : \quenchedP_\omega^x(T_1 > R^{1/2}) \geq e^{-(\log R)^3} \big\} \right) < e^{(\log R)^3 - cR^{1/6}},
\]
and a union bound over all possible values of $x$ yields
\begin{equation}\label{eq:probA2}
P(A_R^{(2)}) \geq 1 -  Ce^{-R^{1/7}}.
\end{equation}

Next we estimate the probability of the event $A_R^{(1)}$. Note that the event $A_{n_0}(x)$ is determined by $\omega |_{y : \|y - x\| \leq n_0}$, and therefore
$\big(A_{n_0}(x_i)\big)_{i \in I}$ are independent events whenever $\forall_{i, j \in I} \|x_i - x_j\| > 2n_0$. For every $z \in [-n_0, n_0]^d$ we write $I_z = \big( z + (2n_0 +1) \Z^d \big) \cap \disdomain{R}$.
Then for every $z$ the events $\big(A_{n_0}(x_i)\big)_{i \in I_z}$ are independent, and each happens with probability greater than or equal to $1 - \alpha$. Therefore by Chernoff's inequality, for every $z \in [-n_0, n_0]^d$
\[
P \left(
\frac{\sum_{x \in I_z} {\bf 1}_{\omega\in A_{n_0}(x)}}{|I_z|} > 1-2\alpha
\right)
> 1 - e^{-C|I_z|}. 
\]

remembering that we have only finitely many choices of $z$, and that $|I_z| = \Theta (R^d)$ for every $z$, a union bound gives us
\begin{equation}\label{eq:probA1}
P(A_R^{(1)}) > 1 - e^{-CR^{d}}.
\end{equation}

The lemma now follows from \eqref{eq:probA2} and \eqref{eq:probA1}.

\end{proof}

\begin{proof}[Proof of Lemma \ref{lem:cont_T_1}]
For $W = R^{1/2}$ we write $\quenchedE_\omega^x[T_1] = E_1(x) + E_2(x)$ with $E_1(x) = \quenchedE_\omega^x[T_1 \cdot {\bf 1}_{T_1 \leq W}]$ and
$E_2(x) = \quenchedE_\omega^x[T_1 \cdot {\bf 1}_{T_1 > W}]$. We separately control
\[
\frac{1}{|\disball_R|}\sum_{x\in\disball_
R} \big( E_1(x) \big)^p
\ \ \ \ \ \ \ \ 
\mbox{and}
\ \ \ \ \ \ \ \ 
\frac{1}{|\disball_R|}\sum_{x\in\disball_
R} \big( E_2(x) \big)^p.
\]

To control the empirical $L^p$ norm of $E_1(x)$ we note that $E_1(x) \leq \quenchedE_\omega^x[T_1]$, and that, as in the proof of Lemma \ref{lem:events_likely}, $E_1(x_1)$ and
$E_1(x_2)$ are independent whenever $\|x_1 - x_2\| > 2W$. Therefore, following the same decomposition as in the proof of Lemma \ref{lem:events_likely}, for 
\[
K_1 = 2E\left[ \big(\quenchedE_\omega^x(T_1)\big)^p \right]
\]
we get
\begin{eqnarray}\label{eq:bnde1}
P\left[
\frac{1}{|\disball_R|}\sum_{x\in\disball_
R} \big( E_1(x) \big)^p > K_1
\right] < e^{-CR^{\delta'}}
\end{eqnarray}
with $\delta' > 0$ determined by $p$, by the choice of $W$ and by the dimension.
We now turn to control $E_2(x)$. We note that by Lemma
\ref{lem:well_defined}
$ 
E\left[ \quenchedE_\omega^x (T_1^2)  \right] < \infty,
$ 
and that $E\left[ \quenchedE_\omega^x ({\bf 1}_{T_1 > W})  \right] < e^{-cW^{1/3}} = e^{-cR^{1/6}}$.
Thus by Cauchy-Schwarz, 
\begin{equation}\label{eq:conte2}
E\left[ E_2(x) \right] \leq Ce^{-cR^{1/6}}.
\end{equation}

From \eqref{eq:conte2} we learn that 
\begin{equation}\label{eq:conte2av}
P\left[ \exists_{x \in \disball_R} E_2(x) > 1 \right] \leq \left| \disball_R \right| \cdot e^{-cR^{1/6}} \leq e^{-R^{1/7}}.
\end{equation}

From \eqref{eq:conte2av} we get that 
\begin{equation}\label{eq:e2p}
P \left[
\frac{1}{|\disball_R|}\sum_{x\in\disball_
R} \big( E_2(x) \big)^p > 1
\right]  \leq e^{-R^{1/7}}.
\end{equation}

The lemma now follows from \eqref{eq:bnde1} and \eqref{eq:e2p} once we remember that $\quenchedE_\omega^x(T_1)^p \leq p \big( E_1(x)^p + E_2(x)^p \big)$ for all $x$.

\end{proof}

One corollary of Theorem~\ref{thm:quan_hom} is particularly useful for us. This is Corollary~\ref{cor:exit} below, which is a very slight generalization of Corollary \ref{cor:quan_hom}.

\ignore{
Two corollaries of Theorem~\ref{thm:quan_hom} are particularly useful for us. The first one, Lemma \ref{lem:ann} below, bounds from below the minimum of a $\omega$-harmonic 
function on a large ball, using its minimum on a smaller ball. This lemma is an adaptation of Lemma 4.1 of \cite{KuoTru} to our case, and the proof is based on the idea in \cite{KuoTru} where 
we use a harmonic function and Theorem~\ref{thm:quan_hom} instead of the cutoff function of \cite{KuoTru}. The second one is Corollary~\ref{cor:exit} below, which controls the exit distribution from a ball.

\begin{lemma}\label{lem:ann}
Let $\tau<\sigma<1$. There exists $\gamma=\gamma(\tau,\sigma)>0$ and $C=C(\gamma,\sigma,P)$ such that for every $R$, with probability at least $1-C\exp(-R^{\delta})$, for every $g:\desball{R}{0}\to\R$ which is $\omega$-harmonic (Need to explain where exactly)
and non-negative,
\[
\min_{x\in\desball{\sigma R}{0}}g(x) \geq \gamma\min_{x\in\desball{\tau R}{0}}g(x).
\]
\end{lemma}


\begin{proof}
Let $m = \min_{x\in\desball{\tau R}{0}}g(x)$.

Let $\domain=\ball{1}{0} \setminus \overline {\ball{\tau}{0}}$, and let $f:\partial \domain\to\R$ be defined as
\[
f(x)=\left\{
\begin{array}{ll}
0 & \mbox{if } \|x\| = 1 \\
1 & \mbox{if } \|x\| = \tau
\end{array}
\right..
\]
Fix $R$. Let $F$, $G$ and $H$ be defined the same way as before the statement of Lemma \ref{lem:events_likely}.
Let $w=\min\{F(x):\|x\|\leq \sigma R\}$. 
We then take $\gamma=w/2$. Note that $w$ and $\gamma$ do not depend on $R$.
Let $\epsilon$ and $\alpha$ be so that $\BndSecDer{2}{f} \big(3\epsilon + C\cdot(3\alpha)^{\frac{1}{qd}}\big)<w/3$.
By Theorem~\ref{thm:quan_hom}, with probability greater than or equal to $1-C\exp(-R^{\delta})$,
\[
\max_{x\in\disdomain{R}} |F(x)-G(X)|<w/3.
\]
In particular, with the same probability, 
\[
\min_{\tau R <\| x\| < \sigma R} G(x) > \gamma
\]
We now consider the function $h=mG-g$. $h$ is $\omega$-harmonic on $\disdomain{R}$, and is non-positive on its boundary. Therefore
$h$ is nonpositive everywhere in $\disdomain{R}$, and for every $x\in \desball{\sigma R}{0} \setminus \desball {\tau R}{0}$,
\[
g(x) \geq g(x) + h(x) = mG(x) \geq \gamma m,
\]
and the lemma follows.

\end{proof}

} 

For any $A\subset \partial \cball 1$ and $R\ge 1$, we define
\begin{equation}\label{def: bdry-R}
\tilde A_R:=\{
x\in\partial\disball_R: x/|x|_2\in A
\}.
\end{equation}
For $x\in\R^d$, we let $\pbm^x$ denote the law of the Brownian motion with limiting covariance matrix $\CovMat$ and starting point $x$. We may describe an event without mentioning the underling Brownian motion. E.g, it should be clear what $\pbm^x(\mbox{exits the ball $\cball 1$ through }A)$ means.

\begin{corollary}\label{cor:exit}
Let $A\subseteq\partial \cball 1$ be open in the relative topology of $\partial \cball 1$.
Assume also that the boundary of $A$ w.r.t.\ the topology of $\partial \cball 1$ has measure zero w.r.t. the ($d-1$ dimensional) Lebesgue measure on $\partial \cball 1$.
For $x\in\overline{\cball 1}$, let 
\[
\ExitProb{A}(x)=\pbm^x(\mbox{exits $\cball 1$ through }A)
\]
For $\epsilon,r\in(0,1)$ and $R>0$, let 
\[
G(A, R,r,\epsilon)=
\left\{
\omega: \max_{x\in \disball_{rR}}|\ExitProb{A}(\tfrac{x}{R})-P_\omega^x(X_\cdot \mbox{ exits $\disball_R$ through }\tilde A_R)|\le\epsilon
\right\}.
\]
Then, there are constants $c, C$ depending on $A,r$ and $\epsilon$, such that for every $R$, 
\[
P(G(A,R,r,\epsilon))\ge 1-C\exp(-cR^\delta).
\]
\end{corollary}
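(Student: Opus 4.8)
The plan is to reduce the statement to Theorem~\ref{thm:quan_hom} by approximating the (discontinuous) exit-through-$A$ functional by smooth solutions of the homogenized PDE. First I would fix a mollification parameter $\eta>0$ and choose two continuous boundary data $\phi_\eta^{\pm}:\partial\cball 1\to[0,1]$ with $\phi_\eta^-\le \1_A\le \phi_\eta^+$, each smooth on a neighborhood of $\partial\cball1$, such that $\int_{\partial\cball1}(\phi_\eta^+-\phi_\eta^-)\,d\sigma<\eta$; this is possible precisely because $\partial A$ has zero $(d-1)$-dimensional Lebesgue measure. Let $F^\pm_\eta$ be the solutions of the homogenized Dirichlet problem $\sum_{ij}\CovMat_{ij}\partial_{ij}F=0$ on $\cball1$ with boundary data $\phi^\pm_\eta$; by the probabilistic representation for the limiting Brownian motion, $F^\pm_\eta(x)=\pbm^x[\phi^\pm_\eta(B_{\tau_{\cball1}})]$, and since the exit distribution of $\CovMat$-Brownian motion from $\cball1$ has a bounded density with respect to surface measure on $\partial\cball1$, we get $\sup_{x\in\overline{\cball1}}|F^+_\eta(x)-F^-_\eta(x)|\le C\eta$ and in particular $|F^\pm_\eta(x)-\ExitProb A(x)|\le C\eta$ for all $x$. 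The functions $F^\pm_\eta$ are smooth in $\cball1$, but their derivatives blow up near $\partial\cball1$, so I would apply Theorem~\ref{thm:quan_hom} on a slightly smaller ball: fix $r'$ with $r<r'<1$, restrict attention to $\cball{r'}$, and note that $\BndSecDer2{F^\pm_\eta}$ and $\BndSecDer3{F^\pm_\eta}$ computed on $\cball{r'}$ are finite constants $M_\eta$ depending only on $\eta,r'$ (and $\CovMat$).

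Next I would invoke Theorem~\ref{thm:quan_hom} with the ball $\cball{r'}$ in place of $\cball1$ and with accuracy parameter $\epsilon':=\epsilon/(4(M_\eta+1))$ (after the trivial rescaling $R\mapsto r'R$, which only changes constants): with probability at least $1-C\exp(-(r'R)^\delta)\ge 1-C\exp(-cR^\delta)$, the $\omega$-harmonic function $G^\pm_{R}$ on $\disdomain{r'R}$ with boundary data $(F^\pm_\eta)_{r'R}$ on $\partial\disdomain{r'R}$ satisfies $\max_{\disdomain{r'R}}|(F^\pm_\eta)_{r'R}-G^\pm_{R}|\le \epsilon/4$. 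Combining with the previous paragraph, on this event $|G^\pm_{R}(x)-\ExitProb A(x/(r'R))|\le C\eta+\epsilon/4$ for all $x\in\disball_{rR}$. It remains to sandwich $P_\omega^x(X_\cdot\text{ exits }\disball_R\text{ through }\tilde A_R)$ between (perturbations of) $G^-_R$ and $G^+_R$. For this I would use the maximum principle for $\omega$-harmonic functions together with Lemma~\ref{lem:BD1459}: the function $x\mapsto P_\omega^x(X_\cdot\text{ exits }\disball_R\text{ through }\tilde A_R)$ is $\omega$-harmonic on $\disdomain R$, and one compares it on the annulus $\disball_R\setminus\disball_{r'R}$ to $G^\pm_R$ — the issue is that $G^\pm_R$ is only defined on $\disdomain{r'R}$, so I would instead run the walk until it exits $\disball_{r'R}$, apply the strong Markov property, and estimate the discrepancy between $\1_{\tilde A_R}(X_{\tau_R})$ and $\phi^\pm_\eta(X_{\tau_R}/R)$ in terms of the annealed/quenched probability that the walk exits near $\partial A$; this last estimate again uses Theorem~\ref{thm:quan_hom} (applied to a smooth function equal to $1$ near $\partial A$), or more directly the already-established Corollary~\ref{cor:quan_hom}/its proof on a neighborhood of $\partial A$. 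Assembling: choose $\eta$ small enough that $C\eta<\epsilon/4$, then all error terms sum to at most $\epsilon$, on an event of probability at least $1-C\exp(-cR^\delta)$ with $c,C$ depending only on $A,r,\epsilon$.

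The main obstacle is the boundary-regularity gap: the homogenization estimate of Theorem~\ref{thm:quan_hom} is stated for functions smooth up to a neighborhood of $\partial\cball1$ with controlled derivatives, whereas the exit functional $\ExitProb A$ is genuinely discontinuous at $\partial A$ and its smooth homogenized approximants have derivatives blowing up at the boundary. Handling this requires (i) the measure-zero hypothesis on $\partial A$ to make the $L^1$-approximation of $\1_A$ work, (ii) the bounded-density property of the limiting exit measure to convert $L^1$-closeness of boundary data into uniform closeness of the interior solutions, and (iii) a separate control — via the strong Markov property at $\tau_{r'R}$ and a second application of the quantitative homogenization (Corollary~\ref{cor:quan_hom}) near $\partial A$ — of the contribution of walk trajectories that exit $\disball_R$ close to $\partial(RA)$. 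Once these three points are in place the rest is routine bookkeeping of $\epsilon$'s and a union bound absorbing the polynomially many bad events.
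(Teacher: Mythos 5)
Your overall strategy is the same as the paper's: sandwich $\1_A$ between smooth boundary data $\phi^-_\eta\le\1_A\le\phi^+_\eta$ (possible because $\partial A$ is Lebesgue-null on $\partial\cball 1$), solve the homogenized Dirichlet problem for each, apply Theorem~\ref{thm:quan_hom}, and transfer the sandwich to the quenched exit probability by the comparison principle. But your execution has two genuine problems. First, the premise that drives your detour is false: since $\phi^\pm_\eta$ are smooth on the smooth boundary $\partial\cball 1$ and the operator has constant coefficients, the solutions $F^\pm_\eta$ are smooth up to $\partial\cball 1$ with $\BndSecDer{i}{F^\pm_\eta}\le C(\eta)$ (this is exactly what the paper uses, with $\BndSecDer{i}{F^{(\ell)}}\le C\epsilon^{-i}$); there is no derivative blow-up at the boundary, so no need to retreat to $\cball{r'}$. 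Relatedly, your claim $\sup_{x\in\overline{\cball 1}}|F^+_\eta-F^-_\eta|\le C\eta$ is wrong: the Poisson kernel is not bounded uniformly up to the boundary, and at points of $\partial A$ the two solutions differ by about $1$. The estimate holds only on compact subsets such as $\overline{\cball r}$, which is all you need, but as written the "for all $x$" step is invalid and it quietly matters for how you treat exit points near $\partial(RA)$.

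Second, and more seriously, the smaller-ball detour creates a near-circularity that your patch does not really escape. After the strong Markov property at $\tau_{r'R}$ you must bound $P_\omega^y(X_\cdot\mbox{ exits }\disball_R\mbox{ through }\tilde A_R)$ uniformly for $y\in\partial\disball_{r'R}$ — a statement of exactly the type being proved. Your proposed fix (apply Theorem~\ref{thm:quan_hom} on the full ball to smooth data equal to $1$ near $\partial A$, and to $\phi^\pm_\eta$) is fine, but it is precisely the paper's direct argument, which makes the restriction to $\cball{r'}$ and the strong Markov step redundant; and invoking Corollary~\ref{cor:quan_hom} instead would be circular in the paper's logical order, since that corollary is derived from the present one. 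You also leave implicit the boundary-discretization comparison between $\1_{\tilde A_R}(z)$ and the smooth data at $z/R$ for $z\in\partial\disball_R$ (the smooth data lives on $\partial\cball 1$ while $z/R$ is only within $1/R$ of it). The paper handles all of this cleanly in one stroke: it defines the $\omega$-harmonic approximants $G^{(\ell)}_{R+1}$ on the slightly enlarged ball $\disball_{R+1}$ with boundary data $F^{(\ell)}_{R+1}$, proves the boundary inequality $F^{(1)}_{R+1}-c\epsilon\le\1_{\tilde A_R}\le F^{(2)}_{R+1}+C\epsilon$ on $\partial\disball_R$ (its Step~3, using that the supports of $f^{(1)},f^{(2)}$ are separated from the complement/closure of $A$ at scale $\epsilon$ while the discretization error is $O(1/R)$), and then concludes on all of $\overline{\disball_R}$ by the comparison principle, applying Theorem~\ref{thm:quan_hom} once with accuracy $\epsilon^4$ against derivative bounds of size $\epsilon^{-3}$. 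If you drop the $\cball{r'}$ detour and add the Step~3-type boundary estimate, your argument becomes the paper's.
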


\begin{proof}
We fix $\epsilon>0$. Recall the constant $n_1$ in Theorem~\ref{thm:quan_hom}. If suffices to prove the lemma for all $R\ge n_1(\epsilon^4, P)$.

Our proof consists of several steps.
\begin{enumerate}[Step 1.]
\item 
First, we will define two functions $F^{(1)}, F^{(2)}$ with smooth boundary data such that $F^{(1)}\le \ExitProb A\le F^{(2)}$.
For $A\subset\partial\cball 1$ and $\epsilon\in(0,1)$, we define  subsets $A_\epsilon^-, A_\epsilon^+$ of $\partial\cball{1}$  as
\begin{align*}
&A_\epsilon^-=\left\{x\in A: \dist(x, \partial\cball 1\setminus A)\ge\epsilon\right\},\\
&A_\epsilon^+=\left\{x\in\partial\cball 1:\dist(x,A)\le\epsilon\right\}.
\end{align*}
Clearly, $A_{2\epsilon}^-\subset A_\epsilon^-\subset A\subset A_\epsilon^+\subset A_{2\epsilon}^+$.
We can construct two smooth functions $f^{(\ell)}:\partial\cball 1\to[0,1]$, $\ell=1,2$ such that all of their $i$-th order partial derivatives have absolute values less than $\epsilon^{-i}$ for  $i=1,2,3$, and
\[
\left\{
\begin{array}{lr}
&f^{(1)}|_{A_{2\epsilon}^-}=1\\
&f^{(1)}|_{\partial\cball 1\setminus A_\epsilon^-}=0
\end{array}
\right.\qquad\mbox{ and }\qquad
\left\{
\begin{array}{lr}
&f^{(2)}|_{A_{2\epsilon}^+}=1\\
&f^{(2)}|_{\partial\cball 1\setminus A_{2\epsilon}^+}=0
\end{array}
\right..
\]
Then $f^{(1)}$ is supported on $A_\epsilon^-$ and $f^{(2)}$ is supported on $A_{2\epsilon}^+$.
Now for $\ell=1,2$, let $F^{(\ell)}: \overline{\cball 1}\to[0,1]$ be the solution of the Dirichlet problem
\[
\left\{
\begin{array}{lr}
\sum_{i,j=1}^d\CovMat_{ij}\partial_{ij} F^{(\ell)}=0 &\mbox{ in }\cball 1\\
F^{(\ell)}=f^{(\ell)} &\mbox{ on }\partial\cball 1.
\end{array}
\right.
\]
Note that $f^{(1)}\le 1_A\le f^{(2)}$ on $\partial\cball 1$ and so
\[
F^{(1)}\le \ExitProb A\le F^{(2)} \quad \mbox{ in } \overline{\cball 1}.
\]
Note also that by the definitions of $f^{(\ell)}$, we have for $i=1,2,3$ and $\ell=1,2$,
 \[
 \BndSecDer{i}{F^{(\ell)}}\le C/\epsilon^i,
 \]
 where $\BndSecDer{i}{F}$ denotes the supremum of the absolute values of all $i$-th order derivatives of $F$ over  $\cball 1$. 
Moreover, for $\ell=1,2$,
\begin{equation}\label{eq:F-perturb}
\sup_{\cball r}|\ExitProb A-F^{(\ell)}|\le \sup_{x\in\cball r}\pbm^x(\mbox{exits $\partial\cball 1$ from }A_{2\epsilon}^+\setminus A_{2\epsilon}^-) Ê\mathop{\longrightarrow}^{\epsilon \to 0} 0 
\end{equation}
\item 
Next, we will define two $\omega$-harmonic functions on $\disball_{R+1}$ whose boundary values  agree with that of $F^{(\ell)}$, $\ell=1,2$. 
Let $\disdomain{R+1}=\left\{x\in\disball_{R+1}: y\in\disball_{R+1}\mbox{ for all }y\sim x\right\}$. Note that $\overline{\disdomain{R+1}}=\disball_{R+1}$. For $\ell=1,2$, let $G_{R+1}^{(\ell)}:\overline{\disball_{R+1}}\to[0,1]$ be the solution of the Dirichlet problem
\[
\left\{
\begin{array}{lr}
L_\omega G_{R+1}^{(\ell)}=0 &\mbox{ in }\disdomain{R+1}\\
G_{R+1}^{(\ell)}=F_{R+1}^{(\ell)} &\mbox{ in }\partial\disdomain{R+1}.
\end{array}
\right.
\]
Recall that $F_{R+1}^{(\ell)}(x):=F^{(\ell)}(x/(R+1))$ for $x\in\overline{\cball {R+1}}$. Then, for any $R\ge n_1(\epsilon^4, P)$, by Theorem~\ref{thm:quan_hom}, with probability at least $1-C\exp(CR^\delta)$  we have
\begin{equation}\label{eq:Fell-Gell}
\max_{\disball_{R+1}}|F_{R+1}^{(\ell)}-G_{R+1}^{(\ell)}|\le 
\epsilon^4(\BndSecDer{2}{F^{(\ell)}}+\BndSecDer{3}{F^{(\ell)}})\le C\epsilon,
\quad \ell=1,2.
\end{equation}
\item 
We will show for all $R\ge n_1(\epsilon^4, P)\ge \epsilon^{-8}$, 
\begin{equation}
\label{eq:bdry-perturb}
F^{(1)}_{R+1}-c\epsilon
\le 
1_{\tilde A_R}
\le 
F^{(2)}_{R+1}+C\epsilon
\quad\mbox{ on }\partial\disball_R.
\end{equation}
First, for any $x\in\partial\disball_R\setminus\tilde A_R$, since $R\ge \epsilon^{-8}$, we have
$\dist(\tfrac x{R+1}, \partial\cball 1\setminus A^-_\epsilon)\le 1/R\le\epsilon^8$ and $\dist(\tfrac x{R+1},A_\epsilon^-)\ge C\epsilon$. Hence for  $x\in\partial\disball_R\setminus\tilde A_R$,
\[
1-F_{R+1}^{(1)}(x)\ge \pbm^{x/{R+1}}(\mbox{exits $\partial\cball 1$ at $\partial\cball 1\setminus A_\epsilon^-$})\ge 1-c\epsilon^7,
\]
which implies that $F_{R+1}^{(1)}-c\epsilon^7\le 1_{\tilde A_R}$ on $\partial\disball_R$. The first inequality of \eqref{eq:bdry-perturb} is proved.

Similarly, to obtain the second inequality of \eqref{eq:bdry-perturb}, notice that for any $x\in\tilde A_R$, $\dist(\tfrac{x}{R+1}, A_\epsilon^+)\le 1/R\le \epsilon^8$ and $\dist(\tfrac{x}{R+1},\partial\cball 1\setminus A_\epsilon^+)\ge C\epsilon$. Hence for $x\in\tilde A_R$, 
\[
F_{R+1}^{(2)}(x)\ge \pbm^{x/{R+1}}(\mbox{exits $\partial \cball 1$ at $A_\epsilon^+$})\ge 1-C\epsilon^7,
\]
which implies that $F_{R+1}^{(2)}+C\epsilon^7\ge 1_{\tilde A_R}$ on $\partial\disball_R$. Our proof of \eqref{eq:bdry-perturb} is now complete.
\item 
With \eqref{eq:bdry-perturb}, assuming \eqref{eq:Fell-Gell} we have
\[
G_{R+1}^{(1)}-c\epsilon
\le 1_{\tilde A_R}
\le G_{R+1}^{(2)}+C\epsilon\quad
\mbox{ on }\partial\disball_R
\] 
and so on $\overline{\disball_R}$,
\[
G_{R+1}^{(1)}-c\epsilon
\le 
P_\omega^x(X_\cdot \mbox{ exits $\disball_R$ through }\tilde A_R)
\le 
G_{R+1}^{(2)}+C\epsilon.
\]
This inequality, together with \eqref{eq:F-perturb} and \eqref{eq:Fell-Gell}, yields
\[
\max_{x\in \disball_{rR}}|\ExitProb{A}(\tfrac x R)-P_\omega^x(X_\cdot \mbox{ exits $\disball_R$ through }\tilde A_R)|\le C\epsilon.
\]
\end{enumerate}
Recalling that \eqref{eq:Fell-Gell} occurs with probability at least $1-C\exp(CR^\delta)$, the corollary is proved.
\end{proof}

\section{Percolation estimates}\label{sec:perc}

In this section we study connectivity properties of the balanced directed percolation at $\omega$.
%

The main results of this section are the following two propositions.
\begin{proposition}\label{prop:connect_outside_sink}
There exists $\ConnectExp>0$ depending only on the dimension and a constant $C<\infty$ depending on $P$, s.t.
\begin{equation}\label{eq:connect_outside_sink1}
P
\left[
\mbox{there exists } x \mbox { s.t. } 0\connom x, \ x\notin\thesink
\mbox{ and }
\|x\| = k
\right] < Ck^de^{-k^\ConnectExp}.
\end{equation}
\end{proposition}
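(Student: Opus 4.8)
The plan is to pass to the directed percolation $x\conneib y$ and run a renormalisation (block) argument. \emph{Reduction.} If $0\connom x$ with $x\notin\thesink$, then any directed path $0=z_0\conneib z_1\conneib\dots\conneib z_m=x$ cannot meet $\thesink$: a sink has no outgoing edges, so once the path entered $\thesink$ it would stay there and could not reach $x\notin\thesink$. Let $\BernClust=\BernClust(\omega)$ be the set of vertices joined to $0$ by a directed path avoiding $\thesink$; it is an undirected‑connected subset of $\Z^d\setminus\thesink$ containing $0$, and the event in \eqref{eq:connect_outside_sink1} is contained in $\{\BernClust\cap\{z:\|z\|=k\}\ne\emptyset\}$. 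Since there are at most $Ck^d$ lattice points of norm $k$, it suffices to prove $P\big(\BernClust\cap\{z:\|z\|=k\}\ne\emptyset\big)\le Ck^de^{-k^{\ConnectExp}}$ for a suitable $\ConnectExp=\ConnectExp(d)>0$.

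\emph{Renormalisation.} Fix a scale $L$, to be chosen a small power of $k$, and tile $\Z^d$ by boxes $B_v=Lv+[0,L)^d$. Declare $B_v$ \emph{good} if an event $\mathcal G_v$, measurable with respect to $\omega$ restricted to $3B_v$, occurs, where $\mathcal G_v$ is designed so that: (a) $P(\mathcal G_v^c)\le Ce^{-cL^{\beta_0}}$ for a dimensional $\beta_0>0$; and (b) no $\thesink$‑avoiding directed path can cross a good box (connect opposite faces of $3B_v$ while staying in $3B_v$). Property (a) makes the events $\{B_v\ \text{bad}\}$ finitely dependent, hence for $L$ large stochastically dominated (Liggett–Schonmann–Stacey) by a subcritical Bernoulli site percolation; property (b) shows that on $\{\BernClust\cap\{z:\|z\|=k\}\ne\emptyset\}$ the bad boxes met by $\BernClust$ contain a $*$‑connected set through $B_0$ that reaches a box at distance $\simeq k/L$. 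Standard subcritical‑cluster estimates then give $P\big(\BernClust\cap\{z:\|z\|=k\}\ne\emptyset\big)\le C(k/L)^de^{-ck/L}$, and choosing $L$ to balance $e^{-cL^{\beta_0}}$ against $e^{-ck/L}$ yields the stretched‑exponential bound $Ck^de^{-k^{\ConnectExp}}$.

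\emph{The good‑box event, and the main obstacle.} Property (b) is the crux, because membership in $\thesink$ is a global feature of $\omega$ whereas $\mathcal G_v$ must be local. The event $\mathcal G_v$ should guarantee a ``core'' $W_v\subset 3B_v$: a set containing at least a $(1-\Phi/2)$‑fraction of $B_v$, with any two of its vertices joined by a directed path inside $3B_v$, with the funnelling property that every directed path crossing $B_v$ meets $W_v$, and such that $W_v$ is joined, in both directions, to the cores of the neighbouring good boxes by directed paths in $3B_v$. Producing such a $W_v$ with failure probability $\le Ce^{-cL^{\beta_0}}$ is exactly where the quantitative inputs enter: the $T_1$‑tails of Lemma~\ref{lem:well_defined}, the maximum principle Theorem~\ref{thm:max_princ}, the homogenisation Theorem~\ref{thm:quan_hom} and Corollary~\ref{cor:exit} (the walk in $3B_v$ spreads like the Brownian motion of Theorem~\ref{thm:BD14}, so it mixes the box), the density bound Lemma~\ref{lem:posdens}, and genuine $d$‑dimensionality (to steer paths in any direction). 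Granting this, fix $\omega$ in the full‑measure event that in every slab only finitely many boxes are bad. Then $W:=\big(\bigcup_{v\ \text{good}}W_v\big)\cup(\text{connectors})$ is a single infinite strongly connected set with density $>1-\Phi$, so $W\cap\thesink\ne\emptyset$ since $\thesink$ has density $\ge\Phi$ (Lemma~\ref{lem:posdens}). Let $\widetilde W$ be the strongly connected component containing $W$. As $\widetilde W$ meets the strongly connected set $\thesink$ we get $\thesink\subseteq\widetilde W$; and $\widetilde W$ has no outgoing edge, because any vertex $u$ with $\widetilde W\connom u$ satisfies $u\connom\thesink\subseteq\widetilde W$ by Lemma~\ref{lem:BD1459}, forcing $u\in\widetilde W$. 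Hence $\widetilde W$ is a sink, so $\widetilde W=\thesink$ by Proposition~\ref{prop:unique}, and therefore $W\subseteq\thesink$; in particular $W_v\subseteq\thesink$ for every good box, which is (b). The main difficulty of the whole argument is to carry out the construction of $W_v$ — near‑full‑volume, strongly connected inside a bounded window, with the funnelling and wiring properties — quantitatively, with the stated exponentially small failure probability extracted from the above inputs.
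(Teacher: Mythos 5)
There is a genuine gap, and it sits exactly where you place it: the good-box event $\mathcal G_v$ is never constructed. Your whole scheme needs a \emph{local} event, measurable inside $3B_v$, which (a) fails with probability at most $Ce^{-cL^{\beta_0}}$ and (b) guarantees a near-full-density, strongly connected core $W_v$ that every directed crossing path must meet and that is ultimately shown to lie in $\thesink$. You defer this ("granting this\dots") and list Lemma~\ref{lem:well_defined}, Theorem~\ref{thm:max_princ}, Theorem~\ref{thm:quan_hom} and Lemma~\ref{lem:posdens} as the inputs that "should" produce it, but none of these results controls the geometry of directed paths or the local structure of the sink at a quantitative rate; the homogenisation and maximum-principle statements are about harmonic functions, and the density and uniqueness statements (Lemma~\ref{lem:posdens}, Proposition~\ref{prop:unique}, Lemma~\ref{lem:GM}) are soft ergodic facts with no rate. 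Worse, the funnelling requirement --- with failure probability stretched-exponential in $L$, every sink-avoiding directed path inside a box of side $L$ is blocked --- is essentially the proposition itself at scale $L$ in place of $k$ (your choice $L=k^{\text{small power}}$ makes this explicit), so as written the argument is circular unless you set up a genuine multi-scale bootstrap, which you do not. The soft part of your step (b) (that a strongly connected set of density $>1-\Phi$ which is wired into $\thesink$ must equal $\thesink$, via Lemma~\ref{lem:BD1459} and Proposition~\ref{prop:unique}) is fine, but it is not the hard part.

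The paper avoids needing any such quantitative box estimate. It fixes a \emph{constant} scale $\n$ and calls a cube good if it and its neighbours have unique, mutually connected sinks; the probability of goodness tends to $1$ by the soft Lemma~\ref{lem:GM} (no rate), and Liggett--Schonmann--Stacey then gives domination of a supercritical Bernoulli percolation in dimension $d-1$ (Corollary~\ref{cor:dminusine}), whose cluster of cube-sinks $\BernSink$ satisfies $\BernSink\subseteq\thesink$. The stretched-exponential decay in $k$ is then produced not by box probabilities but by an exploration argument (Lemma~\ref{lem:connect_to_zero}): one runs a breadth-first search over the renormalised cubes met by the \emph{backward} cluster $\{y: y \mbox{ connects to } x\}$ of a point $x\notin\thesink$, and each time the exploration reaches a fresh hyperplane it has a uniformly positive conditional probability (Claim~\ref{claim:bnd_pnt_in_sink} plus supercriticality in the hyperplane) of touching $\BernSink$, which would force $x\in\thesink$; hence a large backward cluster and $x\notin\thesink$ simultaneously have probability $\le Ce^{-k^{\ConnectExp}}$, and the proposition follows by a union bound over the $O(k^{d})$ points at norm $k$, whose backward clusters have size at least $k$. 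Note also that the paper must treat $d=2$ by a separate planar argument (the holes in the complement of the sink are rectangles), since the $(d-1)$-dimensional percolation step degenerates there; your proposal does not address how its missing core construction would be carried out in either regime. To repair your proof you would have to either carry out the quantitative core construction (a substantial new argument, essentially a quantitative local version of the proposition) or switch to the paper's mechanism, where the decay comes from the exploration and only soft goodness estimates are required.
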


\begin{proposition}\label{prop:antal_pist_sink}
For $x,y\in\Z^d$ we define the distance $\DistSink{x}{y}{\omega}\leq\infty$ as the length of a shortest $\omega$-path from $x$ to $y$. Note that in general
$\DistSink{x}{y}{\omega}$ may be different from $\DistSink{y}{x}{\omega}$. Then for
the same $\ConnectExp>0$ as in Proposition \ref{prop:connect_outside_sink}, and some constant $C<\infty$ depending on $P$, we have that for every $x$ and $y$
\begin{equation}\label{eq:connect_outside_sink2}
P\left[
\DistSink{x}{y}{\omega} > C\|x-y\|
\ ; \
x,y\in\thesink
\right]
<C\|x-y\|^de^{-\|x-y\|^\ConnectExp}.
\end{equation}
\end{proposition}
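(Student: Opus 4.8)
The plan is to run a renormalisation (block) argument in the spirit of Antal--Pisztora, with Proposition \ref{prop:connect_outside_sink} as the main input. Fix a large constant scale $L$ and tile $\Z^d$ by boxes $\{B_j\}$ of side $L$; for a box $B$ write $\widehat B,\widetilde B$ for its concentric dilates of side $3L,5L$. Declare $B$ \emph{good} if: (i) $|\thesink\cap B|\ge\tfrac12\Phi L^d$, with $\Phi$ the density lower bound of Lemma \ref{lem:posdens}; (ii) for every $z\in\widehat B$ and every $w$ with $z\connom w$, $w\notin\thesink$ one has $\|w-z\|\le\sqrt L$ (no long non-sink excursion emanates from $\widehat B$); and (iii) for all $p,q\in\thesink\cap\widehat B$ there is a directed $\omega$-path from $p$ to $q$ lying in $\widetilde B$ of length at most $C_0L$. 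The purpose of (iii) is that it lets one stitch together linear pieces along a chain of good boxes: since $\thesink$ is the unique sink (Proposition \ref{prop:unique}), for adjacent good $B,B'$ any $p\in\thesink\cap B$ and $q\in\thesink\cap B'$ both lie in $\widehat B$, hence are joined within $\widetilde B$ by a directed path of length $\le C_0L$.

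The heart of the matter is the estimate $P(B\text{ bad})\le\exp(-cL^\delta)$ for some $\delta>0$, uniform in large $L$. Property (ii) is immediate from Proposition \ref{prop:connect_outside_sink}, by translation invariance, a union bound over $z\in\widehat B$ and a summation over $\|w-z\|=k\ge\sqrt L$, costing at most $CL^d\exp(-cL^{\alpha/2})$. Property (i) follows from a quantitative ergodic/concentration bound for $\Bbbone\{\tau_{-x}\omega\in\supp Q\}$ using Lemma \ref{lem:posdens}, obtained by splitting $B$ into sub-boxes at a slowly growing sub-scale and controlling the short-range dependence again via Proposition \ref{prop:connect_outside_sink}. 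The subtle point is property (iii): one must, on a high-probability event, produce a \emph{linear-length} directed path between any two sink points of $\widehat B$. Here I would combine (a) the deterministic consequence of the event in (ii) that every directed path starting in $\widehat B$ stays within bounded distance of $\thesink$; (b) the quantitative invariance principle in the form of Corollary \ref{cor:exit} and Theorem \ref{thm:BD14}, which guarantees that from any sink point the walk reaches a prescribed sub-box of $\widetilde B$ with probability bounded below, so that \emph{some} directed sink path between the two points exists; and (c) a finite hierarchy of intermediate sub-scales inside $B$ to upgrade ``some path'' to ``a path of length $O(L)$''. I expect step (c)---converting the diffusive, $O(L^2)$-long trajectories produced by the walk into $O(L)$-long chemical paths---to be the main obstacle, and to be exactly where the stretched-exponential (rather than clean exponential) tail of the bad-box probability is forced; its exponent should be matched to the $\alpha$ of Proposition \ref{prop:connect_outside_sink}.

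Granting $P(B\text{ bad})\le\exp(-cL^\delta)$, fix $L$ so large that this lies below the critical probability for i.i.d. site percolation on $\Z^d$. Since $\{B_j\text{ good}\}$ depends on $\omega$ only through a bounded neighbourhood of $B_j$, the field $(\Bbbone\{B_j\text{ good}\})_j$ stochastically dominates a supercritical i.i.d. field after a standard re-blocking and a Liggett--Schonmann--Stacey type domination. Now let $x,y\in\thesink$ with $\|x-y\|=k$. With probability at least $1-Ck^d\exp(-ck^{\alpha})$: there is a path of at most $Ck/L$ neighbouring good blocks from $\mathrm{block}(x)$ to $\mathrm{block}(y)$ (using supercriticality and, near the endpoints, a short stitching argument to connect $x$ and $y$ to the good cluster through their---w.h.p. small---surrounding bad clusters), and replacing each step of this block path by the $O(L)$-long directed sink path furnished by (iii) yields a directed $\omega$-path from $x$ to $y$ of length $O(k)$ lying entirely in $\thesink$. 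Collecting the probability estimates gives the asserted bound $C\|x-y\|^d e^{-\|x-y\|^{\alpha}}$.
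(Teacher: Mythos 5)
There is a genuine gap, and it sits exactly where you flag it yourself: the block estimate for your property (iii). Your good-box definition requires that \emph{any} two sink points of $\widehat B$ be joined inside $\widetilde B$ by a directed path of length at most $C_0L$, with failure probability $\exp(-cL^\delta)$ --- but this is essentially a finite-scale version of the very statement being proved, and your proposed route to it does not close. The invariance principle (Theorem \ref{thm:BD14}, Corollary \ref{cor:exit}) only tells you that the walk from a sink point reaches a prescribed sub-box with positive probability, hence that \emph{some} directed path exists; it gives no control on chemical length beyond the diffusive $O(L^2)$ number of steps, and your step (c) (``a finite hierarchy of intermediate sub-scales'') is not an argument. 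The same issue resurfaces in your endpoint stitching: connectivity here is directed and not reversible, so ``crossing a small bad cluster'' from $x$ outward into the good cluster, or from the good cluster into $y$, by a path of bounded length is precisely the kind of statement that needs a mechanism, not a remark. A secondary soft spot is property (i): Lemma \ref{lem:posdens} gives an almost-sure lower density for the sink, not a concentration bound with stretched-exponential tails, and upgrading it is not routine (nor needed in the paper).

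The paper's proof (for $d\ge 3$) avoids both difficulties by outsourcing the linear chemical distance to a genuinely Bernoulli structure and by treating the two endpoints asymmetrically. Good cubes dominate a supercritical Bernoulli field (Corollary \ref{cor:dminusine}, via \cite{lss}); the set $\BernSink$, the union of the chosen cube-sinks along the infinite Bernoulli cluster, satisfies an Antal--Pisztora chemical-distance bound \cite{antal_pisztora}, which is the substitute for your (iii). Then the source $x$ is connected \emph{into} $\BernSink$ by an explicit coordinate-monotone directed path: by balancedness every vertex has an outgoing edge increasing $\langle\cdot,\ones\rangle$, and the events that this deterministic-length path meets the slab clusters $\D(j)$ at times $3j\n$ are independent with positive probability, giving an exponential tail for the entry time (this is the step your scheme has no analogue of). The target $y$ is handled in the reverse direction by Lemma \ref{lem:connect_to_zero}: since the set of points with a directed path to $y\in\thesink$ is infinite, it meets $\BernSink$ within distance $k$ except on an event of probability $Ck^de^{-k^\ConnectExp}$. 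Concatenating the three pieces gives \eqref{eq:connect_outside_sink2}. If you want to salvage your renormalisation scheme, you would need to replace property (iii) by something you can actually verify at scale $L$ --- for instance, connectivity to a dominated Bernoulli structure as in the paper --- rather than postulating the linear-distance property wholesale.
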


Unfortunately we need to provide different proofs for Proposition \ref{prop:connect_outside_sink} in two dimensions and in larger dimensions. The reason is that our high-dimensional proof uses the fact that Bernoulli percolation in $d-1$ dimensions 
has a non-trivial critical value, whereas our 2-dimensional proof relies heavily on planarity.

\subsection{Proof of Propositions \ref{prop:connect_outside_sink} and \ref{prop:antal_pist_sink} in three or more dimensions}
\begin{claim}\label{claim:sink_mon_dec}
Let $\numsinks(n)$ be the number of sinks in $[-n,n]^d$. Then a.s. there exists some $n_0$ such that $\numsinks(n+1)\leq \numsinks(n)$ for all $n>n_0$. In particular, the limit $\numsinks=\lim_{n\to\infty}\numsinks(n)$ exists a.s.
\end{claim}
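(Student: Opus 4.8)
The plan is to interpret ``a sink in $[-n,n]^d$'' as a closed communicating class of the finite directed graph $\Gamma_n$ with vertex set $\Lambda_n:=[-n,n]^d\cap\Z^d$ and a directed edge from $x$ to $y$ whenever $x,y\in\Lambda_n$ are neighbours with $\omega(x,y-x)>0$; equivalently, a terminal strongly connected component of $\Gamma_n$. Note $\Gamma_n$ is the subgraph of $\Gamma_{n+1}$ induced on $\Lambda_n$, and that every $x\in\Lambda_n$ has out-degree at least one in $\Gamma_n$ (choosing $i$ with $\omega(x,e_i)>0$, balancedness gives $\omega(x,-e_i)>0$, and one of $x\pm e_i$ lies in $\Lambda_n$), so that $\numsinks(n)\ge1$ for every $n$. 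Since $\numsinks(\cdot)$ is $\{1,2,\dots\}$-valued, it suffices to prove that a.s.\ $\numsinks(n+1)\le\numsinks(n)$ for all large $n$: an eventually non-increasing sequence of positive integers is eventually constant, which in particular yields the limit.

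The first step is the structural bound
\[
\numsinks(n+1)\ \le\ \numsinks(n)+b(n+1),\qquad
b(m):=\#\bigl\{\text{terminal components of }\Gamma_m\text{ contained in }\Lambda_m\setminus\Lambda_{m-1}\bigr\}.
\]
Enlarging the box only adds vertices and edges, so passing from $\Gamma_n$ to $\Gamma_{n+1}$ can merge strongly connected components but never split them; moreover, if $C'$ is a terminal component of $\Gamma_{n+1}$ with $C'\cap\Lambda_n\ne\emptyset$, then $C'\cap\Lambda_n$ is closed under the edges of $\Gamma_n$ (an edge of $\Gamma_n$ leaving $C'\cap\Lambda_n$ would be an edge of $\Gamma_{n+1}$ leaving $C'$), and a nonempty closed set in a finite digraph with no out-degree-zero vertex contains a terminal component. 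Picking one such terminal component of $\Gamma_n$ inside each $C'\cap\Lambda_n$ gives an injection from the terminal components of $\Gamma_{n+1}$ meeting $\Lambda_n$ into those of $\Gamma_n$, and the components of $\Gamma_{n+1}$ that miss $\Lambda_n$ are precisely those counted by $b(n+1)$.

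It remains to show $b(m)=0$ for all large $m$, a.s., which is the heart of the argument. By Proposition~\ref{prop:unique} and Lemma~\ref{lem:BD1459} the only closed communicating class of the walk on $\Z^d$ is the sink $\thesink$, which by Lemma~\ref{lem:posdens} is infinite; hence a finite set $C$ lying in the thin shell $\Lambda_m\setminus\Lambda_{m-1}$ can be a terminal component of $\Gamma_m$ only because of the truncation at $\partial\Lambda_m$. Analysing such a $C$: taking a point of $C$ extremal in a suitable coordinate and using balancedness, its transition law can have no inward weight (that would leave $C$ inside $\Gamma_m$, towards $\Lambda_{m-1}$) and hence no outward weight either, so it must be supported on the tangential directions of the relevant face; iterating this forces $C$ to ``freeze'' and to fill a whole face or edge of $\partial\Lambda_m$, i.e.\ at least of order $m$ sites each carrying a constraint that, by genuine $d$-dimensionality (Assumption~\ref{ass:main}), fails with probability bounded away from $0$, independently across sites. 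This gives $P[b(m)>0]\le p(m)\,q^{cm}$ for some polynomial $p$ and $q<1$, which is summable, so Borel--Cantelli finishes the proof. The main obstacle is exactly this geometric/percolative estimate: one has to rule out that the spurious shell component is a genuine finite piece of $\thesink$ that ceases to be terminal only at a larger scale, and to bound the rarity of a frozen region uniformly over its shape and position on the $(d-1)$-dimensional boundary -- it is at this point that the higher-dimensional argument must diverge from the planar one, comparing the frozen region with a sub-critical Bernoulli percolation cluster on $\partial\Lambda_m$.
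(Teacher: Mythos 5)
Your combinatorial reduction is fine: every terminal component of $\Gamma_{n+1}$ that meets $\Lambda_n$ contains a terminal component of $\Gamma_n$, so $\numsinks(n+1)\le \numsinks(n)+b(n+1)$, and eventual monotonicity follows once $b(m)=0$ for all large $m$. This half is essentially the paper's observation that every sink of the larger box which intersects the smaller box contains a sink of the smaller box. But the paper disposes of the shell-contained components in one line by quoting the density input of Lemma~\ref{lem:posdens}: almost surely there is a (random) finite $n_0$ such that for $n>n_0$ every sink of $[-n,n]^d$ has density at least $\Phi$ in the box, while the one-layer shell $[-n-1,n+1]^d\setminus[-n,n]^d$ has relative volume less than $\Phi/2$, so no sink of the larger box fits inside the shell. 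You never use this density bound; instead you try to prove from scratch the much stronger quantitative statement $P[b(m)>0]\le p(m)q^{cm}$ and conclude by Borel--Cantelli, and it is exactly this step that is not proved.

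Concretely, two things are missing. First, the geometric claim that a terminal component $C\subseteq\Lambda_m\setminus\Lambda_{m-1}$ must ``fill a whole face or edge'', i.e.\ contain order $m$ sites each carrying a fixed per-site constraint, is asserted but not established, and in $d\ge 3$ it is genuinely nontrivial: a face-interior site of $C$ is only forced to have zero weight in the normal direction, so $C$ restricted to a face behaves like a closed set of a $(d-1)$-dimensional balanced environment and can be a union of segments that change direction inside the face; extracting a rigid order-$m$ substructure requires a careful extremal/slicing argument over the faces, edges and corners of the cube (your planar ``freezing'' picture does not iterate as stated). Second, even granting that $|C|\ge cm$, the probabilistic step does not follow: a union bound over the possible shapes of $C$ costs exponential (lattice-animal) entropy per site, while Assumption~\ref{ass:main} only guarantees that each per-site constraint (e.g.\ $\omega(x,e_i)=0$, or ``law supported on a single axis'') has probability $q<1$, with no quantitative smallness -- $q$ may be arbitrarily close to $1$, so the comparison with a \emph{sub-critical} Bernoulli percolation on $\partial\Lambda_m$ that you invoke is simply not available in general. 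To complete your route you would need to show that any shell component must contain a member of a polynomial-size deterministic family of order-$m$ configurations (full face-crossing lines or full edges, say), so that independence across those sites beats only a polynomial union bound; your closing sentence acknowledges this obstacle rather than resolving it. Alternatively, the gap disappears entirely if you use the paper's softer argument: the density bound of Lemma~\ref{lem:posdens} rules out shell-contained sinks beyond a random finite scale, with no rate needed.
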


\begin{proof}
Let $n_0$ be so large that
\begin{enumerate}
\item For all $n>n_0$, every sink in $[-n,n]^d$ has density at least $\Phi$.
\item For all $n>n_0$,
\begin{equation}\label{eq:boundary_small}
\frac{\left| [-n-1,n+1]^d \setminus [-n,n]^d \right|}
{\left| [-n-1,n+1]^d  \right|} < \frac \Phi 2.
\end{equation}
\end{enumerate}

Lemma \ref{lem:posdens}
guarantees that almost surely $n_0<\infty$.

Fix $n>n_0$. Then every sink in $[-n-1,n+1]^d$ intersects $[-n,n]^d$, and thus contains (at least one) sink in $[-n,n]^d$. Therefore $\numsinks(n+1)\leq \numsinks(n)$.
\end{proof}

We can also identify the value of the limit $\numsinks=\lim_{n\to\infty}\numsinks(n)$.

\begin{claim}\label{claim:num_sink_1}
$\lim_{n\to\infty}\numsinks(n) = 1.$
\end{claim}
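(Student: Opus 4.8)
The plan is to combine Claim~\ref{claim:sink_mon_dec} with Proposition~\ref{prop:unique}. Claim~\ref{claim:sink_mon_dec} already tells us that $\numsinks=\lim_{n\to\infty}\numsinks(n)$ exists almost surely and that $\numsinks(n)$ is eventually non-increasing; so it remains to rule out $\numsinks=0$ and $\numsinks\ge 2$.

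First I would argue $\numsinks\ge 1$. By Lemma~\ref{lem:posdens}\eqref{item:allpnt}, $P$-a.s.\ every point of $\supp_\omega Q$ lies in a sink (here $Q$ is the unique invariant measure from Theorem~\ref{thm:BD14}). Since $Q$ is absolutely continuous with respect to $P$ and not identically zero, $\supp_\omega Q$ is nonempty with positive density (by the ergodic theorem, its density equals $P(\supp Q)>0$), so it contains points, hence there is at least one sink; pick one such sink $S$. Because every sink has lower density at least $\Phi>0$ by Lemma~\ref{lem:posdens}\eqref{item:posdens}, $S$ has infinitely many points, and in particular $S\cap[-n,n]^d\ne\emptyset$ for all large $n$, so $\numsinks(n)\ge 1$ eventually and $\numsinks\ge 1$.

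Next I would rule out $\numsinks\ge 2$. By Proposition~\ref{prop:unique} there is ($P$-a.s.) a unique sink $S$ in $\Z^d$. If two distinct sinks $S',S''$ of $[-n,n]^d$ persisted as $n\to\infty$ (i.e.\ contributed to the limit $\numsinks\ge 2$), then by the monotonicity in Claim~\ref{claim:sink_mon_dec} each contains, for all large $n$, a sink of $[-n,n]^d$, and these nested sequences stabilize; passing to the limit, their unions would be two disjoint strongly connected subsets of $\Z^d$, each of which is a sink of $\Z^d$ (no point of either can reach outside, since for large $n$ a sink of $[-n,n]^d$ with density $\ge\Phi$ cannot escape — more carefully, one uses that a maximal strongly connected set with no outgoing $\omega$-edges in all of $\Z^d$ is obtained as the increasing union). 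This contradicts uniqueness, so $\numsinks\le 1$.

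Combining the two inequalities gives $\numsinks=1$, which is the claim. The main obstacle I anticipate is the bookkeeping in the second step: making precise that two distinct sinks surviving in the box limit really yield two distinct sinks of $\Z^d$, rather than, say, two box-sinks that merge into one $\Z^d$-sink or leak into each other's basins. This requires carefully tracking that the box-sinks are eventually nested (Claim~\ref{claim:sink_mon_dec}'s proof) and that strong connectedness and the ``no escape'' property pass to the limit — using Lemma~\ref{lem:posdens} to guarantee each limiting object is nonempty and genuinely a sink, and Lemma~\ref{lem:BD1459} (the walk eventually stays in the unique sink) to see there cannot be a second absorbing class.
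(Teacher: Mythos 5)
Your strategy is sound and it reaches the claim by a route that differs from the paper's at the final step. Both arguments begin identically: once $\numsinks(n)$ is constant, say equal to $\numsinks\ge 2$, for all $n\ge N$, every sink of the larger box contains a sink of the smaller box, and equality of the counts upgrades this to a bijection, so the box-sinks organize into $\numsinks$ disjoint increasing chains. You then pass to the limit and produce two disjoint sinks of $\Z^d$, contradicting Proposition~\ref{prop:unique} directly. The paper never takes limits: it picks $x_1,x_2$ in two distinct sinks of $[-N,N]^d$, notes that their reachability sets in $\Z^d$ are disjoint (if some $y$ were reachable from both, then inside a sufficiently large box both $x_1$ and $x_2$ would lie in the box-sink containing $y$, merging two chains and forcing $\numsinks(n)<\numsinks(N)$), and then contradicts Lemma~\ref{lem:BD1459}, by which every point of $\Z^d$ can reach the unique infinite sink. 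Your route buys a contradiction with uniqueness alone; the paper's buys a shorter argument that avoids verifying that a limit object is a sink.

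The ``bookkeeping'' you flag is genuinely needed, and the justification you sketch is not the right one: the density bound $\Phi$ plays no role in showing the limit set cannot leak. The correct (and short) argument is this: let $T_N'\subseteq T_{N+1}'\subseteq\cdots$ be one of the increasing chains of box-sinks and let $S'_\infty$ be its union. If $x\in T_n'$ has an $\omega$-path to some $y$, the entire path lies in $[-m,m]^d$ for some $m\ge n$; since $T_n'\subseteq T_m'$ and $T_m'$ is absorbing within $[-m,m]^d$, the endpoint $y$ lies in $T_m'\subseteq S'_\infty$. Strong connectedness of $S'_\infty$ and disjointness of the two unions follow from the same properties of the $T_m$'s (two distinct sinks of one box are disjoint, and a point in $T_n'\cap T_m''$ with $m\ge n$ would put it in $T_m'\cap T_m''$). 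With this inserted, your proof is complete. Two small points: the containment in your second step should be stated the other way around (each sink of the smaller box is contained in one of the larger box, which is what makes the chains increase), and your first step can be shortened, since every finite box trivially contains at least one strongly connected class with no outgoing edges inside the box, so $\numsinks(n)\ge 1$ needs no appeal to Lemma~\ref{lem:posdens}.
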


\begin{proof}
By Proposition \ref{prop:unique}, the infinite sink is unique.
Assume for contradiction that $\numsinks>1$. Then there exists some $N>n_0$ (where $n_0$ is from the proof of Claim \ref{claim:sink_mon_dec})
such that $\numsinks(n)=\numsinks>1$ for all $n>N$. By induction on the previous argument, for every $n\geq N$, every sink in $[-n,n]^d$ intersects $[-N,N]^d$. Furthermore, every sink in $[N,N]^d$ is contained in a sink in $[-n,n]^d$, because
$\numsinks(n)=\numsinks(N)$. Let $x_1$ and $x_2$ belong to two distinct sinks in $[-N,N]^d$. Then the set of points (in $\Z^d$) reachable from $x_1$ is disjoint of the set of points reachable from $x_2$.
\ignore{
because if $y$ is reachable from both, it is reachable from both in some $[-n,n]^d$, and thus belong to both sinks, i.e. $x_1$ and $x_2$ are in the same sink in $[-n,n]^d$, and then
$\numsinks(n)<\numsinks(N)$.
} 
This stands in contradiction to 
Lemma \ref{lem:BD1459}
which implies that every point in the (infinite) sink is reachable from all points in $\Z^d$, and is thus reachable from both $x_1$ and $x_2$.
\end{proof}

As an immediate consequence of Claim \ref{claim:num_sink_1} we get the following Grimmett-Marstrand type lemma.

\begin{lemma}\label{lem:GM}
\[
\lim_{n\to\infty}P(\numsinks(n)=1)=1.
\]
\end{lemma}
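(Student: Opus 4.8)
The plan is to derive Lemma~\ref{lem:GM} directly from the almost sure statement of Claim~\ref{claim:num_sink_1} by a soft measure-theoretic argument; no new probabilistic input is needed.

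First I would observe that $\numsinks(n)$ is a $\{0,1,2,\dots\}$-valued random variable, so the almost sure convergence $\numsinks(n)\to 1$ furnished by Claim~\ref{claim:num_sink_1} is equivalent to the assertion that, $P$-almost surely, there is a (random) $n_0<\infty$ with $\numsinks(n)=1$ for all $n\ge n_0$; equivalently, the indicators ${\bf 1}_{\{\numsinks(n)=1\}}$ converge to $1$ $P$-almost surely. (The mere existence of such a stabilization point $n_0$ is already guaranteed by Claim~\ref{claim:sink_mon_dec}; Claim~\ref{claim:num_sink_1} pins the stable value to $1$.)

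Then, since $0\le {\bf 1}_{\{\numsinks(n)=1\}}\le 1$, the bounded convergence theorem gives
\[
P\big(\numsinks(n)=1\big)=E\big[{\bf 1}_{\{\numsinks(n)=1\}}\big]\longrightarrow 1\qquad(n\to\infty),
\]
which is exactly the lemma. Equivalently, one may apply the reverse Fatou lemma to the complementary events $C_n:=\{\numsinks(n)\ne 1\}$: since $P(\limsup_n C_n)=0$ by Claim~\ref{claim:num_sink_1}, we get $\limsup_n P(C_n)\le P(\limsup_n C_n)=0$, so $P(\numsinks(n)=1)\to 1$.

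I do not expect any real obstacle here: all the substance is already contained in Claim~\ref{claim:num_sink_1} (and hence in Proposition~\ref{prop:unique} and Lemma~\ref{lem:BD1459}), and Lemma~\ref{lem:GM} merely repackages that almost sure statement into the quantitative-in-$n$ form that is convenient for the percolation estimates that follow. I would not try to extract an explicit rate of convergence at this point, since the stated qualitative form needs nothing beyond bounded convergence; a rate would require revisiting the quantitative content of the earlier results.
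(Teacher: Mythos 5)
Your proof is correct and takes essentially the same route as the paper, which states Lemma~\ref{lem:GM} without further argument as an ``immediate consequence'' of Claim~\ref{claim:num_sink_1}; the passage from the almost sure convergence of the integer-valued sequence $\numsinks(n)$ to convergence of $P(\numsinks(n)=1)$ via bounded convergence (or reverse Fatou) is exactly the implicit step, and you have filled it in correctly.
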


Typically, the sink is ubiquitous in the cube. We make precise a weak sense of this statement.
\begin{lemma}\label{lem:sink_ubiq}
Fix $k$ and $\epsilon$. For all $n$ large enough, with probability greater than $1-\epsilon$ the following happens:
\begin{enumerate}
\item\label{item:unsink} There is a unique sink in $[-n,n]^d$.
\item\label{item:evrywhr} The sink intersects every sub-cube of side length $n/k$.
\end{enumerate}
\end{lemma}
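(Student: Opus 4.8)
The plan is to prove Lemma~\ref{lem:sink_ubiq} by combining the uniqueness of the sink (Lemma~\ref{lem:GM}) with a covering argument that upgrades ``the sink is non-empty somewhere'' to ``the sink meets every sub-cube,'' using the positive lower density bound from Lemma~\ref{lem:posdens}\eqref{item:posdens}. Part \eqref{item:unsink} is immediate from Lemma~\ref{lem:GM}: for $n$ large the probability that $[-n,n]^d$ contains a unique sink is at least $1-\epsilon/2$. The point of the lemma is really part \eqref{item:evrywhr}.

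For part \eqref{item:evrywhr}, first observe that once we are on the event that there is a unique sink $S_n$ in $[-n,n]^d$, that sink automatically has density at least $\Phi$ provided $n$ is large (apply Lemma~\ref{lem:posdens}\eqref{item:posdens}, absorbing the exceptional null set into the $\epsilon$-budget by taking $n$ past the a.s.-finite threshold with probability $\ge 1-\epsilon/4$). Fix a sub-cube $Q$ of side length $n/k$; there are $(2k)^d$ of them, a number independent of $n$. If $S_n$ missed $Q$ entirely, then $S_n$ would be a strongly connected set contained in $[-n,n]^d\setminus Q$. The natural mechanism to contradict is as follows: consider the restriction of the environment to a slightly enlarged copy $Q'$ of $Q$ (side length, say, $2n/k$, still of order $n$ once $n$ is large relative to $k$). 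Inside $Q'$, with probability tending to $1$ as $n\to\infty$ there is a unique sink $S_{Q'}$ of $Q'$ of density at least $\Phi$, hence $S_{Q'}\cap Q\neq\emptyset$ as long as $(n/k)^d \Phi$ exceeds the volume of the ``shell'' $Q'\setminus Q$ — which we arrange by the choice of the enlargement ratio and $n$ large. Now I want to argue that $S_{Q'}$ is contained in the true sink $S_n$: any point of $S_{Q'}$ from which the walk cannot escape $Q'$ is, in particular, a point from which the walk cannot escape $[-n,n]^d$, and Lemma~\ref{lem:BD1459} together with uniqueness forces it into $S_n$. This gives $S_n\cap Q\supseteq S_{Q'}\cap Q\neq\emptyset$.

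The more careful way to run this, avoiding subtleties about sinks relative to a finite box versus sinks relative to $\Z^d$, is to phrase everything through Lemma~\ref{lem:GM} applied at scale $n/k$: by translation invariance, for each sub-cube $Q$ (a translate of $[0,n/k)^d$) the probability that $Q$, viewed as a box in $\Z^d$, contains a unique sink-of-$Q$ is the same quantity $P(\numsinks(\lfloor n/(2k)\rfloor)=1)$, which by Lemma~\ref{lem:GM} tends to $1$. On that event the sink-of-$Q$ has density $\ge\Phi$, hence is non-empty; and being strongly connected with no $\omega$-path leaving $Q$, it is contained in some sink-of-$[-n,n]^d$; by uniqueness (part \eqref{item:unsink}) that is $S_n$. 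Thus on the intersection of the unique-sink event for $[-n,n]^d$ and the $(2k)^d$ unique-sink events for the sub-cubes, part \eqref{item:evrywhr} holds. A union bound over the $(2k)^d$ sub-cubes, each failing with probability $o(1)$, plus the $O(1)$-many other events, shows the total failure probability is $o(1)$ as $n\to\infty$ with $k,\epsilon$ fixed, so it is below $\epsilon$ for $n$ large.

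The main obstacle I anticipate is bookkeeping the distinction between a ``sink'' as defined in Definition~\ref{def:connected} (a property relative to all of $\Z^d$, via $\connom$) and the finite-box analogue used implicitly in the definition of $\numsinks(n)$ and in Claims~\ref{claim:sink_mon_dec}--\ref{claim:num_sink_1}; one must check that a strongly connected subset of a box $Q$ with no exiting $\omega$-edge really does behave like a sink of $Q$ and really is swallowed by the global sink $S_n$ of the larger box once uniqueness holds — this is exactly the content that Lemma~\ref{lem:BD1459} and Claim~\ref{claim:num_sink_1} are designed to supply, but it needs to be invoked cleanly. The density-versus-shell-volume comparison is routine: it only requires the enlargement ratio and $n$ to be chosen so that $\Phi\,(n/k)^d$ beats the shell volume, which is automatic for $n$ large since both sides scale like $n^d$ with the left-hand constant fixed.
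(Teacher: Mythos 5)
Your part (1) and the general covering strategy agree with the paper, but the central step of your part (2) has the containment running in the wrong direction, and this is a genuine gap. A ``sink of $Q$'' (the object counted by $\numsinks$, i.e.\ a terminal strongly connected component of the environment restricted to the box) is a set admitting no escaping directed path \emph{inside} $Q$; it is not a set with ``no $\omega$-path leaving $Q$,'' as you write. A point of the sub-cube sink lying on $\partial Q$ may well have a directed edge out of $Q$, and from there the walk, moving in $[-n,n]^d\setminus Q$, may never be able to return; hence the unique sink of $Q$ need not be contained in \emph{any} sink of $[-n,n]^d$, and neither uniqueness nor Lemma~\ref{lem:BD1459} (which concerns the global sink in $\Z^d$, not box sinks) repairs this. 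Indeed, the only place the paper deduces ``smaller-box sink $\subseteq$ bigger-box sink'' is in Claim~\ref{claim:num_sink_1}, and there it is extracted from the \emph{equality of sink counts} for concentric boxes, not from a general principle. The paper's proof of item \eqref{item:evrywhr} goes in the opposite, top-down direction: using the density bound of Lemma~\ref{lem:posdens} at every intermediate scale, the induction of Claim~\ref{claim:sink_mon_dec} shows that the unique sink of $[-n,n]^d$ \emph{intersects} each mesh cube (a sink of a larger box has density at least $\Phi$, which beats the thin shell of a slightly smaller nested box, so it meets that box and therefore contains one of its sinks; iterate down to the mesh scale). This intersection statement is exactly what you need and exactly what your bottom-up argument skips; your informal first variant has the same flaw (membership in $S_{Q'}$ does not mean the walk cannot escape $Q'$), and there the arithmetic $\Phi\,(2n/k)^d>(2^d-1)(n/k)^d$ would force $\Phi>1-2^{-d}$, so the factor-$2$ enlargement does not work unless the enlargement ratio is taken close to $1$.

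A secondary, fixable issue: the lemma requires the sink to meet every sub-cube of side $n/k$ at \emph{arbitrary} position, not only the $(2k)^d$ translates of $[0,n/k)^d$, and intersecting the cells of one partition does not imply this. That is precisely why the paper works with the $(4k)^d$ mesh cubes of side $n/2k$: every side-$n/k$ sub-cube contains a full mesh cube, so a union bound over a number of cubes that does not grow with $n$ suffices.
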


\begin{proof}
We already know that Item \eqref{item:unsink} holds w.h.p. for large enough $n$. To see that Item \eqref{item:evrywhr} holds too, we note that it is enough to intersect $(4k)^d$
cubes of side length $n/2k$. If $n$ is large enough then each of them has a unique sink, and by the same induction as in the proof of Claim \ref{claim:sink_mon_dec}, the sink in $[-n,n]^d$
intersects each of them (note that their number does not grow with $n$).
\end{proof}

Another important fact is the following.
\begin{claim}\label{claim:bnd_pnt_in_sink}
There exist $N_0>N$ and $\ProbBndinSink>0$ such that for all $n>N_0$, for every $x\in\partial[-n,n]^d$, the probability that $x$ is in the sink of $[-n,n]^d$ is greater than
or equal to $\ProbBndinSink$.
\end{claim}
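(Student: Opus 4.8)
The plan is to reduce the claim to opening a short directed path from a nearby site of the sink to $x$. The engine is an elementary observation: since the (a.s.\ unique, by Lemma~\ref{lem:sink_ubiq}) sink $S$ of $[-n,n]^d$ is a terminal strongly connected component of the $\omega$-digraph on $[-n,n]^d$, it is closed under successors, so every vertex of $[-n,n]^d$ that is $\omega$-reachable \emph{inside the box} from a site of $S$ is itself in $S$; in particular it suffices to exhibit a site $s\in S$ at bounded distance from $x$ together with an $\omega$-open directed path from $s$ to $x$ lying in the box. Also, by the symmetries of the i.i.d.\ balanced law — coordinate permutations and the reflections $z\mapsto(z_1,\dots,-z_i,\dots,z_d)$, all fixing $\partial[-n,n]^d$ — it is enough to handle $x=(n,x_2,\dots,x_d)$.

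For the nearby sink site I would run a multiscale descent. Pick a nested sequence $[-n,n]^d=Q_0\supset Q_1\supset\dots\supset Q_J\ni x$, where $Q_j$ is a sub-box of side $\asymp nk^{-j}$ (a translate of a centred box, so Lemma~\ref{lem:sink_ubiq} applies to it) and $Q_J$ has a large but fixed side $L_0$. On the event that each $Q_j$ has a unique sink $S_{Q_j}$ which meets every sub-box of $Q_j$ of side $\asymp nk^{-(j+1)}$, these sinks are nested inside $S$: indeed $S_{Q_j}$ is closed under successors in $Q_j$, so $S_{Q_j}\cap Q_{j+1}$ — nonempty by the ubiquity just assumed — is closed under successors in $Q_{j+1}$ and hence contains the unique sink $S_{Q_{j+1}}$; and $S\cap Q_1$, nonempty by ubiquity of $S$, is closed under successors in $Q_1$, so $S_{Q_1}\subseteq S$. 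Therefore $\emptyset\neq S_{Q_J}\subseteq S$ and every site of $S_{Q_J}$ lies within $\sqrt d\,L_0$ of $x$. Summing the failure probabilities of Lemma~\ref{lem:sink_ubiq} over the $O(\log n)$ scales keeps the total below $\epsilon_0$, provided that lemma is available with a decay in the box-side $N$ faster than $1/\log N$.

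To connect such a site $s\in S_{Q_J}$ to $x$, fix a monotone nearest-neighbour lattice path from $s$ to $x$ contained in $Q_J$; it has length $\le CL_0$. Its being an $\omega$-open directed path is a product, over the distinct sites it visits, of the single-site events $\{\omega(u,e)>0\}$, each of which has probability at least $q:=\min_e\nu[\omega(0,e)>0]>0$ by genuine $d$-dimensionality — and here it is essential that, although the environment may be very non-elliptic (cf.\ Example~\ref{exam:Erwin's walk}), every coordinate direction is still opened at a positive density of sites. Thus this event has probability $\ge q^{CL_0}>0$, uniformly in $n$, and combined with the previous paragraph, $x\in S$ with probability at least $(1-\epsilon_0)\,q^{CL_0}>0$, which is the asserted $\xi$.

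The step I expect to be the real obstacle is the interaction of the last two: the site $s$ produced by the descent depends on the whole environment, in particular on $\omega$ near $x$, so the ``open path'' event is not independent of ``$s\in S$''. The fix is a finite-energy / local-resampling argument — resample $\omega$ only on a bounded window $W$ around $x$, show that with conditional probability $\ge q^{CL_0}$ this opens a directed path from some site on $\partial W$ to $x$, and show that resampling on $W$ does not (except with small probability) destroy the property that the box-sink comes within bounded distance of $x$ and is connected to that path's starting site through the environment \emph{outside} $W$. Arranging this robustness — so that the nearby sink site and its membership in $S$ are witnessed off $W$ — together with the bookkeeping for the multiscale sum, is where the work is; everything else (the structural characterization, the reduction, and the last-step probability) is soft.
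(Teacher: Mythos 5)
Your reduction (the box sink is closed under within-box successors, so it suffices to produce a sink site at bounded distance from $x$ together with a bounded-length open directed path to $x$, paid for at constant cost $q^{CL_0}$) is exactly the skeleton of the paper's argument, and your nesting-of-sinks observation is sound. But the device you use to bring the sink within bounded distance of $x$ has a genuine gap, which you flag and then wave past: the multiscale descent needs the failure probability in Lemma~\ref{lem:sink_ubiq} to decay in the box side faster than $1/\log N$, and no such rate exists in the paper. That lemma is derived from purely qualitative statements (Claim~\ref{claim:num_sink_1}, the ``Grimmett--Marstrand type'' Lemma~\ref{lem:GM}), so over the $\Theta(\log n)$ scales of your chain the union bound gives $\epsilon\cdot\Theta(\log n)$, which is vacuous, and the probability that \emph{every} scale succeeds is not bounded below. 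Making sink uniqueness/ubiquity quantitative is essentially a separate project, and the paper deliberately avoids it: it fixes a \emph{single} mesoscopic scale $\n$ (independent of $n$), calls an $\n$-cube good when it and its neighbours have unique, mutually connected sinks, invokes Liggett--Schonmann--Stacey (Corollary~\ref{cor:dminusine}) to dominate supercritical Bernoulli site percolation, and then uses the classical fact --- uniform in $n$ --- that the cluster of good cubes adjacent to the cube containing $x$ is giant with probability bounded below. The long-range part of the problem is thus delegated to Bernoulli percolation, where quantitative estimates are free, instead of to a chain of sink-uniqueness events at growing scales.

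Second, the step you yourself call ``where the work is'' --- decoupling the nearby sink site from the resampling window around $x$ --- is left unresolved, and it is not soft: membership of a site in the box sink is a global event, not measurable with respect to the environment off any bounded window, so ``witnessing off $W$'' must be engineered. This is precisely what the good-cube structure buys in the paper: the connecting object (the giant cluster of good cubes and their sinks, which hook into the box sink by your own nesting argument) lives essentially outside the single cube $\cubenz{\n}{z_0}$ containing $x$, and one then pays $\kappa^{(2\n+1)^d}$ to force the whole configuration on that one cube to be a fixed pattern containing a directed path from the neighbouring cluster to $x$. So your proposal is right in spirit, but as written it proves the claim only modulo a quantitative sink-ubiquity estimate that the paper does not contain and a conditioning argument you have not carried out; both are exactly the points the fixed-scale percolation construction is there to handle.
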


To prove Claim \ref{claim:bnd_pnt_in_sink} we first need some definitions.

For $z\in\Z^d$ and $n>0$, we write $\cubenz{n}{z}$ for the cube $[-n,n]^{d}+(2n+1)z$.

Now let $N$ be large enough for Lemma \ref{lem:sink_ubiq} to hold with $k=10$ and let $\epsilon=\epsilon(d)$ be small enough for what follows, and let $\n=10N$. Then for every $z$, with probability greater than $1-3d\epsilon$, we have that $\cubenz{\n}{z}$ contains a unique sink, that so does every nearest neighbour of $z$, and that the sinks are connected to each other. We call a cube satisfying these conditions good. Note that the event of goodness is independent beyond distance 2. Therefore, from the Liggett-Schonmann-Stacey theorem \cite{lss}, we Corollary \ref{cor:dminusine} below.

We write $\C(\cubenz{\n}{z})$ for the sink in the cube $\cubenz{\n}{z}$.Ê If there is more than one, we use some arbitrary scheme to choose one (in fact, we will only use this definition for good cubes, for which there is anyway only one sink).

\begin{corollary}\label{cor:dminusine}
If $\n$ is large enough,
the good cubes dominate Bernoulli site percolation which is supercritical for dimension $d-1$ (remember that $d\geq 3$).
\end{corollary}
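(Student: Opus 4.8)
The plan is to apply the Liggett--Schonmann--Stacey domination theorem \cite{lss}. Write $\eta_z:=\mathbf 1\{\cubenz{\n}{z}\text{ is good}\}$ for $z\in\Z^d$. As observed just above the statement, goodness of $\cubenz{\n}{z}$ is a local event: it is measurable with respect to $\omega$ restricted to $\cubenz{\n}{z}$, to its $2d$ nearest--neighbour cubes, and to the unions of $\cubenz{\n}{z}$ with each of those neighbours (the latter being what one inspects to verify that the respective sinks are connected to each other). In the rescaled lattice all of this lies within distance $2$ of $z$, so the field $(\eta_z)_{z\in\Z^d}$ is $2$-dependent, uniformly in $\n$.

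Next I would make the common marginal $P(\eta_z=1)$ as close to $1$ as needed. Given $\delta>0$, Lemma~\ref{lem:sink_ubiq} with $k=10$ furnishes an $N$ so that, with $\n=10N$, a single $\n$-cube carries a unique sink meeting every sub-cube of side $\n/10$ with probability $>1-\delta$. A union bound over $z$ and its $2d$ neighbours controls the ``unique sink'' part of goodness; the connectivity part follows from the monotone induction of Claim~\ref{claim:sink_mon_dec} together with uniqueness (Proposition~\ref{prop:unique}, Claim~\ref{claim:num_sink_1}): with high probability the union of two adjacent $\n$-cubes is a box carrying a single sink, and that single sink contains a sink of each of the two sub-cubes, which forces the two sub-cube sinks to lie in one strongly connected set. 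Hence $P(\eta_z=1)\ge 1-3d\delta=:1-3d\epsilon$, uniformly in $z$, with $\epsilon\downarrow 0$ as $N\uparrow\infty$.

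Finally, fix $p$ with $p_c(\Z^{d-1})<p<1$; this is possible because $d-1\ge 2$, so $p_c(\Z^{d-1})<1$. The Liggett--Schonmann--Stacey theorem provides $\epsilon_0=\epsilon_0(p,d)>0$ such that any $2$-dependent field on $\Z^d$ with marginals $\ge 1-\epsilon_0$ stochastically dominates i.i.d. Bernoulli$(p)$ site percolation on $\Z^d$. Choosing $\epsilon<\epsilon_0$ in the previous paragraph, i.e. $\n$ large enough, the good--cube field $(\eta_z)$ dominates Bernoulli$(p)$ on $\Z^d$; restricting to any coordinate hyperplane $\cong\Z^{d-1}$ then shows the good cubes in that hyperplane dominate Bernoulli$(p)$ site percolation on $\Z^{d-1}$, which is supercritical. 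This is exactly the assertion of the corollary.

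The only delicate point is the connectivity clause of goodness in the second paragraph --- that the unique sinks of two adjacent $\n$-cubes are connected with high probability. Everything else (locality, the union bound, the invocation of \cite{lss}) is routine once that is in hand, and the connectivity is precisely where uniqueness of the infinite sink and the monotonicity of $\numsinks(n)$ enter.
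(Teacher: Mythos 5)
Your proposal is correct and follows essentially the same route as the paper: goodness is a $2$-dependent local event whose marginal probability can be pushed above any threshold by Lemma~\ref{lem:sink_ubiq} (with $k=10$, $\n=10N$), and then the Liggett--Schonmann--Stacey theorem \cite{lss} yields domination of Bernoulli site percolation at a parameter supercritical for $\Z^{d-1}$ (using $d-1\ge 2$). The paper leaves the argument at the level of the paragraph preceding the corollary; your write-up merely spells out the same steps, including the connectivity-of-adjacent-sinks point via uniqueness and the monotonicity of $\numsinks(n)$.
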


We write $\BernClust$ for the infinite cluster of the percolation of good cubes. We write 
\[
\BernSink = \bigcup_{z\in\BernClust}\C(\cubenz{\n}{z}),
\]
and note that $\BernSink\subseteq\thesink$.

\begin{proof}[Proof of Claim \ref{claim:bnd_pnt_in_sink}]
Let $\n$ be as in Corollary \ref{cor:dminusine}, and assume w.l.o.g. that $(2 \n +Ê1)\ |\ (2n + 1)$. Break $\cubenz{n}{0}$ into cubes $\left(\cubenz{\n}{z} \right)_{z \in [-(2n + 1) / (2\n + 1) , (2n + 1) / (2\n + 1)]^d}$, and let $z_0$ be such that $x \in \cubenz{\n}{z_0}$.
Then with probability bounded away from zero in $n$, the good cubes in $[-(2n + 1) / (2\n + 1) , (2n + 1) / (2\n + 1)]^d \ \{z_0\}$ have a giant component neighbouring $z_0$, and conditioned on this event, with probability greater than or equal to $\kappa^{(2\N + 1)^d}$ there is a path from this component to the point $x$. Under event, whose probability is bounded away from zero in $n$, the point $x$ is in the sink.
\end{proof}


We now advance towards proving Proposition \ref{prop:connect_outside_sink}. 
We start by proving the bound for connection in the other direction.

\begin{lemma}\label{lem:connect_to_zero}
For $z\in\Z^d$, write $\GoingTo{\omega}{z}:=\{x:x\connom z\}$. Then
there exists $\ConnectExp>0$ such that
\begin{equation}\label{eq:connect_to_zero2}
P
\left[
|\GoingTo{\omega}{0}|>k
\mbox{ and }
\GoingTo{\omega}{z} \cap \BernSink =\emptyset
\right] < Ce^{-k^\ConnectExp}.
\end{equation}
\end{lemma}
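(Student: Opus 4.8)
The plan is to show that if the set $\GoingTo{\omega}{0}=\{x:x\connom 0\}$ is large but avoids the ``Bernoulli sink'' $\BernSink$, then the good-cube percolation must fail to have a large cluster near the origin, which is exponentially unlikely by Corollary \ref{cor:dminusine} (supercriticality) together with standard large-deviation estimates for supercritical percolation.

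\medskip

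\textbf{Step 1: Reduce to a statement about good cubes.} Recall that $\BernSink=\bigcup_{z\in\BernClust}\C(\cubenz{\n}{z})$, where $\BernClust$ is the infinite cluster of good $\n$-cubes. The key observation is that if a cube $\cubenz{\n}{z}$ is good and belongs to $\BernClust$, then its sink $\C(\cubenz{\n}{z})$ is contained in $\thesink$ and hence in $\BernSink$. The first thing I would establish is a deterministic (or high-probability) geometric fact: if $x\connom 0$ and $x$ lies in a good cube $\cubenz{\n}{z}$, then $x$ can reach the sink of that cube, so the entire oriented path from $x$ towards $0$ stays out of $\BernSink$ only if it never enters a good cube that is in $\BernClust$. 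More precisely, once the path from $x$ enters a good cube in $\BernClust$, it is absorbed into $\BernSink$ (by Lemma \ref{lem:BD1459}-type reasoning inside that cube, the walk from any point of a good cube reaches its sink), contradicting $\GoingTo{\omega}{z}\cap\BernSink=\emptyset$. Hence on the event in \eqref{eq:connect_to_zero2}, every point of $\GoingTo{\omega}{0}$ lies in a cube $\cubenz{\n}{z}$ that is either not good, or good but not in the infinite cluster $\BernClust$ — i.e., in a finite cluster of good cubes, or in a ``bad'' cube.

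\medskip

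\textbf{Step 2: Covering argument and counting.} Since $|\GoingTo{\omega}{0}|>k$ and each $\n$-cube contains only $(2\n+1)^d=O(1)$ points, the set of $\n$-cubes $z$ meeting $\GoingTo{\omega}{0}$ has cardinality at least $k/(2\n+1)^d=:k'$. Moreover $\GoingTo{\omega}{0}$ is a connected subset of $\Z^d$ in the undirected sense (any $x\connom 0$ is connected to $0$ by a path of sites all in $\GoingTo{\omega}{0}$), so the corresponding set of $\n$-cubes is $*$-connected (connected in the lattice of $\n$-cubes, allowing diagonal adjacency). By Step 1 this is a $*$-connected set of $\ge k'$ cubes, each of which is either bad or in a finite good-cluster; in particular it contains a $*$-connected ``blocking'' set — here I would invoke the standard fact that a $*$-connected set of cubes of size $\ge k'$ in which the origin's good-cluster is finite forces either $\ge ck'$ bad cubes in a $*$-connected configuration, or the good cluster of one of these cubes has size $\ge ck'$.

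\medskip

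\textbf{Step 3: Large-deviation input.} Both alternatives have exponentially small probability: for a fixed density (from Liggett--Schonmann--Stacey the good cubes dominate supercritical $(d-1)$-dimensional Bernoulli site percolation, in particular are supercritical as a $d$-dimensional field of positive density, with density close to $1$ if $\n$ is large), a $*$-connected set of $m$ bad cubes anchored near a fixed location has probability at most $C^m p_{\mathrm{bad}}^{cm}$ which is $\le e^{-c'm}$ when $p_{\mathrm{bad}}$ is small; and by the exponential decay of cluster size in the subcritical dual / the known large-deviation bound for the cluster of the origin in supercritical percolation (the finite cluster containing a given vertex has exponentially small tails), the event that some good cube near $0$ has finite good-cluster of size $\ge m$ is also $\le e^{-c''m^{(d-1)/d}}$ (the surface-order bound suffices here). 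Taking $m=ck'\asymp k$ and choosing $\ConnectExp$ to be the exponent coming out of the weaker of these two bounds (which in general dimension is the $k^{(d-1)/d}$-type exponent, comfortably of the form $k^\ConnectExp$ with $\ConnectExp\in(0,1)$), and absorbing the $Ck^d$ enumeration of the anchor location and a polynomial Peierls factor into the exponential, gives \eqref{eq:connect_to_zero2}.

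\medskip

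\textbf{Main obstacle.} The delicate point is Step 1 — making rigorous that the oriented connectivity $x\connom 0$ combined with goodness of a traversed cube really forces the oriented path into $\BernSink$. One has to be careful that $x\connom 0$ gives an oriented path $x\conneib\cdots\conneib 0$, and show that if this path enters a good cube $\cubenz{\n}{z}$ with $z\in\BernClust$, then because inside a good cube every site can reach the (unique) sink of that cube, and the sink of a good cube in the infinite good-cluster is part of $\BernSink$, we obtain a point of $\GoingTo{\omega}{z}\cap\BernSink$ — but we must track which endpoint ($x$ or $0$) the connectivity is stated for. The cleanest route is to note that $z$ in the statement is arbitrary and that $\GoingTo{\omega}{z}$ and $\GoingTo{\omega}{0}$ overlap when $0\connom z$ is not assumed; I would therefore reformulate so that the relevant inclusion is literally $\GoingTo{\omega}{0}\cap\BernSink\ne\emptyset$ whenever $\GoingTo{\omega}{0}$ reaches a good cube in $\BernClust$, which is exactly what the internal-sink property of good cubes delivers. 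This geometric bookkeeping, rather than the probabilistic estimates, is where the care is needed.
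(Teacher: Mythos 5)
Your reduction in Step 1 is where the argument breaks, and the failure is not just bookkeeping. The event to be excluded is that no point of $\BernSink$ can reach $0$, i.e. $\GoingTo{\omega}{0}\cap\BernSink=\emptyset$. What goodness of a cube gives you is the opposite-direction statement: every site of a good cube can reach, \emph{within the cube}, the cube's unique sink $\C(\cubenz{\n}{z})$. So if the oriented cluster $\GoingTo{\omega}{0}$ meets a good cube of $\BernClust$, you only learn that some point which reaches $0$ can \emph{also} reach $\BernSink$; since the sink of a cube is absorbing inside the cube, nothing forces any point of that sink to reach $0$, and hence nothing contradicts $\GoingTo{\omega}{0}\cap\BernSink=\emptyset$. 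There is no deterministic implication of the kind you want (your proposed fix in the ``Main obstacle'' paragraph asserts exactly this non-implication), and this is precisely the difficulty of the lemma: the cubes met by $\GoingTo{\omega}{0}$ can all be good and lie in the infinite good cluster while $\GoingTo{\omega}{0}$ still misses every cube-sink. Consequently the counting in Steps 2--3 (many bad cubes, or a large finite good cluster, near the origin) bounds the probability of the wrong event, and the large-deviation estimates, correct as they are for Bernoulli-dominated fields, are never brought to bear on the event in \eqref{eq:connect_to_zero2}.

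The paper's proof replaces this deterministic reduction by a probabilistic, sequential one. It explores the set of $\n$-cubes met by $\GoingTo{\omega}{0}$ with a breadth-first search and, at each newly explored cube, uses the still-unexplored randomness inside it: by Claim \ref{claim:bnd_pnt_in_sink} the entry point (a boundary point of the new cube already known to reach $0$ through previously explored cubes) lies in that cube's sink with conditional probability bounded below, and by Corollary \ref{cor:dminusine} the cube belongs, again with probability bounded below, to the infinite cluster of good cubes in its hyperplane; on this joint event a point of $\BernSink$ reaches $0$. Since $|\GoingTo{\omega}{0}|>k$ forces the exploration to visit at least order $k^{1/d}$ distinct levels in some coordinate direction, one obtains that many conditionally independent trials, each succeeding with probability at least some fixed $\rho>0$, which yields the bound $Ce^{-k^{\ConnectExp}}$ with $\ConnectExp<1/d$. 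To salvage your approach you would have to convert ``the entry point is in the sink'' into exactly such a conditional, exploration-based estimate; a static renormalization-plus-Peierls count cannot capture it, because the relevant event is neither local to a cube nor implied by goodness.
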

\begin{proof}

Let
$\GoRen{\omega}{\n}:=\{z:\GoingTo{\omega}{0}\cap\cubenz{\n}{z}\neq\emptyset\}$.

We enumerate the set $\GoRen{\omega}{\n}$ using a breadth-first-search algorithm, as follows. First we arbitrarily enumerate $Z^d$, i.e. take a bijection $\pi:\N\to\Z^d$ with $\pi(0)= 0$. Then we define a sequence $(a_n)$ in $\Z^d\cup\{\infty\}$ inductively as follows.

We begin by writing $a_0=0$, and $\Cont_0:=\{x\connom 0 \mbox{ in }\cubenz{\n}{0} \}$, i.e. the set of points $x$ s.t. there is a directed path from $x$ to $0$ which is contained in $\cubenz{\n}{0}$.

Given $a_0,\ldots,a_n$ and $\Cont_n$, we define $a_{n+1}$ and $\Cont_{n+1}$ as follows. If $a_n=\infty$ then $a_{n+1}=\infty$ and $\Cont_{n+1}=\Cont_n$. Otherwise, take
\[
k_{n+1}=\inf
\left\{
k\ :\ \pi(k)\notin\{a_0,\ldots,a_n\} \mbox{ s.t. } \exists_{x\in\cubenz{\n}{\pi(k)}} \ x\connom 0 \mbox{ in } \cubenz{\n}{\pi(k)}\cup \left(\mathop\cup_{j=0}^n\cubenz{\n}{a_j}\right)
\right\}
\leq\infty,
\]
and 
$a_{n+1}=\begin{cases}
\pi(k_{n+1}) & k_{n+1}<\infty\\
\infty & k_{n+1}=\infty
\end{cases}$. 

\smallskip

We then take
\[
\Cont_{n+1} = \left\{
x\ :\ x\connom 0 \mbox{ in } \mathop\cup_{j=0}^{n+1}\cubenz{\n}{a_j}
\right\}
\]
if $a_{n+1} \neq \infty$ and $\Cont_{n+1} = \Cont_{n} $ if $a_{n+1} = \infty$.

We define a filtration $(\FF_n)_{n\geq 0}$ by 
\[
\FF_n=\sigma\big(a_0,\ldots,a_n,\omega|_{\mathop\cup_{j=0}^n\cubenz{\n}{a_j}}\big).
\]

Fix a coordinate $i=1,\ldots,d$, and define the stopping times 
\[
T_0=0\ ;\ T_{n+1}=\inf\left\{ j > T_n \ :\ 
\langle a_j,i\rangle \notin \{\langle a_k,i\rangle\ :\ k=0,\ldots,T_n\}
\right\}.
\]

We write $\Width{\omega}{i}=\sup\{n:T_n<\infty\}\leq\infty$.

Let $F_n$ be the event that  $\cubenz{\n}{a_n}$ is a good cube, $\Cont_n\cap \C(\cubenz{\n}{a_n})\neq\emptyset$ and that $a_n$ belongs to the infinite component in the hyperplane $\{z:\langle z,i\rangle = \langle a_n,i\rangle\}$. Note that if there exists $n$ such that $F_n$ occurs then 
$\GoingTo{\omega}{z} \cap \BernSink \neq \emptyset$.

Next we note that by Claim \ref{claim:bnd_pnt_in_sink} and Corollary \ref{cor:dminusine},
$
P[F_{T_n} | \FF_n\ ;\ \Width{\omega}{i}\geq n] \geq \rho
$
for some $\rho>0$.

Therefore for all $i$ and $n$,
\[
P\left(\Width{\omega}{i}\geq n \mbox{ and } 0\notin \thesink \right) < \rho^n.
\]

Note that if $\GoRen{\omega}{\n}>n$ then there exists $i$ such that $\Width{\omega}{i}>n^{1/d}$, which implies
the lemma with $\ConnectExp<1/d$.


\end{proof}

Using the fact that $\BernSink\subseteq\thesink$, and that if $x\in\thesink$ and $x\connom 0$ then $0\in\thesink$, we get the following corollary to Lemma \ref{lem:connect_to_zero}.

\begin{corollary}\label{cor:connect_to_zero}
there exists $\ConnectExp>0$ such that
\begin{equation}\label{eq:connect_to_zero1}
P
\left[
\left|x:x\connom 0\right| > k
\mbox{ and }
0\notin\thesink
\right] < Ce^{-k^\ConnectExp}.
\end{equation}
\end{corollary}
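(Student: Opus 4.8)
The plan is to deduce \eqref{eq:connect_to_zero1} from Lemma~\ref{lem:connect_to_zero} by a pure inclusion-of-events argument: the probabilities in \eqref{eq:connect_to_zero2} and \eqref{eq:connect_to_zero1} govern essentially the same event, once one replaces the requirement that $\GoingTo{\omega}{0}$ avoid $\BernSink$ by the requirement that $0$ not lie in the sink. Concretely, recall $\GoingTo{\omega}{0}=\{x:x\connom 0\}$, and read the event in \eqref{eq:connect_to_zero2} with the natural choice (so that it reads $\{|\GoingTo{\omega}{0}|>k\}\cap\{\GoingTo{\omega}{0}\cap\BernSink=\emptyset\}$).

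First I would record the set inclusion
\[
\bigl\{\,|\{x:x\connom 0\}|>k\,\bigr\}\cap\{0\notin\thesink\}\ \subseteq\ \bigl\{\,|\GoingTo{\omega}{0}|>k\,\bigr\}\cap\bigl\{\GoingTo{\omega}{0}\cap\BernSink=\emptyset\bigr\}.
\]
To check it, suppose $0\notin\thesink$ but that some $x$ lies in $\GoingTo{\omega}{0}\cap\BernSink$. Then $x\connom 0$, and since $\BernSink\subseteq\thesink$ we also have $x\in\thesink$. By the defining property of a sink — no $\omega$-path exits it — $x\connom 0$ forces $0\in\thesink$, contradicting our assumption. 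Hence on $\{0\notin\thesink\}$ one has $\GoingTo{\omega}{0}\cap\BernSink=\emptyset$, which is exactly the inclusion above (the cardinality condition being identical on both sides).

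Given the inclusion, monotonicity of $P$ together with \eqref{eq:connect_to_zero2} immediately yields \eqref{eq:connect_to_zero1} with the same exponent $\ConnectExp$ and the same constant $C$. There is no substantive obstacle here; the only point needing a line of care is the observation that $\{0\notin\thesink\}$ is precisely what forces $\GoingTo{\omega}{0}$ to miss $\BernSink$ — equivalently, that $\BernSink$, being contained in the unique sink, is absorbing along $\omega$-paths — which is just the combination of the two facts ($\BernSink\subseteq\thesink$, and $x\in\thesink,\ x\connom 0\Rightarrow 0\in\thesink$) quoted before the statement.
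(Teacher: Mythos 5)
Your proposal is correct and is essentially the paper's own argument: the paper derives the corollary in one line from exactly the two facts you isolate, namely $\BernSink\subseteq\thesink$ and that $x\in\thesink$, $x\connom 0$ forces $0\in\thesink$ (the sink being absorbing along $\omega$-paths), which give the event inclusion you state, and then Lemma~\ref{lem:connect_to_zero} finishes via monotonicity of $P$. Your reading of the typo $\GoingTo{\omega}{z}$ as $\GoingTo{\omega}{0}$ in \eqref{eq:connect_to_zero2} is also the intended one.
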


\begin{proof}[Proof of Proposition \ref{prop:connect_outside_sink}]
Using Corollary \ref{cor:connect_to_zero},
\begin{eqnarray*}
&&
P\left[
\mbox{there exists } x \mbox { s.t. } 0\connom x, \ x\notin\thesink
\mbox{ and }
\|x\| = k
\right] \\
&\leq&
\sum_{x:\|x\|=k}P\left[0\connom x, \ x\notin\thesink \right]
=\big|\partial[-k,k]^d\big| P\left[
\left|y:y\connom 0\right| > k
\mbox{ and }
0\notin\thesink
\right]\\
&\leq& Ck^de^{-k^\ConnectExp}.
\end{eqnarray*}

\end{proof}

\begin{proof}[Proof of Proposition \ref{prop:antal_pist_sink}]
First note that by The Antal-Pisztora theorem (Theorem 1.1 of \cite{antal_pisztora}) there exists $C$ and $\gamma$ such that
\begin{eqnarray}\label{eq:ap1}
P\left[
\DistSink{z}{w}{\omega} > C\|z-w\|
\ ; \
z,w\in\BernSink
\right]<C
e^{-\gamma\|z-w\|}.
\end{eqnarray}

Next we note that $|\GoingTo{\omega}{y}|=\infty$ for $y\in\thesink$, and thus by Lemma
\ref{lem:connect_to_zero} we get that 
\begin{eqnarray}\label{eq:ap2}
P\left[
\not\exists_{w\in\BernSink}
\DistSink{w}{y}{\omega}>k
\ ;\ y\in\thesink
\right]<Ck^de^{-k^{\ConnectExp}}.
\end{eqnarray}

We need to show that for all $x$,
\begin{eqnarray}\label{eq:ap3}
P\left[
\not\exists_{z\in\BernSink}
\DistSink{x}{z}{\omega}>k
\ ;\ x\in\thesink
\right]<
e^{-\gamma k}.
\end{eqnarray}

To this end we write $\ones$ for the vector
$(1,1,\ldots,1)\in\Z^d$
and then define a sequence $(x_n)_{n\geq 0}$ as follows. $x_0=x$. Given $x_n$ we choose $x_{n+1}$ as a nearest neighbor of $x$ satisfying
\begin{enumerate}
\item $\langle x_{n+1},\ones\rangle > \langle x_n,\ones\rangle$, and
\item $x_n\conneib x_{n+1}$.
\end{enumerate}
If there is more than one such neighbor, we apply some arbitrary scheme to choose one. The existence of $x_n$ is guaranteed by the balancedness of the environment.
Write $\D(j)$ for the union of the sinks in the boxes in the connected component of the Bernoulli percolation on 
$\{ z: \left| \langle z,\ones\rangle - j  \right| \leq 1 \}$.
Then, the events $\big(E_j=x_{3j\n}\in\D(j)\big)_{j\geq 1}$ are independent and of positive probability. \eqref{eq:ap3} follows.
\end{proof}

Proposition \ref{prop:antal_pist_sink} now follows from \eqref{eq:ap1}, \eqref{eq:ap2} and \eqref{eq:ap3}.

\subsection{Proof of Propositions \ref{prop:connect_outside_sink} and \ref{prop:antal_pist_sink} in two dimensions}
{
We write $x\sim y$ if $|x-y|_1=1$. A sequence $(x_i)_{i=0}^n$ is called a {\it path} if $x_0\sim x_1\ldots \sim x_n$. 
A subset $S\subset\Z^d$ is said to be {\it connected} if for any $x,y\in S$, there is a path $(x_i)_{i=0}^n\subset S$ with $x_0=x$ and $x_n=y$.}

\begin{proof}[Proof of Proposition~\ref{prop:connect_outside_sink}:]
{
First, we will show that the ``holes" outside of the sink are rectangles. 

Let $C$ be a connected (in the sense of $\sim$ ) component of $\Z^2\setminus\thesink$. For the sake of clarity, we color the unit square centered at $x$ (with sides parallel to the lattice) with white color if $x\notin\mathcal C$, and with blue color if $x\in\mathcal C$. Now consider the interface between the white and blue areas. The border of the blue area may consist of straight lines and angles with degrees $90^{\circ}$ or $270^{\circ}$. See Figure~\ref{fig:possibilities}.
However, case (C) in Figure~\ref{fig:possibilities} is impossible, since every point in the sink has at least two neighbors in opposite sides that are in $\mathcal C$.

Therefore, the border of $C$ consists of only straight lines and right angles. In other words, it is a rectangle.}

 The rectangle is finite w.p. 1, because the probability of an infinite line with no bond orthogonal emanating from
it is zero, and there are countably many such lines.

The probability for a given rectangle $L$ to be a connected component of $\Z^2\setminus\thesink$ is exponentially small in the length of the boundary of $L$. This proves Proposition \ref{prop:connect_outside_sink}
in two dimensions with $\ConnectExp=1/2$.
\end{proof}

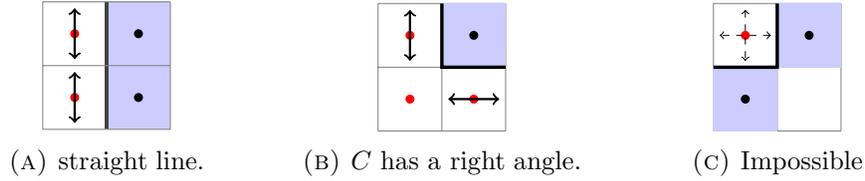
\begin{figure}
\begin{subfigure}[b]{0.30\textwidth}
\centering
\resizebox{\linewidth}{!}{
\begin{tikzpicture}[scale=1]
    \draw[fill=blue!20,draw=none] (0,0) coordinate (o) rectangle (1,-1); 
    \draw[fill=blue!20,draw=none] (o)  rectangle (1,1);
    \fill[red] (-.5, .5) coordinate (p1) circle (2pt);
    \fill[red] (-.5,-.5) coordinate (p2) circle (2pt);
        \fill (.5,-.5) coordinate (p3) circle (2pt);
    \fill (.5, .5) coordinate (p4) circle (2pt);
    \draw[line width=1,<->] (p1)+(0,.4)--++(0,-.4);
     \draw[line width=1,<->] (p2)+(0,.4)--++(0,-.4);
    \draw[line width =1.5] (0,-1)--(0,1);
       \draw[help lines] (-1,-1) grid (1,1);
    \node[minimum width=5cm] at (o){};
\end{tikzpicture}
}
\caption{straight line.} \label{fig:M1}
\end{subfigure}%
\begin{subfigure}[b]{0.30\textwidth}
\centering
\resizebox{\linewidth}{!}{
\begin{tikzpicture}[scale=1]
   \draw[help lines] (-1,-1) grid (1,1);
    \draw[fill=blue!20,draw=none]  (o)  rectangle (1,1);
    \fill[red] (p1) circle (2pt);
    \fill[red] (p2) circle (2pt);
    \fill (p4) circle (2pt);
    \fill[red] (p3) circle (2pt);
    \draw[line width =1.5] (0,1)--(o)--(1,0);
     \draw[line width=1,<->] (p1)+(0,.4)--++(0,-.4);
      \draw[line width=1,<->] (p3)+(.4,0)--++(-.4,0);
    \node[minimum width=5cm] at (o){};
\end{tikzpicture}
}
\caption{$C$ has a right angle.} \label{fig:M2}
\end{subfigure}%
\begin{subfigure}[b]{0.30\textwidth}
	\centering
	\resizebox{\linewidth}{!}{
		\begin{tikzpicture}[scale=1]
		\draw[help lines] (-1,-1) grid (1,1);
    	\draw[fill=blue!20,draw=none]  (o) rectangle (-1,-1); 
   	\draw[fill=blue!20,draw=none] (o)  rectangle (1,1);
    	\fill[red] (p1) circle (2pt);
    	    	\fill  (p2) circle (2pt);
    	\fill  (p4) circle (2pt);
    	     \draw[dashed,<->] (p1)+(0,.4)--++(0,-.4);
      \draw[dashed,<->] (p1)+(.4,0)--++(-.4,0);
      \draw[line width =1.5] (-1,0)--(0,0)--(0,1);
      \node[minimum width=5cm] at (o){};
	\end{tikzpicture}
	}
\caption{Impossible} \label{fig:M3}
\end{subfigure}%

\caption{Interface between $C$ and the sink $\mathcal C$. Red dots are points in the sink.}\label{fig:possibilities}
\end{figure}

\begin{proof}[Proof of Proposition~\ref{prop:antal_pist_sink}:]
When the dimension $d=2$, our proof of Proposition~\ref{prop:antal_pist_sink} consists of several steps.
In Steps~1--3, we define and estimate several geometric quantities in $\omega$. In Step~4 we will prove Proposition~\ref{prop:antal_pist_sink} using these geometric estimates.
\begin{enumerate}[Step 1.]
\item
First, we define a few terms. For a fixed environment $\omega$, 
the ES-stair (ES stands for east-south) is defined to be the infinite path starting from $o$ which goes first vertically upwards until it has the possibility to move right, and then it takes every opportunity to move right and moves downwards if a step to the right is not possible. The part of the ES-stair above the horizontal line is called the {\it ES-path}. See Figure~\ref{fig:ES-stair}. We  estimate the length $L^{ES}(\omega)$ of the ES-path. To do this, we
set $V_0=V_0(\omega)=\inf\{n\ge 0: \omega(ne_2,e_1)>0\}$, $H_0:=1$, and define recursively for $j\ge 0$,
\[
	H_j:=\inf\left\{n>0: \omega\big((n+H_0+\ldots+H_{j-1})e_1+(V_0-j)e_2, e_2\big)>0\right\}.
\]
We have the length of the ES-path
\[
L^{ES}(\omega)=2V_0+\sum_{j=0}^{V_0-1}H_j.
\]
Observe that $V_0,H_1,H_2,\ldots$ are independent (under $P$) geometric random variables, and $(H_j)_{j=1}^\infty$ are identically distributed. Hence we conclude that $L^{ES}$ has exponential tail. That is, for $x\ge 0$,
\[
P(L^{ES}>x)\le Ce^{-cx}.
\]
Similarly, we can define the EN-stair, EN-path and $L^{EN}$. The shorter one among the EN- and ES- path (If $L^{ES}=L^{EN}$ then take the EN-path) is simply called the {\it E-path}. The E-path has length
\[
L^E=L^E(\omega):=L^{EN}\wedge L^{ES},
\]
which also has exponential tail
\begin{equation}\label{eq:tail_of_L^E}
P(L^E>x)\le Ce^{-cx}, \qquad \forall x\ge 0.
\end{equation}

\item
The set of vertices that lie below (or on) the ES-stair and above (or on) the EN-stair is called the {\it E-bubble}, denoted by $B^E(\omega)$. In other words, $B^E(\omega)$ is the area enclosed by the EN- and ES- stairs. Clearly, $\# B^E(\omega)\le (L^{ES}+L^{EN})^2$ and so it has stretched exponential tail. That is, for $x\ge 0$,
\begin{equation}\label{eq:tail_of_bubble}
	P(\#B^E>x)\le Ce^{-c\sqrt x}.
\end{equation}
Here $\#B^E$ denotes the cardinality of the set $B^E$.

\item\label{step:tadpole}
Now we will define the east-tadpole. We denote the E-path by $p^E(\omega)$ and its end-point by $R^E(\omega)\in \N e_1$.
Set $R^E_0=0$ and define for $j\ge 0$
\[
	R^E_{j+1}:=R^E_j+E(\theta_{R^E_j}\omega).
\]	
In other words, $R^E_j$ is end-point of the concatenation $p_j^E(\omega):=\bigcup_{i=0}^{j-1}p^E(\theta_{R_i^E}\omega)$ of $j$ consecutive E-paths. For any $n\ge 0$, let
\[
M(n):=\inf\{i\ge 1:  R^E_i\cdot e_1\ge n\}
\]
and define the {\it east-tadpole} $T^E(n)=T^E_\omega(n)$ to be the union of the first $M(n)-1$ E-paths and the $M(n)$-th E-bubble. Namely,
\[
	T^E(n):=p^E_{M(n)-1}(\omega)\bigcup B^E(\theta_{R^E_{M(n)-1}}\omega).
\]
Since $M(n)\le n$, we have
\[
	\# T^E(n)\le \sum_{i=0}^{n-1}L^E(\theta_{R_i^E}\omega)+\max_{j=0,\ldots,n-1}\# B^E(\theta_{R^E_j}\omega),
\]
where the right side is a sum of 1-dependent random variables. It then follows from \eqref{eq:tail_of_L^E} and \eqref{eq:tail_of_bubble} that for $n\in\N$,
\[
	P(\# T^E(n)>Cn)\le Ce^{-c\sqrt n}.
\]
As the most important property of the tadpole, notice that for any $x\in T^E_\omega(n)$, if $o\connom x$, then we can find a $\omega$-path in $T^E(n)$ from $o$ to $x$. 
\item
Finally, we are ready to prove the theorem. Without loss of generality, assume that $x=o$ and $y=(y_1,y_2)$ with $y_1, y_2\ge 0$. Let $z=(y_1,0)$.  We define the {\it north-tadpole} $T^N(n)$ similarly as in Step~\ref{step:tadpole}.
By the remark at the end of Step~\ref{step:tadpole}, if $o\connom y$, then we can find a $\omega$-path in $T^E_\omega(y_1)\cup T^N_{\theta_z\omega}(y_2)$ from $o$ to $y$. Therefore,
\begin{align*}
	& P\left[
\DistSink{o}{y}{\omega} > C\|y\|
\ ; \
o\connom y
\right]\\
&\le 
P[\#T^E(y_1)+\# T^N_{\theta_z\omega}(y_2)\ge C(|y_1|+|y_2|)]\\
&\le 
Ce^{-c\sqrt n}.
\end{align*}
\end{enumerate}

\begin{figure} 
\tikzstyle{arrow}=[draw, -latex] 

\centering
\pgfmathsetseed{33}
  \begin{tikzpicture}[scale=.7]
    \draw[gray] (-2,0)--(6,0);
    \node (a) at (0,0) {o}[below left];
       \draw[white,pattern=north west lines, pattern color=gray!20] (a)--++(0,2)--++(2,0)--++(0,-1)--++(1,0)--++(0,-2)--++(-1,0)--++(0,-1)--++(-1,0)--++(0,-1)--++(-1,0)--(a);
    \draw[arrow, red]  (a)--++(0,2)--++(2,0)--++(0,-1)--++(1,0)--++(0,-1) coordinate (b);
    \draw[arrow, red, dashed] (b)--++(0,-1);
    \draw[arrow, blue]  (a)--++(0,-3)--++(1,0)--++(0,1)--++(1,0)--++(0,1)--++(1,0) coordinate (c);
   \draw[arrow, blue] (c)--++(1,0)--++(0,1);
  \end{tikzpicture}
\caption{The ES-(EN-) path is marked with a solid red (blue) line. The shaded region is an E-bubble.}\label{fig:ES-stair}
\end{figure}

\ignore{
\begin{figure}\label{fig:ET}
\tikzstyle{arrow}=[draw, -latex] 
\tikzset{
  declare function={%
    sign(\x) = (and(\x<0, 1) * -1) +
               (and(\x>0, 1) * 1);}}
\centering
\pgfmathsetseed{17}
  \begin{tikzpicture}[scale=.5]
    \draw[gray!20] (-2,0)--(20,0);
   \node (o) at (0,0) {o}[below left];
\foreach \j in {1,...,8}
{%
\pgfmathrandominteger{\v}{-3}{3}
 \pgfmathsetmacro{\absv}{abs(\v)}
 \pgfmathsetmacro{\d}{-sign(\v)} 
      \ifthenelse{\v=0}{\draw (o)--++(1,0) coordinate (o);}{
   \stair{\v}{\absv}{\d}{black}{}}
 }
 \draw[arrow] (o)--++(1,0) coordinate (a);
 \draw[fill=blue!20, fill opacity=0.2] (a)--++(0,2)--++(2,0)--++(0,-1)--++(1,0)--++(0,-2)--++(-1,0)--++(0,-1)--++(-1,0)--++(0,-1)--++(-1,0)--(a);
 \draw[arrow](a)++(3,0)--++(0,-1);
 \draw[arrow](a)++(2,-1)--++(1,0);
 \fill (a)++(2,0) coordinate (b) circle(3pt);
  \node[below] at (b) {$ne_1$};
  \end{tikzpicture}
  
  \caption{An east-tadpole $T^E(n)$}
\end{figure}
}

\end{proof}

\subsection{Relation to Harmonic functions}
From Propositions  \ref{prop:connect_outside_sink} and \ref{prop:antal_pist_sink} we can now prove a useful corollary regarding harmonic functions. Note that in uniformly elliptic cases
the following statement is trivial. We note that in the non uniformly elliptic setting there are genuinely $d$-dimensional, finite-range dependent counter examples to the following corollary.

\begin{corollary}\label{cor:perc_bnd_harm_func}
For every $\epsilon>0$ there exists $\nn$ and a constant $\PercHarmConst$ such that with probability greater than or equal to $1-\epsilon$, for every non-negative $\omega$-harmonic function $h$ 
on $[-2\nn,2\nn]^d$, we have
\begin{equation}\label{eq:Harnack_huge_const}
\max\{h(x) : x \in [-\nn,\nn]^d\} \leq \PercHarmConst \min\{h(x) : x \in [-\nn,\nn]^d\}.
\end{equation}
In particular,  if we denote by $\hitmstarx{\omega}{x}$ the hitting probability of the quenched random walk in $\partial {[-2\nn, 2\nn]^d}$,
namely $\hitmstarx{\omega}{x}(z) := \quenchedP_\omega^x(X_{T_\partial {[-2\nn, 2\nn]^d}} = z)$ for all $z \in \partial {[-2\nn, 2\nn]^d}$, then with probability
greater than $1 - \epsilon$,
\begin{equation}\label{eq:coupling_huge_const}
\max\{\| \hitmstarx{\omega}{x} - \hitmstarx{\omega}{y} \|_{\mbox{\bf TV}} : x , y \in [-\nn,\nn]^d\} \leq 1 - 1/{\PercHarmConst}.
\end{equation}
\end{corollary}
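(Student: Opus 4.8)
The plan is to prove the Harnack bound \eqref{eq:Harnack_huge_const}; the ``In particular'' will then follow by applying \eqref{eq:Harnack_huge_const} to the nonnegative $\omega$-harmonic functions $x\mapsto\hitmstarx{\omega}{x}(z)$, $z\in\partial[-2\nn,2\nn]^d$, and using $x,y$ symmetrically: this gives $\min\big(\hitmstarx{\omega}{x}(z),\hitmstarx{\omega}{y}(z)\big)\ge\PercHarmConst^{-1}\hitmstarx{\omega}{y}(z)$ for every $z$, hence $\|\hitmstarx{\omega}{x}-\hitmstarx{\omega}{y}\|_{\mathbf{TV}}=1-\sum_z\min\le1-\PercHarmConst^{-1}$, which is \eqref{eq:coupling_huge_const}. (We use here that the walk exits $[-2\nn,2\nn]^d$ a.s., a consequence of the non-degeneracy of $\CovMat$ in Theorem~\ref{thm:BD14}.) For \eqref{eq:Harnack_huge_const} itself the only analytic fact I need is the following consequence of optional stopping for the bounded nonnegative martingale $h(X_{n\wedge\tau\wedge\sigma_A})$: if $h\ge0$ is $\omega$-harmonic on $[-2\nn,2\nn]^d$, $\tau$ is the exit time of $[-2\nn,2\nn]^d$, and $A\subseteq[-2\nn,2\nn]^d$ has hitting time $\sigma_A$, then
\[
h(a)\ \ge\ \quenchedP_\omega^a(\sigma_A<\tau)\cdot\min_{v\in A}h(v),\qquad a\in[-2\nn,2\nn]^d;
\]
and, for finite $\mathcal C\subset[-2\nn,2\nn]^d$ on which $h$ is $\omega$-harmonic, $h(a)=\quenchedE_\omega^a\big[h(X_{\sigma_{\mathcal C^c}})\big]$ for $a\in\mathcal C$ (here $\mathcal C^c=\Z^d\setminus\mathcal C$).

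\textbf{The good event.} The idea is to pick one ``hub'' $w^*$ in the sink near the centre and obtain uniform-in-$\omega$ lower bounds for $\quenchedP_\omega^z(\sigma_{\{w^*\}}<\tau)$ and $\quenchedP_\omega^{w^*}(\sigma_{\{v\}}<\tau)$. The key observation that neutralises non-ellipticity once $\nn$ is fixed: since $[-2\nn,2\nn]^d$ is a fixed finite set and $P[\,0<\omega(0,e)<\delta\,]\to0$ as $\delta\downarrow0$ for each $e$, a union bound produces $\delta_0=\delta_0(\epsilon,\nn)>0$ such that, with probability $\ge1-\epsilon/3$, every positive transition probability inside $[-2\nn,2\nn]^d$ is $\ge\delta_0$; the walk then follows any prescribed $\omega$-path of length $\ell$ inside the box with probability $\ge\delta_0^{\ell}\ge\delta_0^{(4\nn+1)^d}=:q_0>0$. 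Next let $\thesink$ be the a.s.\ unique sink (Proposition~\ref{prop:unique}). By Proposition~\ref{prop:connect_outside_sink} and translation invariance, a union bound over $z\in[-\tfrac32\nn,\tfrac32\nn]^d$ shows that with probability $\ge1-\epsilon/3$ the set $\{x:z\connom x,\ x\notin\thesink\}$ has diameter $<\nn/10$ for every such $z$; in particular every connected component of $\Z^d\setminus\thesink$ meeting $[-\tfrac32\nn,\tfrac32\nn]^d$ has diameter $<\nn/10$, and (by Lemma~\ref{lem:BD1459}) from every such $z$ there is an $\omega$-path to $\thesink$ staying in $\{x:\|x-z\|_\infty<\nn/10\}$. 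Finally, using the positive lower density of $\thesink$ (Lemma~\ref{lem:posdens}) and Proposition~\ref{prop:antal_pist_sink}, fix a large constant $N=N(d)$, put $s=\nn/N$, and conclude that with probability $\ge1-\epsilon/3$: $\thesink$ meets every side-$s$ subcube of $[-2\nn,2\nn]^d$, and any $u,v\in\thesink$ with $\|u-v\|\le2s\sqrt d$ are joined by an $\omega$-path in $\thesink$ of length $\le2Cs\sqrt d$ lying within distance $2Cs\sqrt d$ of $u$. Chaining such short paths along a monotone sequence of subcubes — with $N$ taken large enough that the chain never leaves $[-2\nn,2\nn]^d$ — yields, on this event, that any two points of $\thesink\cap[-\tfrac32\nn,\tfrac32\nn]^d$ are joined by an $\omega$-path contained in $\thesink\cap[-2\nn,2\nn]^d$. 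Let $\mathcal E$ be the intersection of the three events; then $P(\mathcal E)\ge1-\epsilon$.

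\textbf{Conclusion on $\mathcal E$.} Fix $w^*\in\thesink$ with $\|w^*\|_\infty\le s$. For $z\in[-\nn,\nn]^d$, concatenating the $\omega$-path from $z$ to $\thesink$ (which stays near $z$, hence in the box) with an in-$\thesink$ path from its endpoint to $w^*$ gives an $\omega$-path from $z$ to $w^*$ inside $[-2\nn,2\nn]^d$ of length $\le(4\nn+1)^d$, so $\quenchedP_\omega^z(\sigma_{\{w^*\}}<\tau)\ge q_0$; symmetrically $\quenchedP_\omega^{w^*}(\sigma_{\{v\}}<\tau)\ge q_0$ for every $v\in\thesink\cap[-\tfrac32\nn,\tfrac32\nn]^d$. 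Now take $h\ge0$ $\omega$-harmonic on $[-2\nn,2\nn]^d$ and $x,y\in[-\nn,\nn]^d$. The displayed bound with $A=\{w^*\}$, $a=y$ gives $h(y)\ge q_0\,h(w^*)$. If $x\in\thesink$, the same bound with $A=\{x\}$, $a=w^*$ gives $h(w^*)\ge q_0\,h(x)$. If $x\notin\thesink$, it lies in a component $\mathcal C$ of $\Z^d\setminus\thesink$ of diameter $<\nn/10$, and since $X_{\sigma_{\mathcal C^c}}$ is a neighbour of $\mathcal C$ and hence in $\thesink$, we get $h(x)=\quenchedE_\omega^x\big[h(X_{\sigma_{\mathcal C^c}})\big]\le\max\{h(v):v\in\thesink,\ \|v-x\|_\infty\le\nn/10+1\}$; applying the displayed bound to such a maximizing $v\in\thesink\cap[-\tfrac32\nn,\tfrac32\nn]^d$ again yields $h(w^*)\ge q_0\,h(x)$. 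In every case $h(x)\le q_0^{-1}h(w^*)\le q_0^{-2}h(y)$, so \eqref{eq:Harnack_huge_const} holds with $\PercHarmConst=q_0^{-2}$.

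\textbf{The main obstacle.} The delicate step is the percolation geometry of the second paragraph, not the (routine) harmonic-function bookkeeping. Proposition~\ref{prop:antal_pist_sink} controls only the \emph{length} of a shortest sink-path and nothing about its spatial extent, so one cannot connect a generic point of $[-\nn,\nn]^d$ to the hub by a single Antal--Pisztora path without risking leaving $[-2\nn,2\nn]^d$; this forces running Antal--Pisztora at the small scale $s=\nn/N$ and building long-range connections by chaining, which in turn makes it necessary to know — with high probability — that $\thesink$ meets every such small subcube, a fact that ultimately rests on the positive lower-density estimate of Lemma~\ref{lem:posdens}. A secondary, minor point (handled above) is that points of $[-\nn,\nn]^d$ lying in ``holes'' of the sink must be compared with the hub indirectly, through the short path by which the walk escapes the hole.
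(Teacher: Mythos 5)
Your argument is correct in substance and follows essentially the same route as the paper's own proof: a high\mbox{-}probability lower bound $\delta_0$ on all positive transition probabilities in the fixed box, Proposition~\ref{prop:connect_outside_sink} together with Lemma~\ref{lem:BD1459} to absorb every point of $[-\nn,\nn]^d$ into the sink locally, Proposition~\ref{prop:antal_pist_sink} to connect sink points inside the box, the elementary bound $h(a)\ge \delta_0^{\ell}h(b)$ along an $\omega$-path of length $\ell$, and finally the TV statement deduced from \eqref{eq:Harnack_huge_const} applied to the harmonic functions $x\mapsto\hitmstarx{\omega}{x}(z)$. The paper's proof is a cruder version of yours: it applies Proposition~\ref{prop:antal_pist_sink} at scale $\nn$ to all sink pairs in $[-\tfrac32\nn,\tfrac32\nn]^d$, takes $\PercHarmConst=\xi^{-C\nn}$, and does not address whether the connecting paths remain in $[-2\nn,2\nn]^d$; your chaining at scale $s=\nn/N$ is a legitimate, more careful way to guarantee that containment.

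Three small justification slips, all fixable. First, ``$\thesink$ meets every side-$s$ subcube with probability $\ge 1-\epsilon/3$'' does not follow from the lower-density statement of Lemma~\ref{lem:posdens} alone; either invoke Lemma~\ref{lem:sink_ubiq}, or derive it from tools you already use (if a subcube missed the sink, its centre $z$ would have, by Lemma~\ref{lem:BD1459}, a point outside $\thesink$ reachable from $z$ at distance $\ge s/2-1$, which Proposition~\ref{prop:connect_outside_sink} excludes with high probability after a union bound). Second, the union bound behind ``any $u,v\in\thesink$ with $\|u-v\|\le 2s\sqrt d$ are joined by a path of length $\le 2Cs\sqrt d$'' fails for pairs at bounded distance, where the bound $C\|u-v\|^de^{-\|u-v\|^{\ConnectExp}}$ of Proposition~\ref{prop:antal_pist_sink} is vacuous; restrict the chaining to representatives at mutual distance of order $s$ (e.g.\ sink points in the middle thirds of adjacent subcubes), which is all your construction needs. (The paper's own proof makes the identical move at scale $\nn$ with the same looseness.) Third, your ``in particular every connected component of $\Z^d\setminus\thesink$ meeting $[-\tfrac32\nn,\tfrac32\nn]^d$ has diameter $<\nn/10$'' is not implied by the bound on forward-reachable sets (a hole can be larger than the set of points reachable from any single one of its points), but you never actually use it: the exit point of the hole under the walk started at $x$ is reachable from $x$, which is exactly what your escape-from-hole step requires.
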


\begin{proof}
Fix $m$. 
Let $\xi=\xi(m)$ be so small that 
$
P\big[\exists_{z\in[-m,m]^d, e} \ \omega(z,e)\in(0,\xi) \big] < m^{2d}e^{-m^\ConnectExp}.
$
By Proposition \ref{prop:antal_pist_sink}, with probability at least $1-m^{2d}e^{-m^\ConnectExp}$, for every $x$ and $y$ in $[-3m/2,3m/2]^d$ we have
$\DistSink x y \omega < Cm$, and therefore 
\[
\max\{h(x) : x \in [-3m/2,3m/2]^d\cap\thesink\} \leq \xi^{-Cm} \min\{h(x) : x \in [-3m/2,3m/2]^d\cap\thesink \}.
\]
For a random walk in the random environment $\omega$, let $T_1$ be the first time it is in $\thesink$, and $T_2$ the first time it is outside $[-3m/2,3m/2]^d$.
We note that by Lemma \ref{lem:connect_to_zero} with probability at least $1-m^{2d}e^{-m^\ConnectExp}$, for every $x\in[-m,m]^d$, we have
$P_\omega^x(T_1<T_2)=1$ and thus for all $x\in[-m,m]^d$,
\[
\min\{h(x) : x \in [-3m/2,3m/2]^d\cap\thesink \} \leq h(x) \leq \max\{h(x) : x \in [-3m/2,3m/2]^d\cap\thesink\}.
\]

\eqref{eq:Harnack_huge_const} now follows if we take $\nn$ such that $3\nn^{2d}e^{-\nn^\ConnectExp}<\epsilon$ and $\PercHarmConst=\xi^{-C\nn}$. 
\eqref{eq:coupling_huge_const} follows from \eqref{eq:Harnack_huge_const} and the fact that $\hitmstarx{\omega}{x}$ is a harmonic function of $x$.

\end{proof}

\section{An oscillation inequality}\label{sec:osc}

The goal of this section is to obtain an oscillation estimate (Theorem~\ref{thm:osc}) for $\omega$-harmonic functions. 

\ignore{
The following is an immediate corollary of Lemma \ref{lem:ann}.
\begin{corollary}\label{cor:ann}
Let  $\Cube{R}=[-R,R]^d$. There exists $\gamma>1$ such that for every $R$, with probability $1-\exp(-CR^\delta)$, for every non-negative $\omega$-harmonic function $h$ on $\Cube{R}$, 
\[
\min \{h(x):x\in\Cube{R/2}\} \geq \gamma \min \{h(x):x\in\Cube{R/4}\}.
\]
\end{corollary}

Combining Corollary \ref{cor:ann} with Corollary \ref{cor:perc_bnd_harm_func} we get an a priori bound for the ratio between the minimum and the maximum in a large ball.

\begin{proposition}\label{prop:apribnd}
There exists $C$ and $\Apbnd$ such that for every $R$, with probability at least $1-\exp(-CR^{\delta})$,
every  non-negative $\omega$-harmonic function $u$ on $\desball{2R}{0}$ satisfies
\begin{equation}\label{eq:apribnd}
\max\{u(x):x\in\desball{R}{0}\} \leq R^\Apbnd \min\{u(x):x\in\desball{R}{0}\}
\end{equation}
\end{proposition}

To prove Proposition \ref{prop:apribnd}, we first need a percolation lemma.

\begin{lemma}\label{lem:percforapbnd}
Let $(p_n)$ be a sequence satisfying 
\begin{enumerate}
\item 
\[
\sum_{n=1}^\infty 3^{2dn}p_n < \frac 1{2d}, \mbox{ and}
\]
\item $p_n$ decays stretched exponentially with $3^n$.
\end{enumerate}
For every $n$, let $\CubeColl{n}$ be the collection of all cubes of side length $3^n$ in $\Z^d$ whose center is in
$3^{n-1}\Z^d$.
Let $\{X_A:A\in\cup_{n}\CubeColl{n}\}$ be an ensemble of Bernoulli variables satisfying:
\begin{enumerate}
\item $P(X_A=1) < p_n$ for $A\in\CubeColl{n}$,
\item If $A_1,\ldots,A_n$ are pairwise disjoint, then $X_{A_1},\ldots,X_{A_N}$ are independent.
\end{enumerate}
We say that $z\in\Z^d$ is open if there exists $A\in\cup_{n}\CubeColl{n}$ such that $z\in A$ and $X_A=1$.
Let $\mathcal C_0$ be the open cluster containing the origin (empty if the origin is closed). Then $P(|\mathcal C_0|>k)$ goes to zero
stretched exponentially with $k$.
\end{lemma}


\begin{proof}


{\red Structure of proof: Find largest cube in cluster. Then largest disjoint, and so on until there is nothing more. Then every cube has a neighbor, i.e. $A$ and $B$ neighbors if their distance smaller
than side length of maximal. Call this the skeleton of the cluster. developing the skeleton from zero is dominated by a sub-critical Galton-Watson tree.}

There exists $\alpha>0$ such that for every $k$, with probability greater than $\exp\big(-k^\alpha\big)$, there is no open box of size greater $k^{1/2d}$ with center at distance
up to $k^2$ from the origin. We call this event $A_k$, and write $B_K=\cap_{k\geq K}A_k$. Fix $K$, and write $C_K:=\{|\mathcal C_0|>K\}$.
Then,
$
P(C_K)\leq P(C_K|B_K)+P(B_K^c)
$
as $P(B_K^c)$ decays stretched-exponentially, it suffices to bound $P(C_K|B_K)$. Under the event $C_K$ there exists a point in $C_0$ at distance $k^{1/d}$ from the origin.
Let $\Path$ be an open path starting at zero and reaching distance $k^{1/d}$ from the origin, and let $(Q_j)_{j=0,\ldots,h}$ in $\cup_{n}\CubeColl{n}$ be the following sequence of cubes.
For every $z\in\Z^d$, let $Q(z)$ be the largest open cube containing $z$.
Then let $k_0=0$ and $Q_0:=Q(0)$, and  then 
\[
k_{j+1}=\inf\{k:Q(\Path_k)\cap\cup_{i=0,\ldots,j}Q_i=\emptyset\},
\]
and if 

\end{proof}

\begin{proof}[Proof of Proposition \ref{prop:apribnd}]
{\red Sketch, need to be improved}
Let $x_0\in\desball{R}{0}$ be where the maximum of $u$ in $\desball{R}{0}$ is attained. 
By Lemma \ref{lem:percforapbnd} and the maximum principle, with high probability, $x_0+\Cube{2\nn}$ satisfies the event in Corollary
\ref{cor:perc_bnd_harm_func}, and for every $1\geq k \geq \log_2 R$, the cube $x_0+\Cube{2\nn}$ satisfies the event in Corollary
\ref{cor:ann}. Therefore,
\[
\min\{u(x):x\in\desball{R}{0}\} \geq \PercHarmConst\gamma ^{\log_2 R} \max\{u(x):x\in\desball{R}{0}\}
\]
as desired.
\end{proof}

} 


For any finite subset $E\subset\Z^d$ and any function $u:E\to\R$, we define the {\it oscillation } of $u$ over the set $E$ by 
\[
\osc_E u:=\max_{x,y\in E}[u(x)-u(y)].
\]
For $z\in\Z^d$, $0<\alpha<1<\gamma<\infty$ and $R>0$,  we let $\OscEvent z R \alpha \gamma$ denote the event that every  $\omega$-harmonic function $f:\disball_{\gamma R}(z)\to\R$ satisfies
\begin{equation}\label{eq:osc}
\osc_{\disball_R(z)}f
\le 
\alpha
\osc_{\disball_{\gamma R}(z)} f. 
\end{equation}
\begin{theorem}\label{thm:osc}
There exist constants $\SizeOsc$ and $\OscConst<1$ such that $1-P(\OscEvent 0 R \OscConst \SizeOsc)$ decays stretched exponentially with $R$.
\end{theorem}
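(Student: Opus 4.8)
The plan is to run the De~Giorgi / Krylov--Safonov / Kuo--Trudinger scheme, with the non-elliptic maximum principle of Theorem~\ref{thm:max_princ} playing the role that the Alexandrov--Bakelman--Pucci estimate plays in the uniformly elliptic case, and with the percolation facts of Section~\ref{sec:perc} compensating for the absence of ellipticity. It is enough to produce \emph{some} $\OscConst<1$ together with some $\SizeOsc=\gamma>1$ (say $\gamma$ a large absolute constant): the sharpness of the Harnack constant in Theorem~\ref{thm:harnack} is an orthogonal issue settled later. Writing $m=\inf_{\disball_{\gamma R}}f$, $M=\sup_{\disball_{\gamma R}}f$ and normalising $M-m=1$, the inequality \eqref{eq:osc} says exactly that $f$ cannot come within $\OscConst$ of both $m$ and $M$ on $\disball_R$. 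So the real content is a \emph{density lemma}: there are $\gamma>1$, $\delta\in(0,1)$ and an event $\EE_R$ with $1-P(\EE_R)$ decaying stretched exponentially in $R$, such that on $\EE_R$ every $\omega$-harmonic $h$ on $\disball_{\gamma R}$ with $0\le h\le 1$ and $|\{x\in\disball_R:h(x)\ge\tfrac12\}|\ge\tfrac12|\disball_R|$ satisfies $\inf_{\disball_R}h\ge\delta$. Since at least one of the two level sets $\{f-m\ge\tfrac12\}$, $\{f-m\le\tfrac12\}$ has cardinality $\ge\tfrac12|\disball_R|$, applying the lemma to $h=f-m$ or to $h=M-f$ (both $\omega$-harmonic with values in $[0,1]$) yields $\osc_{\disball_R}f\le 1-\delta$, i.e.\ \eqref{eq:osc} with $\OscConst=1-\delta$.

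First I would prove the density lemma by the barrier-plus-ABP argument of \cite{KuoTru} (cf.\ also \cite{Barlow}, \cite{FS86}): argue by contradiction from a point where $h$ is nearly $0$, slide a quadratic barrier $b$ of curvature $O(R^{-2})$ up from below, and apply Theorem~\ref{thm:max_princ} to $h-b$ on a ball $Q$ of diameter $N\sim R$ with collar width $k=\sqrt R$; exactly as in the proof of Theorem~\ref{thm:quan_hom}, $|L^{(N)}_\omega(h-b)(z)|=|L^{(N)}_\omega b(z)|\lesssim \quenchedE_\omega^z[T_1^{(N)}]/R^2$, so \eqref{eq:max_princ} forces the exposed set $D_h$ to be small, contradicting the fact that $h$ is ``often large'' on $\disball_R$. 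In the elliptic case the contact set automatically has positive density wherever $h$ is near its minimum; here this role is taken over by the unique sink $\thesink$. On an event whose complementary probability is stretched-exponentially small --- using Lemma~\ref{lem:posdens}, Lemma~\ref{lem:connect_to_zero}, Propositions~\ref{prop:connect_outside_sink}--\ref{prop:antal_pist_sink} and Proposition~\ref{prop:unique}, together with a union bound over the $O(R^d)$ sites of $\disball_{\gamma R}$ --- the sink has density $\ge\Phi$ inside $\disball_{\gamma R}$, its internal graph distances there are comparable to Euclidean distances, the ``holes'' of $\Z^d\setminus\thesink$ meeting $\disball_{\gamma R}$ all have diameter $\le R^\delta$, and every site of $\disball_{\gamma R}$ reaches $\thesink$ within distance $R^\delta$; on this event the fixed-scale Harnack comparison of Corollary~\ref{cor:perc_bnd_harm_func} is available around every point, which is precisely what upgrades the ABP output into the contact-set density needed above. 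The hypotheses \eqref{eq:condformax} of Theorem~\ref{thm:max_princ} at scale $N\sim\gamma R$, $k=\sqrt R$ hold on an event of probability $1-C e^{-cR^{1/7}}$ by the proof of Lemma~\ref{lem:events_likely} (plus Lemma~\ref{lem:well_defined} and Markov's inequality for the two remaining conditions). Intersecting all these good events defines $\EE_R$.

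The step I expect to be the main obstacle is exactly this conversion of an ABP estimate into a density estimate with no ellipticity at hand: the classical constants depend on the ellipticity ratio, which here can be arbitrarily small, so everything must be routed through the rescaled walk and the sink, and one must keep $\delta$ from decaying with $R$. For the ``not too small'' part of the control I would compare with Brownian motion: by Theorem~\ref{thm:quan_hom} and Corollary~\ref{cor:exit}, on an event of stretched-exponentially high probability the $\omega$-harmonic comparison functions built from smooth $\CovMat$-caloric boundary data are within a prescribed $\epsilon$ of their continuum counterparts uniformly on $\disball_R$, so the density lemma can be run with the constants of $\CovMat$-Brownian motion up to an $\epsilon$ slack --- this is also the mechanism that will later let the constant $C$ in Theorem~\ref{thm:harnack} approach the classical Euclidean one. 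The degenerate directions themselves enter only through the rare, $O(R^\delta)$-small holes outside $\thesink$, which are absorbed into the $\sqrt R$-collar $\partial^{(k)}Q$ of Theorem~\ref{thm:max_princ} and so do not affect the contact-set count. Finally, a bounded number of iterations of the density lemma (to pass from whatever ratio the barrier construction naturally produces to a clean ratio $\SizeOsc$), together with the union bounds already accounted for --- all of which cost only polynomial factors and are swallowed by decreasing $\delta$ --- completes the proof of Theorem~\ref{thm:osc}.
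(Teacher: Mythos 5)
Your proposal takes the Krylov--Safonov/Kuo--Trudinger route (ABP estimate plus a critical-density ``growth'' lemma), and this is not the paper's argument; more importantly, in the present non-elliptic setting it breaks down at exactly the point you flag as the main obstacle. The density lemma you need is equivalent, via optional stopping, to a uniform lower bound on the probability that the quenched walk hits an \emph{arbitrary} set $E\subset\disball_R$ of density $\tfrac12$ before leaving $\disball_{\gamma R}$, and nothing in Theorem~\ref{thm:quan_hom} or Corollary~\ref{cor:exit} addresses such a statement: those results compare with Brownian motion only for fixed smooth boundary data, respectively exit probabilities through fixed macroscopic boundary portions, and cannot see lattice-scale level sets $\{h\ge\tfrac12\}$. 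Worse, a walk started at a point of the sink $\thesink$ never visits $\Z^d\setminus\thesink$, and the only density guarantee for the sink is some small $\Phi>0$ (Lemma~\ref{lem:posdens}); hence a set of density $\tfrac12$ can be entirely invisible from the point where $h$ is small. So the degenerate directions are not ``absorbed into the $\sqrt R$-collar'': the complement of the sink is not small, it is merely a union of individually small holes spread everywhere. Second, even granting a one-scale growth lemma with some small density constant coming out of the barrier/ABP computation, upgrading it to density $\tfrac12$ (or directly to an oscillation bound) is the Calder\'on--Zygmund/ink-spots iteration, which requires the growth lemma in sub-balls of all scales and locations; here such a lemma can at best hold with probability $1-e^{-r^\delta}$ at scale $r$, and at bounded scales it is simply false with probability bounded away from zero because the environment is genuinely non-elliptic, so the iteration cannot be closed by union bounds and your constant $\delta$ cannot be kept scale-free in the way you assert.

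For contrast, the paper proves Theorem~\ref{thm:osc} without any density lemma and without Theorem~\ref{thm:max_princ} (whose role is confined to the homogenization Theorem~\ref{thm:quan_hom}). It bounds $\osc_{\disball_R}f$ by the total variation distance between the exit measures of two walks started at $x,y\in\disball_R$, and bounds that distance by a multi-scale coupling: at scales above a base scale $R_0$ the two walks are matched through a fixed finite partition of the sphere using Corollary~\ref{cor:exit}, so that the scale of the ball containing both walkers performs a random walk biased towards decreasing; a recursive notion of admissible balls, together with a regeneration argument, controls the probability of ever relying on a bad ball; and at scale $R_0$ the percolation estimate of Corollary~\ref{cor:perc_bnd_harm_func} makes the two walks coalesce with probability bounded below. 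The resulting success probability $\MeetProb>0$ yields \eqref{eq:osc} with $\OscConst=1-\MeetProb$ and $\SizeOsc=2\BallGrowth$, on the event that $\desball{R}{0}$ is admissible, whose complement has stretched exponentially small probability. If you want to salvage an analytic route you would have to supply a substitute for the hitting estimate on arbitrary positive-density sets that works despite the sink structure; as written, that step fails.
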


The proof uses techniques of probability coupling.
{
For any $R>1$, $z\in\Z^d$, define the hitting time of the {\it inner-boundary} of the ball $\disball_{R}(z)$ as
\[
\tau_{R,z}=\tau_{R,z}(X_\cdot)=\inf\{n\ge 0: X_n\in\partial(\Z^d\setminus\disball_R(z))\}.
\]
The underlying process of the stopping time $\tau_{R,z}$ should be understood {\it from the context}. For instance, the subscripts of $X_{\tau_{R,z}}$ and $Y_{\tau_{R,z}}$ represent two different stopping times $\tau_{R,z}(X_\cdot)$ and $\tau_{R,z}(Y_\cdot)$, respectively.

Our observation is that the oscillation estimate \eqref{eq:osc} will follow if for every $\omega\in\Omega$ and any $x,y\in \disball_R(z)$, there is a coupling of two paths $(X_n)$, $(Y_n)$ in $\disball_{\gamma R}(z)$ such that
\begin{enumerate}[ (a)]
\item The marginal distributions of $(X_n)$ and $(Y_n)$ are $P_\omega^x$ and $P_\omega^y$, respectively. With abuse of notation,  we use $P_\omega^{x,y}$ to denote the joint law of $(X_n, Y_n)_{n\ge 0}$.
\item $P_\omega^{x,y}(X_{\tau_{\gamma R,z}}=Y_{\tau_{\gamma R,z}})>1-\alpha$.
\end{enumerate}
Indeed, for any $\omega$-harmonic function $f$, $f(X_n)$ is a martingale under the quenched law $P_\omega^x$. 
Hence, by the optional stopping theorem, for any $x, y\in\disball_R(z)$,
\begin{align*}
f(x)-f(y)
&=E_\omega^{x,y}[f(X_{\tau_{\gamma R,z}})-f(Y_{\tau_{\gamma R, z}})]\\
&\le 
P_\omega^{x,y}(X_{\tau_{\gamma R,z}}\neq Y_{\tau_{\gamma R,z}})\osc_{\disball_{\gamma R}(z)}f\\
&\le \alpha\osc_{\disball_{\gamma R}(z)}f.
\end{align*}
}
We start by describing a multi scale structure.
\subsection{Multi scale structure}\label{sec:mltstr}

We fix three (large) parameters $R_0$, $\BallGrowth$ and $\BallScale$ and one (small) parameter $\errQb$ whose values will be determined later. We now say that $\BallScale$ needs to be an even number, and require that $\BallGrowth>100$. Further requirements will come later.

\label{page:partition}
Let $\Gamma=\{A_1,A_2,\ldots,A_k\}$ be a covering of $\partial\ball 1 0$ by closed sets intersecting only in their boundaries (in the $\partial\ball 1 0$ topology) and having relative boundaries with measure zero, such that the diameter of each of them
is smaller than $1/\BallGrowth^2$.

For a ball $B=\desball{R}{x}$ and a point $y\in\desball{R}{x}$, we denote by $\disribTile_\omega(B,y)$ the distribution on the set $\{1,\ldots,k\}$ with
$\disribTile_\omega(B,y)(j)=\quenchedP_\omega^y(T_{\partial\desball{R\BallGrowth}{x}}=T_{R\BallGrowth A_j})$. We write $\disribTile_\CovMat(B,y)$ for the distribution on the set $\{1,\ldots,k\}$ with
$\disribTile_\CovMat(B,y)(j)=\pbm^y(T_{\partial\desball{R\BallGrowth}{x}}=T_{R\BallGrowth A_j})$ where $\pbm$ is the distribution of Brownian Motion with covariance matrix $\CovMat$.

We now define the notion of goodness of a ball $\desball{R}{x}$.
\begin{definition}\label{def:good_ball}
\begin{enumerate}
\item If $R\leq R_0$, we say that the ball $B=\desball{R}{x}$ is good if it satisfies the event in 
\eqref{eq:coupling_huge_const}. 
\item If $R > R_0$, we say that the ball $B=\desball{R}{x}$ is good if for every $y\in\desball{R}{x}$, 
\[
\left\|
\disribTile_\omega(B,y)-\disribTile_\CovMat(B,y)
\right\|_{\mbox{TV}}<\errQb.
\]
\end{enumerate}
\end{definition}
 
 The following claim follows from 
 Corollaries \ref{cor:perc_bnd_harm_func} and \ref{cor:exit}.
 
 \begin{claim}\label{claim:ball_good_prob}
 There exists $\delta$ such that for every $x$ and $R$, the probability that the ball $\desball{R}{x}$ is good is at least $1-\exp(-R^\delta)$.
 \end{claim}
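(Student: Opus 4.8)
The plan is to split along the two cases of Definition~\ref{def:good_ball}, each of which reduces to one of the two corollaries already established: Corollary~\ref{cor:exit} governs the balls with $R>R_0$, and Corollary~\ref{cor:perc_bnd_harm_func} the balls with $R\le R_0$. Fix once and for all a constant $\delta>0$ strictly smaller than both $\ConnectExp$ and the exponent $\delta_0$ of Theorem~\ref{thm:quan_hom} (equivalently, of Corollary~\ref{cor:exit}); I claim this $\delta$ works. Along the way I will point out which additional constraints on the still-free parameters $R_0,\BallGrowth,\errQb,\nn$ are being imposed, all of them of the kind deferred in Section~\ref{sec:mltstr}.

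Take first $B=\desball{R}{x}$ with $R>R_0$, and recall the finite covering $\Gamma=\{A_1,\dots,A_k\}$ of $\partial\ball 1 0$ from Section~\ref{sec:mltstr}, with $k$ depending only on $\BallGrowth$. Since each $\partial A_j$ is Lebesgue-null on $\partial\ball 1 0$, and the harmonic measure $\pbm^v(\text{exit point}\in\cdot)$ of $\ball 1 0$ has a density with respect to surface measure that is bounded uniformly over $v$ in the compact set $\{|v|_2\le1/\BallGrowth\}$, one may fix open sets $A_j^-\subset A_j\subset A_j^+$, each with null relative boundary, with $\sup_{|v|_2\le1/\BallGrowth}\pbm^v(\text{exits }\ball 1 0\text{ through }A_j^+\setminus A_j^-)<\errQb/(4k)$ for every $j$. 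Using translation invariance of $P$ to recenter at $x$, I would apply Corollary~\ref{cor:exit} to each of the $2k$ sets $A_1^\pm,\dots,A_k^\pm$, with outer radius $R\BallGrowth$, ratio $r=1/\BallGrowth$ — so that $\disball_{r\cdot R\BallGrowth}=\desball{R}{x}$ is exactly the range of the points $y$ entering the definition of goodness — and error parameter $\errQb/(4k)$. On the intersection of the $2k$ resulting events, for each $y\in\desball{R}{x}$ and each $j$ one sandwiches $\disribTile_\omega(B,y)(j)$, the probability that the $\omega$-walk from $y$ exits $\desball{R\BallGrowth}{x}$ in radial direction $A_j$, between $\quenchedP_\omega^y(\text{direction}\in A_j^-)$ and $\quenchedP_\omega^y(\text{direction}\in A_j^+)$, replaces each of these by the corresponding $\pbm^{(y-x)/(R\BallGrowth)}$-exit probability (Brownian scaling) up to an error $\errQb/(4k)$, and then, since $\pbm$ puts no mass on $\partial A_j$ and by the choice of $A_j^\pm$, concludes $|\disribTile_\omega(B,y)(j)-\disribTile_\CovMat(B,y)(j)|<\errQb/k$. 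Summing over $j$ yields $\|\disribTile_\omega(B,y)-\disribTile_\CovMat(B,y)\|_{\mathrm{TV}}<\errQb$, i.e.\ $B$ is good; the failure probability is at most $2kC\exp(-c(R\BallGrowth)^{\delta_0})$, which for $R>R_0$ is at most $\exp(-R^\delta)$ once $R_0$ is taken large (this absorbs both the factor $2kC$ and the gap $\delta_0>\delta$).

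Next take $B=\desball{R}{x}$ with $R\le R_0$. By Definition~\ref{def:good_ball}, ``$B$ good'' is the translate by $x$ of the event in \eqref{eq:coupling_huge_const}, which does not in fact depend on $R$. Applying Corollary~\ref{cor:perc_bnd_harm_func} with its free parameter set equal to $\exp(-R_0^\delta)$ produces $\nn$ and $\PercHarmConst$ for which \eqref{eq:coupling_huge_const} has probability at least $1-\exp(-R_0^\delta)\ge1-\exp(-R^\delta)$ for every $R\le R_0$, and translation invariance of $P$ transfers this to the event centered at $x$. This pins down $\nn$; since the construction in Corollary~\ref{cor:perc_bnd_harm_func} takes $\nn$ of order $R_0^{\delta/\ConnectExp}$, the hypothesis $\delta<\ConnectExp$ keeps $\nn\ll R_0$, so the small-scale notion of goodness stays compatible with the scale separation used in the rest of the multi-scale construction. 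The two cases together give the claim with the $\delta$ fixed at the outset.

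I expect the only genuinely delicate point to be the discretization mismatch in the first case: the walk may exit $\desball{R\BallGrowth}{x}$ at a lattice site whose radial direction falls on an interface $\partial A_j$, through which the Brownian motion exits with probability zero, so $\disribTile_\omega(B,y)$ need not even be a probability vector on $\{1,\dots,k\}$; the inner/outer open approximations $A_j^-\subset A_j\subset A_j^+$ together with the Lebesgue-nullity of the $\partial A_j$ are exactly what is needed to trap each $\disribTile_\omega(B,y)(j)$ between two quantities that Corollary~\ref{cor:exit} controls. Everything else is parameter bookkeeping.
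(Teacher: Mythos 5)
Your proof is correct and takes essentially the same approach as the paper, whose entire argument for this claim is the one-line observation that it follows from Corollaries \ref{cor:perc_bnd_harm_func} and \ref{cor:exit}: you handle $R>R_0$ by a union bound over the finitely many partition sets via Corollary \ref{cor:exit} (with the inner/outer open approximations taking care of the closed, boundary-overlapping $A_j$ and the lattice discretization), and $R\le R_0$ by invoking Corollary \ref{cor:perc_bnd_harm_func} with $\epsilon=\exp(-R_0^\delta)$ together with translation invariance. Your write-up simply supplies the parameter bookkeeping (largeness of $R_0$, choice of $\nn$) that the paper leaves implicit, and this is consistent with how those parameters are deferred in Section \ref{sec:mltstr}.
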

 
 We can now define our multi scale structure. We will recursively define the notion of an admissible ball. Claim \ref{claim:ball_good_prob} will then help
 us estimate the probability that a ball is admissible. We start with setting the scales. $R_0$ is given to us. We then define
 \[
 R_k:=R_{k-1}^\BallScale
 \]
 for $k\geq 1$.
 
 For now, we only define admissibility for balls of radius $R_k,k=0,1,2,\ldots$ To define admissibility, we first choose a parameter $\nu<\delta/\BallScale$,
 {
 where $\delta$ is as in Claim~\ref{claim:ball_good_prob}. }
 
 \begin{definition}\label{def:admis_ball}
 \begin{enumerate}
\item
A ball of radius $R_0$ is called \underline{admissible} if it is good.
\item
A ball of radius $R_k$, $k\geq 1$ is called \underline{admissible} if
\begin{enumerate}
\item Every sub ball of radius larger than $R_{k-1}$ is good, and
\item there are at most $R_k^{\nu}$ non-admissible sub balls of radius $R_{k-1}$.
\end{enumerate}
\end{enumerate}
\end{definition}
 
We now estimate the probability that a ball of radius $R_k$ is not admissible. We denote by $\admis(x,k)$ the event that $\desball{R_k}{x}$ is admissible.
\begin{lemma}\label{lem:prob_admis}
For every $x$ and $k$, the probability that $\desball{R_k}{x}$ is not admissible is bounded by $e^{-R_k^{\nu/2}}$.
\end{lemma}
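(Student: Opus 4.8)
\medskip
\noindent\emph{Proof proposal.} The plan is an induction on $k$, running the usual multiscale bootstrap. By translation invariance of $P$ the probability in question does not depend on $x$, so write $q_k:=P(\desball{R_k}{x}\text{ is not admissible})$; we aim at $q_k\le e^{-R_k^{\nu/2}}$. For $k=0$ admissibility means goodness, so Claim~\ref{claim:ball_good_prob} gives $q_0\le e^{-R_0^{\delta}}\le e^{-R_0^{\nu/2}}$, the last step because $\nu<\delta/\BallScale\le\delta$. For the inductive step we note that a ball $\desball{R_k}{x}$ can fail to be admissible only through one of the two clauses of Definition~\ref{def:admis_ball}: either \textup{(a)} some sub-ball of radius in $(R_{k-1},R_k]$ is not good, or \textup{(b)} more than $R_k^\nu$ of its sub-balls of radius $R_{k-1}$ are non-admissible. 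I would bound $P(\text{(a)})$ and $P(\text{(b)})$ separately and add.

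For clause \textup{(a)}: there are only polynomially many (in $R_k$) discrete sub-balls of radius larger than $R_{k-1}$ inside $\desball{R_k}{x}$, and by Claim~\ref{claim:ball_good_prob} each of them is good with probability at least $1-e^{-R_{k-1}^{\delta}}=1-e^{-R_k^{\delta/\BallScale}}$. Since $\delta/\BallScale>\nu>\nu/2$, a union bound gives $P(\text{(a)})\le R_k^{C}e^{-R_k^{\delta/\BallScale}}\le\tfrac12 e^{-R_k^{\nu/2}}$ once $R_0$ is large enough; this threshold is uniform in $k$ because $R_k$ is increasing in $k$.

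For clause \textup{(b)} the essential input — and the reason the exponent improves from the $\nu/(2\BallScale)$ coming from the induction hypothesis to the $\nu/2$ one needs — is finite-range dependence. Unwinding Definitions~\ref{def:good_ball} and~\ref{def:admis_ball} by induction on the scale, one checks that the event $\{\desball{R_{k-1}}{y}\text{ is admissible}\}$ is measurable with respect to $\omega$ restricted to $\desball{C_0R_{k-1}}{y}$, where $C_0$ depends only on $\BallGrowth$ and on the scale of the event \eqref{eq:coupling_huge_const} (for the base of that sub-induction one uses $R_0\ge\nn$, so that goodness of a ball of radius $\le R_0$ depends on $\omega$ in a neighbourhood of radius $\le C_0 R_0$). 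Consequently the sub-balls of radius $R_{k-1}$ occurring in clause \textup{(b)} — which I would take centred on a grid of mesh of order $R_{k-1}$, so that there are $O((R_k/R_{k-1})^d)$ of them — split into $L=L(C_0,d)=O(1)$ families, obtained by partitioning their centres according to residues modulo a fixed multiple of $C_0R_{k-1}$, so that inside each family the admissibility events are mutually independent. If clause \textup{(b)} occurs, the pigeonhole principle produces a family with at least $m:=\lceil R_k^\nu/L\rceil$ non-admissible sub-balls (and $m\to\infty$ because $L$ is constant). Each family has at most $CR_k^d$ members and, by the induction hypothesis, each member is non-admissible with probability at most $p:=e^{-R_{k-1}^{\nu/2}}=e^{-R_k^{\nu/(2\BallScale)}}$; by independence within a family, the probability that a given family contains at least $m$ non-admissible sub-balls is at most $\binom{CR_k^d}{m}p^{\,m}\le\bigl(CR_k^d\,p\bigr)^{m}\le e^{-\frac12 R_k^{\nu/(2\BallScale)}m}$ for $R_0$ large. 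Summing over the $L$ families and using $m\ge R_k^\nu/L$ gives $P(\text{(b)})\le L\,e^{-\frac{1}{2L}R_k^{\,\nu+\nu/(2\BallScale)}}\le\tfrac12 e^{-R_k^{\nu/2}}$ for $R_0$ large, since $\nu+\nu/(2\BallScale)>\nu/2$. Adding this to the bound from clause \textup{(a)} closes the induction.

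The two union bounds are routine; the one step that needs genuine care is the finite-range dependence claim for admissibility, together with the observation that the constant $C_0$ — hence the number $L$ of independent families — must stay bounded in $k$ (this is exactly what forces one to count the sub-balls in clause \textup{(b)} on a grid of mesh comparable to $R_{k-1}$, rather than over all of $\Z^d$; on the full lattice the dependency range $C_0R_{k-1}$ would force $L$ to grow like $R_{k-1}^d$ and the pigeonhole step would collapse). A minor point is to make sure all the ``$R_0$ large'' thresholds collected along the way can be imposed simultaneously, which is harmless since there are finitely many of them and $R_k\ge R_0$.
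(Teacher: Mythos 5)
Your overall architecture is the same as the paper's: induction on $k$, with the failure event split into (a) some non-good sub-ball of radius larger than $R_{k-1}$ and (b) more than $R_k^{\nu}$ non-admissible sub-balls of radius $R_{k-1}$; (a) and the base case are handled exactly as in the paper (union bound over polynomially many balls, Claim~\ref{claim:ball_good_prob}, and $\nu<\delta/\BallScale$), and (b) by grouping into finitely many independent families plus a binomial tail bound. Your finite-range-dependence claim for admissibility, with a constant $C_0$ depending only on $\BallGrowth$ and the base scale and uniform in $k$, is also correct.

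The substantive issue is in clause (b), and it is exactly the point you flag but do not resolve: you count only sub-balls centred on a grid of mesh of order $R_{k-1}$, whereas Definition~\ref{def:admis_ball} (and its later use, e.g.\ in \eqref{eq:prob_hit_non_admis}, where the centre $x_{\tilde m}$ is an arbitrary lattice point) refers to all sub-balls of radius $R_{k-1}$. With the full count your estimate controls a strictly weaker event, and the gap is not cosmetic: a single non-good ball of radius just above $R_{k-2}$ sitting well inside $\desball{R_k}{x}$ already makes of order $R_{k-1}^d=R_k^{d/\BallScale}\gg R_k^{\nu}$ lattice-centred radius-$R_{k-1}$ sub-balls non-admissible (each such ball violates clause (a) of admissibility at level $k-1$), so the number of non-admissible sub-balls is dominated by local clustering rather than by the sum of weakly dependent indicators your binomial bound handles; with all centres there are of order $(\BallGrowth R_{k-1})^d$ independence families and, as you observe, the pigeonhole threshold per family drops below $1$. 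So, as a proof of the lemma for the definition as literally stated, clause (b) is not established; it becomes correct only after re-reading the count in Definition~\ref{def:admis_ball} as a grid count (in which case the downstream coupling argument needs the extra remark that any ball is contained in a nearby grid ball of comparable radius). For comparison, the paper's own proof partitions all centres into $(\BallGrowth R_{k-1})^d$ classes, bounds $P(U(j)>R_k^{\nu'})$ with $\nu'>\nu-\nu/(2\BallScale)$, and then union-bounds over classes; the inclusion of the event (b) in $\bigcup_j\{U(j)>R_k^{\nu'}\}$ used there would require $\nu'\le\nu-d/\BallScale$, so the paper is loose at precisely the same spot. In other words, you have correctly located the delicate step, and your grid-count version is a natural way to make the bookkeeping consistent, but as written your argument proves a variant of the lemma (for a weakened notion of admissibility) rather than the stated one.
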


\begin{proof}
For $k=0$ this follows from the fact that $\nu<\delta$. For $k\geq 1$ we prove the lemma by induction. Let $A$ be the event that there exists a sub ball of $\desball{R_k}{x}$ of radius larger
than $R_{k-1}$ which is not good, and let $B$ be the event that there are more than 
$R_k^{\nu}$
non-admissible sub balls of radius $R_{k-1}$. We estimate the probabilities of $A$
and $B$.

We start with estimating the probability of $A$. There are less than $(2R_k)^{d+1}$ sub balls of size greater than $R_{k-1}$ in $\desball{R_k}{x}$, and each of them is bad with probability less than
$e^{-R_{k-1}^\delta}<e^{-R_{k}^\nu}$. Thus
\[
P(A)\leq (2R_k)^{d+1} e^{-R_{k}^\nu}.
\]

We continue with estimating the probability of $B$. To this end we partition $\desball{R}{x}$ into $(\BallGrowth R_{k-1})^d$ subsets $L_1,L_2,\ldots,L_{(\BallGrowth R_{k-1})^d}$ such that for every $j$ and every $z,y\in L_j$, the events
$\admis(z,k-1)$ and $\admis(y,k-1)$ are independent.
For given $j$, we write
\[
U(j)=\sum_{z\in L_j}(1-{\bf 1}_{\admis(z,k-1)}).
\]
Then $U(j)$ is a binomial $(|L_j|,P(\admis(0,k-1)))$ random variable, and by the induction hypothesis is dominated by a binomial $((2R_k)^d,e^{-R_{k-1}^{\nu/2}})$ variable.
Let $\nu-\nu/2K<\nu'<\nu$ Then, For every $j$,
\begin{eqnarray*}
P\big(U(j)>R_k^{\nu'}\big) \leq (2R_k)^{dR_k^{\nu'}} e^{-R_k^{\nu'} R_{k-1}^{\nu/2}}\leq e^{-{ 0.9}R_k^\nu},
\end{eqnarray*}
and so
\[
P(B)\leq (\BallGrowth R_{k-1})^d e^{-{ 0.9}R_k^\nu}.
\]
Thus,
\[
P(\admis^c(x,k))\leq P(A)+P(B) \leq (2R_k)^{d+1} e^{-R_{k}^\nu} + (\BallGrowth R_{k-1})^d e^{-{ 0.9}R_k^\nu} \leq e^{-R_k^\nu/2}.
\]
\end{proof}

Before we continue with the proof of the oscillation inequality, we define admissibility also for balls whose radius is not exactly $R_k$ for some $k$.

\begin{definition}\label{def:admis_ball_gen_size}
Let $R>R_0$. Let $k$ be such that $R_k< R < R_{k+1}$. A ball of radius $R$ is called \underline{admissible} if
\begin{enumerate}
\item 
every sub ball of radius $R_k$ is admisible according to definition \ref{def:admis_ball}, and
\item 
every sub ball of radius greater than $R_k$ is good.
\end{enumerate}
\end{definition}

As a corollary of Lemma \ref{lem:prob_admis} we get the following corollary.

\begin{corollary}\label{cor:prob_admis}
For every $x$ and $R\geq R_0$, the probability that $\desball{R}{x}$ is not admissible is bounded by $e^{-R^{\nu/2\BallScale}}$.
\end{corollary}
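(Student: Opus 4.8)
\textit{Proof plan.} The plan is to deduce Corollary~\ref{cor:prob_admis} from Lemma~\ref{lem:prob_admis} and Claim~\ref{claim:ball_good_prob} by a union bound over the sub-balls that appear in Definition~\ref{def:admis_ball_gen_size}. Fix $x$ and $R\ge R_0$, and let $k$ be the unique index with $R_k\le R<R_{k+1}$. By Definition~\ref{def:admis_ball_gen_size}, the ball $\desball R x$ fails to be admissible only if either some sub-ball of radius exactly $R_k$ fails to be admissible in the sense of Definition~\ref{def:admis_ball}, or some sub-ball of radius strictly larger than $R_k$ fails to be good. I would estimate the probabilities of these two events separately.

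For the first event: there are at most $(2R)^d$ sub-balls of radius $R_k$ contained in $\desball R x$, one for each possible center, and by Lemma~\ref{lem:prob_admis} each of them is non-admissible with probability at most $e^{-R_k^{\nu/2}}$, so a union bound contributes $(2R)^d e^{-R_k^{\nu/2}}$. For the second event: there are at most $(2R)^{d+1}$ sub-balls of radius $r$ with $R_k<r\le R$, one for each center and each integer radius, and by Claim~\ref{claim:ball_good_prob} each such ball fails to be good with probability at most $e^{-r^\delta}\le e^{-R_k^\delta}$, contributing $(2R)^{d+1} e^{-R_k^\delta}$. Hence $P(\desball R x\text{ not admissible})\le (2R)^d e^{-R_k^{\nu/2}}+(2R)^{d+1} e^{-R_k^\delta}$.

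Finally I would use the relation between $R_k$ and $R$. Since $R<R_{k+1}=R_k^{\BallScale}$ we have $R_k> R^{1/\BallScale}$, so $R_k^{\nu/2}\ge R^{\nu/(2\BallScale)}$, and since $\nu<\delta/\BallScale$ also $R_k^{\delta}\ge R^{\delta/\BallScale}\ge R^{\nu/(2\BallScale)}$; thus the bound above is at most $2(2R)^{d+1}e^{-R^{\nu/(2\BallScale)}}$. Absorbing the polynomial prefactor into the stretched exponential (which is legitimate for all $R\ge R_0$, since $R_0$ is a free parameter still to be fixed large, at the harmless cost of adjusting constants) then yields the claimed bound $e^{-R^{\nu/2\BallScale}}$. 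I do not expect any serious obstacle here: the only points that require a little care are the enumeration of the sub-balls by center and radius, and the verification that $R_k\ge R^{1/\BallScale}$ — it is precisely the latter that lets the stretched-exponential exponent survive the passage from scale $R_k$ to scale $R$.
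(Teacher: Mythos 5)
Your route is exactly the one the paper intends: the paper states this corollary without proof, as an immediate consequence of Lemma~\ref{lem:prob_admis}, and the natural argument is precisely your union bound over the sub-balls appearing in Definition~\ref{def:admis_ball_gen_size} (radius-$R_k$ sub-balls via Lemma~\ref{lem:prob_admis}, larger sub-balls via Claim~\ref{claim:ball_good_prob}), followed by $R_k>R^{1/\BallScale}$. The counting of sub-balls and the comparison of exponents ($R_k^{\nu/2}\ge R^{\nu/(2\BallScale)}$ and $R_k^{\delta}\ge R^{\delta/\BallScale}\ge R^{\nu/(2\BallScale)}$) are all fine.

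The one step that does not hold as written is the final absorption. You claim that $2(2R)^{d+1}e^{-R^{\nu/(2\BallScale)}}\le e^{-R^{\nu/(2\BallScale)}}$ can be arranged ``for all $R\ge R_0$'' by taking $R_0$ large; but a polynomial prefactor exceeding $1$ can never be absorbed into the \emph{same} stretched exponential, no matter how large $R_0$ is, and in the worst case $R$ close to $R_{k+1}=R_k^{\BallScale}$ the inequality $R_k^{\nu/2}\ge R^{\nu/(2\BallScale)}$ has essentially no slack, so you cannot hide the prefactor there either. The gap is cosmetic and in line with the paper's own conventions, but to close it you should either (i) not use Lemma~\ref{lem:prob_admis} as a black box and instead invoke the stronger bound its proof actually yields at scale $R_k$ (of order $e^{-cR_k^{\nu}}$, before the halving of the exponent), which leaves ample room for the polynomial factor since $R_k^{\nu}-R_k^{\nu/2}\gg d\BallScale\log R_k$ for $R_k\ge R_0$ large; or (ii) state the conclusion with a marginally smaller exponent (say $R^{\nu/(3\BallScale)}$), which is all that is used later, where only stretched-exponential decay matters.
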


\subsection{The coupling}
In this subsection we define a coupling that will be the main tool in proving the oscillation inequality. We start with a notion of a basic coupling, and will afterwards compose the coupling from many basic couplings.
\begin{definition}
Let $R\geq R_0$ and let $y$ and $z$ be points in $\desball{R}{x}$. The \underline{basic coupling} $\BasicCoupling{x,R;z,y}$ is a joint distribution of two times (i.e. natural numbers) $\tilde T_y$ and $\tilde T_z$ and two paths $(\tilde Y_1,\ldots,\tilde Y_{\tilde T_y})$ starting at $y$ and $(\tilde Z_1,\ldots,\tilde Z_{\tilde T_z})$ started at $z$ sampled as follows.
\begin{enumerate}
\item
If $R>R_0$, then $(\tilde Y_1,\ldots,\tilde Y_{\tilde T_y})$  and $(\tilde Z_1,\ldots,\tilde Z_{\tilde T_z})$ are sampled as random walks in the environment $\omega$, starting respectively at $y$ and $z$, with $\tilde T_y$ and $\tilde T_z$ being the respective stopping times of reaching $\partial\desball{R\BallGrowth }{x}$, where the two walks are coupled in a way that maximizes the probability that $\tilde Y_{\tilde T_y}$ and $\tilde Z_{\tilde T_z}$ are in the same element of $\{x+R\BallGrowth A_1,x+R\BallGrowth A_2\ldots,x+R\BallGrowth A_k\}$.
\item
if $R=R_0$ then $(\tilde Y_1,\ldots,\tilde Y_{\tilde T_y})$  and $(\tilde Z_1,\ldots,\tilde Z_{\tilde T_z})$ are sampled as random walks in the environment $\omega$, starting respectively at $y$ and $z$, with $\tilde T_y$ and $\tilde T_z$ being the stopping times of reaching $\partial\desball{R\BallGrowth }{x}$, where the two walks are coupled in a way that maximizes the probability that $\tilde Y_{\tilde T_y} = \tilde Z_{\tilde T_z}$.
\end{enumerate}
\end{definition}

On good balls, the basic coupling has a relatively good success probability, as evident by the following lemma, which follows immediately from the definition of good balls.
\begin{lemma}\label{lem:basic_coupling_success_probability}
Let $\desball{R}{x}$ be a good ball, and let $y,z\in\desball{R}{x}$.
\begin{enumerate}
\item If $R>R_0$ then
\[
\BasicCoupling{x,R;z,y}
\left(
\|\tilde Y_{\tilde T_y}-\tilde Z_{\tilde T_z}\|<R/\BallGrowth 
\right)>1 -\frac 1\BallGrowth  - 2\errQb.
\]
\item If $R=R_0$ then
\[
\BasicCoupling{x,R;z,y}
\left(
\tilde Y_{\tilde T_y} = \tilde Z_{\tilde T_z}
\right)>\PercHarmConst,
\]
where $\PercHarmConst$ is as in Corollary \ref{cor:perc_bnd_harm_func}.
\end{enumerate}
\end{lemma}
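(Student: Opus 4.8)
The plan is to read both statements directly off the definition of a good ball, using nothing beyond the coupling description of total variation distance: if $\mu,\nu$ are probability measures on a common space and $\Pi$ maps that space to a finite set, then among all couplings of $\mu$ and $\nu$ the largest possible probability that the two samples have equal $\Pi$-image is exactly $1-\|\Pi_\ast\mu-\Pi_\ast\nu\|_{\mathrm{TV}}$, and it is attained. By construction $\BasicCoupling{x,R;z,y}$ is exactly such an optimal coupling of the quenched walks $\quenchedP_\omega^y,\quenchedP_\omega^z$ run to the exit of $\desball{R\BallGrowth}{x}$ — for $R>R_0$ with $\Pi$ the map recording which of $x+R\BallGrowth A_1,\dots,x+R\BallGrowth A_k$ the walk exits through, and for $R=R_0$ with $\Pi$ the identity on $\partial\desball{R_0\BallGrowth}{x}$. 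So the only content is to bound the relevant total variation distance.

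For part~(1), fix a good ball $B=\desball{R}{x}$, $R>R_0$, and $y,z\in\desball{R}{x}$. Since $\Pi_\ast\quenchedP_\omega^y=\disribTile_\omega(B,y)$ and $\Pi_\ast\quenchedP_\omega^z=\disribTile_\omega(B,z)$, optimality gives
\[
\BasicCoupling{x,R;z,y}\!\left(\tilde Y_{\tilde T_y}\text{ and }\tilde Z_{\tilde T_z}\text{ lie in a common }x+R\BallGrowth A_j\right)=1-\|\disribTile_\omega(B,y)-\disribTile_\omega(B,z)\|_{\mathrm{TV}}.
\]
If the endpoints lie in a common $x+R\BallGrowth A_j$, then their distance is at most $\diam(x+R\BallGrowth A_j)=R\BallGrowth\,\diam A_j<R\BallGrowth\cdot\BallGrowth^{-2}=R/\BallGrowth$, so the displayed quantity is a lower bound for $\BasicCoupling{x,R;z,y}(\|\tilde Y_{\tilde T_y}-\tilde Z_{\tilde T_z}\|<R/\BallGrowth)$. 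I would then bound the remaining distance by the triangle inequality through the Brownian quantity $\disribTile_\CovMat$, using that $B$ is good at both $y$ and $z$:
\[
\|\disribTile_\omega(B,y)-\disribTile_\omega(B,z)\|_{\mathrm{TV}}\le 2\errQb+\|\disribTile_\CovMat(B,y)-\disribTile_\CovMat(B,z)\|_{\mathrm{TV}},
\]
and finish by showing $\|\disribTile_\CovMat(B,y)-\disribTile_\CovMat(B,z)\|_{\mathrm{TV}}\le 1/\BallGrowth$. This last inequality is a statement purely about the limiting diffusion: after the linear change of variables diagonalising $\CovMat$, the points $y,z$ sit within $O(R)$ of the centre of an ellipsoidal region of size of order $R\BallGrowth$, and the interior gradient estimate (equivalently, Harnack / comparison of Poisson kernels) for $\CovMat$-harmonic functions bounds the total variation between the two exit distributions — even before coarsening to the partition — by a dimensional constant times $R/(R\BallGrowth)$, which is $\le 1/\BallGrowth$ once $\BallGrowth$ is taken large; this is precisely one of the ``further requirements'' on $\BallGrowth$ left open in Section~\ref{sec:mltstr}.

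For part~(2), fix a good ball $\desball{R_0}{x}$ and $y,z\in\desball{R_0}{x}$. Here $\tilde T_y,\tilde T_z$ are the hitting times of $\partial\desball{R_0\BallGrowth}{x}$ and the two walks are coupled optimally so as to agree there, so $\BasicCoupling{x,R_0;z,y}(\tilde Y_{\tilde T_y}=\tilde Z_{\tilde T_z})$ equals $1$ minus the total variation distance between the quenched exit distributions from $y$ and $z$ on $\partial\desball{R_0\BallGrowth}{x}$; and a good ball of radius $R_0$ satisfies, by definition, the event of \eqref{eq:coupling_huge_const}, which bounds exactly this distance by $1-1/\PercHarmConst$. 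Hence the coupling succeeds with probability at least $1/\PercHarmConst$, which is the asserted bound.

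I expect the single substantive step to be the Brownian-motion estimate in part~(1): one must make the comparison of exit distributions of the non-degenerate diagonal diffusion from two deep interior points quantitative and verify the constant genuinely fits inside $1/\BallGrowth$ (or can be folded into the choice of parameters). A second, purely cosmetic, point is reconciling the cube geometry of \eqref{eq:coupling_huge_const} with the ball geometry used at scale $R_0$: Corollary~\ref{cor:perc_bnd_harm_func} and its proof adapt verbatim with $[-\nn,\nn]^d\subset[-2\nn,2\nn]^d$ replaced by $\desball{R_0}{x}\subset\desball{R_0\BallGrowth}{x}$, since that argument only uses that from each inner point the walk reaches the unique sink before leaving the outer region, together with the within-sink Harnack bound of Proposition~\ref{prop:antal_pist_sink}.
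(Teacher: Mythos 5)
Your proposal is correct and is essentially the paper's own (implicit) argument: the paper offers no proof beyond asserting the lemma ``follows immediately from the definition of good balls,'' and your unpacking — the optimal-coupling characterization of total variation, the triangle inequality through $\disribTile_\CovMat$ giving the $2\errQb$, the partition-diameter bound $\BallGrowth^{-2}$ giving the distance $R/\BallGrowth$, and \eqref{eq:coupling_huge_const} at scale $R_0$ — is exactly what that ``immediately'' hides. The only caveats are ones you already flag: the Brownian exit-distribution comparison yields $C(d,\CovMat)/\BallGrowth$ rather than literally $1/\BallGrowth$ (so the constant must be absorbed into the statement or the choice of $\BallGrowth$, which is harmless since downstream only $1-\tfrac{1}{\BallGrowth}-2\errQb>2/3$ is used), and in part (2) the paper's bound ``$>\PercHarmConst$'' is evidently a slip for the bound $1/\PercHarmConst$ that your argument delivers.
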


We now concatenate basic couplings, and get the following construction. Let $R$ be $R_0$ multiplied by a power of $\BallGrowth $, and let $y,z\in\desball{R}{x}$.
We will define $\CompCoupling{x,R;z,y}$ as a joint distribution of a random walk $(Y_n)$ starting at $y$, a random walk $(Z_n)$ starting at $z$, two sequences of stopping times $T_y^{(m)}$ and $T_y^{(m)}$, two sequences of points $(y_m)$ and $(z_m)$, and a sequence of balls $\big( \desball{R^{(m)}}{x_m} \big)$. 

To define $\CompCoupling{x,R;z,y}$, we first construct a coupling, and then take $\CompCoupling{x,R;z,y}$ to be its distribution.
We write $x_0=x$; $y_0=y$; $z_0=z$ and $R^{(0)}=R$, and also $T_y^{(0)}=T_z^{(0)}=0$. We also start the two random walks at the points $Y_0=y$ and $Z_0=z$.

Inductively, for $m=1,2,\ldots$, we now sample $y_m$, $z_m$, $x_m$, $R^{(m)}$, $T_y^{(m)}$, $T_z^{(m)}$ and $(Y_n)_{n=T_y^{(m-1)}}^{T_y^{(m)}}$ and $(Z_n)_{n=T_z^{(m-1)}}^{T_z^{(m)}}$, assuming that we already sampled $y_{m-1}$, $z_{m-1}$, $x_{m-1}$, $R^{(m-1)}$, $T_y^{(m-1)}$, $T_z^{(m-1)}$ and $(Y_n)_{n=0}^{T_y^{(m-1)}}$ and $(Z_n)_{n=0}^{T_z^{(m-1)}}$.

We sample $\tilde T_y$, $\tilde T_z$ and the two paths $(\tilde Y_1,\ldots,\tilde Y_{\tilde T_y})$ and $(\tilde Z_1,\ldots,\tilde Z_{\tilde T_z})$ according to $\BasicCoupling{{x_{m-1},R^{(m-1)};z_{m-1},y_{m-1}}}$ We then assign:
\begin{eqnarray*}
T_y^{(m)}:=T_y^{(m-1)}+\tilde T_y
\ ;\ 
T_z^{(m)}:=T_z^{(m-1)}+\tilde T_z
\ ; \ 
y_m=\tilde Y_{\tilde T_y}
\ ; \ 
z_m=\tilde Z_{\tilde T_z},
\end{eqnarray*}
as well as
\begin{eqnarray*}
Y_n=\tilde Y_{n - T_y^{(m-1)}} \mbox{ for } n=T_y^{(m-1)},\ldots,T_y^{(m)}  \mbox{   and   }
Z_n=\tilde Z_{n - T_z^{(m-1)}} \mbox{ for } n=T_z^{(m-1)},\ldots,T_z^{(m)}.
\end{eqnarray*}

to determine $x_m$ and $R^{(m)}$, we need to consider two different cases.
\begin{enumerate}
\item
If $R^{(m-1)}>R_0$, then if $\|y_m-z_m\|<R^{(m-1)}/\BallGrowth $ then we take $R^{(m)}=R^{(m-1)}/\BallGrowth $, and $x_m$ such that $y_m,z_m\in\desball{R^{(m)}}{x_m}$. Else we take $R^{(m)}=\BallGrowth R^{(m-1)}$ and $x_m=x_{m-1}$.
\item
if $R^{(m-1)}\leq R_0$, we stop the process (i.e. we stop the process one step after we reached a radius smaller than or equal $R_0$.). 
\end{enumerate}

Write $\FF_m$ for the $\sigma$-algebra generated by the environment $\omega$ and by $\big(y_k,z_k,x_k,R^{(k)},T_y^{(k)}, T_z^{(k)} \big)_{k\leq m}$ as well as
$(Y_n)_{n\leq T_y^{(m)}}$ and $(Z_n)_{n\leq T_z^{(m)}}$.

We define 
\[
L_m:=\frac{\log\big(R^{(m)}/R_0\big)}{\log \BallGrowth }.
\]
Note that $\big(L_m)_m$ is a random process whose step size is $1$. If $R^{(m)}>R_0$, and the ball $\desball{R^{(m)}}{x_m}$ is good, then by Lemma
\ref{lem:basic_coupling_success_probability},
\begin{equation}\label{eq:RWdomin}
\CompCoupling{x,R;z,y}\left(
L_{m+1}=L_{m}-1
\left| 
\FF_m \ ;\ \desball{R^{(m)}}{x_m}\mbox{ is good and }R^{(m)}>R_0
\right.
\right) > 1  -\frac 1\BallGrowth  - 2\errQb.
\end{equation}

At this point we choose $\BallGrowth $ and $\errQb$ so that $1  -\frac 1\BallGrowth  - 2\errQb>2/3$. For any $l>0$ we let $\StopLevel(l)$ be the stopping time
\[
\StopLevel(l):=\inf\{m:L_m\leq l\}.
\]
Then from \eqref{eq:RWdomin} we get domination by a biased one dimensional random walk which gives us the following estimate.
\begin{lemma}\label{lem:domin}
Let $z,y,x,R$ be so that $z,y\in\desball{R}{x}$, and let $l<\log R/\log \BallGrowth $. For every $j$,
\begin{equation}\label{eq:RWmax}
\CompCoupling{x,R;z,y}\left(
\sup\{L_m:m\leq \StopLevel(l)\} > L_0 + j \; ; \ \forall_{m\leq \StopLevel(l)}\desball{R^{(m)}}{x_m}\mbox{ is good }
\left| 
\omega
\right.
\right) \leq 2^{-j}
\end{equation}

\end{lemma}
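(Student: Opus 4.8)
The plan is to carry out rigorously the ``domination by a biased one‑dimensional random walk'' announced just before the statement, implementing it via an exponential supermartingale. Introduce the $(\FF_m)$‑stopping time
\[
\sigma:=\StopLevel(l)\wedge\inf\{m\ge 0:\desball{R^{(m)}}{x_m}\text{ is not good}\};
\]
goodness of $\desball{R^{(m)}}{x_m}$ is $\FF_m$‑measurable because $\FF_m$ contains $\omega$ together with $(x_m,R^{(m)})$. We may assume $l\ge 0$ (otherwise $\StopLevel(l)$ is simply the termination index of the construction and the argument is unchanged); then for $m<\StopLevel(l)$ one has $L_m>l\ge 0$, i.e.\ $R^{(m)}>R_0$, so on $\{m<\sigma\}$ the conditioning event of \eqref{eq:RWdomin} is in force. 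Since moreover $(L_m)$ has step size exactly $1$, on $\{m<\sigma\}$,
\[
\CompCoupling{x,R;z,y}\bigl(L_{m+1}=L_m-1\mid\FF_m\bigr)>\tfrac23,\qquad
\CompCoupling{x,R;z,y}\bigl(L_{m+1}=L_m+1\mid\FF_m\bigr)<\tfrac13,
\]
where we used that $1-\tfrac1\BallGrowth-2\errQb>\tfrac23$ was arranged just before the lemma.

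Next set $N_m:=2^{L_{m\wedge\sigma}}$ and write $\E$ for expectation under $\CompCoupling{x,R;z,y}(\,\cdot\mid\omega)$. On $\{m<\sigma\}$, writing $q:=\CompCoupling{x,R;z,y}(L_{m+1}=L_m+1\mid\FF_m)<\tfrac13$, the previous display gives
\[
\E[N_{m+1}\mid\FF_m]\le N_m\bigl(2q+\tfrac12(1-q)\bigr)=N_m\bigl(\tfrac12+\tfrac32 q\bigr)<N_m,
\]
while $N_{m+1}=N_m$ on $\{m\ge\sigma\}$. Hence $(N_m)_{m\ge 0}$ is a nonnegative supermartingale with $N_0=2^{L_0}$, and Doob's maximal inequality for nonnegative supermartingales yields
\[
\CompCoupling{x,R;z,y}\Bigl(\sup_{m\ge 0}N_m\ge 2^{L_0+j}\ \Big|\ \omega\Bigr)\le \frac{2^{L_0}}{2^{L_0+j}}=2^{-j}.
\]

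Finally, on the event $\{\forall_{m\le\StopLevel(l)}\ \desball{R^{(m)}}{x_m}\text{ is good}\}$ one has $\sigma=\StopLevel(l)$, whence $\sup_{m\le\StopLevel(l)}L_m=\sup_{m\ge 0}L_{m\wedge\sigma}=\log_2\sup_{m\ge 0}N_m$; consequently the event in \eqref{eq:RWmax} is contained in $\{\sup_{m\ge 0}N_m\ge 2^{L_0+j}\}$, and the displayed bound gives \eqref{eq:RWmax}. The argument is essentially routine; the only points needing care are that the one‑step contraction \eqref{eq:RWdomin} is available only while the current ball is good and has radius exceeding $R_0$ — which is precisely why one stops at $\sigma$ and restricts to the good event — and the strict inequality $1-\tfrac1\BallGrowth-2\errQb>\tfrac23$, which is exactly what turns $2^{L_{m\wedge\sigma}}$ into a genuine (strict) supermartingale rather than a mere martingale.
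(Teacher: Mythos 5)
Your proof is correct and follows essentially the route the paper intends: the paper gives no written proof beyond asserting that \eqref{eq:RWdomin} yields domination by a $(2/3,1/3)$-biased walk, and your exponential supermartingale $2^{L_{m\wedge\sigma}}$ together with Doob's maximal inequality is precisely the standard rigorous implementation of that biased-walk maximum bound, with the stopping at the first non-good ball correctly handling the restriction to the good event.
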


Before we define the third and last coupling, we need an estimate regarding the hitting points of the two random walks in the the coupling $\CompCoupling{x,R;z,y}$.

\begin{lemma}\label{lem:hitpoint}
Fix $k$, and let $R=R^{(0)}=R_k^{\BallScale/2}$. Let $x,y,z$ be such that $y,z\in\desball{R}{x}$ and let $\omega$ be such that $\desball{R_{k+1}}{x}$ is admissible.  Let
\[
\tilde m := \inf\{m:R^{(m)}=R_k/\BallGrowth^k\}.
\]
Then for every $w\in \desball{R_{k+1}}{x}$,
\begin{equation}\label{eq:unfbnd}
\CompCoupling{x,R;z,y}
\left(
\|x_{\tilde m} - w\| < 10 R_k
\right) < R_k^{-\HitExp\BallScale}
\end{equation}
for some $\HitExp>0$ which is determined only by $P$ and $\BallGrowth$.
\end{lemma}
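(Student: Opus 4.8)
The plan is to translate the statement about the center $x_{\tilde m}$ into one about the position of the walk $(Y_n)$ inside the coupling $\CompCoupling{x,R;z,y}$ (abbreviated $\mu$ below), and then run a multiscale argument that exploits the goodness of balls at each of the $\asymp\BallScale\log_\BallGrowth R_k$ intermediate scales that the radius $R^{(m)}$ must cross on its way down from $R^{(0)}=R_k^{\BallScale/2}$ to $R^{(\tilde m)}=R_k/\BallGrowth^k=:r_{\mathrm{fin}}$. Since $r_{\mathrm{fin}}\le R_k$ and $x_{\tilde m}\in\desball{r_{\mathrm{fin}}}{y_{\tilde m}}$ with $y_{\tilde m}=Y_{T_y^{(\tilde m)}}$, it suffices to bound $\mu(\|y_{\tilde m}-w\|<11R_k)$. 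First I would use Lemma~\ref{lem:domin} (with $l=\log_\BallGrowth(r_{\mathrm{fin}}/R_0)$, so that $\StopLevel(l)=\tilde m$, and with an overshoot parameter $j$ left at our disposal) to restrict to the event $\mathcal G_0=\{L_m\le L_0+j\ \forall m\le\tilde m\}$, which has $\mu(\mathcal G_0^c)\le 2^{-j}$; on $\mathcal G_0$ one has $R^{(m)}\le R^{(0)}\BallGrowth^j$ throughout, so for $j\le\HitExp\BallScale\log_2 R_k$ with $\HitExp$ small every ball $\desball{R^{(m)}}{x_m}$ used by the coupling lies inside $\desball{R_{k+1}}{x}$ and has radius $\ge r_{\mathrm{fin}}>R_{k-1}$, hence is good, $\desball{R_{k+1}}{x}$ being admissible (Definitions~\ref{def:good_ball},~\ref{def:admis_ball}).

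\textbf{Scale decomposition and per‑scale spreading.} Put $L=\lfloor\log_\BallGrowth(R^{(0)}/r_{\mathrm{fin}})\rfloor\asymp\BallScale\log_\BallGrowth R_k$, and for $\ell=k+1,\dots,L$ let $m_\ell$ be the \emph{last} step with $R^{(m_\ell)}=\BallGrowth^\ell r_{\mathrm{fin}}$ and $R^{(m_\ell+1)}=\BallGrowth^{\ell-1}r_{\mathrm{fin}}$ (a ``downcrossing of level $\ell$''). As $R^{(m)}$ moves one level at a time between levels $L$ and $0$, each $m_\ell$ exists, $m_L<\dots<m_{k+1}<\tilde m$, and after time $m_\ell+1$ the radius never again reaches level $\ell$; hence on $\mathcal G_0$ the downward bias of Lemma~\ref{lem:domin} makes the number of later downcrossings of each lower level have bounded mean, and by Markov's inequality there is a constant $C_1=C_1(\BallGrowth,P)$ with $\mu(\|x_{\tilde m}-x_{m_\ell+1}\|>C_1\BallGrowth^\ell r_{\mathrm{fin}}\mid\FF_{m_\ell+1})\le\tfrac1{10}$ on $\mathcal G_0$. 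Off that wandering event $\mathcal W_\ell$, and since $\ell\ge k+1$ gives $11R_k\le\BallGrowth^\ell r_{\mathrm{fin}}$, the event $\{\|y_{\tilde m}-w\|<11R_k\}$ forces $\mathcal E_\ell:=\{\|y_{m_\ell+1}-w\|\le(C_1+2)\BallGrowth^\ell r_{\mathrm{fin}}\}$. Finally, conditionally on $\FF_{m_\ell}$ and on step $m_\ell+1$ being a downcrossing, $y_{m_\ell+1}$ is the exit point of an $\omega$‑walk from $\desball{\BallGrowth^{\ell+1}r_{\mathrm{fin}}}{x_{m_\ell}}$; goodness of $\desball{\BallGrowth^\ell r_{\mathrm{fin}}}{x_{m_\ell}}$ puts its law within $\errQb$ in total variation, on the partition $\{\BallGrowth^{\ell+1}r_{\mathrm{fin}}A_j\}$ whose pieces have diameter $<\BallGrowth^{\ell-1}r_{\mathrm{fin}}$, of the Brownian exit law, whose Poisson kernel on a Euclidean ball (from an interior point, here at distance $\le\BallGrowth^{-1}$ of the boundary sphere) is $\le C(\CovMat,d)/(\mathrm{radius})^{d-1}$. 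A ball of radius $(C_1+2)\BallGrowth^\ell r_{\mathrm{fin}}$ meets $\le C((C_1+2)\BallGrowth)^{d-1}$ of those pieces, each of Brownian mass $\le C\BallGrowth^{-2(d-1)}$, so $\mu(\mathcal E_\ell\mid\FF_{m_\ell},\text{ downcrossing at }m_\ell+1)\le q$ with $q=C(C_1+2)^{d-1}\BallGrowth^{-(d-1)}+C\errQb<\tfrac45$ once $\BallGrowth$ is large and $\errQb$ small, uniformly in $\ell$ and in the past (I use here that a downcrossing has conditional probability $>\tfrac23$ on a good ball, Lemma~\ref{lem:basic_coupling_success_probability}, to pass from the unconditioned exit law to the conditioned one).

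\textbf{Assembly.} On $\mathcal G_0$ the preceding gives $\{\|y_{\tilde m}-w\|<11R_k\}\subseteq\bigcap_{\ell=k+1}^{L}(\mathcal E_\ell\cup\mathcal W_\ell)$. Processing the levels one at a time from the largest $m_\ell$ downward (the contribution of $\mathcal W_{k+1}$ being vacuous since $\tilde m\le m_{k+1}+1$), folding $\mathcal W_\ell$ into the per‑level conditional estimate, and using a last‑exit decomposition over the possible values of the non‑adapted times $m_\ell$, one obtains $\mu(\|y_{\tilde m}-w\|<11R_k)\le 2^{-j}+(q+\tfrac1{10})^{L-k}$. Since $L-k\asymp\BallScale\log_\BallGrowth R_k$, the second term is $\le R_k^{-c_0\BallScale}$ for some $c_0=c_0(\BallGrowth,P)>0$; now choose $j=\lceil\HitExp\BallScale\log_2 R_k\rceil$ with $\HitExp>0$ small enough that $\HitExp\le c_0$ and $\HitExp\le 1/(2\log_2\BallGrowth)$ (the latter being exactly the bound used in the first paragraph to keep the coupling inside $\desball{R_{k+1}}{x}$). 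Then both terms are $\le R_k^{-\HitExp\BallScale}$, and after adjusting constants this is \eqref{eq:unfbnd}; note $\HitExp$ depends only on $\BallGrowth$ and on $P$ (through $d$, $\CovMat$ and the already fixed $\errQb(\BallGrowth)$).

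\textbf{Main obstacle.} The real work is this bookkeeping. One genuinely needs the $\asymp\BallScale\log_\BallGrowth R_k$ constant factors to \emph{multiply}, so the per‑level wandering failures $\mathcal W_\ell$ must \emph{not} be union‑bounded (that would cost only a polylogarithmic, not a polynomial, factor in $R_k$); absorbing them into the conditional estimates and compounding through the filtration forces a careful last‑exit treatment of the cascade‑dependent (non‑stopping) times $m_\ell$, together with an honest use of the downward‑drift domination of Lemma~\ref{lem:domin} both to count the relevant scales and to control the displacement of the center after each of them. The second, more routine but still delicate, ingredient is matching Euclidean balls of radius $\sim R_k$ to unions of rescaled partition pieces simultaneously at every intermediate scale $\BallGrowth^\ell r_{\mathrm{fin}}$ — this is why one localizes $y_{m_\ell+1}$ at that scale rather than at $R_k$, and correspondingly must carry the wandering bound at exactly that scale — and verifying that the Brownian (harmonic‑measure) estimate holds with the required uniformity in the starting point and the center.
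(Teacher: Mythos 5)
Your overall strategy -- gain a constant factor at each of the $\asymp\BallScale\log_\BallGrowth R_k$ scale crossings and multiply the gains to obtain a bound polynomially small in $R_k$ with exponent proportional to $\BallScale$ -- is the same basic intuition as the paper's, but the step you yourself flag as ``the real work'' is exactly the step that is missing, and it is not mere bookkeeping. Your per-level estimates are conditioned at the last downcrossing times $m_\ell$, which are not stopping times: the event $\{m_\ell=m\}$ requires that the radius process never return to level $\ell$ after time $m+1$, a constraint on the future which is correlated with the location of $y_{m_\ell+1}$, because the success/failure pattern of the subsequent basic couplings depends on the environment around the centers the walk actually visits. Hence neither the spreading bound for $\mathcal E_\ell$ given ``$\FF_{m_\ell}$ and a downcrossing'' nor the wandering bound $\mu(\mathcal W_\ell\mid\FF_{m_\ell+1})\le\tfrac1{10}$ is available as stated; and since the defining future constraints of the various $m_\ell$ all bear on overlapping segments of the same trajectory up to $\tilde m$, the product bound $(q+\tfrac1{10})^{L-k}$ does not follow from the per-level statements -- the ``last-exit decomposition'' is named but never carried out. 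The paper's proof supplies precisely the device that circumvents this: it dominates $(L_m)$ by a biased walk $(\xi_m)$ with i.i.d.\ increments and works at \emph{regeneration} times, defined by $\sum_{n>m}\BallGrowth^{\xi_n-\xi_m}<1$. After a regeneration at $m$ the total subsequent displacement of the centers is controlled \emph{deterministically} (given the domination), so $x_{\tilde m}$ is automatically localized near $x_{m+1}$ and no analogue of your $\mathcal W_\ell$ is needed; Claim \ref{claim:many_reg} yields linearly many regenerations with exponentially small failure probability; and at each regeneration the conditional probability that the new center falls within $2R^{(m)}/\BallGrowth$ of $w$ is at most $C/\BallGrowth$, so multiplying over regenerations gives $R_k^{-\HitExp\BallScale}$. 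Without this (or some substitute making the per-scale events conditionally multiplicative), your argument has a genuine gap at its central step.

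A secondary point: restricting to $\mathcal G_0=\{L_m\le L_0+j\ \forall m\le\tilde m\}$ controls only the radii, and does not by itself keep the coupling inside $\desball{R_{k+1}}{x}$ nor guarantee that every ball encountered is good -- the centers can drift, and the number of steps before $\tilde m$ is unbounded. The paper handles this separately in \eqref{eq:coupling_stays_in_ball}: using the biased-walk domination of Lemma \ref{lem:domin} to bound the expected number of visits to each level, one gets a first-moment bound on $\|x_{\tilde m}-x\|$ of order $R_k^{3\BallScale/4}$ and concludes by Markov's inequality against $R_{k+1}/2$. You would need the same estimate in your first paragraph; this part is routine to repair, unlike the factorization issue above.
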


Note that using this lemma, once $P$ and $\BallGrowth$ are chosen, we can choose $\BallScale$ so that the exponent in \eqref{eq:unfbnd} is as small
as we like.

\begin{proof}
Let 
\[
\bar m=\inf
\left\{m:R^{(m)}\geq R_{k+1} \mbox{ or }  x_m\notin \desball{R_{k+1}/2}{x}  \right\}.
\]
First we show that 
\begin{equation}\label{eq:coupling_stays_in_ball}
\CompCoupling{x,R;z,y}(\bar m < \tilde m) < 2R_k^{-\rho_2\BallScale}
\end{equation}
for $\rho_2=\rho_2(P,\BallGrowth)>0$ which is specified below.

To see \eqref{eq:coupling_stays_in_ball}, we first note that by Lemma \ref{lem:domin}, for $\rho_1=\frac{\log 2}{2\log\BallGrowth}$,
\[
\CompCoupling{x,R;z,y}\left( \sup_{m<\tilde m} R^{(m)} \geq R_{k+1}  \right)  < 2^{-\frac{\BallScale}{2} \frac{\log R_k}{\log \BallGrowth}}=R_k^{-\rho_1\BallScale}.
\]
To estimate the probability that $x_m\notin \desball{R_{k+1}/2}{x}$, we estimate $E_{\CompCoupling{x,R;z,y}}\|x_{\tilde m} - x\|$ and use Markov's inequality. To estimate the expectation,
we note that $(L_m)$ is dominated by a random walk with a $(2/3,1/3)$ bias, and thus, for every $l$,

\begin{equation*}
E_{\CompCoupling{x,R;z,y}}\left(
\# m:L_m=l
\right) \leq
\begin{cases}
            2, & l\leq L_0\\
            2^{L_0-l}, & l\geq L_0
         \end{cases},
\end{equation*}
and 
\[
\|x_{m+1}-x_m\|\leq \BallGrowth R^{(m)}.
\]

Thus, noting that $\BallGrowth > 3$, we get

\begin{equation*}
E_{\CompCoupling{x,R;z,y}}\left(
\|x_{\tilde m} - x\| ; \max\{R^{(m)}:m\leq\tilde m\} \leq R_k^{\frac 34\BallScale}
\right)
\leq
R_k^{\frac 34\BallScale},
\end{equation*}
and we get 
\begin{eqnarray*}
\CompCoupling{x,R;z,y}\big(x_m\notin \desball{R_{k+1}/2}{x}\big) &=& \CompCoupling{x,R;z,y}\big(\|x_m - x\| >R_{k+1}/2\big)\\
&\leq& \CompCoupling{x,R;z,y}\left(\max\{R^{(m)}:m\leq\tilde m\} > R_k^{\frac 34\BallScale}\right) \\
&+& \frac
{E_{\CompCoupling{x,R;z,y}}\left(
\|x_{\tilde m} - x\| ; \max\{R^{(m)}:m\leq\tilde m\} \leq R_k^{\frac 34\BallScale}
\right)}
{R_{k+1}/2} \\
&\leq& R_k^{-\frac{\rho_1}{2}\BallScale} + R_k^{-\BallScale / 4} < R_k^{-\rho_2\BallScale}
\end{eqnarray*}
for appropriate $\rho_2>0$. Thus \eqref{eq:coupling_stays_in_ball} holds.

We now prove the statement of the lemma. By \eqref{eq:coupling_stays_in_ball}, with probability at least $1-2R_k^{-\rho_2\BallScale}$, until time $\tilde m$ the coupling only sees good balls. Therefore we can couple the random walk $(L_m)$ with a random walk $(\xi_m)$ such that
\begin{enumerate}
\item $(\xi_{m+1}-\xi_m)$ is an iid sequence satisfying $P(\xi_{m+1}-\xi_m=1)=1/3$ and $P(\xi_{m+1}-\xi_m=-1)=2/3$, and
\item For every $m$ we have $L_{m+1}-L_m \leq \xi_{m+1}-\xi_m$.
\end{enumerate}

We call $m$ a {\em regeneration} if 
\[
\sum_{n>m}\BallGrowth^{\xi_n-\xi_m} < 1.
\]
(Note that our definition of regenerations is quite different from the standard definitions in the literature). We use $\RegFlag (m)$ to denote the event that $m$ is a regeneration. We need to use the following fact, which says that there are plenty of regenerations. Note that Claim \ref{claim:many_reg}
below is a statement regarding biased simple random walks.
\begin{claim}\label{claim:many_reg}
There exist $\kappa>0$ and $\upsilon>0$ such that for every $N$,
\[
P\left(
\sum_{m=1}^N {\bf 1}_{\RegFlag (m)} <\kappa N
\right)
<e^{-\upsilon N}.
\]
\end{claim}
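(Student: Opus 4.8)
\medskip\noindent\emph{Proof proposal.} The plan is to recast the notion of regeneration in Claim~\ref{claim:many_reg} in terms of the stationary process
\[
V_m:=\sum_{n>m}\BallGrowth^{\xi_n-\xi_m},\qquad m\ge0,
\]
for which $\RegFlag(m)=\{V_m<1\}$, and to exploit its self-similar recursion. Writing $\Delta_{m+1}:=\xi_{m+1}-\xi_m\in\{\pm1\}$ (so $P(\Delta=+1)=1/3$), one has $V_m=\BallGrowth^{\Delta_{m+1}}(1+V_{m+1})$ with $\Delta_{m+1}$ independent of $(V_n)_{n\ge m+1}$. Since $\xi_n/n\to-\tfrac13$ a.s., $V_m<\infty$ a.s.\ (in fact $\E V_0^\beta<\infty$ whenever $\beta<\log2/\log\BallGrowth$, so $V_0$ has polynomial tails), and $(V_m)$ is stationary and ergodic. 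The recursion shows $V_m<1$ forces $\Delta_{m+1}=-1$, and on $\{\Delta_{m+1}=-1\}$ that $V_m<1\iff V_{m+1}<\BallGrowth-1$; hence
\[
\RegFlag(m)=\{\Delta_{m+1}=-1\}\cap\{V_{m+1}<\BallGrowth-1\},
\]
and iterating (with $P(V_0<t)\to1$ as $t\to\infty$) gives $p_0:=P(\RegFlag(0))>0$. Read in the direction of \emph{decreasing} $m$, $(V_m)$ is a Markov chain driven by the fresh bits $\dots,\Delta_m,\Delta_{m-1},\dots$, and it contracts on average (a $-1$ step multiplies $V$ by $\BallGrowth^{-1}$ up to the additive $1$, and occurs with probability $2/3$).

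The first observation is that regenerations come in runs. If $\RegFlag(m)$ holds then $V_m<1<\BallGrowth-1$, so the displayed characterisation at $m-1$ shows that $\RegFlag(m-1)$ holds precisely when $\Delta_m=-1$; since $\Delta_m$ is independent of $(V_n)_{n\ge m}$, hence of the entire pattern of regenerations on $\{n\ge m\}$, it follows that scanning positions downwards the regeneration times form maximal runs $\{\mu,\mu-1,\dots,\mu-R+1\}$ with lengths $R$ that are i.i.d.\ $\mathrm{Geom}(1/3)$ (so $\E R=3$), independent of the positions of the runs.

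The second observation is that the gaps between runs are short. When a run ends at $m$ we have $\Delta_{m+1}=+1$ and $V_{m+1}\in[\BallGrowth^{-1},1)$, whence $V_m\in[\BallGrowth+1,2\BallGrowth)$. Running backward from $m$, as long as $V_n\ge1$ one has $\log V_{n-1}-\log V_n\le\Delta_n\log\BallGrowth+\log2$, a step of negative mean $-\tfrac13\log\BallGrowth+\log2$ (as $\BallGrowth>8$). Thus $\log V$ on $\{V\ge1\}$ is dominated by a random walk of negative drift started in the bounded set $[\log(\BallGrowth+1),\log(2\BallGrowth)]$, and the excursion length $E$ (number of backward steps until $V$ re-enters $[0,1)$) is dominated by that walk's first passage below $0$; such first-passage times have an exponential tail uniformly in the starting point, so $P(E>k\mid\text{past})\le Ce^{-ck}$ with $C,c$ depending only on $\BallGrowth$, and $\E E<\infty$. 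The same estimate shows $P(\text{no regeneration in }[1,N])\le Ce^{-cN}$, since that event forces an excursion of length $\ge N$ through a fixed point.

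It remains to count. On the event (probability $\ge1-Ce^{-cN}$) that $[1,N]$ contains a regeneration, scanning backward from $N$ writes $\{1,\dots,N\}$ as an initial partial excursion followed by alternating runs $R_1,R_2,\dots$ and excursions $E_1,E_2,\dots$: the $R_i$ are i.i.d.\ $\mathrm{Geom}(1/3)$, while relative to the filtration generated by the earlier cycles the $E_i$ are stochastically dominated by i.i.d.\ copies of an exponential-tail variable $\widehat E$. A routine Cram\'er/renewal large-deviation bound then yields, with $n:=\lfloor N/(10(3+\E\widehat E))\rfloor$, that with probability $\ge1-e^{-\upsilon N}$ the first $n$ complete cycles lie inside $[1,N]$ and $\sum_{i=1}^nR_i\ge\tfrac32 n$; hence $\sum_{m=1}^N{\bf 1}_{\RegFlag(m)}\ge\tfrac32 n\ge\kappa N$ for $\kappa:=\tfrac{3}{20(3+\E\widehat E)}$, after shrinking $\upsilon$ to absorb constants and to cover small $N$. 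The delicate point is the excursion estimate: one must establish the exponential tail of the dominating walk's first-passage time \emph{uniformly} over the bounded range of initial values $\log V_m$, and arrange that this domination of $E_i$ is \emph{conditional} on the preceding cycles — the length of $E_i$ depends, via the endpoint of run $i$, on the past, so only the conditional (sub-)geometric domination is available, and it is exactly this that licenses the renewal large-deviation step despite the Markov dependence between consecutive cycles.
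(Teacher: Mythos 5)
Your proposal is correct (at the level of detail this claim warrants, and you yourself flag the two points that genuinely need care: the uniformity of the first-passage tail over the bounded entry window and the fact that the gap lengths are only \emph{conditionally} dominated given the earlier cycles), but it takes a genuinely different route from the paper. The paper never analyses the level set $\{V_m<1\}$ of the perpetuity $V_m=\sum_{n>m}\BallGrowth^{\xi_n-\xi_m}$: it certifies regenerations only at the two-sided ladder epochs of $\xi$ (the ``renewals'' $\tau_k$, at which $\xi$ is strictly below its past and strictly above its future), bounds the tail sum at $\tau_k$ by $\sum_{j\ge k}V_j\BallGrowth^{k-j-1}$ with $V_j$ the i.i.d., exponentially-tailed inter-renewal gaps, and observes that $\tau_k$ can fail to be a regeneration only if it is ``influenced'' by a later renewal whose gap is atypically large; since the influence counts $\infed{j}$ are i.i.d. with exponential tails and mean below one for $\BallGrowth$ large, an elementary large-deviation bound for i.i.d. variables shows that a positive fraction of the renewals in $[1,N]$ — hence a positive density of indices — are regenerations. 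You instead use the recursion $V_m=\BallGrowth^{\Delta_{m+1}}(1+V_{m+1})$ to read $(V_m)$ backwards as a Markov chain with i.i.d. innovations and carry out an excursion decomposition of the \emph{exact} regeneration set $\{V_m<1\}$: runs of regenerations are exactly geometric, the gaps have uniformly exponential conditional tails by comparison with a negative-drift walk started in $[\log(\BallGrowth+1),\log(2\BallGrowth)]$, and a renewal-type Cram\'er count finishes. The trade-off: the paper's reduction to genuinely i.i.d. quantities makes the final large-deviation step trivial and avoids all conditional-domination bookkeeping, at the price of seeing regenerations only at ladder times and of needing $\BallGrowth$ large enough that the mean influence count is below one; your route works for any $\BallGrowth>8$, identifies the regeneration set precisely (explicit density $P(V_0<1)$, geometric run structure), but must handle the Markovian dependence between consecutive run/gap cycles — exactly the delicate point you single out, and which a full write-up would need to execute via iterated conditional Chernoff bounds.
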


Using Claim \ref{claim:many_reg} we now finish the proof of  Lemma~\ref{lem:hitpoint}.
Note that if $m$ is a regeneration, then
\[
\|x_{\tilde m} - x_{m+1}\| <R^{(m)}/\BallGrowth.
\]

Therefore $\|w-x_{{\tilde m}}\|<R_k$ only if $\|w - x_{m+1}\| < 2 R^{(m)}/\BallGrowth$ for every regeneration $m$,
and this happens with probability bounded above by $C/\BallGrowth$ for every such $m$. The lemma follows.
\end{proof}

Using Lemma \ref{lem:hitpoint}, we estimate the probability that $\desball{R_k}{x_{\tilde m}}$ is not admissible, and see that if $\BallScale$ is large enough, then this probability is quite small.
Indeed, this probability is bounded by the number of non-admissible balls of radius $R^{(\tilde m)}$ with the uniform bound, obtained in Lemma \ref{lem:hitpoint}, on the hitting
probability of every ball. We get that
\begin{eqnarray}\label{eq:prob_hit_non_admis}
\CompCoupling{x,R;z,y}(\neg \admis(x_{\tilde m},k))
\leq R_k^{-\HitExp\BallScale} R_k^\delta.
\end{eqnarray}
For $\BallScale$ large enough, the power $\delta-\HitExp\BallScale$ is negative, which gives us a probability that is a negative power of $R_k$ to hit a non-admissible ball.

We can now proceed with the proof of Theorem~\ref{thm:osc}.

\begin{proof}[Proof of Theorem~\ref{thm:osc}]
In light of Lemma \ref{lem:hitpoint} and of \eqref{eq:prob_hit_non_admis}, we write $\NonAdmisExp:=\HitExp\BallScale-\delta$. Then, there exists a choice of our parameters such that 
$\NonAdmisExp>0$ and, \eqref{eq:prob_hit_non_admis} says that $\CompCoupling{x,R;z,y}(\neg \admis(x_{\tilde m},k))\leq R_k^{-\NonAdmisExp}$. We take $\SizeOsc = 2\BallGrowth$.

Let $R>0$ and assume that $R>R_0$ and that $\log(R/R_0)/\log \BallGrowth$ is an integer number. We later explain why these assumptions on $R$  do not limit the generality. Let $f:\desball{\SizeOsc R}{0}\to\R$ be $\omega$-harmonic function. Let $k_1$ be the largest number such that $R>R_{k_1}$. Let $y,z\in\desball{R}{0}$. 
Let  $(Y_n),(Z_n),(T_y^{(m)}),(T_z^{(m)}),(x_m),(R^{(m)})$ be sampled according to the coupling $\CompCoupling{x,R;z,y}$. We say that the coupling is successful if the following conditions are satisfied.
\begin{enumerate}
\item \label{item:cond_stay_in_big_ball} We require that 
\[
\big\{ Y_n:n>0 \big\}  \cup \big\{ Z_n:n>0 \big\}  \subseteq \desball{\SizeOsc R}{0}.
\]

\item \label{item:cond_admiss} for every $j=0,\ldots,k_1$, we write $\tilde m(j):=\inf\{m:R^{(m)}=R_j/\BallGrowth^j\}$. we require that for every $j$, the ball $\desball{R_j}{x_{\tilde m(j)}}$ is admissible.

\item \label{item:cond_stay_in_small_ball} We require that for every $j$ and every $m\geq \tilde m(j)$, we have $R^{(m)}<R_j$, and 
\[
\big\{ Y_n:n>T_y^{\tilde m(j)} \big\}  \cup \big\{ Z_n:n>T_z^{\tilde m(j)} \big\}  \subseteq \desball{R_j}{x_{\tilde m(j)}}.
\]

\item \label{item_cond_eventually_meet} We require that $Z_{T_z^{\tilde m(0)+1}} =Y_{T_y^{\tilde m(0)+1}}$.
\end{enumerate}
We call the event in Item \ref{item:cond_stay_in_big_ball} $A_1$, that in item \ref{item:cond_admiss} $A_2$, that in item \ref{item:cond_stay_in_small_ball} $A_3$ and that in item \ref{item_cond_eventually_meet} $A_4$. We write $A=A_1\cap A_2 \cap A_3 \cap A_4$. The main step in proving Theorem~\ref{thm:osc} is the following claim.
\begin{claim}\label{claim:walk_meet}
There exists $\MeetProb>0$ such that $\CompCoupling{x,R;z,y} \big(A|\admis(x,R)\big)>\MeetProb$ uniformly in $x,R;z,y$.
\end{claim}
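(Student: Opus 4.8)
The plan is to show that, with probability bounded below uniformly in $x,R;z,y$, the coupling $\CompCoupling{x,R;z,y}$ performs a \emph{clean descent} through the scales $R>R_{k_1}>R_{k_1-1}>\dots>R_0$; by translation invariance assume $x=0$. On a clean descent the scale radius $R^{(m)}$ reaches, in this order, the checkpoint values $R_{k_1}/\BallGrowth^{k_1},\dots,R_1/\BallGrowth,\,R_0$, and on first reaching $R_j/\BallGrowth^{j}$ (at time $\tilde m(j)$) two things hold: the ball $\desball{R_j}{x_{\tilde m(j)}}$ is admissible, and from then on the scale never returns to $R_j$ and both walks stay inside $\desball{R_j}{x_{\tilde m(j)}}$; a last basic step at scale $R_0$ then makes the walks coincide. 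Conditions $A_1$--$A_4$ are read off from this picture. The two quantitative ingredients are: (i) on good balls $(L_m)$ is dominated by a biased simple random walk whose down-step probability is $1-\tfrac1\BallGrowth-2\errQb$ (see \eqref{eq:RWdomin} and Lemma~\ref{lem:domin}); since we are still free to enlarge $\BallGrowth$ and shrink $\errQb$, make this exceed $1-\epsilon'$ with $\epsilon'$ as small as we please, so that such a walk ever rises $j$ levels above a given value with probability at most $(2\epsilon')^{j}$; and (ii) descending from an admissible ball of radius $R_{j+1}$, the landing ball $\desball{R_j}{x_{\tilde m(j)}}$ fails to be admissible with probability at most $R_j^{-\NonAdmisExp}$ --- this is exactly \eqref{eq:prob_hit_non_admis}, applied through the strong Markov property of the coupling to its continuation from the first time the scale equals $R_j^{\BallScale/2}$ (note $R_j<R_j^{\BallScale/2}<R_{j+1}$).

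I would establish $A_2$ first. Conditioning on $\admis(0,R)$ makes $\desball{R}{0}$ good and makes all its sub-balls of radius $\ge R_{k_1}$ good, and its sub-balls of radius $R_{k_1}$ admissible. The balls of radius $\ge R_{k_1}$ in $\desball{\SizeOsc R}{0}\setminus\desball{R}{0}$ that the coupling might also visit are not covered by this conditioning; I would dispose of them by a crude union bound, since there are at most $CR^{(\BallScale-1)d}$ of them (using $R_{k_1}^{\BallScale}=R_{k_1+1}>R$, whence $R_{k_1}>R^{1/\BallScale}$) and each is non-good/non-admissible with probability at most $\exp(-R_{k_1}^{\nu/2})\le\exp(-R^{\nu/(2\BallScale)})$ by Claim~\ref{claim:ball_good_prob} and Lemma~\ref{lem:prob_admis}; the resulting bad event has probability $\le CR^{(\BallScale-1)d}\exp(-R^{\nu/(2\BallScale)})$, which is decreasing in $R$ and below $\tfrac1{10}$ for every $R>R_0$ once $R_0$ is large. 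On its complement every ball the coupling can meet is good, so $(L_m)$ performs its biased descent, and applying \eqref{eq:prob_hit_non_admis} recursively from $j=k_1$ down to $j=0$, using $R_j=R_0^{\BallScale^{j}}$ and $\BallScale^{j}\ge j+1$, the total chance of ever landing at a non-admissible checkpoint ball is $\le\sum_{j\ge0}R_0^{-\NonAdmisExp(j+1)}\le 2R_0^{-\NonAdmisExp}$, again below $\tfrac1{10}$ for $R_0$ large; so $A_2$ fails with probability below $\tfrac15$, uniformly in $R$.

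Next, granting $A_2$, the confinement conditions $A_1,A_3$ follow from the downward bias. After the coupling enters the admissible ball $\desball{R_j}{x_{\tilde m(j)}}$ at checkpoint $j$, all sub-balls of it of radius $\ge R_{j-1}$ it can meet are good (Definitions~\ref{def:good_ball} and~\ref{def:admis_ball}), so $(L_m)$ is once more dominated by the $\epsilon'$-biased walk; the probability it ever climbs back the $j$ levels up to scale $R_j$ is $\le(2\epsilon')^{j}$, and on the complement a geometric estimate for $\sum_{m\ge\tilde m(j)}(\BallGrowth+\BallGrowth^{-1})R^{(m)}$, together with the fact that at checkpoint $j$ the scale $R_j/\BallGrowth^{j}$ leaves a comfortable buffer below $R_j$ for $j\ge2$ (the cases $j=0,1$ being treated by hand, or subsumed in the $j=2$ confinement), keeps both walks inside $\desball{R_j}{x_{\tilde m(j)}}$; summing $\sum_{j\ge1}(2\epsilon')^{j}<\tfrac1{10}$ for $\epsilon'$ small gives $A_3$. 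For $A_1$ the only new point is the top phase $m\le\tilde m(k_1)$: on the positive-probability event (probability $\ge1-3\epsilon'$, using that $\desball{R}{0}$ is good) that the first basic step succeeds and the scale never afterwards returns to level $R$, the first excursion stays in $\desball{\BallGrowth R}{0}$ and every later centre obeys $\|x_m\|\le\BallGrowth R+CR$ by a geometric sum over the decreasing scales, so for $\SizeOsc=2\BallGrowth$ and $\BallGrowth$ large both walks stay in $\desball{\SizeOsc R}{0}$ throughout the top phase; together with the nested confinements from $A_3$ (which give $\desball{R_{k_1}}{x_{\tilde m(k_1)}}\subseteq\desball{\SizeOsc R}{0}$) this produces $A_1$.

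Finally $A_4$: on $A_2$ the ball $\desball{R_0}{x_{\tilde m(0)}}$ is admissible, hence good by Definition~\ref{def:good_ball}(1), so by Lemma~\ref{lem:basic_coupling_success_probability}(2) the basic step at scale $R_0$ executed one step past $\tilde m(0)$ makes $Y_{T_y^{\tilde m(0)+1}}=Z_{T_z^{\tilde m(0)+1}}$ with probability at least a fixed positive constant. Adding up the failure probabilities of the events above shows that, for $R_0$ large and $\epsilon'$ small, $\CompCoupling{0,R;z,y}(A\mid\admis(0,R))$ exceeds a positive number $\MeetProb$, uniformly in $R>R_0$ (every error estimate above is monotone decreasing in $R$) and in $z,y\in\desball{R}{0}$ (the opening basic step at scale $R$ accommodates arbitrary starting points). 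I expect the bookkeeping to be the real obstacle: the conditioning supplies admissibility of only the single ball $\desball{R}{0}$, so the goodness and admissibility of all the larger balls and of the balls outside $\desball{R}{0}$ must be recovered via (doubly-exponentially summable) union bounds, and these recoveries must be threaded consistently through the confinement events $A_1,A_3$ --- which are themselves what guarantee that the coupling only ever encounters balls we have already controlled.
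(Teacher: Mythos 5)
Your proposal follows essentially the same route as the paper: the same decomposition $A=A_1\cap A_2\cap A_3\cap A_4$, with $A_2$ handled by iterating \eqref{eq:prob_hit_non_admis} along the checkpoint scales and exploiting the doubly-exponential growth of $R_j$ together with a large choice of $R_0$, the confinement events $A_1,A_3$ handled by the biased-random-walk domination underlying \eqref{eq:coupling_stays_in_ball} (Lemmas \ref{lem:domin} and \ref{lem:hitpoint}), and $A_4$ handled by the bottom-scale coupling constant coming from Corollary \ref{cor:perc_bnd_harm_func} via Lemma \ref{lem:basic_coupling_success_probability}. Your additional union bound over balls of radius $\geq R_{k_1}$ outside $\desball{R}{0}$ is extra bookkeeping (making explicit an environment event the paper leaves implicit) rather than a different argument, and the rest matches the paper's proof at the same level of detail.
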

We now see how Theorem~\ref{thm:osc} follows from Claim \ref{claim:walk_meet}, and then we prove the claim. For every point $p\in\partial\desball{\SizeOsc R}{0}$ let
$P_y(p)=\quenchedP_\omega^y\big(Y_{T_{\partial\desball{\SizeOsc R}{0}}}=p\big)$ and $P_z(p)=\quenchedP_\omega^z\big(Y_{T_{\partial\desball{\SizeOsc R}{0}}}=p\big)$.
By Claim \ref{claim:walk_meet}, if $\desball{R}{0}$ is admissible then
\[
\sum_{p\in\partial\desball{\SizeOsc R}{0}} \left| P_y(p)-P_z(p) \right| \leq 1-\MeetProb,
\]
or equivalently
\[
\sum_{p\in\partial\desball{\SizeOsc R}{0}}  \min\big(P_y(p),P_z(p)\big) \geq \MeetProb.
\]
We write $m_p=\min\big(P_y(p),P_z(p)\big)$, and then
\[
\sum_{p\in\partial\desball{\SizeOsc R}{0}}  \big(P_y(p)-m_p\big) = \sum_{p\in\partial\desball{\SizeOsc R}{0}}  \big(P_z(p)-m_p\big)  \leq 1-\MeetProb.
\]
Remembering that 
\[
f(y)=\sum_{p\in\partial\desball{\SizeOsc R}{0}} P_y(p)f(p)
\]
and equivalently for $z$, we get
\begin{eqnarray*}
f(z)-f(y) &=& \sum_{p\in\partial\desball{\SizeOsc R}{0}} f(p)P_z(p) - \sum_{p\in\partial\desball{\SizeOsc R}{0}} f(p) P_y(p) \\
&=&  \sum_{p\in\partial\desball{\SizeOsc R}{0}} f(p)\big(P_z(p)-m_p\big) - \sum_{p\in\partial\desball{\SizeOsc R}{0}} f(p) \big(P_y(p)-m_p\big) \\
&\leq& \max_{p\in\partial\desball{\SizeOsc R}{0}}f(p) \sum_{p\in\partial\desball{\SizeOsc R}{0}} \big(P_z(p)-m_p\big) \\
&-& \min_{p\in\partial\desball{\SizeOsc R}{0}}f(p) \sum_{p\in\partial\desball{\SizeOsc R}{0}} \big(P_y(p)-m_p\big) \\
&\leq& \big(1-\MeetProb\big) \left[\max_{p\in\partial\desball{\SizeOsc R}{0}}f(p) - \min_{p\in\partial\desball{\SizeOsc R}{0}}f(p)\right] 
\end{eqnarray*}
which, since $y$ and $z$ are arbitrary, proves the proposition with $\OscConst=1-\MeetProb$.
\end{proof}
We still need to prove Claim \ref{claim:walk_meet}
\begin{proof}[Proof of Claim \ref{claim:walk_meet}]
We bound the probability of $A_1^c$ exactly the same way \eqref{eq:coupling_stays_in_ball} is shown. By \eqref{eq:prob_hit_non_admis},
\[
\CompCoupling{x,R;z,y}
\left(
\admis \big(\desball{R_j}{x_{\tilde m(j)}}\big) \left| \ \bigcap_{h>j} \admis \big(\desball{R_h}{x_{\tilde m(h)}} 
\right.\right) > 1-R_j^{-\kappa},
\]
and as $R_k$ grows faster than exponentially in $k$, we get
\[
\CompCoupling{x,R;z,y}(A_2) = 
\CompCoupling{x,R;z,y}
\left(
\bigcap_j \admis \big(\desball{R_j}{x_{\tilde m(j)}}\big)
\right) \geq
1-\sum_{j=1}^\infty R_j^{-\kappa},
\]
which is, by the copice of $R_0$, as close as we want to $1$.

By \eqref{eq:coupling_stays_in_ball},
\[
\CompCoupling{x,R;z,y}(A_3) \geq
1-\sum_{j}R_j^{-\rho_2\BallScale}
\]
which, again, is close to $1$.

By Corollary \ref{cor:perc_bnd_harm_func},
\[
\CompCoupling{x,R;z,y}(A_4 | A_1\cap A_2\cap A_3)> C^\prime.
\]
Therefore $\CompCoupling{x,R;z,y}(A|\omega)$ is bounded away from zero in $R$ and in $\Omega$ satisfying $\admis(\desball{R}{x})$.
\end{proof}


Finally we provide a proof of Claim \ref{claim:many_reg}.

\begin{proof}[Proof of Claim \ref{claim:many_reg}]
We call $n$ a {\em renewal} if $\xi_m>\xi_n$ for all $m<n$ and $\xi_m<\xi_n$ for all $m>n$. Denote by $\tau_k$ the $k^{\mbox{\tiny th}}$ renewal. Then $(\tau_{k+1}-\tau_k)_{k\geq 1}$ is an i.i.d. sequence
and $\tau_1$, as well as $\tau_2-\tau_1$ have exponential tails. write $U_k=\xi_{\tau_k}$ and $V_k=\tau_{k-1}-\tau_k$. Then $(V_k)_{k\geq 1}$ is an i.i.d. sequence and there exists $\nu>0$ such that $P(V_1>l)<e^{-\nu l}$ for every $l$. In addition, $U_{k}-U_{k-1}\geq 1$ for every $k$. For $n=\tau_k$, we have
\begin{eqnarray*}
\sum_{n>m}\BallGrowth^{\xi_n-\xi_m} 
&=&
\sum_{j=k}^\infty M^{\xi_n}\left[\sum_{m=\tau_j+1}^{\tau_{j+1}}   M^{-\xi_m}    \right]\\
\leq
\sum_{j=k}^\infty M^{U_k}\left[V_j M^{U_j-1}    \right]
&\leq&
\sum_{j=k}^\infty V_j M^{k-j-1} 
\end{eqnarray*}
So, in particular, $\tau_k$ is a regeneration if $V_j M^{k-j-1}<2^{k-j-1}$ for every $j\geq k$. For $j\geq k$, we say that $j$ {\em influences} $k$, and denote it by $\infl{j}{k}$, if $V_j M^{k-j-1}\geq 2^{k-j-1}$. So $\tau_k$
is a regeneration if it is not influenced by any $j\geq k$. Let 
\[
\infed{j}=\sum_{k\leq j} {\bf 1}_{\infl{j}{k}}.
\]
Then $(\infed{j})_{j\geq 1}$ is an i.i.d. sequence with exponential tails, and for $M$ large enough we have $E(\infed{j})<1$. Thus by a large deviation estimate with probability exponentially close to $1$,
$\sum_{j=1}^J\infed{j})<CJ$ for some $C<1$, and under this event there are at least $(1-C)J$ regenerations.
\end{proof}

\section{Proof of the Harnack inequality}\label{sec:harnack}

In this section we prove Theorem \ref{thm:harnack}. We start with some preparation and notation.

\subsection{Preparation and notation}

Let $\Mesh>0$ and let $\Gamma=\{A_1,A_2,\ldots,A_k\}$ be a covering of $\partial\ball{1}{0}$ as in Subsection \ref{sec:mltstr} (Page \pageref{page:partition}), except that 
the diameter of the sets $A_1,A_2,\ldots,A_k$ is bounded by $\Mesh$. 
The exact value of $\Mesh$ will be specified later. Let $\errQb>0$, and for $j=1, \ldots, k$ let $\disribTile_\omega(B,y)(j)$ and 
$\disribTile_\CovMat(B,y)(j)$ be as in Subsection \ref{sec:mltstr}. For $0<\rho<1$ we write
\[
\DistribGood{z}{R}{\rho}{\Gamma}{\errQb}:=
\left\{
\forall_{y\in\desball{\rho R}{z}} \forall_{j=1,\ldots,k} \frac { \left| \disribTile_\omega(\desball{R}{z},y)(j) - \disribTile_\CovMat(\desball{R}{z},y)(j) \right| } { \disribTile_\CovMat(\desball{R}{z},y)(j) } < \errQb 
\right\}.
\]

Let $\OscConst$ and $\SizeOsc$ be such that by Theorem~\ref{thm:osc} the probability of ${\OscEvent{x}{r}{\OscConst}{\SizeOsc}}^c$ decays stretched exponentially.
Let $0<\PwrDwn<1/4$, and for $z\in\Z^d$ and $R>0$ let $\GoodEvents{z}{R}{\PwrDwn}$ be the following event:
\begin{equation}\label{eq:def_good_event}
\GoodEvents{z}{R}{\PwrDwn} :=
\bigcap_{x\in\desball{R}{z} ; R^{\PwrDwn}<r<R} \left (\OscEvent{x}{r}{\OscConst}{\SizeOsc} \cap \DistribGood{z}{R}{\rho}{\Gamma}{\errQb} \right)
\end{equation}

\begin{claim}\label{claim:probtov}
$1-P(\GoodEvents{z}{R}{\PwrDwn})$ decays stretched exponentially with $R$.
\end{claim}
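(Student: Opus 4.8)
The plan is a union bound over the two ingredients making up $\GoodEvents{z}{R}{\PwrDwn}$, each of which is already known (via earlier results) to hold with stretched--exponentially large probability. By \eqref{eq:def_good_event} we may rewrite $\GoodEvents{z}{R}{\PwrDwn}$ as $\big(\bigcap_{x,r}\OscEvent{x}{r}{\OscConst}{\SizeOsc}\big)\cap\DistribGood{z}{R}{\rho}{\Gamma}{\errQb}$, the first intersection being over $x\in\desball{R}{z}$ and $R^{\PwrDwn}<r<R$, so it suffices to bound the probability of the complement of each of the two factors by something decaying stretched exponentially in $R$.

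For the oscillation factor, I would first observe that, although $r$ ranges over a continuum, the event $\OscEvent{x}{r}{\OscConst}{\SizeOsc}$ depends on $r$ only through the two lattice balls $\desball{r}{x}$ and $\desball{\SizeOsc r}{x}$, of which there are only polynomially many in $R$; hence the intersection is really over a family of at most polynomially many (in $R$) distinct events. By Theorem~\ref{thm:osc} and translation invariance, each such event has complement probability at most $C\exp(-cr^{\delta'})$ for some $\delta'>0$, and since $r>R^{\PwrDwn}$ this is at most $C\exp(-cR^{\PwrDwn\delta'})$. A union bound absorbs the polynomial prefactor and still leaves a bound that decays stretched exponentially in $R$.

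The one slightly delicate point --- the main obstacle, such as it is --- is that $\DistribGood{z}{R}{\rho}{\Gamma}{\errQb}$ is phrased with the \emph{relative} error of the exit distribution, whereas Corollary~\ref{cor:exit} supplies control on the \emph{absolute} error. I would bridge this by establishing a uniform lower bound $\disribTile_\CovMat(\desball{R}{z},y)(j)\ge c_0>0$, valid for all $y\in\desball{\rho R}{z}$, all $j=1,\dots,k$, and all $R$: by Brownian scaling this probability equals $\ExitProb{A_j}\big((y-z)/(\BallGrowth R)\big)$, the rescaled starting point lies in the fixed ball of radius $\rho/\BallGrowth<1$ about the origin, and each $A_j$ has positive $(d-1)$--dimensional Lebesgue measure, so such a constant $c_0=c_0(\Gamma,\rho,\BallGrowth)$ exists. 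Given this, an absolute error below $\errQb c_0$ forces the relative error below $\errQb$. Since each $A_j\in\Gamma$ is open in $\partial\cball 1$ with relative boundary of measure zero, Corollary~\ref{cor:exit} applies to $A=A_j$ with inner radius $\rho/\BallGrowth$ and accuracy $\errQb c_0$ --- with constants uniform over the finite family $\Gamma$, and, by translation invariance of $P$, over the center $z$ --- yielding complement probability $\le C\exp(-c(\BallGrowth R)^\delta)$ for each $j$; a union bound over $j=1,\dots,k$ then gives the desired stretched--exponential bound for $\DistribGood{z}{R}{\rho}{\Gamma}{\errQb}$, and combining with the oscillation estimate finishes the proof. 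The only remaining work is purely bookkeeping: checking that the boundary piece ``$z+\BallGrowth R A_j$'' of $\partial\desball{\BallGrowth R}{z}$ entering $\disribTile$ matches the radially--projected arc used by Corollary~\ref{cor:exit}, and that the ball--radius conventions of Subsection~\ref{sec:mltstr} and of Corollary~\ref{cor:exit} agree up to the fixed factor $\BallGrowth$.
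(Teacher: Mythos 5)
Your proposal is correct and follows essentially the same route as the paper, whose entire proof of this claim is the one-line observation that it follows from Corollary~\ref{cor:exit} and Theorem~\ref{thm:osc}. You simply supply the details the paper leaves implicit --- the union bound over the polynomially many distinct oscillation events and over the finitely many pieces of $\Gamma$, and the uniform lower bound on the Brownian exit probabilities that converts the absolute-error control of Corollary~\ref{cor:exit} into the relative-error statement in the definition of $\DistribGood{z}{R}{\rho}{\Gamma}{\errQb}$ --- and these details are sound.
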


\begin{proof}
This follows from Corollary~\ref{cor:exit}. and Theorem~\ref{thm:osc}.
\end{proof}

We will prove that a Harnack inequality for $\omega$-harmonic functions holds for every ball $\desball{R}{z}$ satisfying $\GoodEvents{z}{R}{\PwrDwn}$.

\subsection{Main lemma}
Let $\rdconst$ be the Harnack constant for harmonic functions in $\R^d$, 
and let $\omegaconst > \rdconst + 10\errQb$.

\begin{lemma}\label{lem:mainfharn}
Let $z$ and $R$ be such that the ball $\desball{2R}{z}$ satisfies $\GoodEvents{z}{2R}{\PwrDwn}$. Let $f:\desball{2R}{z} \to \R$ be non-negative and 
$\omega$-harmonic. Then
\[
\max_{x \in \desball{R}{z}} f(x) \leq \omegaconst \min_{x \in \desball{R}{z}} f(x).
\]
\end{lemma}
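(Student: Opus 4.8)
\emph{The plan.} By translation invariance assume $z=0$. First dispose of the trivial case: if $\min_{\desball R 0}f=0$, pick a minimiser $y_0$; since $f\ge0$ is $\omega$-harmonic, $0=L_\omega f(y_0)=\sum_e\omega(y_0,e)f(y_0+e)$ forces $f$ to vanish at every point reachable from $y_0$ by a directed $\omega$-path inside $\disdomain{2R}$, and on the good event this set is all of $\desball R 0$ (the event $\GoodEvents 0 {2R}\PwrDwn$ carries, through the inputs behind Theorem~\ref{thm:osc}, Corollary~\ref{cor:perc_bnd_harm_func} and the percolation estimates of Section~\ref{sec:perc}, the strong connectivity of $\desball{2R}0$: any two of its points communicate via the sink $\thesink$ without leaving $\disdomain{2R}$). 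Hence we may assume $\min_{\desball R 0}f>0$, and after rescaling $\min_{\desball R 0}f=1$; the claim becomes $\max_{\desball R 0}f\le\omegaconst$.

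\emph{Skeleton of the estimate.} Fix a small $\kappa>0$ (to be sent to $0$) and peel off a thin annulus: for $x\in\desball R 0$, optional stopping at the exit time $\tau'$ of $\disdomain{(2-\kappa)R}$ gives $f(x)=\quenchedE_\omega^x[f(X_{\tau'})]$ (legitimate since $f$ is $\omega$-harmonic on all of $\desball{2R}0\supseteq\disdomain{(2-\kappa)R}$). Split $\partial\disdomain{(2-\kappa)R}$ into the pieces $\Lambda_1,\dots,\Lambda_k$ prescribed by $\Gamma=\{A_1,\dots,A_k\}$ (put $w\in\Lambda_j$ when $w/|w|_2\in A_j$, breaking ties arbitrarily); each $\Lambda_j$ has diameter at most $2\Mesh R$. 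Write $M_j=\max_{\Lambda_j}f$, $m_j=\min_{\Lambda_j}f$, $p_j(x)=\quenchedP_\omega^x(X_{\tau'}\in\Lambda_j)$, and $q_j(x)=\pbm^{x/((2-\kappa)R)}(\text{exit }\cball 1\text{ through }A_j)$. Two ingredients feed in. First, on the good event one has the multiplicative comparison $(1-\errQb)q_j(x)\le p_j(x)\le(1+\errQb)q_j(x)$ for all $x\in\desball R 0$ and $j$: this is the content of the distributional event $\DistribGood 0 {2R}\rho\Gamma\errQb$ with $\disdomain{(2-\kappa)R}$ in the role of the radius-$2R$ ball --- either one includes the corresponding event for the radius-$(2-\kappa)R$ ball among the good events, or one deduces it from $\DistribGood 0 {2R}\rho\Gamma\errQb$ by one further application of Corollary~\ref{cor:exit} --- combined with Brownian scaling and Theorem~\ref{thm:BD14}. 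Second, the \emph{classical} Harnack inequality for the limiting operator $\sum_{i,j}\CovMat_{ij}\partial_{ij}$: for any non-negative weights $(c_j)$ the function $\xi\mapsto\sum_j\pbm^\xi(\text{exit }\cball 1\text{ via }A_j)c_j$ is non-negative and $\CovMat$-harmonic on $\cball 1$, so $\sum_j q_j(x)c_j\le\rdconst_{2-\kappa}\sum_j q_j(y)c_j$ for all $x,y\in\desball R 0$, where $\rdconst_{2-\kappa}$ is the classical Harnack constant for the annulus of ratio $2-\kappa$ and $\rdconst_{2-\kappa}\to\rdconst$ as $\kappa\to 0$.

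\emph{The heart of the matter: killing the boundary poles.} Combining the two ingredients with $c_j=M_j$ and with $c_j=m_j$ gives $f(x)\le(1+\errQb)\rdconst_{2-\kappa}\sum_j q_j(y)M_j$ and $f(y)\ge(1-\errQb)\sum_j q_j(y)m_j$, so everything hinges on showing that $\sum_j q_j(y)(M_j-m_j)=\sum_j q_j(y)\,\osc_{\Lambda_j}f$ is small compared to $f(y)$. This is \emph{not} automatic --- a discrete Poisson kernel with pole inside some $\Lambda_{j_0}$ has $\osc_{\Lambda_{j_0}}f$ of order $\max_{\desball{2R}0}f$ --- which is exactly why one peels and works on the inner sphere. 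Here the interior oscillation estimate does the job: every $v\in\partial\disdomain{(2-\kappa)R}$ is at distance $\sim\kappa R$ from $\partial\disdomain{2R}$, hence $f$ is $\omega$-harmonic on $\desball{c\kappa R}{v}$, and iterating the events $\OscEvent v r\OscConst\SizeOsc$ (all part of $\GoodEvents 0 {2R}\PwrDwn$) from scale $\sim\kappa R$ down to scale $\sim\Mesh R$ --- about $\Theta(\log(\kappa/\Mesh))$ steps --- yields $\osc_{\Lambda_j}f\le\OscConst^{\,\Theta(\log(\kappa/\Mesh))}\,\osc_{\desball{c\kappa R}{v}}f$; feeding in the fixed-scale Harnack inequality of Corollary~\ref{cor:perc_bnd_harm_func} to bound $\osc_{\desball{c\kappa R}{v}}f$ by a constant times $m_j$, we obtain $M_j\le(1+\epsilon)m_j$ for every $j$, with $\epsilon=\epsilon(\kappa,\Mesh)$ as small as we please once $\Mesh$ is chosen small relative to $\kappa$. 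Then $\sum_j q_j(y)\osc_{\Lambda_j}f\le\epsilon\sum_j q_j(y)m_j\le\frac{\epsilon}{1-\errQb}f(y)$ and
\[
f(x)\ \le\ (1+\errQb)\rdconst_{2-\kappa}\sum_j q_j(y)M_j\ \le\ (1+\errQb)(1+\epsilon)\,\rdconst_{2-\kappa}\sum_j q_j(y)m_j\ \le\ \frac{(1+\errQb)(1+\epsilon)}{1-\errQb}\,\rdconst_{2-\kappa}\,f(y).
\]
Choosing $\kappa$ (hence $\rdconst_{2-\kappa}-\rdconst$) and then $\Mesh$ (hence $\epsilon$) sufficiently small, the prefactor is at most $\rdconst+10\errQb=\omegaconst$, i.e.\ $\max_{\desball R 0}f\le\omegaconst\min_{\desball R 0}f$.

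\emph{Main obstacle.} The delicate point is the third step: because the environment is non-elliptic the discrete Poisson kernel can concentrate on a handful of boundary points, so $f$ is \emph{not} H\"older up to $\partial\disdomain{2R}$ and cannot be coarse-grained there; peeling off a thin annulus and working on an inner sphere is the remedy, but it forces one to balance three small parameters --- the annulus thickness $\kappa$, the mesh $\Mesh$ of $\Gamma$, and the homogenisation error $\errQb$ --- and to bring in both the fixed-scale Harnack inequality (Corollary~\ref{cor:perc_bnd_harm_func}) and the connectivity-to-the-sink facts of Section~\ref{sec:perc}. That the constant is \emph{optimal} rather than merely finite is exactly what demands $\kappa,\Mesh,\errQb\to 0$, and is reflected in the hypothesis $\omegaconst>\rdconst+10\errQb$.
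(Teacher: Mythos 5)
Your overall strategy is genuinely different from the paper's: you argue directly by peeling off an annulus, coarse-graining the exit distribution on the inner sphere of radius $(2-\kappa)R$, comparing it multiplicatively to the Brownian exit law, and invoking the classical Harnack inequality for the homogenized operator; the paper instead argues by contradiction, propagating a ``defect'' $f(x_j)/f(y_j)>\rdconst+2\errQb$ through a decreasing sequence of scales $r_j$ and using the oscillation inequality to \emph{amplify} it geometrically, until after $k\ge R^{1/3}$ steps the ratio $\gtrsim 2^{R^{1/3}}$ contradicts the crude bound obtainable at scale $r_k\approx R^{\PwrDwn}$ (chaining Corollary~\ref{cor:perc_bnd_harm_func} only costs $e^{CR^{\PwrDwn}}$, and $\PwrDwn<1/4$). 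Your route would be attractive if it closed, but it has a genuine gap exactly at the step you call the heart of the matter.

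The gap is the bound $\osc_{\desball{c\kappa R}{v}}f\le C\,m_j$ with $C$ independent of $R$, which you attribute to Corollary~\ref{cor:perc_bnd_harm_func}. That corollary is a \emph{fixed-scale} statement: for a given failure probability it produces a single side length $\nn$ (independent of $R$) and a constant $\PercHarmConst$ valid on boxes of that size only. It cannot be applied to a ball of radius $c\kappa R$, and chaining it across such a ball yields a constant of order $\PercHarmConst^{c\kappa R/\nn}$, exponential in $R$, which the oscillation gain $\OscConst^{\Theta(\log(\kappa/\Mesh))}$ (a constant in $R$) cannot offset. What you actually need there --- $\max_{\desball{c\kappa R}{v}}f\le C\min_{\desball{\Mesh R}{v}}f$ for points $v$ at distance $\sim\kappa R$ from $\partial\desball{2R}{z}$, with $C$ uniform in $R$ --- is itself a Harnack inequality at a scale growing with $R$, i.e.\ essentially the statement being proved; in the non-elliptic setting the possible concentration of quenched harmonic measure is precisely what makes this non-trivial, so the argument as written is circular. (One could imagine repairing it by a multi-scale induction in $R$, feeding the scale-$\kappa R$ Harnack constant into the choice of $\Mesh$, but that requires restructuring the good events and supplying a base case; it is not contained in your proposal. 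Two further, smaller, issues: the event $\DistribGood{z}{2R}{\rho}{\Gamma}{\errQb}$ in $\GoodEvents{z}{2R}{\PwrDwn}$ concerns exit from the radius-$2R$ ball, not from $\disdomain{(2-\kappa)R}$, and enlarging the good event changes the statement of the lemma; and your treatment of the case $\min f=0$ uses connectivity of the sink, which is likewise not part of $\GoodEvents{z}{2R}{\PwrDwn}$.)
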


\begin{proof}
Assume for contradiction that there exist $x,y \in \desball{R}{z}$ such that $f(x)>\omegaconst f(y)$.
Let $\LessThanTwo<2$ be such that for every non-negative harmonic function $f$ on $\ball{\LessThanTwo}{0}$, we have
$\max_{\ball{1}{0}} f \leq (\rdconst + \errQb)\min_{\ball{1}{0}} f$.
We define a sequence of radii $r_0,r_1,r_2,\ldots,r_k$ as follows: $r_0=R$, $r_1=\errQb R/2$, and then $r_j=r_1/j^2$. We take $k$ to be the largest s.t.
$r_k>R^\PwrDwn$. Note that $k = k(R) > C R^{(1-\PwrDwn) / 2} > R^{1/3}$.

We now define a sequence of pairs of points $(x_0,y_0),(x_1,y_1),\ldots,(x_k,y_k)$. We will always have $\|x_j - y_j\|<2 r_j$ and that the distance of both $x_j$ and $y_j$ from $\partial \desball{2R}{0}$ is less than $4 r_j$.

We set a constant $\EpsComp=\omegaconst / (\rdconst - 2\errQb) - 1>0$.

At this point we can determine $\Mesh$, the mesh of the partition $\Gamma$. We take $\Mesh$ so that
\begin{equation}\label{eq:chooseMesh}
\Mesh \cdot \SizeOsc^{-\frac{\log 2 (\rdconst + 2\errQb)^2 \EpsComp^{-1}} {\log \OscConst}} < \min_{j = 1,\ldots} r_j/r_{j-1}
\end{equation}
where $\SizeOsc$ and $\OscConst$ are as in Theorem~\ref{thm:osc}.

We start by choosing $x_0=x$ and $y_0=y$. Then we need to explain how $x_{j+1}$ and $y_{j+1}$ are chosen, provided that we know $x_j$ and $y_j$. This explanation is postponed to after Claim \ref{claim:part_with_difference} below.

We can find a point $z_j$ (for $j=0$ we take $z_0=0$) such that $x_j$ and $y_j$ are both in $\desball{r_j}{z_j}$. Then for every set $A\in\Gamma$, we have
\begin{eqnarray}\label{eq:hitprobhrnk}
\nonumber
\big( \rdconst + \errQb \big)^{-1} &P_\omega^{x_j}& \left( T_{\partial \desball{\LessThanTwo r_j}{z_j}} = T_{z_j + \LessThanTwo r_j A} \right) \\
\nonumber
\leq
&P_\omega^{y_j}& \left( T_{\partial \desball{\LessThanTwo r_j}{z_j}} = T_{z_j + \LessThanTwo r_j A} \right) \\
\leq
\big( \rdconst + \errQb \big) &P_\omega^{x_j}& \left( T_{\partial \desball{\LessThanTwo r_j}{z_j}} = T_{z_j + \LessThanTwo r_j A} \right)
\end{eqnarray}

\begin{claim}\label{claim:part_with_difference}
If $f(x_j)/f(y_j) > \rdconst + 2\errQb$ then there exists $A\in\Gamma$ such that 
\begin{equation}\label{eq:part_with_difference}
\max_{z_j + \LessThanTwo r_j A} f
>
\frac{1}{\rdconst + 2\errQb}\frac{f(x_j)}{f(y_j)}  \min_{z_j + \LessThanTwo r_j A} f
\end{equation}
\end{claim}
We postpone the proof of Claim \ref{claim:part_with_difference}.

we now explain how $x_{j+1}$ and $y_{j+1}$ are chosen, provided that we know $x_j$ and $y_j$.

By Claim \ref{claim:part_with_difference} there is $A\in\Gamma$ such that \eqref{eq:part_with_difference} holds. Note that the diameter of $z_j + \LessThanTwo r_j A$
is bounded by $\Mesh r_j$. Thus we can find a point $z_{j+1}$ such that 
$z_j + \LessThanTwo r_j A \subseteq \desball{\Mesh r_j}{z_{j+1}}$. In particular, we get that 
\begin{equation*}
\max_{\desball{\Mesh r_j}{z_{j+1}}} f
>
\frac{1}{\rdconst + 2\errQb}\frac{f(x_j)}{f(y_j)}  \min_{\desball{\Mesh r_j}{z_{j+1}}} f.
\end{equation*}

Now note that by the choice of $\Mesh$ \eqref{eq:chooseMesh}, we get that
\[
r_{j+1}>\Mesh r_j \cdot \SizeOsc^{-\frac{\log 2 (\rdconst + 2\errQb)} {\log \OscConst}}.
\]
Thus, since the event $\GoodEvents{z}{R}{\PwrDwn}$ occurs, we get
\[
\osc_{\desball{r_{j+1}}{z_{j+1}}} f 
 \geq \OscConst ^ {\frac{\log 2 (\rdconst + 2\errQb)^2} {\log \OscConst}} \cdot  \osc_{\desball{\Mesh r_j}{z_{j+1}}} f 
=  2 (\rdconst + 2\errQb)^2 \cdot  \osc_{\desball{\Mesh r_j}{z_{j+1}}} f 
\]

We can then calculate
\begin{eqnarray}\label{eq:jddeuschel}
\nonumber
\frac{\max_{\desball{r_{j+1}}{z_{j+1}}} f } {\min_{\desball{r_{j+1}}{z_{j+1}}} f } &=& 1 + \frac{\osc_{\desball{r_{j+1}}{z_{j+1}}} f } {\min_{\desball{r_{j+1}}{z_{j+1}}} f } \\
\nonumber
\geq 1 + \frac{\osc_{\desball{r_{j+1}}{z_{j+1}}} f } {\min_{\desball{\Mesh r_j}{z_{j+1}}} f }  &\geq& 1 +  2 (\rdconst + 2\errQb)^2 \EpsComp^{-1} \frac{\osc_{\desball{\Mesh r_j}{z_{j+1}}} f } {\min_{\desball{\Mesh r_j}{z_{j+1}}} f } \\
\nonumber
= 1 + 2 (\rdconst + 2\errQb)^2 \EpsComp^{-1} \left[  \frac{\max_{\desball{\Mesh r_j}{z_{j+1}}} f } {\min_{\desball{\Mesh r_j}{z_{j+1}}} f } -1 \right] 
&\geq& 1 + 2(\rdconst + 2\errQb)\EpsComp^{-1} \left[  \frac{f(x_j) } {f(y_j)} - (\rdconst + 2\errQb) \right]\\
\end{eqnarray}

We now take $x_{j+1}$ and $y_{j+1}$ to be, respectively, points where the maximum and the minimum of $f$ in $\desball{r_{j+1}}{z_{j+1}}$ are obtained. It is easy to verify that the pairs $(x_j,y_j)$ satisfy the requirements above, namely that
$\|x_j - y_j\|<2 r_j$ and that the distance of both $x_j$ and $y_j$ from $\partial \desball{2R}{0}$ is less than $4 r_j$ for every $j$.

Write $\alpha_j = f(x_j)/f(y_j) - (\rdconst + 2\errQb)$. Then by our assumption, $\alpha_0 > \EpsComp$, and we can show that $\alpha_j > \EpsComp$ for every $j \geq 1$. Indeed, inductively using \eqref{eq:jddeuschel},
\begin{eqnarray*}
\alpha_{j+1} + (\rdconst + 2\errQb) \geq 1 + 2(\rdconst + 2\errQb)\EpsComp^{-1} \alpha_j \geq 1 + 2(\rdconst + 2\errQb) > (\rdconst + 2\errQb) + \EpsComp.
\end{eqnarray*}

Once we know that $\alpha_j > \EpsComp$, again using \eqref{eq:jddeuschel}, we get that
\begin{eqnarray*}
\alpha_{j+1} + (\rdconst + 2\errQb) \geq 1 + 2(\rdconst + 2\errQb)\EpsComp^{-1} \alpha_j 
\geq 
(\rdconst + 2\errQb) + 1 + (\rdconst + 2\errQb)\EpsComp^{-1} \alpha_j 
\end{eqnarray*}
which means that $\alpha_{j+1}>(\rdconst + 2\errQb)\EpsComp^{-1} \alpha_j>2\alpha_j$, and inductively, $\alpha_j > C2^j$. For $k$ as defined in the beginning of this proof, this means that
$f(x_k)/f(y_k)$ grows super-exponentialy with $\|y_k-x_k\|$, which contradicts Corollary \ref{cor:perc_bnd_harm_func}.

\end{proof}

\begin{proof}[Proof of Claim \ref{claim:part_with_difference}]
Write $\ConstClaimPart := \frac{1}{\rdconst + 2\errQb}\frac{f(x_j)}{f(y_j)} $.
Assume for contradiction that 
\[
\max_{z_j + \LessThanTwo r_j A} f
\leq
\ConstClaimPart  \min_{z_j + \LessThanTwo r_j A} f
\]
for every $A \in \Gamma$.

Then

\begin{eqnarray*}
f(x_j) &=& \sum_{z\in\partial \desball{\LessThanTwo r_j }{z_j}}f(z) P_\omega^{x_j}(X_{T \partial \desball{\LessThanTwo r_j }{z_j}} = z) \\
& \leq &
\sum_{A \in \Gamma} P_\omega^{x_j} \left( T_{\partial \desball{\LessThanTwo r_j}{z_j}} = T_{z_j + \LessThanTwo r_j A} \right)  \max_{z_j + \LessThanTwo r_j A} f\\ 
& \leq &
\ConstClaimPart \sum_{A \in \Gamma} P_\omega^{x_j} \left( T_{\partial \desball{\LessThanTwo r_j}{z_j}} = T_{z_j + \LessThanTwo r_j A} \right)  \min_{z_j + \LessThanTwo r_j A} f\\ 
&\leq&  (\rdconst +  \errQb) \ConstClaimPart \sum_{A \in \Gamma} P_\omega^{y_j} \left( T_{\partial \desball{\LessThanTwo r_j}{z_j}} = T_{z_j + \LessThanTwo r_j A} \right)  \min_{z_j + \LessThanTwo r_j A} f\\ 
& \leq &(\rdconst +  \errQb) \ConstClaimPart f(y_j) < f(x_j).
\end{eqnarray*}

and we reach a contradiction.

\end{proof}

\begin{proof}[Proof of Theorem \ref{thm:harnack}]
The theorem follows from Lemma \ref{lem:mainfharn} and Claim \ref{claim:probtov}.
\end{proof}

\ignore{
such that
\begin{enumerate}
\item $\diam(A_j)<\Mesh$ for $j=1,\ldots,n$,
\item The sets $A_j : j=1,\ldots,n$ are open in the relative topology, disjoint, and their boundaries have zero measure (w.r.t. Lebesgue measure on the $d-1$ dimenisonal sphere),
\item $\bigcup_{j=1}^n \overline{A_j}=\partial\ball{1}{0}$.
\end{enumerate}
}


\begin{thebibliography}{10}

\bibitem{antal_pisztora}
Peter Antal and Agoston Pisztora.
\newblock On the chemical distance for supercritical bernoulli percolation.
\newblock {\em The Annals of Probability}, pages 1036--1048, 1996.

\bibitem{Barlow}
Martin~T Barlow.
\newblock Random walks on supercritical percolation clusters.
\newblock {\em Annals of probability}, pages 3024--3084, 2004.

\bibitem{BD14}
Noam Berger and Jean-Dominique Deuschel.
\newblock A quenched invariance principle for non-elliptic random walk in iid
  balanced random environment.
\newblock {\em Probability Theory and Related Fields}, 158(1-2):91--126, 2014.

\bibitem{BuKe}
Robert~M Burton and Michael Keane.
\newblock Density and uniqueness in percolation.
\newblock {\em Communications in mathematical physics}, 121(3):501--505, 1989.

\bibitem{dreram}
Alexander Drewitz and Alejandro~F Ram{\'\i}rez.
\newblock Selected topics in random walk in random environment.
\newblock {\em arXiv preprint arXiv:1309.2589}, 2013.

\bibitem{FS86}
Eugene~B Fabes and Daniel~W Stroock.
\newblock A new proof of moser's parabolic harnack inequality using the old
  ideas of nash.
\newblock {\em Archive for Rational Mechanics and Analysis}, 96(4):327--338,
  1986.

\bibitem{GZ}
Xiaoqin Guo and Ofer Zeitouni.
\newblock Quenched invariance principle for random walks in balanced random
  environment.
\newblock {\em Probability Theory and Related Fields}, 152(1-2):207--230, 2012.

\bibitem{KuoTru}
Hung~Ju Kuo and Neil~S Trudinger.
\newblock Linear elliptic difference inequalities with random coefficients.
\newblock {\em Mathematics of computation}, 55(191):37--53, 1990.

\bibitem{Lawler82}
Gregory~F Lawler.
\newblock Weak convergence of a random walk in a random environment.
\newblock {\em Communications in Mathematical Physics}, 87(1):81--87, 1982.

\bibitem{lawlerharnack}
Gregory~F Lawler.
\newblock Estimates for differences and harnack inequality for difference
  operators coming from random walks with symmetric, spatially inhomogeneous,
  increments.
\newblock {\em Proceedings of the London Mathematical Society}, 3(3):552--568,
  1991.

\bibitem{lss}
Thomas~M Liggett, Roberto~H Schonmann, and Alan~M Stacey.
\newblock Domination by product measures.
\newblock {\em The Annals of Probability}, 25(1):71--95, 1997.

\bibitem{ofernotes}
Ofer Zeitouni.
\newblock Random walks in random environment.
\newblock In {\em Lectures on probability theory and statistics}, volume 1837
  of {\em Lecture Notes in Math.}, pages 189--312. Springer, Berlin, 2004.

\end{thebibliography}

\end{document}